\newtheorem{MainThm}{Theorem}
\newtheorem{Thm}{Theorem}[section]
\newtheorem{Prop}[Thm]{Proposition}
\newtheorem{Def}[Thm]{Definition}
\theoremstyle{remark}
\newtheorem{Sit}[Thm]{Situation}
\newtheorem{Rmk}[Thm]{Remark}
\newtheorem*{Notation}{Notation}
\newtheorem*{Ack}{Acknowledgement}
\numberwithin{equation}{section}
\newcommand{\Order}{\mathcal{O}}
\newcommand{\into}{\hookrightarrow}
\newcommand{\onto}{\twoheadrightarrow}
\newcommand{\isomto}{\overset{\sim}{\to}}
\newcommand{\isomfrom}{\overset{\sim}{\leftarrow}}
\newcommand{\compose}{\mathrel{\circ}}
\newcommand{\tensor}{\otimes}
\newcommand{\closure}[1]{\overline{#1}}
\newcommand{\Z}{\mathbb{Z}}
\newcommand{\Q}{\mathbb{Q}}
\newcommand{\sep}{\mathrm{sep}}
\newcommand{\et}{\mathrm{et}}
\newcommand{\fppf}{\mathrm{fppf}}
\newcommand{\rat}{\mathrm{rat}}
\newcommand{\PDual}{\mathrm{PD}}
\newcommand{\SDual}{\mathrm{SD}}
\newcommand{\LDual}{\mathrm{LD}}
\newcommand{\sAb}{\mathrm{sAb}}
\newcommand{\Gm}{\mathbf{G}_{m}}
\newcommand{\EtGp}{\mathrm{EtGp}}
\newcommand{\EtGpf}{\mathrm{EtGp}^{f}}
\newcommand{\ind}{\mathrm{ind}}
\newcommand{\pro}{\mathrm{pro}}
\newcommand{\tor}{\mathrm{tor}}
\newcommand{\sm}{\mathrm{sm}}
\newcommand{\set}{\mathrm{set}}
\newcommand{\alg}[1]{\mathbf{#1}}
\newcommand{\dirlim}{\varinjlim}
\newcommand{\invlim}{\varprojlim}
\newcommand{\var}{\;\cdot\;}
\newcommand{\SmGp}{\mathrm{SmGp}}
\mathchardef\mhyphen="2D
\DeclareMathOperator{\Gal}{Gal}
\DeclareMathOperator{\Hom}{Hom}
\DeclareMathOperator{\Ker}{Ker}
\DeclareMathOperator{\Coker}{Coker}
\DeclareMathOperator{\Ext}{Ext}
\DeclareMathOperator{\Spec}{Spec}
\DeclareMathOperator{\Ab}{Ab}
\DeclareMathOperator{\Set}{Set}
\DeclareMathOperator{\Res}{Res}
\DeclareMathOperator{\sheafhom}{\alg{Hom}}
\DeclareMathOperator{\sheafext}{\alg{Ext}}
\DeclareMathOperator{\Pic}{Pic}
\DeclareMathOperator{\Ch}{Ch}
\newcommand{\BetweenThmAndList}{\leavevmode}
\title[N\'eron models of 1-motives and duality]
{N\'eron models of 1-motives and duality}
\author{Takashi Suzuki}
\thanks{
	The author is a Research Fellow of Japan Society for the Promotion of Science
	and supported by JSPS KAKENHI Grant Number JP18J00415.
}
\address{
	Department of Mathematics, Chuo University,
	1-13-27 Kasuga, Bunkyo-ku, Tokyo 112-8551, JAPAN
}
\email{tsuzuki@gug.math.chuo-u.ac.jp}
\date{January 26, 2019}
\subjclass[2010]{Primary: 14C15; Secondary: 14F20, 11G10}
\keywords{1-motives; N\'eron models; duality}
\begin{document}

\begin{abstract}
	In this paper, we propose a definition of N\'eron models of arbitrary Deligne 1-motives over Dedekind schemes,
	extending N\'eron models of semi-abelian varieties.
	The key property of our N\'eron models is that
	they satisfy a generalization of Grothendieck's duality conjecture in SGA 7
	when the residue fields of the base scheme at closed points are perfect.
	The assumption on the residue fields is unnecessary
	for the class of 1-motives with semistable reduction everywhere.
	In general, this duality holds after inverting the residual characteristics.
	The definition of N\'eron models involves careful treatment of ramification of lattice parts
	and its interaction with semi-abelian parts.
	This work is a complement to
	Grothendieck's philosophy on N\'eron models of motives of arbitrary weights.
\end{abstract}

\maketitle

\tableofcontents


\section{Introduction}
\label{sec: Introduction}

\subsection{Aim of the paper}
\label{sec: Aim of the paper}

Let $X$ be an irreducible Dedekind scheme with function field $K$.
Let $U$ be either a dense open subscheme of $X$ or equal to $\Spec K$.
Recall from \cite[(10.1.10)]{Del74} that
a smooth $1$-motive $M$ over $U$ in the sense of Deligne
is a complex of group schemes $[Y \to G]$ over $U$
whose degree $-1$ term $Y$ is a lattice (\'etale locally isomorphic to $\Z^{n}$ for some $n$)
and degree $0$ term $G$ is an extension of an abelian scheme by a torus.
Raynaud \cite{Ray94} studied monodromy
(i.e.\ the defect of good reduction around $X \setminus U$) of $1$-motives.

In this paper, we define a certain model $\mathcal{N}(M)$ of $M$ over $X$,
which we call the \emph{N\'eron model} of $M$,
generalizing N\'eron (lft) models of semi-abelian varieties \cite[Chap.\ 10]{BLR90}
(see also \cite{Hol16}, \cite{Ore18} for more recent studies).
Grothendieck, in \cite[Exp.\ IX, \S 0.1]{Gro72}, imagined a possibility of a theory of N\'eron models
of motives of arbitrary weights.
On the other hand, there have been several studies of N\'eron models of Hodge structures such as \cite{GGK10}.
Our study of N\'eron models of $1$-motives is a complement to such studies.
We hope that our study sheds some light on possible N\'eron models of more general motives.

One key property of our N\'eron model $\mathcal{N}(M)$ is that
it satisfies a generalization of \emph{Grothendieck's duality conjecture} \cite[IX, Conj.\ 1.3]{Gro72}
when the residue fields of $X \setminus U$ are perfect.
This conjecture is originally for $M = A$ an abelian variety,
in which case (with perfect residue fields)
it is solved by the author \cite{Suz14} after many partial results by other researchers.
By Bertapelle-Bosch \cite{BB00}, the conjecture in its original form (for abelian varieties)
may fail when a residue field is imperfect.
Without the assumption on residue fields,
the original conjecture is true if $A$ has semistable reduction everywhere by Werner \cite{Wer97}
or after inverting the residual characteristics by Bertapelle \cite{Ber01}.
We prove that our N\'eron models $\mathcal{N}(M)$ of $1$-motives $M$ satisfy a duality
under the same assumptions as those results
(i.e.\ for the case of semistable $M$ and for the case of residual characteristics being inverted).
The duality results we prove strongly suggest that our definition of N\'eron models is ``correct''.
If N\'eron models of more general motives make any sense,
then it will be a very interesting problem to try to generalize Grothendieck's duality conjecture to such models.

Our N\'eron model $\mathcal{N}(M)$ represents, in the derived category of $X_{\sm}$,
the truncation $\tau_{\le 0} R j_{\ast} M$
in degrees $\le 0$ of the derived pushforward of $M$
by the natural morphism $j \colon U_{\sm} \to X_{\sm}$.
Here $X_{\sm}$ is the smooth site of $X$,
i.e.\ the category of smooth $X$-schemes with $X$-scheme morphisms
endowed with the \'etale topology,
and $U_{\sm}$ similarly.
Hence $\mathcal{N}(M)$ encodes $j_{\ast} Y$, $R^{1} j_{\ast} Y$ and
the kernel of the morphism $R^{1} j_{\ast} Y \to R^{1} j_{\ast} G$.
The sheaf $R^{1} j_{\ast} Y$ has finite stalks
and contains information about (possibly wild) ramification of the lattice $Y$.
If $Y$ is unramified along $X \setminus U$, then $R^{1} j_{\ast} Y = 0$,
and $\mathcal{N}(M)$ is simplified as $[j_{\ast} Y \to j_{\ast} G]$.

Bosch-Xarles \cite[Def.\ 4.1]{BX96}
defines the N\'eron model of a complex of sheaves $C$ on (the local rigid-analytic version of) $U_{\sm}$
as $R^{0} j_{\ast} C$.
Including information about the degree $-1$ term (or $j_{\ast} Y$) is a new feature of the present work.
Our duality contains the results of Xarles \cite{Xar93} and Bertapelle-Gonzal\'ez-Avil\'es \cite[Thm.\ 1.1]{BGA15}
as a special case where $M$ is a torus.
The result of Xarles mentioned here is essentially about $\tau_{\le 1} R j_{\ast} Y$.
Hence the information of the whole $\tau_{\le 0} R j_{\ast} M$ is crucial
in order to even formulate duality.

According to Gonz\'alez-Avil\'es, Xarles made an (unsuccessful) attempt in 1996
to generalize his result \cite{Xar93} to arbitrary $1$-motives.
The present work has been done independently of his attempt.


\subsection{Main results}
\label{sec: Main results}

Now we state our results.
Let $j \colon U_{\sm} \to X_{\sm}$ and $K$ as above.
Denote the category of $1$-motives over $U$ by $\mathcal{M}_{U}$,
which has a natural additive functor to
the bounded derived category $D^{b}(U_{\sm})$ of sheaves on the site $U_{\sm}$.
Let $\SmGp / X$ be the category of commutative separated smooth group schemes over $X$.
It has a natural additive functor
to the bounded derived category $D^{b}(X_{\sm})$ of sheaves on the site $X_{\sm}$
and hence inherits the notion of quasi-isomorphism of complexes from $D^{b}(X_{\sm})$.
Denote the resulting localization of the category of bounded complexes in $\SmGp / X$ by $D^{b}(\SmGp / X)$.
See Def.\ \ref{def: localize category of groups} for a more detailed definition
and why $D^{b}(\SmGp / X)$ is triangulated.
We have a natural triangulated functor $D^{b}(\SmGp / X) \to D^{b}(X_{\sm})$.
The existence of N\'eron models of semi-abelian varieties
(i.e.\ representability of the sheaf $j_{\ast} G$)
is generalized to $1$-motives as follows.

\begin{MainThm} \label{main: definition of Neron models}
	There exists a canonical additive functor
	$\mathcal{N} \colon \mathcal{M}_{U} \to D^{b}(\SmGp / X)$
	such that the diagram
		\[
			\begin{CD}
					\mathcal{M}_{U}
				@>> \mathcal{N} >
					D^{b}(\SmGp / X)
				\\
				@VVV
				@VVV
				\\
					D^{b}(U_{\sm})
				@> \tau_{\le 0} R j_{\ast} >>
					D^{b}(X_{\sm})
			\end{CD}
		\]
	is commutative.
\end{MainThm}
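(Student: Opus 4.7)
The plan is to construct $\mathcal{N}$ by separating $M = [Y \to G]$ into its lattice and semi-abelian components, defining $\mathcal{N}$ on each separately, and assembling via the Néron mapping property.

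For the semi-abelian component $G$, let $\mathcal{N}(G)$ be the classical lft-Néron model over $X$, viewed as a complex concentrated in degree $0$. It represents $j_{*}G$ in $D^{b}(X_{\sm})$, and functoriality in $G$ is the Néron mapping property.

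For a lattice $Y$, the first task is to realize $\tau_{\le 0}(Rj_{*}Y[1]) \in D^{b}(X_{\sm})$ by an explicit object of $D^{b}(\SmGp/X)$. Its cohomology sheaves are $j_{*}Y$ in degree $-1$ and $R^{1}j_{*}Y$ in degree $0$. The former is represented by the étale lft-Néron model $\mathcal{N}(Y)$. For the latter, I would show representability by a finite étale group scheme $\mathcal{R}(Y)$ supported on $X \setminus U$: working étale-locally, $Y$ becomes constant, and the stalk of $R^{1}j_{*}Y$ at $x \in X \setminus U$ is $H^{1}(I_{x}, Y(\bar K))$, which is finite because the profinite inertia $I_{x}$ acts continuously on the finite-rank free abelian group $Y(\bar K)$ and therefore through a finite quotient. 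Assembling $\mathcal{N}(Y)$ and $\mathcal{R}(Y)$ into a two-term complex $\mathcal{N}^{\bullet}(Y) = [\mathcal{N}(Y) \to \mathcal{R}(Y)]$ that realizes the correct $k$-invariant in $\Ext^{2}_{X_{\sm}}(\mathcal{R}(Y), \mathcal{N}(Y))$ is done by choosing a chain-level model of $\tau_{\le 1} Rj_{*}Y$: push an injective resolution of $Y$ forward to $X_{\sm}$, truncate, and replace up to quasi-isomorphism by a complex of smooth group schemes, possibly enlarging $\mathcal{N}(Y)$ along the way.

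For a general 1-motive $M = [Y \xrightarrow{u} G]$, the structural map $u$ extends by the Néron property to $\tilde{u}\colon \mathcal{N}(Y) \to \mathcal{N}(G)$. Combining $\tilde u$ with the lattice differential, I define $\mathcal{N}(M)$ as a two-term complex in $\SmGp/X$ concentrated in degrees $[-1, 0]$, with degree $-1$ part equal to $\mathcal{N}(Y)$ and degree $0$ part encoding both $\mathcal{N}(G)$ and the contribution from $\mathcal{R}(Y)$. Equivalently, applying the distinguished triangle $G \to M \to Y[1] \to G[1]$ in $D^{b}(U_{\sm})$ and chasing chain-level models, $\mathcal{N}(M)$ can be described as fitting in the corresponding triangle in $D^{b}(\SmGp/X)$. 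Commutativity of the square with $\tau_{\le 0} Rj_{*}$ then holds by construction, and functoriality in $M$ follows from functoriality at each step.

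The main obstacle I anticipate is the assembly in the general case: one must arrange $\mathcal{R}(Y)$ alongside $\mathcal{N}(G)$ so that the $H^{0}$ of $\mathcal{N}(M)$ is exactly an extension of $\Ker(R^{1}j_{*}Y \to R^{1}j_{*}G)$ by $\Coker(j_{*}Y \to j_{*}G)$, not all of $R^{1}j_{*}Y$. This will likely require a suitable pullback or subcomplex inside $\SmGp/X$; together with controlling the possibly nontrivial $k$-invariants in the wildly ramified case, this is where most of the technical work lies.
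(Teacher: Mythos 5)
Your plan breaks at its central step. A sheaf $R^{1}j_{*}Y$ is supported on $Z = X \setminus U$, and a sheaf with support in a closed proper subset of $X$ is \emph{never} representable by a smooth (in particular flat) group scheme over $X$, let alone a finite \'etale one: a finite \'etale $X$-scheme has open and closed support, and since $X$ is irreducible this support is empty or all of $X$. So the object $\mathcal{R}(Y)$ you wish to put in degree $0$ simply does not exist in $\SmGp/X$, and the two-term complex $[\mathcal{N}(Y) \to \mathcal{R}(Y)]$ you propose cannot be formed there. You hint at the fix in the phrase ``possibly enlarging $\mathcal{N}(Y)$ along the way,'' and that hint is exactly where the paper lives, but the hint is not a construction: one has to say \emph{how} to enlarge, prove that the enlarged pushforward lands in $\SmGp/X$, and then prove independence of the enlargement.

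The paper's mechanism is to choose a finite \'etale covering $V$ of $U$ such that $Y\times_{U}V$ extends to a lattice over the normalization of $X$ in $V$ (a ``good covering''), embed $Y$ in the Weil restriction $Y_{(V)} = \Res_{V/U}(Y\times_{U}V)$, and replace $M$ by the quasi-isomorphic complex $M_{(V)} = [Y_{(V)} \to G_{(V)}]$ with $G_{(V)}$ the pushout of $G$ along $Y\into Y_{(V)}$. The whole point is that $R^{1}j_{*}Y_{(V)} = 0$ (Prop.\ \ref{prop: induced lattice has trivial first push}), so term-wise $j_{*}$ already computes $\tau_{\le 0}Rj_{*}M$ with no skyscraper ever appearing, and both $j_{*}Y_{(V)}$ and $j_{*}G_{(V)}$ are separated smooth group schemes (Prop.\ \ref{prop: push of lattice is representable}, \ref{prop: push of lattice by semiabelian is representable}). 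This simultaneously dissolves your second worry, the one about carving out $\Ker(R^{1}j_{*}Y\to R^{1}j_{*}G)$: the cokernel $Y_{(/V)}$ is absorbed into $G_{(V)}$ from the start, so the kernel condition is built into $j_{*}G_{(V)}$ rather than imposed afterward. Finally, your plan gives no mechanism for well-definedness in $D^{b}(\SmGp/X)$ given the choices you make (resolution, chain model, enlargement); the paper handles this by showing that two $U$-maps $W\rightrightarrows V$ between good coverings induce chain-homotopic maps $M_{(V)}\rightrightarrows M_{(W)}$ (Prop.\ \ref{prop: functoriality of Neron models}), which is a short but essential argument you would also need some version of.
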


This means that the complex of sheaves $\tau_{\le 0} R j_{\ast} M$ is
represented by a complex of separated smooth group schemes over $X$,
which is unique up to quasi-isomorphism
and behaves functorially in $M$ in the derived category.
The construction of $\mathcal{N}(M)$ for $M = [Y \to G] \in \mathcal{M}_{U}$ needs, as auxiliary data,
a finite \'etale covering $V$ of $U$
such that $Y \times_{U} V$ extends to a lattice over the normalization of $X$ in $V$
(which means that $V$ kills ramification of $Y$ along $X \setminus U$).
To each such choice of $V$,
we assign a certain canonical complex $\mathcal{N}(M, V)$ in $\SmGp / X$ with terms in degrees $-1$ and $0$
representing $\tau_{\le 0} R j_{\ast} M$.
As an object of $D^{b}(\SmGp / X)$, this complex does not depend on $V$.

Actually this canonical complex $\mathcal{N}(M, V)$ is more useful
than the object $\mathcal{N}(M)$ of $D^{b}(\SmGp / X)$ that it represents,
since functoriality in triangulated categories is difficult to use for some purposes.
For example, the mapping cone of the morphism $\mathcal{N}_{0}(M) \to \mathcal{N}(M)$
mentioned below will be constructed using this actual complex representative.
Nonetheless, the well-definedness of $\mathcal{N}(M)$ makes sense only in $D^{b}(\SmGp / X)$.

The representability of the terms of $\mathcal{N}(M, V)$ is important;
otherwise we would not have much control of the fiber of $\mathcal{N}(M)$
(and $\mathcal{P}(M)$ mentioned below) over $Z$
(see Prop.\ \ref{prop: premorphisms preserving finite products} \eqref{item: derived pull of representables}
and Prop.\ \ref{prop: derived pull of smooth groups}).
Just having a complex of sheaves representing $\tau_{\le 0} R j_{\ast} M$ is not sufficient in this regard.

Next, to state our duality results,
assume that $U \subset X$ is open
(so either $U \ne \Spec K$ or $X$ has finitely many points)
with reduced complement $i \colon Z \into X$.
For $M = [Y \to G] \in \mathcal{M}_{U}$,
let $\mathcal{Y}_{0}$ be the extension by zero of $Y$ along $j \colon U \into X$
and $\mathcal{G}_{0}$ the maximal open subgroup scheme of the N\'eron model of $G$ along $j$
with connected fibers.
(We do not use the more standard notation $\mathcal{G}^{0}$,
in order to avoid confusion with the zeroth term of a complex,
in this highly derived categorical paper.)
Define the \emph{connected N\'eron model} of $M$ by
$\mathcal{N}_{0}(M) = [\mathcal{Y}_{0} \to \mathcal{G}_{0}] \in \SmGp / X$.
We will define a canonical morphism $\mathcal{N}_{0}(M) \to \mathcal{N}(M)$ in $D^{b}(\SmGp / X)$.
There is a canonical mapping cone of this morphism.
This cone is supported on $Z$ (up to quasi-isomorphism).
The fiber over $Z$ of this cone is a complex of \'etale group schemes in degrees $-1$ and $0$
with finitely generated groups of geometric points.
Denote this complex of \'etale group schemes over $Z$ by $\mathcal{P}(M) \in D^{b}(Z_{\et})$
and call it the \emph{N\'eron component complex} of $M$.
We have a canonical distinguished triangle
	\[
			\mathcal{N}_{0}(M)
		\to
			\mathcal{N}(M)
		\to
			i_{\ast} \mathcal{P}(M)
	\]
in $D^{b}(X_{\sm})$.
Let $M^{\vee} \in \mathcal{M}_{U}$ be the dual $1$-motive of $M$
(\cite[(10.2.12), (10.2.13)]{Del74}).
Denote the derived tensor product by $\tensor^{L}$, shift of complexes by $[1]$
and the derived sheaf-Hom functor by $R \sheafhom$.
We will define canonical morphisms
	\begin{gather*}
				\mathcal{N}_{0}(M^{\vee}) \tensor^{L} \mathcal{N}(M)
			\to
				\Gm[1],
		\\
				\mathcal{P}(M^{\vee}) \tensor^{L} \mathcal{P}(M)
			\to
				\Z[1]
	\end{gather*}
in $D(X_{\sm})$, $D(Z_{\et})$, respectively.
They induce morphisms
	\begin{gather*}
				\zeta_{M}
			\colon
				\mathcal{N}(M^{\vee})
			\to
				\tau_{\le 0}
				R \sheafhom_{X_{\sm}}(\mathcal{N}_{0}(M), \Gm[1]),
		\\
				\zeta_{0 M}
			\colon
				\mathcal{N}_{0}(M^{\vee})
			\to
				\tau_{\le 0}
				R \sheafhom_{X_{\sm}}(\mathcal{N}(M), \Gm[1]),
		\\
				\eta_{M}
			\colon
				\mathcal{P}(M^{\vee})
			\to
				R \sheafhom_{Z_{\sm}}(\mathcal{P}(M), \Z[1]).
	\end{gather*}
If the residue field of $Z$ at a point $x \in Z$ has characteristic $p \ge 0$,
then by the residual characteristic exponent of $Z$ at $x$,
we mean $p$ if $p > 0$ and $1$ if $p = 0$.

\begin{MainThm} \label{main: duality} \BetweenThmAndList
	\begin{enumerate}
		\item \label{main: item: trivial duality}
			$\zeta_{M}$ is an isomorphism.
		\item \label{main: item: equivalence}
			$\zeta_{0 M}$ and $\zeta_{0 M^{\vee}}$ are both isomorphisms
			if and only if $\eta_{M}$ is an isomorphism
			if and only if $\eta_{M^{\vee}}$ is an isomorphism.
		\item \label{main: item: semistable case}
			$\eta_{M}$ is an isomorphism if $M$ is semistable
			(meaning that $Y$ is unramified and $G$ is semistable along $j$).
		\item \label{main: item: after inverting res char}
			$\eta_{M} \tensor \Z[1 / n]$ is an isomorphism,
			where $n$ is the product of the residual characteristic exponents of $Z$.
		\item \label{main: item: perfect residue field case}
			$\eta_{M}$ is an isomorphism
			if the residue fields of $Z$ are perfect.
	\end{enumerate}
\end{MainThm}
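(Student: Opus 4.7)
The overall plan is to treat part (1) as essentially tautological, deduce part (2) by dualizing the defining triangle of $\mathcal{P}$, and reduce parts (3)--(5) to classical duality theorems by d\'evissage along the weight filtration of $M = [Y \to G]$. For (1), since $\mathcal{N}(M^\vee)$ represents $\tau_{\le 0} Rj_{\ast} M^\vee$ by Theorem A and $\mathcal{N}_0(M)$ restricts to $M$ on $U$, I would combine the adjunction for $j^{\ast}$ and $Rj_{\ast}$ with Deligne's Cartier duality $R\sheafhom_{U_{\sm}}(M, \Gm[1]) \simeq M^\vee$ on $U$ to identify $\zeta_M$ with a tautological isomorphism, after chasing the pairing $\mathcal{N}_0(M^\vee) \dtensor \mathcal{N}(M) \to \Gm[1]$ through this adjunction.

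For (2), I would apply $R\sheafhom_{X_{\sm}}(-, \Gm[1])$ to the triangle $\mathcal{N}_0(M) \to \mathcal{N}(M) \to i_{\ast} \mathcal{P}(M)$. The resulting third term $R\sheafhom(i_{\ast}\mathcal{P}(M), \Gm[2])$ should be identified with $i_{\ast} R\sheafhom_{Z_{\et}}(\mathcal{P}(M), \Z[1])$ via a local duality computation $Ri^{!} \Gm[1] \simeq \Z$ on the smooth site over $Z$, in the spirit of Xarles. Placing the resulting (rotated and truncated) triangle next to that for $M^\vee$ in a $3 \times 3$ diagram whose three vertical maps are $\zeta_{0M}$, $\zeta_M$, and $i_{\ast}\eta_M$, part (1) provides that the middle column is an isomorphism, so the outer two are simultaneously isomorphisms by the triangulated five lemma. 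The equivalence with $\zeta_{0M^\vee}$ and $\eta_{M^\vee}$ follows by running the same argument with $M$ and $M^\vee$ swapped, combined with the biduality $(M^\vee)^\vee \simeq M$.

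For (3)--(5), by (2) it suffices to show that $\eta_M$ is an isomorphism. My approach is d\'evissage along the weight sequences $0 \to T \to G \to A \to 0$ and $0 \to G \to M \to Y[1] \to 0$, which should induce a filtration on $\mathcal{P}(M)$ whose graded pieces are the N\'eron component complexes of $Y$, $T$, and $A$. Grothendieck's duality conjecture then handles the abelian piece---Werner's theorem in the semistable case for (3), Bertapelle after inverting residual characteristics for (4), and the author's earlier theorem over perfect residue fields for (5)---while Xarles and Bertapelle--Gonz\'alez-Avil\'es supply the torus and lattice pieces under the corresponding hypotheses. In the semistable case (3) one additionally uses that $R^{1}j_{\ast}Y = 0$, so the lattice contribution degenerates completely.

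The principal obstacle I foresee is matching the intrinsic pairing $\mathcal{P}(M^\vee) \dtensor \mathcal{P}(M) \to \Z[1]$ constructed in the text with the d\'evissage pairing produced graded piece by graded piece. Because $\mathcal{N}(M)$ is built from $\tau_{\le 0} Rj_{\ast}$ and because the wild ramification contribution $R^{1}j_{\ast}Y$ enters only through the auxiliary cover $V$ used to define the representative $\mathcal{N}(M, V)$, this compatibility is not formal, and the careful construction from the proof of Theorem A will be indispensable here. The wild ramification contribution lives only at residual characteristic points, which is exactly why (5) must assume perfect residue fields and (4) must invert the residual characteristics, whereas semistability in (3) eliminates this contribution entirely.
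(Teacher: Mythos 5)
Your plan for part~(1) is sound and essentially matches the paper, and your d\'evissage argument for part~(3) is also correct in spirit: semistability of $G$ and unramifiedness of $T$ and $Y$ kill the obstructions $R^1 j_\ast T$ and $R^1 j_\ast Y$, the weight filtration then passes cleanly to the N\'eron models, and Werner handles the abelian piece. Two of your remaining steps, however, contain genuine gaps.

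In part~(2) your appeal to the triangulated five lemma overshoots. The morphism of distinguished triangles relating $\zeta_{0M}$, $\zeta_M$ and $\eta_M$ maps into the \emph{untruncated} complexes $R\sheafhom_{X_{\sm}}(\mathcal{N}(M),\Gm[1])$, etc., whereas $\zeta_{0M}$ and $\zeta_M$ are by definition maps into the $\tau_{\le 0}$-truncations; the truncated bottom row is not itself a distinguished triangle. Working at the level of the long exact sequence of cohomology objects, the five lemma does show that if $\eta_M$ is an isomorphism then so is $\zeta_{0M}$ (the source of $\zeta_{0M}$ lives in degrees $\le 0$, so only degrees $\le 0$ matter). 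But in the converse direction one is blocked at $H^0$: one of the five terms one needs is $H^1$ of the degree-$\le 0$ complex $\mathcal{N}_0(M^{\vee})$, which vanishes, while $H^1$ of the untruncated $R\sheafhom_{X_{\sm}}(\mathcal{N}(M),\Gm[1])$ need not. So knowing that $\zeta_{0M}$ is an isomorphism yields only that $H^0 \eta_M$ is \emph{injective}, not bijective. The paper closes the gap by exploiting \emph{both} hypotheses $\zeta_{0M}$ and $\zeta_{0M^\vee}$, combining the symmetry $\eta_M^{\LDual}=\eta_{M^{\vee}}$ with a finite-group argument: the two mutually Pontryagin-dual injections of the torsion parts of $H^0\mathcal{P}(M)$ and $H^0\mathcal{P}(M^{\vee})$ are forced to be isomorphisms. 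Without this extra step, the equivalence in~(2) is not established, and indeed a remark in the paper (citing Bosch and Lorenzini) points out that an unconditional injection of component groups would not by itself imply the duality.

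The proposed d\'evissage for parts~(4) and~(5) does not go through, and this is the more serious flaw. When $T$ or $Y$ is ramified along $Z$, the sequences $0\to T\to G\to A\to 0$ and $0\to G\to M\to Y[1]\to 0$ do \emph{not} induce short exact sequences of N\'eron (lft) models or of their identity components; the defect is measured by $R^1 j_\ast T$ and $R^1 j_\ast Y$, which are generally nonzero. Consequently $\mathcal{P}(M)$ does not acquire a filtration with graded pieces $\mathcal{P}(Y[1])$, $\mathcal{P}(T)$, $\mathcal{P}(A)$, and there is no way to directly patch together the known results of Werner/Bertapelle, Xarles, and Bertapelle--Gonz\'alez-Avil\'es. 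What the paper actually does is quite different. It first proves only a \emph{rational} version of~(4) ($\eta_M\tensor\Q$ an isomorphism) by a delicate group-theoretic analysis of component groups, not a naive filtration, and then derives the full statement~(4) by introducing $\ell$-adic realizations $\widehat{\mathcal{N}}(M)$, $\widehat{\mathcal{N}}_0(M)$, $\widehat{\mathcal{P}}(M)$ as constructible $\Z_\ell$-complexes and invoking Verdier duality together with the six-functor formalism on $X_{\pro\et}$ (the rational statement is needed at the end to distinguish the abelian and lattice weights, which derived $\ell$-adic completion otherwise conflates). For~(5) the paper does not d\'evissage at all: it applies the duality theorem of \cite{Suz14} for cohomology of local fields with perfect residue field with coefficients in the full 1-motive $M$ (not piece by piece), reformulates it in terms of $R\alg{\Gamma}(\Order_K, \mathcal{N}(M))$ and $R\alg{\Gamma}_x(\Order_K, \mathcal{N}_0(M))$ via the localization triangle, and extracts the statement about $\eta_M$ from a $3\times 3$ compatibility involving Serre duals over the ind-rational pro-\'etale site. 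You will need a fundamentally different strategy for~(4) and~(5) than d\'evissage along the weight filtration.
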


\eqref{main: item: trivial duality} is more or less trivial
(akin to the adjunction $j^{\ast} \leftrightarrow j_{\ast}$ or
$j_{!} \leftrightarrow j^{\ast}$).
Therefore the real content of duality is the three equivalent statements
in \eqref{main: item: equivalence},
which is a generalization of Grothendieck's duality conjecture.
\eqref{main: item: semistable case} easily reduces to
Grothendieck's duality conjecture for semistable abelian varieties proved in \cite{Wer97}.
For \eqref{main: item: after inverting res char},
we define $l$-adic realizations of $\mathcal{N}(M)$ and $\mathcal{N}_{0}(M)$ (resp.\ $\mathcal{P}(M)$)
as constructible complexes of sheaves of $\Z_{l}$-modules on $X$ (resp.\ $Z$),
where $l$ is a prime invertible on $Z$,
and use the six operations formalism (in particular, duality) in $l$-adic derived categories.
\eqref{main: item: perfect residue field case} generalizes the result of \cite{Suz14} for abelian varieties.
We will prove \eqref{main: item: perfect residue field case}
using the duality for cohomology of local fields with perfect residue fields with coefficients in $M$
that is established in \cite[Thm.\ (9.1)]{Suz14}.


\subsection{Remarks and organization}
\label{sec: Remarks and organization}

Here are some remarks.
If $U = \Spec K$ and $X$ has infinitely many points,
then $\mathcal{N}_{0}(M) = [\mathcal{Y}_{0} \to \mathcal{G}_{0}]$ still makes sense;
see Def.\ \ref{def: extension by zero of etale group}
for the definition of the extension by zero $\mathcal{Y}_{0}$ in this setting.
But $\mathcal{Y}_{0}$ is not locally of finite type over $X$
since $\Spec K$ is not.
If one wants a duality in this case,
one should first extend $M$ to a $1$-motive over some dense open subscheme $V$ of $X$
and then consider the above duality for the morphism $V \into X$.

The target category $D^{b}(\SmGp / X)$ of the N\'eron model functor $\mathcal{N}$ is certainly not the best possible one.
In the current form, we cannot consider transitivity of N\'eron model functors
along two dense open subschemes $V \into U \into X$.
Also, an arbitrary object of $D^{b}(\SmGp / X)$ does not seem to have any meaningful notion of dual
such that the double dual recovers the original object.
For this reason, we do not attempt to lift the morphisms $\zeta_{M}$ and $\zeta_{0 M}$ to $D^{b}(\SmGp / X)$.
The correct target (resp.\ source) category might be a suitably defined (non-derived) category of
``constructible'' or even ``perverse'' $1$-motives over $X$ (resp.\ $U$),
and the functors $\mathcal{N}$ and $\mathcal{N}_{0}$ might be viewed as
$j_{\ast}$ and $j_{!}$ between such categories.

Other kinds of realizations of N\'eron models should be explored.
Among such would be the universal one after inverting the residual characteristics,
i.e.\ as mixed \'etale motives over $X$ in the sense of Cisinski-D\'eglise \cite{CD16}.
The answers to this and the previous questions might exist
along the lines of the work of Pepin Lehalleur \cite{PL15}.

The above duality results are essentially of local nature, reduced to each point of $Z$.
Global duality as studied in \cite[III, \S 3, 9, 11]{Mil06} and \cite{Suz18}
should be extended to N\'eron models of $1$-motives.

We will see in Prop.\ \ref{prop: Neron model from relative curve}
an example where the N\'eron model of a $1$-motive arises geometrically
from a relative curve over $X$ with an \'etale local section over $U$.
This suggests that N\'eron models of $1$-motives might have some role
in the study of rational points of curves over $K$ valued in ramified extensions of $K$
and the index problem for curves.

Now the organization of the paper is as follows.
In \S \ref{sec: Definition of Neron models},
after collecting some facts about representability of sheaves on the smooth site,
we define N\'eron models and connected N\'eron models,
thereby proving Thm.\ \ref{main: definition of Neron models}.
In \S \ref{sec: Component complexes},
we first study some generalities on morphisms of topologies without exact pullback functors,
such as the one $i \colon Z_{\sm} \to X_{\sm}$ and
the change of topologies $X_{\fppf} \to X_{\sm}$.
Then we define N\'eron component complexes.
In \S \ref{sec: Duals and duality pairings of Neron models},
we define the duality morphisms $\zeta_{M}$, $\zeta_{0 M}$ and $\eta_{M}$.
We prove Thm.\ \ref{main: duality}
\eqref{main: item: trivial duality},
\eqref{main: item: equivalence} and
\eqref{main: item: semistable case}.
We also prove a weaker version of \eqref{main: item: after inverting res char},
namely that $\eta_{M} \tensor \Q$ is an isomorphism,
by some arguments on connected-\'etale sequences.
In \S \ref{sec: l-adic realization and perfectness for l-part},
we define $l$-adic realizations and prove
Thm.\ \ref{main: duality} \eqref{main: item: after inverting res char}.
The weaker version of \eqref{main: item: after inverting res char} proved earlier
is necessary for this
since derived $l$-adic completions of semi-abelian varieties and lattices are
both $\Z_{l}$-lattices up to shift and destroy their distinction.
In \S \ref{sec: Duality for cohomology of Neron models and perfectness for p-part},
we prove Thm.\ \ref{main: duality} \eqref{main: item: perfect residue field case}.
We quickly recall the formalism of the ind-rational pro-\'etale site from \cite{Suz14}, \cite{Suz18}
and the duality result \cite[Thm.\ (9.1)]{Suz14}
on cohomology of local fields with perfect residue field with coefficients in $M$.
From this, we deduce its version for cohomology of the ring of integers of such a local field
with coefficients in $\mathcal{N}(M)$,
from which \eqref{main: item: perfect residue field case} follows.

\begin{Ack}
	The author thanks Kazuya Kato and Qing Liu for having helpful discussions.
	The author is also grateful to Cristian D.\ Gonz\'alez-Avil\'es
	for sharing his earlier research proposal containing his plan on developing
	Xavier Xarles's 1996 attempt for N\'eron models of $1$-motives,
	and to the referee for careful comments.
\end{Ack}

\begin{Notation}
	The categories of sets and abelian groups are denoted by
	$\Set$ and $\Ab$, respectively.
	All groups, group schemes and sheaves of groups are assumed commutative.
	For an additive category $\mathcal{A}$,
	the category of complexes in $\mathcal{A}$ in cohomological grading is denoted by $\Ch(\mathcal{A})$.
	Its full subcategories of bounded below, bounded above and bounded complexes are denoted by
	$\Ch^{+}(\mathcal{A})$, $\Ch^{-}(\mathcal{A})$ and $\Ch^{b}(\mathcal{A})$, respectively.
	If $A \to B$ is a morphism in $\Ch(\mathcal{A})$,
	then its mapping cone is denoted by $[A \to B]$.
	The homotopy category of $\Ch^{\bullet}(\mathcal{A})$ for $\bullet = +$, $-$, $b$ or (blank)
	is denoted by $K^{\bullet}(\mathcal{A})$.
	If $\mathcal{A}$ is abelian, then its derived category is denoted by
	$D^{\bullet}(\mathcal{A})$.
	The canonical truncation functors for $D(\mathcal{A})$ in degrees $\le n$ and $\ge n$ are denoted by
	$\tau_{\le n}$ and $\tau_{\ge n}$, respectively.
	If we say $A \to B \to C$ is a distinguished triangle in a triangulated category,
	we implicitly assume that a morphism $C \to A[1]$ to the shift of $A$ is given,
	and the triangle $A \to B \to C \to A[1]$ is distinguished.
	If $A \to B$ is a morphism in a triangulated category
	together with a certain canonical choice of a mapping cone,
	then this mapping cone is still denote by $[A \to B]$ unless confusion may occur.
	For a site $S$, the categories of sheaves of sets and abelian groups are denoted by
	$\Set(S)$ and $\Ab(S)$.
	We denote $\Ch^{\bullet}(S) = \Ch^{\bullet}(\Ab(S))$
	and use the notation $K^{\bullet}(S)$, $D^{\bullet}(S)$ similarly.
	The Hom and sheaf-Hom functors for $\Ab(S)$ are denoted by
	$\Hom_{S}$ and $\sheafhom_{S}$, respectively.
	Their right derived functors are denoted by
	$\Ext_{S}^{n}$, $R \Hom_{S}$ and $\sheafext_{S}^{n}$, $R \sheafhom_{S}$, respectively.
	The tensor product functor $\tensor$ is over the ring $\Z$
	(or, on some site, the sheaf of rings $\Z$).
	Its left derived functor is denoted by $\tensor^{L}$.
	For a morphism of sites $f \colon S' \to S$,
	we denote by $f^{\ast}$ the pullback functor for sheaves of abelian groups.
\end{Notation}


\section{Definition of N\'eron models}
\label{sec: Definition of Neron models}

For a scheme $X$,
we denote the smooth site of $X$ by $X_{\sm}$.
It is the category of smooth $X$-schemes with $X$-scheme morphisms
endowed with the \'etale (or equivalently, smooth) topology.
We denote the category of separated smooth group schemes
(commutative, as assumed throughout the paper) over $X$ by $\SmGp / X$
and the category of quasi-separated smooth (commutative!) group algebraic spaces over $X$ by $\SmGp' / X$.
They are additive categories.
The full subcategory of $\SmGp / X$ (resp.\ $\SmGp' / X$) consisting of objects \'etale over $X$
are denoted by $\EtGp / X$ (resp.\ $\EtGp' / X$).

By a Dedekind scheme, we mean a noetherian regular scheme of dimension $\le 1$.
A separated smooth group algebraic space over a Dedekind scheme is a scheme
by \cite[Thm.\ (3.3.1)]{Ray70a}.
Hence $\SmGp / X \subset \SmGp' / X$ if $X$ is Dedekind.

\begin{Prop} \label{prop: morphism between smooth sites of U and X}
	Let $X$ be an irreducible Dedekind scheme with function field $K$.
	Let $U$ be either a dense open subscheme of $X$ or equal to $\Spec K$.
	Then the inclusion morphism $j \colon U \into X$ induces a morphism of sites
	$j \colon U_{\sm} \to X_{\sm}$
	(defined by the functor sending a smooth $X$-scheme $X'$ to $X' \times_{X} U$).
\end{Prop}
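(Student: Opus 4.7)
The plan is to exhibit $u \colon X_{\sm} \to U_{\sm}$, $X' \mapsto X' \times_{X} U$, as the continuous functor underlying the morphism of sites $j \colon U_{\sm} \to X_{\sm}$, by checking the three standard axioms: well-definedness of $u$, preservation of coverings, and compatibility with the fiber products appearing in those coverings.

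Well-definedness I would observe is immediate: for any smooth $X$-scheme $X'$ the base change $X' \times_{X} U$ exists and is smooth over $U$, since smoothness is stable under base change, and an $X$-morphism $X' \to X''$ base changes to a $U$-morphism $u(X') \to u(X'')$. Preservation of coverings is likewise immediate: a jointly surjective family of \'etale morphisms $\{X_i' \to X'\}$ base changes to a family $\{X_i' \times_{X} U \to X' \times_{X} U\}$ that is again jointly surjective and \'etale, since both surjectivity and \'etaleness are preserved by base change. Finally, for an \'etale morphism $X_i' \to X'$ and any morphism $X'' \to X'$ in $X_{\sm}$, associativity of fiber products yields a canonical isomorphism
\[
	(X_i' \times_{X'} X'') \times_{X} U
	\;\cong\;
	(X_i' \times_{X} U) \times_{X' \times_{X} U} (X'' \times_{X} U),
\]
so $u$ commutes with the fiber products required by the pretopology axioms. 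By standard site theory (e.g.\ SGA 4, Exp.\ III), this suffices to conclude that $u$ is continuous and that the induced pullback on sheaves of abelian groups is exact, so that $j$ defines a morphism of sites $U_{\sm} \to X_{\sm}$ as claimed.

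There is no real obstacle here; the proposition is a routine verification. The only mild subtlety is that $X_{\sm}$ does not admit all finite limits, since the fiber product of two smooth $X$-schemes over a third need not be smooth over $X$, but this does not affect the argument: one only needs compatibility of $u$ with base changes along \'etale morphisms, and these land back in $X_{\sm}$. The case $U = \Spec K$ is treated on an equal footing, as $U_{\sm}$ is simply the smooth site of the scheme $\Spec K$ and the construction of $u$ does not require $U$ to be open in $X$.
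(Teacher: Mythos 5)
Your verification establishes that $u\colon X' \mapsto X' \times_X U$ is a \emph{continuous} functor, i.e.\ a premorphism of sites in the paper's terminology: it is well-defined, preserves coverings, and commutes with the fiber products occurring in coverings. This is enough to get an adjoint pair $(j^{\ast}, j_{\ast})$ between the sheaf categories. But the claim at the end, that ``standard site theory'' now yields exactness of $j^{\ast}$, is exactly where the proof has a gap. Continuity alone does not make the pullback exact; a morphism of sites requires exactness as an extra condition. The standard sufficient criterion (SGA~4, Stacks Project Tag~00X6) asks that the underlying category of $X_{\sm}$ have all finite limits and that $u$ preserve them. You yourself note that $X_{\sm}$ does not have all finite limits (no equalizers in general, since closed subschemes of smooth schemes need not be smooth), and then assert that this ``does not affect the argument'' -- but it does: with the hypothesis of the criterion failing, the conclusion of exactness is simply not available for free, and it is precisely the content that the proposition is trying to establish.

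The paper's proof isolates this as the only non-trivial point and handles the two cases of $U$ separately. If $U$ is open in $X$, the functor $j^{\ast\set}$ is a restriction to a localized site, which is visibly exact. If $U = \Spec K$ (which is not in general locally of finite type over $X$), the paper reduces to $X = \Spec A$ affine, expresses $j^{\ast\set} F$ as the sheafification of a filtered colimit $\dirlim F(A')$ over smooth $A$-algebras $A'$ mapping to a given smooth $K$-algebra $B$, using that $B$ is a filtered colimit of smooth $A$-algebras, and concludes by exactness of filtered colimits and sheafification. Your proof would need an argument of this kind to close the gap; as written, the two cases of $U$ are not actually ``on an equal footing'' in the way you claim, since the $U = \Spec K$ case genuinely requires the filtered-colimit computation.
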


\begin{proof}
	The only non-trivial part is the exactness of the pullback functor
	$j^{\ast \set} \colon \Set(X_{\sm}) \to \Set(U_{\sm})$
	for sheaves of sets.
	To show this, we may assume that $X = \Spec A$ is affine.
	If $U$ is open in $X$, then $j^{\ast \set}$ is just the restriction functor, hence exact.
	Assume $U = \Spec K$.
	Let $F \in \Set(X_{\sm})$.
	Then $j^{\ast \set} F$ is the sheafification of the presheaf
	that sends a smooth $K$-algebra $B$ to the direct limit of the sets $F(A')$,
	where $A'$ runs through smooth $A$-algebras with fixed $A$-algebra homomorphisms to $B$.
	The index category for this direct limit is filtered
	since $K$ and hence $B$ are filtered direct limits of smooth $A$-algebras.
	Since filtered direct limits and sheafification are exact,
	we know that $j^{\ast \set}$ is exact.
\end{proof}

In the rest of this section, assume the following:
\begin{Sit} \label{sit: to consider Neron models, U can be Spec K} \BetweenThmAndList
	\begin{itemize}
		\item
			$X$ is an irreducible Dedekind scheme with function field $K$.
		\item
			$U$ is either a dense open subscheme of $X$ or equal to $\Spec K$.
		\item
			$j \colon U \into X$ is the inclusion morphism.
		\item
			$j \colon U_{\sm} \to X_{\sm}$ is the morphism of sites induced by $j$
			as in Prop.\ \ref{prop: morphism between smooth sites of U and X}.
	\end{itemize}
\end{Sit}

As above, we assume that $X$ is irreducible (and, in particular, non-empty),
so that its function field $K$ makes sense.
A Dedekind scheme is a finite disjoint union of irreducible Dedekind schemes (\cite[1.1]{BLR90}).
The arguments in this paper do not involve with descent problems
that require careful treatment of reducible Dedekind schemes.
Note that $\Spec K \subset X$ is open
if and only if $\Spec K$ is (locally) of finite type over $X$
if and only if $X$ has finitely many points
if and only if $X$ has a finite open covering by local Dedekind schemes.

\begin{Prop} \label{prop: push of lattice is representable}
	Let $Y$ be a (possibly infinite) disjoint union of finite \'etale connected schemes over $U$.
	Then $j_{\ast} Y$ is a separated \'etale scheme over $X$.
\end{Prop}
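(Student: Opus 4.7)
The plan is to reduce to the case of $Y$ being a single connected finite \'etale $U$-scheme $Y_{0}$ and then to represent $j_{\ast} Y_{0}$ by the maximal \'etale-over-$X$ open subscheme of the normalization of $X$ in $Y_{0}$.

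\textbf{Reduction to the connected case.} Let $X' = \Spec A$ with $A$ a connected smooth $X$-algebra. Since $X$ is irreducible and $A$ is smooth (hence flat) over $\Gamma(X, \Order_{X})$, $A$ is a regular domain, and $X' \times_{X} U$ is a nonempty localization of $A$, still integral and connected. A morphism from a connected scheme into a disjoint union factors through a single component, so $(j_{\ast} Y)(X') = \bigsqcup_{i} (j_{\ast} Y_{i})(X')$ on such $X'$. Consequently, once each $j_{\ast} Y_{i}$ is represented by a separated \'etale $X$-scheme $Z_{i}$, the sheaf $j_{\ast} Y$ is represented by $\bigsqcup_{i} Z_{i}$, which is again separated and \'etale over $X$.

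\textbf{Construction for connected $Y_{0}$.} Let $\widetilde{Y_{0}}$ be the normalization of $X$ in $Y_{0}$; by Krull-Akizuki this is a finite $X$-scheme, and its restriction to $U$ recovers $Y_{0}$ (which is itself normal). Let $Z_{0} \subset \widetilde{Y_{0}}$ be the maximal open subscheme \'etale over $X$. Then $Z_{0}$ is separated \'etale over $X$, and I claim it represents $j_{\ast} Y_{0}$.

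\textbf{Verification.} For $X' = \Spec A$ as above, I first check $\widetilde{Y_{0}}(X') = (j_{\ast} Y_{0})(X')$: a morphism $X' \to \widetilde{Y_{0}}$ is a $\Gamma(X, \Order_{X})$-algebra map from the integral-over-$\Gamma(X, \Order_{X})$ coordinate ring of $\widetilde{Y_{0}}$ to $A$, and by normality of the smooth ring $A$ these correspond bijectively to $K$-algebra maps from the function field $L_{0}$ of $Y_{0}$ into the generic fiber $A \otimes_{X} K$, i.e.\ to morphisms $X' \times_{X} U \to Y_{0}$. It then suffices to show any such morphism $X' \to \widetilde{Y_{0}}$ factors through $Z_{0}$. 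Let $x' \in X'$ map to $\tilde y \in \widetilde{Y_{0}}$ and $x \in X$. If $x \in U$, then $\widetilde{Y_{0}}$ is \'etale over $X$ at $\tilde y$ by construction. If $x \notin U$, both $x$ and $\tilde y$ are closed and $\Order_{\widetilde{Y_{0}}, \tilde y}$ is a DVR; I must verify that (i) the ramification index $e = 1$ and (ii) $\kappa(\tilde y)/\kappa(x)$ is separable. For (i): a uniformizer $\pi_{X}$ of $\Order_{X, x}$ is part of a regular system of parameters at $x'$ in the smooth local ring $\Order_{X', x'}$, so $\pi_{X} \notin \mathfrak{m}_{x'}^{2}$; if $\pi_{X} = u \pi_{\widetilde{Y_{0}}}^{e}$ with $e \geq 2$, then its image on the right-hand side in $\Order_{X', x'}$ lies in $\mathfrak{m}_{x'}^{2}$, a contradiction. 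For (ii): $\kappa(\tilde y)$ embeds into the local ring of the fiber $X'_{x}$ at $x'$; since $X'_{x}$ is smooth over $\kappa(x)$ (hence geometrically reduced), this local ring remains reduced after base change to $\kappa(x)^{\alge}$, so $\kappa(\tilde y) \otimes_{\kappa(x)} \kappa(x)^{\alge}$ embeds into a reduced ring and is itself reduced, which is the criterion for $\kappa(\tilde y)/\kappa(x)$ to be separable.

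\textbf{Main obstacle.} The most delicate point is (ii), the separability of the residue extension when $\kappa(x)$ may be imperfect. The argument hinges on geometric reducedness of smooth fibers, which survives base change to $\kappa(x)^{\alge}$ and forces the a priori merely unramified extension $\kappa(\tilde y)/\kappa(x)$ to be genuinely separable.
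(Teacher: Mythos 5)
Your proof is correct and follows the same strategy as the paper's: reduce to connected $Y$ by commuting $j_{\ast}$ with disjoint unions, take the normalization $\closure{Y}$ of $X$ in $Y$, and identify $j_{\ast}Y$ with the maximal open subscheme of $\closure{Y}$ that is \'etale over $X$. Where the paper simply asserts that the extended morphism $X' \to \closure{Y}$ factors through the \'etale locus, you supply the verification via the ramification index and residue-field separability (the latter correctly exploiting the embedding of $\kappa(\tilde y)$ into the geometrically reduced \emph{local ring} of the smooth fiber, not merely into its residue field), which is exactly the step the paper leaves implicit.
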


\begin{proof}
	If $Z$ is a smooth $X$-scheme,
	then any connected component of $Z \times_{X} U$ uniquely extends
	to a connected component of $Z$.
	Therefore $j_{\ast}$ as a functor $\Set(U_{\sm}) \to \Set(X_{\sm})$ commutes with disjoint unions.
	Hence we may assume that $Y$ is connected.
	Let $\closure{Y}$ be the normalization of $X$ in $Y$.
	Let $V \subset \closure{Y}$ be the maximal open subscheme \'etale over $X$.
	If $Z$ is a smooth $X$-scheme,
	then any $U$-morphism $Z \times_{X} U \to Y$ uniquely extends
	to an $X$-morphism $Z \to \closure{Y}$ since $Z$ is normal.
	This morphism factors through $V$.
	This means that $j_{\ast} Y = V$,
	which is separated \'etale.
\end{proof}

\begin{Prop} \label{prop: kernel from etale is etale}
	Let $Y$ be an \'etale group scheme over $X$
	and $F \in \Ab(X_{\sm})$ a sheaf.
	Let $\varphi \colon Y \to F$ be any morphism in $\Ab(X_{\sm})$
	and $\Ker(\varphi) \in \Ab(X_{\sm})$ its kernel.
	Then $\Ker(\varphi)$ is an open subscheme of $Y$ and,
	in particular, an \'etale $X$-scheme.
\end{Prop}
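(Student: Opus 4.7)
The plan is to identify $\Ker(\varphi)$ with an explicitly constructed open subscheme of $Y$. Viewing $\varphi$ via Yoneda as a section of $F$ over $Y$, I consider the collection of open subschemes $V \subseteq Y$ on which $\varphi|_{V} = 0$. This collection is closed under arbitrary unions: if $V = \bigcup V_{i}$ with $\varphi|_{V_{i}} = 0$ for each $i$, then $\{V_{i} \to V\}$ is a Zariski (hence étale) covering, and the sheaf property of $F$ forces $\varphi|_{V} = 0$. Hence there is a largest open subscheme $Y_{0} \subseteq Y$ with $\varphi|_{Y_{0}} = 0$, and $Y_{0}$ is étale over $X$ since $Y$ is. I claim that $Y_{0}$ represents $\Ker(\varphi)$.

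One inclusion is immediate: for $T \in X_{\sm}$ and $f \colon T \to Y$ factoring through $Y_{0}$, the pullback $f^{\ast}\varphi$ vanishes, so $f \in \Ker(\varphi)(T)$. For the converse, the \emph{key technical point} is that any $X$-morphism $f \colon T \to Y$ from a smooth $X$-scheme $T$ to the étale $X$-scheme $Y$ is automatically smooth. One clean way to see this is via the graph $\Gamma_{f} \colon T \to T \times_{X} Y$: this is a section of the étale projection $\pi_{1} \colon T \times_{X} Y \to T$, hence an open immersion, and its composition with the smooth projection $\pi_{2} \colon T \times_{X} Y \to Y$ exhibits $f$ as smooth.

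Once $f$ is known to be smooth, its image $f(T) \subseteq Y$ is open and the surjection $T \onto f(T)$ is a smooth covering in $X_{\sm}$. The assumption $f^{\ast}\varphi = 0$ together with the sheaf axiom for $F$ applied to this covering then forces $\varphi|_{f(T)} = 0$; by maximality of $Y_{0}$ we conclude $f(T) \subseteq Y_{0}$, so $f$ factors through $Y_{0}$. The main (and mild) obstacle is verifying the automatic smoothness of $f$; once that is in hand, the rest reduces to the sheaf axiom and the defining property of $Y_{0}$.
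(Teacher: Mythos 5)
Your proof is correct and follows essentially the same approach as the paper: define the maximal open subscheme $Y_{0} \subset Y$ on which $\varphi$ vanishes, then show any $f \colon T \to Y$ from a smooth $X$-scheme with $f^{\ast}\varphi = 0$ factors through $Y_{0}$ by observing that $f$ is smooth (hence open) onto its image. The paper asserts this last structural fact tersely; you supply a clean justification via the graph factorization $T \xrightarrow{\Gamma_f} T \times_{X} Y \xrightarrow{\pi_2} Y$, which is a nice addition but not a different route.
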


\begin{proof}
	Let $N$ be the union of the open subschemes of $Y$ that map to zero in $F$.
	Then $N$ itself maps to zero in $F$.
	Any $X$-morphism $Z \to Y$ from a quasi-compact smooth $X$-scheme $Z$
	is a faithfully flat smooth morphism followed by an open immersion.
	Hence if $Z$ maps to zero in $F$, then it factors through $N$.
	Thus $N = \ker(\varphi)$.
\end{proof}

\begin{Prop} \label{prop: extension of smooth by smooth is smooth}
	Let $G_{1}, G_{2} \in \SmGp' / X$.
	Then any extension $G_{3}$ of $G_{1}$ by $G_{2}$ in $\Ab(X_{\sm})$
	is in $\SmGp' / X$.
	If $G_{1}, G_{2} \in \SmGp / X$, then $G_{3} \in \SmGp / X$.
\end{Prop}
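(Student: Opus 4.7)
The plan is to realize $G_{3} \to G_{1}$ as a $G_{2}$-torsor for the \'etale topology on $X_{\sm}$ and then descend representability and smoothness from an \'etale-local trivialization. The exact sequence $0 \to G_{2} \to G_{3} \to G_{1} \to 0$ in $\Ab(X_{\sm})$ makes $G_{3} \to G_{1}$ an epimorphism of sheaves, so there exists an \'etale surjection $U \onto G_{1}$ from a smooth algebraic space $U$ over $X$ admitting a lift $s \colon U \to G_{3}$. The translation map $(g, u) \mapsto (s(u) + g, u)$ then gives an isomorphism of sheaves $G_{2} \times_{X} U \isomto G_{3} \times_{G_{1}} U$.

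Given this local trivialization, \'etale descent for algebraic spaces takes over. The space $G_{2} \times_{X} U$ is smooth and quasi-separated over $X$, and the equivalence relation
\[
	(G_{2} \times_{X} U) \times_{G_{3}} (G_{2} \times_{X} U)
		\cong
	G_{2} \times_{X} (U \times_{G_{1}} U)
\]
is an algebraic space with \'etale structure maps to $G_{2} \times_{X} U$. Standard descent then realizes $G_{3}$ as a smooth algebraic space over $G_{1}$, hence over $X$; its sheaf-theoretic group law endows it with the structure of a smooth commutative group algebraic space. This descent step is the main obstacle of the argument: it is where one has to verify that the quotient sheaf $G_{3}$ is genuinely representable.

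To conclude, I would use the identity section to transfer separation properties. The section $e_{G_{3}} \colon X \to G_{3}$ factors as $X \xrightarrow{e_{G_{2}}} G_{2} \to G_{3}$, and $G_{2} \to G_{3}$ is the base change of $e_{G_{1}}$ along $G_{3} \to G_{1}$. Thus if $G_{1}$ is quasi-separated (resp.\ separated), then $e_{G_{1}}$ is a quasi-compact (resp.\ closed) immersion, and so $G_{2} \to G_{3}$ inherits the same property. Combining this with the corresponding property of $e_{G_{2}}$, the identity section of $G_{3}$ is a quasi-compact (resp.\ closed) immersion, giving $G_{3}$ the desired quasi-separatedness (resp.\ separatedness). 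In the separated case, Raynaud's theorem recalled just before Prop.\ \ref{prop: push of lattice is representable} upgrades $G_{3}$ from an algebraic space to a scheme, so $G_{3} \in \SmGp / X$.
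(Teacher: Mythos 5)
Your proof is correct and takes essentially the same route as the paper's. The paper compresses the first part to the single phrase \emph{by descent}; your torsor trivialization $G_{2} \times_{X} U \isomto G_{3} \times_{G_{1}} U$ over an \'etale presentation $U \to G_{1}$, the identification of the kernel pair with $G_{2} \times_{X} (U \times_{G_{1}} U)$, and the transfer of (quasi-)separatedness to $G_{3}$ via factoring $e_{G_{3}}$ through $e_{G_{2}}$ and the base change of $e_{G_{1}}$ spell out precisely what that phrase entails, and the concluding appeal to Raynaud's theorem over a Dedekind base is identical.
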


\begin{proof}
	We know that $G_{3} \in \SmGp' / X$ by descent.
	Since $X$ is Dedekind, we know by \cite[Thm.\ (3.3.1)]{Ray70a} that
	a separated group algebraic space over $X$ is a scheme.
	Hence the second statement follows.
\end{proof}

If $G$ is an extension of an abelian scheme by a torus over $U$ and if $U = \Spec K$,
then $j_{\ast} G$ is represented by the N\'eron (lft) model \cite[10.1/7]{BLR90},
which is in $\SmGp / X$.
If $U \subset X$ is dense open,
we still have $j_{\ast} G \in \SmGp / X$
by the arguments in \cite[10.1/9]{BLR90}.
In this case, $j_{\ast} G$ is the open subgroup scheme
of the N\'eron model of $G \times_{X} K$ along $\Spec K \to X$
with connected fibers over $U$.
We still call $j_{\ast} G$ the N\'eron model of $G$ (along $j \colon U \into X$).

\begin{Prop} \label{prop: push of lattice by semiabelian is representable}
	Let $0 \to H \to G \to Y \to 0$ be an extension of group schemes over $U$
	such that $H$ is an extension of an abelian scheme by a torus and $Y$ is a lattice.
	Then $j_{\ast} G \in \SmGp / X$.
\end{Prop}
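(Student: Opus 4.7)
The plan is to apply $j_\ast$ to the short exact sequence $0 \to H \to G \to Y \to 0$ in $\Ab(U_\sm)$ and examine the resulting long exact sequence, reducing the representability of $j_\ast G$ to the already established representability results for $j_\ast H$, $j_\ast Y$, together with kernels of \'etale sheaves and extensions of smooth group schemes.

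More precisely, I would first note that applying $j_\ast$ yields an exact sequence
\[
    0 \to j_\ast H \to j_\ast G \to j_\ast Y \to R^{1} j_\ast H
\]
in $\Ab(X_\sm)$. By the N\'eron model theorem for semi-abelian schemes recalled just before the proposition, $j_\ast H$ belongs to $\SmGp / X$. By Prop.~\ref{prop: push of lattice is representable}, $j_\ast Y$ is a separated \'etale $X$-scheme. Let $\varphi \colon j_\ast Y \to R^{1} j_\ast H$ denote the connecting morphism and set $N := \Ker(\varphi)$; this is precisely the image of $j_\ast G$ in $j_\ast Y$, so we obtain a short exact sequence
\[
    0 \to j_\ast H \to j_\ast G \to N \to 0
\]
in $\Ab(X_\sm)$.

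Next, I would apply Prop.~\ref{prop: kernel from etale is etale} to $\varphi$ to conclude that $N$ is an open subscheme of $j_\ast Y$, hence itself a separated \'etale $X$-scheme and in particular an object of $\SmGp / X$. Finally, since both $j_\ast H$ and $N$ lie in $\SmGp / X$, Prop.~\ref{prop: extension of smooth by smooth is smooth} guarantees that the extension $j_\ast G$ is also an object of $\SmGp / X$, which is the desired conclusion.

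There is no real obstacle here; the proof is a straightforward assembly of the four preceding propositions, and the only point that requires a moment's thought is the identification of the image of $j_\ast G \to j_\ast Y$ with $\Ker(\varphi)$, which is immediate from the left exactness of $j_\ast$ applied to the two short exact sequences $0 \to H \to G \to Y \to 0$ and $0 \to N \to j_\ast Y \to R^{1} j_\ast H$ (or rather to its image).
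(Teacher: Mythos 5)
Your proof is correct and follows essentially the same route as the paper: apply $j_{\ast}$ to the short exact sequence, use the N\'eron lft model for $j_{\ast} H$, Prop.~\ref{prop: push of lattice is representable} for $j_{\ast} Y$, Prop.~\ref{prop: kernel from etale is etale} for the kernel of the connecting map, and Prop.~\ref{prop: extension of smooth by smooth is smooth} to conclude. The paper's proof is just a terser version of the same argument.
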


\begin{proof}
	We have an exact sequence
	$0 \to j_{\ast} H \to j_{\ast} G \to j_{\ast} Y \to R^{1} j_{\ast} H$
	in $\Ab(X_{\sm})$.
	As above, we have $j_{\ast} H \in \SmGp / X$.
	Also $j_{\ast} Y \in \EtGp / X$
	by Prop.\ \ref{prop: push of lattice is representable}.
	By Prop.\ \ref{prop: kernel from etale is etale},
	we know that the kernel of $j_{\ast} Y \to R^{1} j_{\ast} H$ is in $\EtGp / X$.
	Therefore $j_{\ast} G \in \SmGp / X$
	by Prop.\ \ref{prop: extension of smooth by smooth is smooth}.
\end{proof}

Let $\mathcal{M}_{U}$ be the category of smooth $1$-motives over $U$
in the sense of Deligne \cite[(10.1.10)]{Del74}.
An object $M = [Y \to G]$ of $\mathcal{M}_{U}$ is a complex
consisting of a lattice $Y$ placed in degree $-1$,
an extension $G$ of an abelian scheme by a torus placed in degree $0$
and a morphism $Y \to G$ of group schemes over $U$.
A morphism in $\mathcal{M}_{U}$ is a morphism of complexes of group schemes over $U$.
In particular, $\mathcal{M}_{U}$ is a full subcategory of $\Ch^{b}(\SmGp / U)$.

For $M = [Y \to G] \in \mathcal{M}_{U}$ and a finite \'etale covering $V$ of $U$,
we denote the Weil restriction $\Res_{V / U}(Y \times_{U} V)$ of $Y \times_{U} V$ by $Y_{(V)}$.
For references on Weil restrictions, see \cite[7.6]{BLR90} and \cite[A.5]{CGP15}.
We have a natural injective morphism $Y \into Y_{(V)}$.
We set $Y_{(/ V)} = Y_{(V)} / Y$.
The objects $Y_{(V)}$ and $Y_{(/ V)}$ are lattices over $U$.
We denote the cokernel of the diagonal embedding $Y \into Y_{(V)} \oplus G$ in $\Ab(U_{\sm})$ by $G_{(V)}$
(which depends on not only $V$ and $G$ but the whole $M$ despite of the notation).
We have a morphism between exact sequences
	\[
		\begin{CD}
			0 @>>> Y @>>> Y_{(V)} @>>> Y_{(/ V)} @>>> 0 \\
			@. @VVV @VVV @| @. \\
			0 @>>> G @>>> G_{(V)} @>>> Y_{(/ V)} @>>> 0
		\end{CD}
	\]
in $\Ab(U_{\sm})$.
In particular, we have $G_{(V)} \in \SmGp / U$
by Prop.\ \ref{prop: extension of smooth by smooth is smooth}.
Define
	\[
			M_{(V)}
		=
			[Y_{(V)} \to G_{(V)}]
		\in
			\Ch^{b}(\SmGp / U).
	\]
Then the natural morphism $M \to M_{(V)}$ is a quasi-isomorphism in $\Ch^{b}(U_{\sm})$.
For a morphism $W \to V$ of finite \'etale coverings of $U$,
we have natural morphisms $Y_{(V)} \to Y_{(W)}$ and $G_{(V)} \to G_{(W)}$
by functoriality of the Weil restriction
and hence a morphism $M_{(V)} \to M_{(W)}$.

\begin{Def}
	Let $M = [Y \to G] \in \mathcal{M}_{U}$.
	A \emph{good covering} of $U$ with respect to $M$ (or $Y$) and $X$
	is a finite \'etale covering $V$ of $U$
	such that $Y \times_{U} V$ extends to a lattice over the normalization of $X$ in $V$.
\end{Def}

(We do not introduce a piece of notation for the above mentioned extension of $Y \times_{U} V$
as we do not have to.)
The key properties of good coverings are that
any finite \'etale covering that factors through a good covering is good
and that the following holds.

\begin{Prop} \label{prop: induced lattice has trivial first push}
	For any lattice $Y$ over $U$, a good covering exists.
	If $V$ is a good covering of $U$ with respect to $Y$ and $X$,
	then we have $R^{1} j_{\ast} Y_{(V)} = 0$.
\end{Prop}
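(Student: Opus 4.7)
My plan is to prove the two assertions in turn. For existence of a good covering, a lattice $Y$ over the connected (and locally Noetherian) scheme $U$ corresponds to a continuous representation $\rho \colon \pi_{1}^{\et}(U) \to \mathrm{GL}_{n}(\Z)$ in which the target carries the discrete topology; continuity forces $\ker \rho$ to be open and hence of finite index, so $\rho$ factors through a finite quotient corresponding to a connected finite \'etale Galois cover $V \to U$. Then $Y \times_{U} V$ is the constant lattice $\Z^{n}$, which extends tautologically to the constant lattice on the normalization $\closure{V}$ of $X$ in $V$; thus $V$ is good.

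For the vanishing $R^{1} j_{\ast} Y_{(V)} = 0$, I would verify it on stalks over $X_{\et}$ at each geometric point $\bar{x}$ above $x \in X$. When $x \in U$ the relevant scheme is the strictly henselian $\Spec \Order_{X,x}^{\mathrm{sh}}$, whose \'etale cohomology with locally constant coefficients vanishes in positive degree, so the stalk is zero. When $x \in X \setminus U$, set $K_{x}^{\mathrm{sh}} = \Frac(\Order_{X,x}^{\mathrm{sh}})$, and the stalk becomes $H^{1}_{\et}(\Spec K_{x}^{\mathrm{sh}}, Y_{(V)})$. Since $\pi \colon V \to U$ is finite \'etale one has $Y_{(V)} = \pi_{\ast}(Y \times_{U} V)$, and base change along this finite morphism together with the decomposition $V \times_{U} \Spec K_{x}^{\mathrm{sh}} = \bigsqcup_{i} \Spec L_{i}$ into spectra of finite separable extensions of $K_{x}^{\mathrm{sh}}$ yields
    \[
            H^{1}_{\et}(\Spec K_{x}^{\mathrm{sh}}, Y_{(V)})
        =
            \bigoplus_{i} H^{1}_{\et}(\Spec L_{i}, Y|_{V}).
    \]

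The key step is to show that each summand vanishes, which reduces to proving that $Y|_{V}$ becomes the constant lattice $\Z^{n}$ over $\Spec L_{i}$. This is exactly what goodness provides: each $L_{i}$ corresponds to a place of the function field of $V$ above $x$, and the hypothesis that $Y|_{V}$ extends to a lattice $\tilde{Y}$ on $\closure{V}$ means that on the strict henselization at the point corresponding to this place, $\tilde{Y}$ is \'etale-locally constant on a simply connected scheme, hence constant; so $\Gal(L_{i}^{\sep}/L_{i})$ acts trivially on the stalk. Once this is in place,
    \[
            H^{1}_{\et}(\Spec L_{i}, \Z^{n})
        =
            \Hom_{\mathrm{cts}}(\Gal(L_{i}^{\sep}/L_{i}), \Z)^{n}
        =
            0,
    \]
since any continuous homomorphism from a profinite group to discrete torsion-free $\Z$ has open (hence cofinite) kernel and therefore trivial image. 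The main subtlety requiring care is this triviality of the Galois action; if $\closure{V}$ fails to be finite over $X$ (excellence is not assumed), one should work directly with the strict henselization of the local ring of the function field of $V$ at the relevant place, where the same constancy argument applies.
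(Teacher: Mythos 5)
Your argument for the existence of a good covering is correct and is essentially a self-contained proof of the fact the paper cites from SGA 3: a profinite fundamental group acting continuously on the discrete group $\mathrm{GL}_{n}(\Z)$ factors through a finite quotient. That part is fine.

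There is, however, a genuine gap in the vanishing part. The sheaf $R^{1} j_{\ast} Y_{(V)}$ lives on the \emph{smooth} site $X_{\sm}$, not on the small \'etale site $X_{\et}$. You only verify that its stalks vanish at geometric points $\bar{x}$ lying over points $x \in X$, i.e.\ you restrict to $X_{\et}$ and check there. This is not sufficient: a nonzero sheaf on $X_{\sm}$ can easily restrict to zero on $X_{\et}$, since the objects of $X_{\sm}$ include all smooth $X$-schemes and the pushforward $R^{1} j_{\ast}$ is computed as the \'etale sheafification of $X' \mapsto H^{1}(X' \times_{X} U, \cdot)$ over \emph{all} smooth $X$-schemes $X'$. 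The correct reduction is to the strict henselization $X'$ of an \emph{arbitrary} smooth $X$-scheme at an \emph{arbitrary} point (after further reducing to $X$ strictly henselian local with $U$ its generic point), and then to show $H^{1}(X' \times_{X} V, Y) = 0$. Your discussion only treats the case $X' = \Spec \Order_{X,x}^{\mathrm{sh}}$.

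The underlying mechanism you invoke does extend to that broader setting --- goodness gives a lattice over the strictly henselian scheme $X' \times_{X} \closure{V}$, which is therefore constant, and then $H^{1}$ of the regular scheme $X' \times_{X} V$ with coefficients in the constant sheaf $\Z^{n}$ vanishes --- but as written your verification on stalks over $X_{\et}$ does not establish vanishing as a sheaf on $X_{\sm}$, which is what the proposition asserts. The Galois-cohomological computation $H^{1}_{\et}(\Spec L_{i}, \Z^{n}) = 0$ and the constancy argument via simple connectedness of strictly henselian schemes are both sound; the issue is purely that the reduction to stalks must be carried out on the correct site.
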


\begin{proof}
	A lattice can be trivialized by a finite \'etale covering by
	\cite[X, Prop.\ 5.11, Thm.\ 5.16]{DG}.
	Such a covering is good.
	Let $V$ be a good covering of $U$ with respect to $Y$.
	Let $\closure{V}$ be the normalization of $X$ in $V$.
	Note that the Weil restriction functor $\Ab(V_{\sm}) \to \Ab(U_{\sm})$
	is nothing but the pushforward functor for the finite \'etale morphism $V \to U$.
	Since the pushforward functor for a finite morphism is exact in the \'etale topology (\cite[II, Cor.\ 3.6]{Mil80}),
	we know that the Weil restriction functor $\Ab(V_{\sm}) \to \Ab(U_{\sm})$ is exact
	(see also \cite[A.5.4]{CGP15}).
	Hence the sheaf $R^{1} j_{\ast} Y_{(V)} \in \Ab(X_{\sm})$ is the \'etale sheafification of the presheaf
	that sends a smooth $X$-scheme $X'$ to $H^{1}(X' \times_{X} V, Y)$.
	Hence it is enough to assume that $X$ is strict henselian local and
	$U$ is the generic point of $X$, and show that
	$H^{1}(X' \times_{X} V, Y) = 0$
	for the strict henselization $X'$ of any smooth $X$-scheme at any point.
	By goodness, $Y$ extends to a lattice over the strict henselian scheme $X' \times_{X} \closure{V}$.
	Hence $Y$ becomes trivial over $X' \times_{X} V$.
	Since $X' \times_{X} V$ is regular,
	we have $H^{1}(X' \times_{X} V, \Z) = 0$ and so $H^{1}(X' \times_{X} V, Y) = 0$.
\end{proof}

\begin{Def}
	Let $M = [Y \to G] \in \mathcal{M}_{U}$
	and $V$ a good covering of $U$ with respect to $M$ and $X$.
	We define
		\[
				\mathcal{N}(M, V)
			=
				j_{\ast} M_{(V)}
			=
				[j_{\ast} Y_{(V)} \to j_{\ast} G_{(V)}],
		\]
	which is an object of $\Ch^{b}(\SmGp / X)$
	by Prop.\ \ref{prop: push of lattice by semiabelian is representable}.
	We call $\mathcal{N}(M, V)$ the \emph{N\'eron model of $M$
	with respect to $V$} (along $j \colon U \to X$).
	The assignment $V \mapsto \mathcal{N}(M, V)$ is contravariantly functorial.
\end{Def}

\begin{Prop} \label{prop: Neron with V represents truncated push}
	Let $M = [Y \to G] \in \mathcal{M}_{U}$ and
	$V$ a good covering of $U$ with respect to $M$ and $X$.
	Consider the morphisms
		\[
				\mathcal{N}(M, V)
			\to
				R j_{\ast} M_{(V)}
			\cong
				R j_{\ast} M
		\]
	in $D^{b}(X_{\sm})$.
	Then the induced morphism
		\[
				\mathcal{N}(M, V)
			\to
				\tau_{\le 0} R j_{\ast} M
		\]
	is an isomorphism in $D^{b}(X_{\sm})$.
\end{Prop}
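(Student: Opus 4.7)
\emph{Proof plan.} The plan is to decompose $M_{(V)}$ into its lattice and semi-abelian pieces, apply $R j_{\ast}$ to each, and reassemble. Concretely, $M_{(V)} = [Y_{(V)} \to G_{(V)}]$ fits into a degreewise split short exact sequence of complexes
\[
	0 \to G_{(V)}[0] \to M_{(V)} \to Y_{(V)}[1] \to 0
\]
in $\Ch^{b}(U_{\sm})$, giving a distinguished triangle in $D^{b}(U_{\sm})$. Applying $j_{\ast}$ termwise yields a distinguished triangle in $K^{b}(X_{\sm})$, while applying $R j_{\ast}$ yields one in $D^{b}(X_{\sm})$; there is a natural morphism from the first to the second. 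Note that $j_{\ast} M_{(V)}$ is $\mathcal{N}(M, V)$ by definition, and $R j_{\ast} M_{(V)} \cong R j_{\ast} M$ since $M \to M_{(V)}$ is a quasi-isomorphism.

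First I would check that the two outer vertical comparison morphisms become isomorphisms after applying $\tau_{\le 0}$. The map $j_{\ast} G_{(V)}[0] \to R j_{\ast} G_{(V)}$ is an isomorphism on $H^{0}$ and trivially on $H^{n}$ for $n < 0$. The map $j_{\ast} Y_{(V)}[1] \to R j_{\ast} Y_{(V)}[1]$ is an isomorphism on $H^{-1}$, and on $H^{0}$ it becomes the map $0 \to R^{1} j_{\ast} Y_{(V)}$, whose target vanishes by Proposition \ref{prop: induced lattice has trivial first push}. This vanishing is the one substantive input and is precisely the reason for imposing the goodness hypothesis on $V$.

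Finally I would compare the long exact cohomology sequences of the two distinguished triangles. From both sequences one reads that the $H^{-1}$ of the middle entry equals $\Ker(j_{\ast} Y_{(V)} \to j_{\ast} G_{(V)})$ and the $H^{0}$ equals $\Coker(j_{\ast} Y_{(V)} \to j_{\ast} G_{(V)})$; the middle vertical morphism $j_{\ast} M_{(V)} \to R j_{\ast} M_{(V)}$ induces the identity on these identifications by naturality of the connecting morphisms. Since $\mathcal{N}(M, V)$ is concentrated in degrees $-1$ and $0$, the canonical factorization $\mathcal{N}(M, V) \to \tau_{\le 0} R j_{\ast} M_{(V)} \to R j_{\ast} M_{(V)}$ shows that the first arrow is a quasi-isomorphism, as required. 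The main obstacle, such as it is, is purely bookkeeping: verifying that the outer vertical morphisms are compatible with the connecting maps in the two triangles, which is a formal consequence of the functoriality of the triangle associated to a short exact sequence of complexes; the only genuine nonformal ingredient is $R^{1} j_{\ast} Y_{(V)} = 0$.
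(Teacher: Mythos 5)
Your proof is correct and is essentially the same as the paper's: both arguments apply $Rj_{\ast}$ to the triangle $G_{(V)} \to M_{(V)} \to Y_{(V)}[1]$, invoke the goodness hypothesis via $R^{1} j_{\ast} Y_{(V)} = 0$ (Prop.\ \ref{prop: induced lattice has trivial first push}), and read off the four-term exact sequence $0 \to H^{-1} Rj_{\ast} M \to j_{\ast} Y_{(V)} \to j_{\ast} G_{(V)} \to H^{0} Rj_{\ast} M \to 0$ identifying the cohomology of $\mathcal{N}(M,V)$ with that of $\tau_{\le 0} Rj_{\ast} M$. The paper writes down this sequence directly and stops there; your version spells out the intermediate bookkeeping (the morphism of triangles, the compatibility with connecting maps) but the substance is identical.
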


\begin{proof}
	By Prop.\ \ref{prop: induced lattice has trivial first push},
	we have an exact sequence
		\[
				0
			\to
				H^{-1} R j^{\ast} M
			\to
				j_{\ast} Y_{(V)}
			\to
				j_{\ast} G_{(V)}
			\to
				H^{0} R j^{\ast} M
			\to
				0.
		\]
	This proves the proposition.
\end{proof}

The inclusion functor $\SmGp' X \into \Ab(X_{\sm})$ induces a triangulated functor
$K^{b}(\SmGp' / X) \to K^{b}(X_{\sm})$.

\begin{Def} \label{def: localize category of groups}
	We say that a morphism in $K^{b}(\SmGp / X)$, $K^{b}(\SmGp' / X)$, $K^{b}(\EtGp / X)$ or $K^{b}(\EtGp' / X)$
	is a \emph{quasi-isomorphism} if it is so in $K^{b}(X_{\sm})$.
	We define $D^{b}(\SmGp / X)$ to be the localization of $K^{b}(\SmGp / X)$
	by the null system of objects quasi-isomorphic to zero (\cite[Thm.\ 10.2.3]{KS06}).
	We define $D^{b}(\SmGp' / X)$, $D^{b}(\EtGp / X)$ and $D^{b}(\EtGp' / X)$ similarly.
\end{Def}

A priori, $D^{b}(\SmGp / X)$ (and $D^{b}(\SmGp' / X)$) might not be a locally small category and
might be as large as $D^{b}(X_{\sm})$.
Smooth group schemes over $X$ with connected fibers are quasi-compact (\cite[Exp.\ VI B, Cor.\ 3.6]{DG70})
and hence form a small set.
Therefore the problem is the cardinalities of the component groups of the fibers.
If one wants to prove the local smallness of $D^{b}(\SmGp / X)$ using \cite[Rmk.\ 7.1.14]{KS06},
it suffices to show that for any quasi-isomorphism $F' \to F$ in $K^{b}(\SmGp / X)$,
there exists a quasi-isomorphism $F'' \to F'$ in $K^{b}(\SmGp / X)$
such that the cardinalities of the component groups of the fibers of the terms of $F''$ are no bigger than those of $F$.
While this seems likely verified by a limit argument on component groups,
we content ourselves with not necessarily locally small categories.
But notice that the relative component group $G / G_{0} = \pi_{0}(G)$
(the quotient by the relative identity component $G_{0}$;
\cite[Exp.\ VI B, Cor.\ 3.5]{DG70}, \cite[(3.2), d)]{Ray70a};
see also the first paragraph of the proof of Prop.\ \ref{prop: Neron components} below)
of any smooth group scheme and group algebraic space $G$ in this paper
are all $\Z$-constructible (\cite[II, \S 0, ``Constructible sheaves'']{Mil06}) over $X$.
Hence one may instead use the full subcategory of $\SmGp / X$ and $\SmGp' / X$
consisting of objects with $\Z$-constructible relative component groups.
This full subcategory is small, and hence any localization of its bounded homotopy category is small.

\begin{Prop}
	The natural functors induce a commutative diagram of triangulated functors
		\[
			\begin{CD}
					D^{b}(\EtGp / X)
				@>>>
					D^{b}(\EtGp' / X)
				@>>>
					D^{b}(X_{\et})
				\\
				@VVV
				@VVV
				@VVV
				\\
					D^{b}(\SmGp / X)
				@>>>
					D^{b}(\SmGp' / X)
				@>>>
					D^{b}(X_{\sm}).
			\end{CD}
		\]
\end{Prop}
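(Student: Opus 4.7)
The plan is to deduce the proposition from the universal property of Verdier localization by a null system. I would proceed in three steps.

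First, I would observe that each of the natural additive inclusions $\EtGp / X \hookrightarrow \SmGp / X$, $\SmGp / X \hookrightarrow \SmGp' / X$, $\EtGp / X \hookrightarrow \EtGp' / X$, and $\EtGp' / X \hookrightarrow \SmGp' / X$ preserves quasi-isomorphisms in the sense of Def.\ \ref{def: localize category of groups}: composing any one of them with the embedding of its target into $K^{b}(X_{\sm})$ factors as the embedding of the source into $K^{b}(X_{\sm})$, since the underlying representable sheaf on $X_{\sm}$ is the same either way. By \cite[Thm.\ 10.2.3]{KS06}, these descend to triangulated functors between the respective $D^{b}$-categories, giving the two leftmost vertical arrows in the diagram together with the two horizontal arrows connecting the first two columns. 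The tautological embedding $\SmGp' / X \hookrightarrow \Ab(X_{\sm})$ likewise preserves quasi-isomorphism and yields the lower-right horizontal arrow.

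Second, I would construct the rightmost vertical arrow $D^{b}(X_{\et}) \to D^{b}(X_{\sm})$ as the pullback $\pi^{\ast}$ along the morphism of sites $\pi \colon X_{\sm} \to X_{\et}$ coming from the full-subcategory inclusion $X_{\et} \subset X_{\sm}$ (both equipped with the \'etale topology). The functor $\pi^{\ast}$ is exact: as a left adjoint it preserves colimits, and it preserves finite limits because $X_{\et}$ is closed under fiber products in $X_{\sm}$. Hence $\pi^{\ast}$ extends to a triangulated functor on bounded derived categories.

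Third---the only step containing substantive content---I would verify that the embedding $\EtGp' / X \hookrightarrow \Ab(X_{\et})$ respects the null system defining $D^{b}(\EtGp' / X)$. For any $G \in \EtGp' / X$, the sheaf on $X_{\sm}$ represented by $G$ is canonically $\pi^{\ast}$ of the sheaf on $X_{\et}$ represented by $G$. For $Y \in X_{\et}$ one has $(\pi^{\ast} F)(Y) = F(Y)$: the indexing category of pairs $(T, T \to Y)$ with $T \in X_{\et}$ has $(Y, \id_{Y})$ as a terminal object, and sheafification in $X_{\sm}$ restricted to $X_{\et}$ agrees with sheafification in $X_{\et}$ since any \'etale cover of an \'etale $X$-scheme is itself \'etale over $X$. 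Hence $\pi^{\ast}$ is fully faithful, and in particular conservative; combined with its exactness, this shows that a complex in $\EtGp' / X$ is acyclic in $K^{b}(X_{\sm})$ if and only if it is acyclic in $K^{b}(X_{\et})$. Consequently the embedding $\EtGp' / X \hookrightarrow \Ab(X_{\et})$ descends to the upper-right horizontal arrow $D^{b}(\EtGp' / X) \to D^{b}(X_{\et})$. Commutativity of the two resulting squares is immediate from commutativity at the underlying sheaf level. The main obstacle, and the sole nonformal ingredient, is the exactness and conservativity of $\pi^{\ast}$ established in the second and third steps.
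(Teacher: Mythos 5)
Your approach matches the paper's, which disposes of the proposition in one line by citing \cite[Thm.\ 10.2.3]{KS06} (the universal property of Verdier localization by a null system) and leaves all the verifications implicit; you spell them out, and your identification of the right vertical arrow as $\pi^{\ast}$ together with the discussion of the right square is exactly what that one-line proof glosses over. Steps 1 and 2 are fine.

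There is, however, a gap in step 3. You assert that for $G \in \EtGp'/X$ the sheaf on $X_{\sm}$ represented by $G$ is canonically $\pi^{\ast}$ of the sheaf on $X_{\et}$ represented by $G$, but what you then actually prove, namely $(\pi^{\ast} F)(Y) = F(Y)$ for $Y \in X_{\et}$, i.e.\ $\pi_{\ast}\pi^{\ast} \cong \id$ (full faithfulness of $\pi^{\ast}$), is a different statement: it tells you that $\pi^{\ast}F$ and the $X_{\sm}$-sheaf of $G$ have the same sections over objects of $X_{\et}$, which does not identify two sheaves on $X_{\sm}$. For $G$ an étale $X$-\emph{scheme} the needed identification is the standard fact that $\pi^{\ast}$ of the Yoneda sheaf on the small site is the Yoneda sheaf on the big site. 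For $G$ a (quasi-separated, étale) algebraic \emph{space} one must additionally use an étale presentation $R \rightrightarrows U \to G$ with $U$ and $R$ étale $X$-schemes: the abelian sheaf of $G$ on either site is the cokernel of $R \to U$, and $\pi^{\ast}$, being a left adjoint that sends representables to representables, preserves this cokernel. With that point supplied, your exactness-plus-conservativity argument correctly shows that acyclicity of a complex in $\EtGp'/X$ is the same in $K^{b}(X_{\sm})$ and in $K^{b}(X_{\et})$, giving both well-definedness of the top-right functor and commutativity of the right square. (Note that for well-definedness alone, exactness of the restriction $\pi_{\ast}$ -- which holds because an étale cover in $X_{\sm}$ of an object of $X_{\et}$ already lies in $X_{\et}$ -- would suffice; the $\pi^{\ast}$-representable identification is what you genuinely need for the commutativity of the square.)
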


\begin{proof}
	This follows from \cite[Thm.\ 10.2.3]{KS06}.
\end{proof}

\begin{Prop} \label{prop: term wise exact sequence gives dist triangle}
	Let $0 \to F_{1} \to F_{2} \to F_{3} \to 0$ be a term-wise exact sequence in $\Ch^{b}(X_{\sm})$
	with $F_{i} \in \Ch^{b}(\SmGp / X)$ for any $i$.
	Then there exists a canonical morphism $F_{3} \to F_{1}[1]$ in $D^{b}(\SmGp / X)$
	such that the triangle $F_{1} \to F_{2} \to F_{3} \to F_{1}[1]$ is distinguished in $D^{b}(\SmGp / X)$
	and maps to the canonical distinguished triangle
	$F_{1} \to F_{2} \to F_{3} \to F_{1}[1]$ in $D^{b}(X_{\sm})$.
	Similar statements hold for $D^{b}(\SmGp' / X)$, $D^{b}(\EtGp / X)$ and $D^{b}(\EtGp' / X)$.
\end{Prop}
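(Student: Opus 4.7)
The plan is to realize the desired morphism $F_3 \to F_1[1]$ via the mapping cone, exploiting that the mapping cone in $\Ch^b(\SmGp/X)$ stays in $\Ch^b(\SmGp/X)$ because $\SmGp/X$ is additive (and similarly for $\SmGp'/X$, $\EtGp/X$, $\EtGp'/X$). First I would form $C = [F_1 \to F_2] \in \Ch^b(\SmGp/X)$, the mapping cone of the given morphism, whose construction only uses finite direct sums and shifts and therefore makes sense term-wise in $\SmGp/X$. The standard triangle $F_1 \to F_2 \to C \to F_1[1]$ is distinguished in $K^b(\SmGp/X)$ by the definition of the triangulated structure on a homotopy category, and hence remains distinguished in $D^b(\SmGp/X)$ under the localization functor.

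Next I would compare $C$ with $F_3$. The projection $F_2 \onto F_3$ gives a canonical morphism of complexes $C \to F_3$ in $\Ch^b(\SmGp/X)$ (using that the degree-$n$ part of $C$ is $F_1^{n+1} \oplus F_2^n$ and sending the second summand to $F_3^n$ while killing the first). The term-wise exactness hypothesis says exactly that the mapping cone of $F_1 \to F_2$ computes the cokernel in $D^b(X_\sm)$, i.e.\ that $C \to F_3$ is a quasi-isomorphism in $\Ch^b(X_\sm)$. By Def.\ \ref{def: localize category of groups}, this map becomes an isomorphism in $D^b(\SmGp/X)$, so I can define the connecting morphism as the composite
\[
    F_3 \isomfrom C \to F_1[1],
\]
where the second arrow is the canonical projection of the cone onto the shift. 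Rotating the distinguished triangle $F_1 \to F_2 \to C \to F_1[1]$ along this isomorphism produces a distinguished triangle $F_1 \to F_2 \to F_3 \to F_1[1]$ in $D^b(\SmGp/X)$, canonical because the cone construction is functorial and the comparison map $C \to F_3$ depends only on the given short exact sequence.

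Finally, to check that the image in $D^b(X_\sm)$ is the canonical distinguished triangle associated with the short exact sequence, I would note that the canonical triangle in the derived category of an abelian category is constructed in exactly the same way: take the mapping cone, invert its quasi-isomorphism to the cokernel, then rotate. So the triangulated functor $D^b(\SmGp/X) \to D^b(X_\sm)$ sends our construction to the canonical one tautologically. The identical argument applies verbatim to the variants $D^b(\SmGp'/X)$, $D^b(\EtGp/X)$, $D^b(\EtGp'/X)$, since each underlying category is additive and the quasi-isomorphism class is tested in $X_\sm$. I do not foresee a real obstacle; the only subtlety is verifying that $C$ genuinely lies in $\Ch^b(\SmGp/X)$ rather than merely in $\Ch^b(X_\sm)$, but this is immediate from the term-by-term formula for the mapping cone.
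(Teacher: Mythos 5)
Your proposal is correct and matches the paper's argument: both set $C = [F_1 \to F_2]$, use the projection onto the $F_2$-summand composed with $F_2 \onto F_3$ to get a quasi-isomorphism $C \to F_3$ in $\Ch^b(\SmGp/X)$, and then define the connecting morphism as $F_3 \isomfrom C \to F_1[1]$. The only cosmetic difference is that the paper verifies the quasi-isomorphism by explicitly comparing long exact sequences, whereas you invoke the standard fact directly.
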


\begin{proof}
	Set $F_{3}' = [F_{1} \to F_{2}]$.
	We have $F_{3}' = F_{2} \oplus F_{1}[1]$
	as a graded object forgetting the differentials.
	The first projection $F_{2} \oplus F_{1}[1] \to F_{2}$ followed by the morphism $F_{2} \to F_{3}$
	gives a morphism $F_{3}' \to F_{3}$ in $\Ch^{b}(\SmGp / X)$ (i.e.\ commutative with the differentials).
	For any $n$, the diagram with exact rows
		\[
			\begin{CD}
				H^{n} F_{1} @>>> H^{n} F_{2} @>>> H^{n} F_{3}' @>>> H^{n + 1} F_{1} @>>> H^{n + 1} F_{2} \\
				@| @| @VVV @| @| \\
				H^{n} F_{1} @>>> H^{n} F_{2} @>>> H^{n} F_{3} @>>> H^{n + 1} F_{1} @>>> H^{n + 1} F_{2}
			\end{CD}
		\]
	in $\Ab(X_{\sm})$ is commutative.
	Hence $F_{3}' \to F_{3}$ is a quasi-isomorphism.
	The required morphism is given by the composite $F_{3} \isomfrom F_{3}' \to F_{1}[1]$ in $D^{b}(\SmGp / X)$.
\end{proof}

\begin{Prop} \label{prop: functoriality of Neron models}
	For any object $M = [Y \to G] \in \mathcal{M}_{U}$,
	choose a good covering $V$ of $U$ with respect to $M$ and $X$
	and consider the object
		\[
				\mathcal{N}(M, V)
			\in
				D^{b}(\SmGp / X).
		\]
	For any morphism $M = [Y \to G] \to M' = [Y' \to G'] \in \mathcal{M}_{U}$,
	choose a good covering $V$ of $U$ with respect to both $M$ and $M'$ and $X$
	and consider the morphism
		\[
				\mathcal{N}(M, V)
			\to
				\mathcal{N}(M', V)
			\in
				D^{b}(\SmGp / X).
		\]
	These assignments define a well-defined additive functor
	$\mathcal{M}_{U} \to D^{b}(\SmGp / X)$.
\end{Prop}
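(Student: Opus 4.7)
The plan is to show three things in turn: independence of $\mathcal{N}(M,V)$ as an object of $D^b(\SmGp/X)$ from the choice of good covering $V$; independence of the associated morphism for a fixed $f : M \to M'$ from the choice of common good covering; and compatibility with composition and direct sums. The cornerstone of the argument is that, by Prop.\ \ref{prop: Neron with V represents truncated push}, every $\mathcal{N}(M,V)$ canonically receives a quasi-isomorphism to $\tau_{\le 0} R j_{\ast} M$ in $D^b(X_{\sm})$, so any morphism between two such models induced by Weil restriction is a quasi-isomorphism in $\Ch^b(X_{\sm})$. By the definition of $D^b(\SmGp/X)$ via localization at the null system of complexes quasi-isomorphic to zero (Def.\ \ref{def: localize category of groups}), such a morphism becomes an isomorphism in $D^b(\SmGp/X)$.

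First I would verify the key lemma: if $W \to V$ is a morphism of finite \'etale coverings of $U$ and both are good with respect to $M$ and $X$, then the functorial morphism $\mathcal{N}(M, V) \to \mathcal{N}(M, W)$ in $\Ch^b(\SmGp/X)$ is a quasi-isomorphism. Indeed, applying Prop.\ \ref{prop: Neron with V represents truncated push} to both sides and noting that the composite $\mathcal{N}(M, V) \to \mathcal{N}(M, W) \to \tau_{\le 0} R j_{\ast} M$ agrees with the natural map for $V$ (both are induced by the canonical quasi-isomorphism $M \isomto M_{(V)} \isomto M_{(W)}$ in $\Ch^b(U_{\sm})$) gives the claim. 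Note also that since being a good covering is preserved under refinement, the fiber product $V \times_{U} V'$ of two good coverings is again good.

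Given this, I would define $\mathcal{N}(M)$ on objects as follows: pick any good covering $V$ and set $\mathcal{N}(M) := \mathcal{N}(M, V)$. For two choices $V, V'$, the canonical zig-zag $\mathcal{N}(M, V) \isomto \mathcal{N}(M, V \times_{U} V') \isomfrom \mathcal{N}(M, V')$ in $D^b(\SmGp/X)$ gives a canonical isomorphism; a standard cofinality argument (using that any third covering can be refined by a common refinement of all three) shows these isomorphisms are compatible, so $\mathcal{N}(M)$ is well-defined up to canonical isomorphism. For a morphism $f \colon M \to M'$, pick a $V$ good for both (which exists by intersecting good coverings); this induces $Y_{(V)} \to Y'_{(V)}$ and $G_{(V)} \to G'_{(V)}$ and hence $\mathcal{N}(M, V) \to \mathcal{N}(M', V)$ in $\Ch^b(\SmGp/X)$. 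Independence of $V$ follows from the same common-refinement trick, now applied to the square
\[
    \begin{CD}
        \mathcal{N}(M, V)       @>>>    \mathcal{N}(M, W)       \\
        @VVV                            @VVV                    \\
        \mathcal{N}(M', V)      @>>>    \mathcal{N}(M', W),
    \end{CD}
\]
which commutes in $\Ch^b(\SmGp/X)$ by functoriality of Weil restriction and of the cokernel construction defining $G_{(V)}$.

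Finally, functoriality in composition follows by choosing a single good covering $V$ adapted to all three motives $M, M', M''$ appearing in $M \to M' \to M''$; the resulting diagram commutes already in $\Ch^b(\SmGp/X)$. Additivity is immediate from the fact that Weil restriction, the formation of $G_{(V)}$, and $j_{\ast}$ all commute with finite direct sums. The main obstacle in this argument is purely bookkeeping: one must keep track of which morphism in $D^b(\SmGp/X)$ is being chosen at each step, and verify that all the diagrams produced by refinement of good coverings actually commute on the nose in $\Ch^b(\SmGp/X)$ rather than only up to quasi-isomorphism. Fortunately the Weil restriction $\Res_{V/U}$ and the coequalizer defining $G_{(V)}$ are strictly functorial in $V$, so these compatibilities hold at the chain level and descend to the localization.
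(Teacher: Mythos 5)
Your proposal correctly identifies the key lemma that $\mathcal{N}(M,V) \to \mathcal{N}(M,W)$ is a quasi-isomorphism, but the ``standard cofinality argument'' you invoke for well-definedness has a genuine gap. The difficulty is that the category of good coverings of $U$ with $U$-morphisms is \emph{not} cofiltered: given two parallel $U$-morphisms $f,g \colon W \rightrightarrows V$ (e.g.\ the two projections $V \times_U V \rightrightarrows V$, or the identity and a fixed-point-free automorphism of a Galois covering), their equalizer need not be a covering, so there is in general no covering that ``equalizes'' them. Yet your argument implicitly requires that such parallel morphisms induce the same arrow $\mathcal{N}(M,V) \to \mathcal{N}(M,W)$ in $D^b(\SmGp/X)$: already for the cocycle condition $\phi_{VV} = \id$, the zig-zag $\mathcal{N}(M,V) \to \mathcal{N}(M,V\times_U V) \leftarrow \mathcal{N}(M,V)$ uses the two distinct projections, and these must be shown to agree as morphisms of $D^b(\SmGp/X)$.

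You cannot deduce this just from the fact that both maps become the same isomorphism onto $\tau_{\le 0}Rj_*M$ in $D^b(X_{\sm})$, because the functor $D^b(\SmGp/X) \to D^b(X_{\sm})$ has no reason to be faithful (morphisms in the localized category $D^b(\SmGp/X)$ are roofs with apex required to lie in $\Ch^b(\SmGp/X)$, a much smaller class). The paper closes this gap by constructing an explicit chain homotopy: for $f,g\colon W \rightrightarrows V$, the difference $f - g \colon Y_{(V)} \to Y_{(W)}$ kills $Y$ and hence factors through $Y_{(/V)}$, and the composite $G_{(V)} \onto Y_{(/V)} \to Y_{(W)}$ provides a homotopy between the two induced chain maps $M_{(V)} \rightrightarrows M_{(W)}$; applying $j_*$ term-wise then gives a homotopy at the level of the N\'eron models, so the two morphisms agree already in $K^b(\SmGp/X)$. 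This homotopy step is the real content of the proposition; your argument must supply it (or some substitute) before the common-refinement bookkeeping can go through.
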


\begin{proof}
	Let $M = [Y \to G] \in \mathcal{M}_{U}$ and
	$V$ a good covering of $U$ with respect to $M$ and $X$.
	If $W \to V$ is a morphism from another finite \'etale covering $W$ of $U$,
	then the induced morphism
	$\mathcal{N}(M, V) \to \mathcal{N}(M, W)$
	is a quasi-isomorphism by Prop.\ \ref{prop: Neron with V represents truncated push}.
	Let $f, g \colon W \rightrightarrows V$ be two $U$-morphisms.
	Then the diagram
		\[
			\begin{CD}
				0 @>>> Y @>>> Y_{(V)} @>>> Y_{(/ V)} @>>> 0 \\
				@. @VV 0 V @VV f - g V @VV f - g V @. \\
				0 @>>> Y @>>> Y_{(W)} @>>> Y_{(/ W)} @>>> 0
			\end{CD}
		\]
	is commutative.
	Hence the morphism $f - g \colon Y_{(V)} \to Y_{(W)}$ factors through the quotient $Y_{(/ V)}$.
	The composite $G_{(V)} \onto Y_{(/ V)} \to Y_{(W)}$ as a diagonal arrow in the commutative diagram
		\[
			\begin{CD}
				Y_{(V)} @>>> G_{(V)} \\
				@VV f - g V @V f - g VV \\
				Y_{(W)} @>>> G_{(W)}
			\end{CD}
		\]
	from the right upper term to the left lower term
	splits the diagram into two commutative triangles.
	This means that the two morphisms $f, g \colon M_{(V)} \rightrightarrows M_{(W)}$ are homotopic to each other.
	Applying $j_{\ast}$ term-wise,
	we know that the two morphisms
	$f, g \colon \mathcal{N}(M, V) \rightrightarrows \mathcal{N}(M, W)$
	are also homotopic to each other.
	Therefore $\mathcal{N}(M, V) \in D^{b}(\SmGp / X)$ is independent of the choice of $V$.
	The rest is an easy consequence of this.
\end{proof}

\begin{Def}
	We denote the functor $\mathcal{M}_{U} \to D^{b}(\SmGp / X)$
	defined in Prop.\ \ref{prop: functoriality of Neron models}
	by $\mathcal{N}$.
	Hence $\mathcal{N}(M) = \mathcal{N}(M, V)$ in $D^{b}(\SmGp / X)$
	for $M \in \mathcal{M}_{U}$ and a good covering $V$ of $U$ with respect to $M$ and $X$.
	We call $\mathcal{N}(M)$ the \emph{N\'eron model} of $M$ (along $j \colon U \to X$).
\end{Def}

\begin{Prop} \label{prop: Neron represents truncated push}
	For any $M \in \mathcal{M}_{U}$,
	the image of $\mathcal{N}(M)$ under the functor $D^{b}(\SmGp / X) \to D^{b}(X_{\sm})$
	is canonically identified with $\tau_{\le 0} R j_{\ast} M$.
\end{Prop}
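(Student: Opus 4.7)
The plan is to exhibit a natural isomorphism between the two functors $\mathcal{M}_{U} \to D^{b}(X_{\sm})$: one obtained by composing $\mathcal{N}$ with the forgetful functor $D^{b}(\SmGp / X) \to D^{b}(X_{\sm})$, and the other $\tau_{\le 0} R j_{\ast}$. Its components are already in place: for any $M \in \mathcal{M}_{U}$ and good covering $V$ of $U$ with respect to $M$ and $X$, Prop.\ \ref{prop: Neron with V represents truncated push} provides a canonical isomorphism
\[
\alpha_{M, V} \colon \mathcal{N}(M, V) \isomto \tau_{\le 0} R j_{\ast} M
\]
in $D^{b}(X_{\sm})$, built from the canonical morphism $j_{\ast} M_{(V)} \to R j_{\ast} M_{(V)}$, the quasi-isomorphism $R j_{\ast} M \isomto R j_{\ast} M_{(V)}$ (induced by the quasi-isomorphism $M \to M_{(V)}$), and the truncation $\tau_{\le 0}$.

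First I would check that $\alpha_{M, V}$ is independent of $V$. Namely, for a morphism $W \to V$ of good coverings for $M$, the square
\[
\begin{CD}
\mathcal{N}(M, V) @> \alpha_{M, V} >> \tau_{\le 0} R j_{\ast} M \\
@VVV @| \\
\mathcal{N}(M, W) @> \alpha_{M, W} >> \tau_{\le 0} R j_{\ast} M
\end{CD}
\]
commutes in $D^{b}(X_{\sm})$. This is immediate from the naturality of $j_{\ast} \to R j_{\ast}$ applied to $M_{(V)} \to M_{(W)}$ in $\Ch^{b}(U_{\sm})$, together with the agreement of the composite $M \to M_{(V)} \to M_{(W)}$ with the direct map $M \to M_{(W)}$. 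Combined with Prop.\ \ref{prop: functoriality of Neron models}, which identifies all $\mathcal{N}(M, V)$ with $\mathcal{N}(M)$ in $D^{b}(\SmGp / X)$ in a canonical way, this defines an isomorphism $\alpha_{M} \colon \mathcal{N}(M) \isomto \tau_{\le 0} R j_{\ast} M$ in $D^{b}(X_{\sm})$ independent of the auxiliary choice of $V$.

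Finally, naturality in $M$ reduces to the following: for a morphism $f \colon M \to M'$ in $\mathcal{M}_{U}$ and a common good covering $V$, the induced map $f_{(V)} \colon M_{(V)} \to M'_{(V)}$ in $\Ch^{b}(\SmGp / U)$ yields, by the naturality of $j_{\ast} \to R j_{\ast}$ and of $\tau_{\le 0}$, a commuting square relating $\mathcal{N}(f, V)$ to $\tau_{\le 0} R j_{\ast} f$. Since $\mathcal{N}(f)$ is by construction represented by $\mathcal{N}(f, V)$ in the localization, the family $\{\alpha_{M}\}$ assembles into a natural transformation, which is a pointwise isomorphism, hence a natural isomorphism. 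The only real piece of bookkeeping is the simultaneous independence of the good covering for both source and target of $\mathcal{N}(f)$, but this is entirely parallel to (and a simple consequence of) the arguments already carried out in Props.\ \ref{prop: Neron with V represents truncated push} and \ref{prop: functoriality of Neron models}; no additional obstacle arises.
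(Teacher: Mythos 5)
Your argument is correct and follows essentially the same route as the paper, which simply declares that the statement follows from Prop.\ \ref{prop: Neron with V represents truncated push}; you have merely spelled out the independence-from-$V$ and naturality bookkeeping that the paper leaves implicit (relying on Prop.\ \ref{prop: functoriality of Neron models}, as you do).
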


\begin{proof}
	This follows from Prop.\ \ref{prop: Neron with V represents truncated push}.
\end{proof}

Prop.\ \ref{prop: functoriality of Neron models}
and \ref{prop: Neron represents truncated push} together
finish the proof of Thm.\ \ref{main: definition of Neron models}.
Next we consider connected N\'eron models.

\begin{Prop} \label{prop: extension by zero of etale group}
	Let $Y \in \EtGp / U$.
	Identify $U$ with its image in $Y$ as the zero section,
	which is an open and closed subset of $Y$.
	Then the scheme $(Y \setminus U) \bigsqcup X$ has a unique $X$-group scheme structure with zero section $X$
	compatible with the $U$-group scheme structure of $Y$.
	It is separated over $X$, and moreover \'etale over $X$
	if $U \subset X$ is open.
\end{Prop}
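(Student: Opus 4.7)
The plan is to construct $\tilde{Y} := (Y \setminus U) \sqcup X$ as an $X$-group scheme by defining the multiplication and inversion piecewise on a natural disjoint-union decomposition of $\tilde{Y} \times_X \tilde{Y}$, and then verify the group axioms, uniqueness, separatedness, and (when $U$ is open) étaleness directly.

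First, since $Y \to U$ is separated and étale, the zero section $U \hookrightarrow Y$ is a clopen immersion (being a section of a separated étale map, hence a flat unramified monomorphism that is also a closed immersion), so $Y = U \sqcup (Y \setminus U)$ with $Y \setminus U$ separated and étale over $U$. I view $\tilde{Y}$ as an $X$-scheme via $Y \setminus U \hookrightarrow Y \to U \hookrightarrow X$ on the first summand and the identity on the second, and take the unit section to be the inclusion $X \hookrightarrow \tilde{Y}$ of the second summand; this inclusion is automatically a closed immersion, which will give separatedness once the group structure is in place. The key geometric observation is that both factors of $(Y \setminus U) \times_X (Y \setminus U)$ map through $U$, and $U \hookrightarrow X$ is a monomorphism (an open immersion, or the generic point), so
\[
  \tilde{Y} \times_X \tilde{Y}
  = \bigl((Y \setminus U) \times_U (Y \setminus U)\bigr)
  \;\sqcup\; (Y \setminus U) \;\sqcup\; (Y \setminus U) \;\sqcup\; X.
\]
I then define $m$ component by component: on the leading component, restrict $\mu_Y \colon Y \times_U Y \to Y$ and postcompose with the canonical map $Y = U \sqcup (Y \setminus U) \to X \sqcup (Y \setminus U) = \tilde{Y}$ induced by $U \hookrightarrow X$; on the middle two copies of $Y \setminus U$, use the inclusion $Y \setminus U \hookrightarrow \tilde{Y}$; on the $X$-piece, the identity of $X$. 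Inversion is defined analogously from $[-1]_Y$ on $Y \setminus U$ together with the identity of $X$.

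Verification of associativity and the unit and inverse laws is then piecewise on the corresponding decomposition of $\tilde{Y}^n$ indexed by subsets of $\{1,\dots,n\}$: on the component where all $n$ factors lie in $Y \setminus U$, each axiom reduces to the same axiom for $Y$, while on any component containing an $X$-factor it collapses to a triviality via the unit law for $Y$. For uniqueness, any compatible group structure restricts over $U$ to that of $Y$, and the restriction of $m$ to the leading component is forced, because any $X$-morphism from a $U$-scheme into $\tilde{Y}$ factors through $\tilde{Y} \times_X U = Y$ (again using that $U \hookrightarrow X$ is a monomorphism); the middle and trivial components are then forced by the unit laws. Separatedness follows from the zero section being a closed immersion, and when $U \subset X$ is open, both summands of $\tilde{Y}$ are étale over $X$ (the first as a composition of two étale maps, the second trivially), giving étaleness. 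The main subtlety is the uniqueness argument in the case $U = \Spec K$ with $U$ not open in $X$, where the factorization argument through $\tilde{Y} \times_X U$ is essential; once that is handled, the rest is a bookkeeping exercise on the disjoint pieces.
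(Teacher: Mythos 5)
Your proposal is correct. The paper's own proof reads simply ``Obvious,'' so what you have done is spell out the details the author left to the reader, and you have done so along the expected lines: the observation that $(Y\setminus U)\times_X(Y\setminus U)=(Y\setminus U)\times_U(Y\setminus U)$ because $U\into X$ is a monomorphism, the four-piece decomposition of $\tilde Y\times_X\tilde Y$, the piecewise definition of $m$ via the canonical projection $p\colon Y=U\sqcup(Y\setminus U)\to X\sqcup(Y\setminus U)=\tilde Y$, separatedness from the zero section $X\into\tilde Y$ being a closed immersion, and uniqueness via $\tilde Y\times_X U=Y$. One small point worth making explicit in the axiom check: on the component of $\tilde Y^{3}$ where all three factors lie in $Y\setminus U$, the first application of $m$ may already land in the $X$-summand, so the verification does not reduce \emph{purely} to the corresponding identity for $Y$; what one really checks is that $p$ intertwines $\mu_Y$ with $m$ (i.e.\ $p\circ\mu_Y=m\circ(p\times p)$ on $Y\times_U Y$), which follows from your definition of $m$ together with the unit law for $Y$ on the pieces where one coordinate hits the zero section. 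Once that is noted, the rest of the bookkeeping goes through exactly as you say.
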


\begin{proof}
	Obvious.
\end{proof}

\begin{Def} \label{def: extension by zero of etale group}
	For $Y \in \EtGp / U$,
	we denote the group scheme $(Y \setminus U) \bigsqcup X$ over $X$
	in Prop.\ \ref{prop: extension by zero of etale group} by $\mathcal{Y}_{0}$
	and call it the \emph{extension by zero} of $Y$ to $X$.
\end{Def}

\begin{Prop}
	Let $Y \in \EtGp / U$ with extension by zero $\mathcal{Y}_{0}$ over $X$.
	Let $G$ be a group scheme over $X$.
	Then the natural homomorphism
		\[
				\Hom_{X}(\mathcal{Y}_{0}, G)
			\to
				\Hom_{U}(Y, G \times_{X} U)
		\]
	is an isomorphism.
	If $U \subset X$ is open, then we have $\mathcal{Y}_{0} = j_{!} Y$,
	where $j_{!} \colon \Ab(U_{\sm}) \to \Ab(X_{\sm})$ is the left adjoint of $j^{\ast}$.
\end{Prop}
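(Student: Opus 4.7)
The plan is to handle the two assertions separately: the first by direct construction with group schemes, the second by a sheaf-theoretic stalk argument.

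For the first assertion, the natural map $\Hom_X(\mathcal{Y}_0, G) \to \Hom_U(Y, G \times_X U)$ is base change along $j \colon U \into X$, legitimate via the identification $\mathcal{Y}_0 \times_X U = Y$. This identification holds because $j$ is a monomorphism, so $(Y \setminus U) \times_X U = Y \setminus U$, and $\mathcal{Y}_0 = X \sqcup (Y \setminus U)$ therefore pulls back to $U \sqcup (Y \setminus U) = Y$. Injectivity is then immediate: a morphism of $X$-group schemes $\mathcal{Y}_0 \to G$ sends the zero section $X$ to the zero section of $G$, while its restriction to $Y \setminus U$ is recovered from the base change. For surjectivity, given $\phi \colon Y \to G \times_X U$ of $U$-group schemes, I would glue the composite $X$-morphism $Y \setminus U \to G \times_X U \to G$ (well-defined since $Y \setminus U$ lives over $U$) with the zero section $X \to G$ to obtain $\bar\phi \colon \mathcal{Y}_0 \to G$. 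The remaining task is to check $\bar\phi$ is a group homomorphism, for which I would decompose $\mathcal{Y}_0 \times_X \mathcal{Y}_0$ as $X \sqcup (Y\setminus U) \sqcup (Y\setminus U) \sqcup ((Y\setminus U) \times_U (Y\setminus U))$ (again using that $j$ is a monomorphism) and match the two sides of the homomorphism identity on each piece.

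For the second assertion, with $U \subset X$ open, the identification $j^* \mathcal{Y}_0 = Y$ from the first part gives, via the counit of the adjunction $j_! \dashv j^*$, a natural morphism $j_! Y \to \mathcal{Y}_0$ in $\Ab(X_\sm)$, which I would prove is an isomorphism by comparing stalks at geometric points. At a geometric point $\bar x \to U$, both stalks equal $Y_{\bar x}$ since both sheaves restrict to $Y$ over $U$. At a geometric point $\bar x$ of $X \setminus U$, the stalk of $j_! Y$ vanishes by the standard extension-by-zero property; for the stalk of $\mathcal{Y}_0$, note that for any connected smooth neighborhood $(X', \bar x')$ of $\bar x$, an $X$-morphism $X' \to Y \setminus U$ would force $X' \to X$ to factor through $U$, which is impossible since $\bar x$ lies outside $U$. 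Hence $\mathcal{Y}_0(X') = \Hom_X(X', X) = \{*\}$, its stalk at $\bar x$ is zero, and so $j_! Y \to \mathcal{Y}_0$ is an isomorphism.

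The main delicate step lies in verifying the homomorphism identity for $\bar\phi$ on the $(Y \setminus U) \times_U (Y \setminus U)$ component of $\mathcal{Y}_0 \times_X \mathcal{Y}_0$. There the group law $m_{\mathcal{Y}_0}$ factors through the group law $m_Y$ of $Y$ followed by the canonical morphism $Y = U \sqcup (Y \setminus U) \to X \sqcup (Y \setminus U) = \mathcal{Y}_0$ (sending $U$ to $X$ along $j$), so the two cases $a + b \in Y \setminus U$ and $a + b \in U$ must be treated separately: the former invokes the homomorphism property of $\phi$ on $Y$, the latter uses that both $\bar\phi|_X$ and $\phi$ applied to the zero section of $Y$ land at zero. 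The required compatibility of group laws is exactly that provided by Prop.\ \ref{prop: extension by zero of etale group}.
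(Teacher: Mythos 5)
Your proof is correct. The paper's own proof of this proposition consists of the single word ``Obvious,'' so there is no competing argument to compare against; you have simply supplied the verification, and it is sound. The key points are all in place: the identification $\mathcal{Y}_{0} \times_{X} U = Y$ uses that $j$ is a monomorphism (open immersion or localization at the generic point), the decomposition of $\mathcal{Y}_{0} \times_{X} \mathcal{Y}_{0}$ into four open-closed pieces is correct for the same reason, and the case split on whether the image of $m_{Y}$ lands in the zero section $U \subset Y$ or in $Y \setminus U$ is exactly what is needed to verify $\bar\phi$ is a homomorphism scheme-theoretically. One minor point of phrasing on the second half: to test an isomorphism of sheaves on $X_{\sm}$ one must compare stalks at geometric points of arbitrary smooth $X$-schemes, not only of $X$ itself; but your argument applies verbatim to any such point, since the dichotomy depends only on its image in $X$ and the neighborhoods you take are connected \'etale neighborhoods inside the ambient smooth $X$-scheme, which are still objects of $X_{\sm}$.
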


\begin{proof}
	Obvious.
\end{proof}

\begin{Def}
	Let $M = [Y \to G] \in \mathcal{M}_{U}$.
	Let $\mathcal{Y}_{0} / X$ be the extension by zero of $Y$
	and $\mathcal{G}_{0} / X$ be the maximal open subgroup scheme of the N\'eron model $\mathcal{G}$ of $G$
	with connected fibers.
	We define
		\[
				\mathcal{N}_{0}(M)
			=
				[\mathcal{Y}_{0} \to \mathcal{G}_{0}],
		\]
	which is a complex of (not necessarily locally finite type) group schemes over $X$.
	The assignment $M \mapsto \mathcal{N}_{0}(M)$ is an additive functor.
	We call $\mathcal{N}_{0}(M)$ the \emph{connected N\'eron model} of $M$
	(along $j \colon U \to X$).
\end{Def}

We have $\mathcal{N}_{0}(M) \in \Ch^{b}(\SmGp / X)$ if $U \subset X$ is open.

\begin{Prop} \label{prop: connected Neron model}
	Let $M = [Y \to G] \in \mathcal{M}_{U}$.
	For good coverings $V$ of $U$ with respect to $M$, the natural morphisms
		\[
				\mathcal{N}_{0}(M)
			\to
				\mathcal{N}(M, V)
		\]
	of complexes of group schemes over $X$ are contravariantly functorial in $V$.
	In particular, if $U \subset X$ is open,
	then they induce a canonical morphism
		\[
				\mathcal{N}_{0}(M)
			\to
				\mathcal{N}(M)
			\text{ in }
				D^{b}(\SmGp / X).
		\]
\end{Prop}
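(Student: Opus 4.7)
The plan is to construct the morphism term by term as a morphism of complexes of $X$-group schemes, verify its behaviour as $V$ varies, and only at the end pass to $D^{b}(\SmGp / X)$ in the open case. First I would define the degree-$(-1)$ morphism $\mathcal{Y}_{0} \to j_{\ast} Y_{(V)}$ using the explicit description $\mathcal{Y}_{0} = (Y \setminus U) \sqcup X$ from Prop.\ \ref{prop: extension by zero of etale group}: send the summand $X$ (the zero section) to the zero section of $j_{\ast} Y_{(V)}$, and send $Y \setminus U$ via the composite $Y \setminus U \into Y \to Y_{(V)} \into j_{\ast} Y_{(V)}$, where the final open immersion uses that $j_{\ast} Y_{(V)}$ is separated \'etale over $X$ with generic fiber $Y_{(V)}$ (Prop.\ \ref{prop: push of lattice is representable}). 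When $U \subset X$ is open this coincides with the morphism $j_{!} Y \to j_{\ast} Y_{(V)}$ adjoint to the inclusion $Y \into Y_{(V)}$.

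In degree $0$, define $\mathcal{G}_{0} \to j_{\ast} G_{(V)}$ as the composite $\mathcal{G}_{0} \into \mathcal{G} = j_{\ast} G \to j_{\ast} G_{(V)}$, the last arrow being induced by $G \to G_{(V)}$. To see that these assemble into a morphism of complexes, I would invoke the principle that a morphism of $X$-schemes out of $\mathcal{Y}_{0}$ is determined by its restriction to $U$ together with the requirement that the summand $X \subset \mathcal{Y}_{0}$ map to a zero section. Both composites $\mathcal{Y}_{0} \to j_{\ast} G_{(V)}$ trivially send $X$ to $0$, so commutativity reduces to the commutativity on $U$ of
\[
	\begin{CD}
		Y   @>>> Y_{(V)}  \\
		@VVV     @VVV     \\
		G   @>>> G_{(V)}
	\end{CD}
\]
which is built into the definition of $G_{(V)}$ as the cokernel of the diagonal $Y \into Y_{(V)} \oplus G$.

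For contravariant functoriality in $V$: given a $U$-morphism $W \to V$ of good coverings, the same reduction-to-$U$ principle shows that the compatibility of $\mathcal{N}_{0}(M) \to \mathcal{N}(M, V) \to \mathcal{N}(M, W)$ with $\mathcal{N}_{0}(M) \to \mathcal{N}(M, W)$ amounts to the identities $Y \to Y_{(V)} \to Y_{(W)} = Y \to Y_{(W)}$ and $G \to G_{(V)} \to G_{(W)} = G \to G_{(W)}$ on $U$, which are immediate from the construction of the Weil-restriction transition maps. Finally, when $U \subset X$ is open, $\mathcal{N}_{0}(M) \in \Ch^{b}(\SmGp / X)$ and any two good coverings admit a common good refinement (any finite \'etale covering factoring through both remains good). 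Prop.\ \ref{prop: Neron with V represents truncated push} ensures the transition morphisms between the $\mathcal{N}(M, V)$ are quasi-isomorphisms, so the functoriality just established pins down a canonical morphism $\mathcal{N}_{0}(M) \to \mathcal{N}(M)$ in $D^{b}(\SmGp / X)$ independent of $V$. The one point to handle carefully is the commutativity of the defining square at the level of schemes; as described, that collapses at once to the very construction of $G_{(V)}$, so the real work is the bookkeeping around Prop.\ \ref{prop: extension by zero of etale group} and the glueing of $Y \setminus U$ with the zero section.
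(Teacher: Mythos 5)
The paper's own proof of this proposition is the single word ``Obvious,'' so there is nothing to compare approaches against; your task was to supply the routine details, and you have done so correctly. The construction in degree $-1$ and degree $0$, the reduction of commutativity to the level of $U$ via the mapping property of $\mathcal{Y}_{0}$, and the compatibility with transition maps $\mathcal{N}(M,V)\to\mathcal{N}(M,W)$ are all right, and combined with Prop.\ \ref{prop: functoriality of Neron models} they do pin down the morphism in $D^{b}(\SmGp/X)$.

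Two small remarks, neither a gap. First, the phrase ``the final open immersion $Y_{(V)} \into j_{\ast} Y_{(V)}$'' is only literally an open immersion when $U \subset X$ is open; in the case $U = \Spec K$ with $X$ having infinitely many points it is the pro-open ``generic fiber'' inclusion. The morphism exists in both cases, so the argument is unaffected, but the wording is slightly loose given that the first assertion of the proposition is meant to cover both situations. Second, you could streamline the degree-$(-1)$ step by invoking directly the universal property $\Hom_{X}(\mathcal{Y}_{0}, H) \isomto \Hom_{U}(Y, H \times_{X} U)$ stated in the paper right before the definition of $\mathcal{N}_{0}(M)$ (applied to $H = j_{\ast} Y_{(V)}$ and then to $H = j_{\ast} G_{(V)}$ for the commutativity check), rather than spelling out the disjoint-union pieces; that is essentially what your ``reduction-to-$U$ principle'' amounts to, and citing the stated isomorphism makes the bookkeeping around $(Y \setminus U) \sqcup X$ unnecessary.
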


\begin{proof}
	Obvious.
\end{proof}

In the next two propositions, consider the following situation:

\begin{Sit} \label{sit: base change}
		\[
			\begin{CD}
				U' @>> j' > X' \\
				@VVV @VVV \\
				U @> j >> X.
			\end{CD}
		\]
	is a cartesian diagram of schemes
	such that the both horizontal morphisms are as in
	Situation \ref{sit: to consider Neron models, U can be Spec K}.
\end{Sit}

In this situation,
we say that the formation of N\'eron models commutes with the base change $X' / X$
if the natural morphism
	\[
			\mathcal{N}(M, V) \times_{X} X'
		\to
			\mathcal{N}(M \times_{U} U', V \times_{U} U')
	\]
in $\Ch^{b}(\SmGp / X')$ is an isomorphism
for any $M \in \mathcal{M}_{U}$ and any good covering $V$ of $U$ with respect to $M$ and $X$.
In this case, the natural morphism
	\[
			\mathcal{N}_{0}(M) \times_{X} X'
		\to
			\mathcal{N}_{0}(M \times_{U} U')
	\]
of complexes of group schemes over $X'$ is an isomorphism
since the base change $(\var) \times_{X} X'$ preserves the maximal subgroup scheme with connected fibers.
The natural morphism
	\[
			\mathcal{N}(M) \times_{X} X'
		\to
			\mathcal{N}(M \times_{U} U')
	\]
in $D^{b}(\SmGp / X')$ is also an isomorphism.

\begin{Prop} \label{prop: Neron model commutes with regular base change}
	The formation of N\'eron models commutes with the base change $X' / X$
	if $X' \to X$ is a regular morphism.
	This happens, in particular, if $X' \to X$
	is an \'etale morphism, the localization of $X$ at a closed point,
	or the (strict) henselization of local $X$.
\end{Prop}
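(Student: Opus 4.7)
The plan is to show the term-wise identity
\[
\mathcal{N}(M, V) \times_{X} X' \isomto \mathcal{N}(M \times_{U} U', V \times_{U} U')
\]
in $\Ch^{b}(\SmGp / X')$; the statements for $\mathcal{N}_{0}$ and for $\mathcal{N}$ in $D^{b}(\SmGp / X')$ then follow from the remarks immediately preceding the proposition. Write $V' = V \times_{U} U'$. Since $X' \to X$ is regular and in particular flat, so is $U' \to U$, and Weil restriction along finite \'etale morphisms commutes with flat base change (\cite[A.5.2]{CGP15}). This yields $Y_{(V)} \times_{U} U' \cong (Y \times_{U} U')_{(V')}$ and $G_{(V)} \times_{U} U' \cong (G \times_{U} U')_{(V')}$; moreover, regular morphisms preserve normality (via Serre's $R_{1} + S_{2}$ criterion applied to the geometrically regular fibers), so $V'$ is again a good covering of $U'$ with respect to $M \times_{U} U'$ and $X'$.

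It remains to show that $j_{\ast} Y_{(V)}$ and $j_{\ast} G_{(V)}$ each commute with the base change $X' / X$. I would handle this first for smooth $X' \to X$, using representability. Any smooth $X'$-scheme $Z$ is automatically smooth over $X$, so
\[
(j_{\ast} Y_{(V)})(Z) = Y_{(V)}(Z \times_{X} U) = Y_{(V)}(Z \times_{X'} U') = j'_{\ast}(Y_{(V)} \times_{U} U')(Z),
\]
and the analogous computation holds for $G_{(V)}$ using Prop.\ \ref{prop: push of lattice by semiabelian is representable}. Reinterpreting these identifications as equalities of representable smooth $X'$-group schemes yields the claim in the smooth-base-change case.

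For a general regular morphism $X' \to X$ between Noetherian schemes, I would invoke N\'eron--Popescu desingularization to write $X'$ affine-locally as a filtered inverse limit of smooth $X$-schemes. Since the terms of $\mathcal{N}(M, V)$ are locally of finite presentation over $X$, their pullback to $X'$ commutes with such inverse limits, reducing the regular case to the smooth one. The three listed special cases are then directly covered: \'etale morphisms are a fortiori smooth, while localizations at closed points and (strict) henselizations are filtered limits of \'etale neighborhoods, for which the limit argument is elementary and does not require the full strength of Popescu's theorem.

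The main obstacle, in my view, is controlling the lattice pushforward $j_{\ast} Y_{(V)}$: its construction from the proof of Prop.\ \ref{prop: push of lattice is representable} as the maximal open subscheme of the normalization of $X$ in $Y_{(V)}$ that is \'etale over $X$ must itself be shown to commute with regular base change, and this rests on normality being preserved under regular morphisms together with the \'etale locus of a finite morphism being stable under arbitrary base change. Once the lattice part is secured, the semi-abelian part $j_{\ast} G_{(V)}$ follows by combining the classical compatibility of N\'eron lft models of semi-abelian varieties with regular base change (\cite[\S 10.1]{BLR90}) and the long exact sequence for $j_{\ast}$ applied to the extension $0 \to G \to G_{(V)} \to Y_{(/V)} \to 0$.
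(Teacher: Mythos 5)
Your overall strategy matches the paper's: reduce to a term-wise statement about $j_{\ast}T$ for a representable pushforward of a smooth $U$-scheme $T$, invoke Popescu to write $X'$ as a filtered inverse limit of smooth affine $X$-schemes, and finish by a spreading-out/limit argument. The observations about Weil restriction commuting with (flat) base change and about $V\times_U U'$ remaining a good covering are useful supplementary details not spelled out in the paper. However, the central step as you state it is imprecise, and the final paragraph introduces a detour that is both unnecessary and partly incorrect.

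The sentence ``their pullback to $X'$ commutes with such inverse limits, reducing the regular case to the smooth one'' does not in itself close the argument. What must actually be compared are the sections of $(j_{\ast}T)\times_X X'$ and of $j'_{\ast}(T\times_U U')$ over a smooth affine $X'$-scheme $X''$; to exploit the smooth case one must spread $X''$ out to smooth affine schemes $X''_\lambda$ over the approximating $X'_\lambda$ (and hence over $X$), and then use finite presentation of $j_{\ast}T$ and of $T$ to identify both sides with $\Gamma(X''\times_X U, T)$. This is precisely what the paper does explicitly; your phrasing elides the spreading-out step, which is the whole content.

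The ``main obstacle'' paragraph is misguided on two counts. First, the paper's uniform limit argument for $j_{\ast}T$ already handles the lattice pushforward $j_{\ast}Y_{(V)}$ via the universal property; there is no need to re-derive it from the explicit description as the maximal \'etale open of the normalization. Second, the claim that ``the \'etale locus of a finite morphism [is] stable under arbitrary base change'' is false in general: the \'etale locus can strictly grow under base change (e.g.\ under restriction to the generic point, the whole ramification locus disappears). Making that route work would require a substantially more delicate analysis specific to regular morphisms over Dedekind bases, and is not a trivial corollary of normality preservation. Likewise, the citation of \cite[\S 10.1]{BLR90} for regular base change compatibility of N\'eron lft models is overstated: that reference covers \'etale base change, localization and completion, not general regular morphisms, and the regular case is in any event handled uniformly by the same Popescu argument, making the appeal to BLR superfluous.
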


\begin{proof}
	The statement holds if $X' \to X$ is an open immersion.
	The statement is Zariski local on $X$ and $X'$.
	Hence we may assume that both $X$ and $X'$ are affine.
	
	By the structure of $\mathcal{N}(M, V)$,
	it is enough to show that
	$(j_{\ast} T) \times_{X} X' \isomto j'_{\ast}(T \times_{U} U')$
	for any smooth $U$-scheme $T$ such that
	$j_{\ast} T$ (resp.\ $j'_{\ast}(T \times_{U} U')$) is representable by
	a smooth $X$-scheme (resp.\ smooth $X'$-scheme).
	Since $X' \to X$ is a regular morphism between noetherian affine schemes,
	we know by Popescu's theorem \cite[Thm.\ 1.1]{Swa98} that
	$X'$ can be written as a filtered inverse limit $\invlim X'_{\lambda}$ of smooth affine $X$-schemes.
	Let $X''$ be a smooth affine $X'$-scheme.
	Then there exist an index $\lambda_{0}$ and
	a smooth affine $X'_{\lambda_{0}}$-scheme $X''_{\lambda_{0}}$
	such that $X'' \cong X' \times_{X'_{\lambda_{0}}} X''_{\lambda_{0}}$.
	Set $X''_{\lambda} = X'_{\lambda} \times_{X'_{\lambda_{0}}} X''_{\lambda_{0}}$
	for $\lambda \ge \lambda_{0}$.
	Then
		\begin{align*}
			&
					\Gamma(X'', (j_{\ast} T) \times_{X} X')
				=
					\dirlim_{\lambda \ge \lambda_{0}}
					\Gamma(X''_{\lambda} \times_{X} U, T)
			\\
			&	=
					\Gamma(X'' \times_{X} U, T)
				=
					\Gamma(X'', j'_{\ast}(T \times_{U} U')).
		\end{align*}
	Hence $(j_{\ast} T) \times_{X} X' \isomto j'_{\ast}(T \times_{U} U')$.
\end{proof}

\begin{Prop} \label{prop: Neron model commutes with completion}
	The formation of N\'eron models commutes with the base change $X' / X$
	if $X = \Spec A$ is local and $X' = \Spec \Hat{A}$ its completion.
\end{Prop}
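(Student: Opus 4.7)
The plan is to mimic the structure of the proof of Prop.\ \ref{prop: Neron model commutes with regular base change}. It suffices to show that the natural morphism $(j_{\ast} T) \times_{X} X' \to j'_{\ast}(T \times_{U} U')$ is an isomorphism for each smooth $U$-scheme $T$ appearing as a term of $\mathcal{N}(M, V)$, namely $T = Y_{(V)}$ and $T = G_{(V)}$. The statement is Zariski local on $X$, so we may assume $X = \Spec A$ with $A$ a local Dedekind domain throughout.

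First I would treat $T = G_{(V)}$, which is an extension of the lattice $Y_{(/V)}$ by the semi-abelian scheme $G$. When $U = \Spec K$, the sheaf $j_{\ast} G_{(V)}$ is, by Prop.\ \ref{prop: push of lattice by semiabelian is representable}, the N\'eron lft-model of $G_{(V)}$; the compatibility of N\'eron lft-models of such group schemes over a Dedekind local base with completion is classical (cf.\ \cite[Chap.\ 10]{BLR90}). When $U \subset X$ is a dense open subscheme, $j_{\ast} G_{(V)}$ is an open subgroup scheme of the N\'eron lft-model of the generic fiber, and the compatibility transfers because $X' \to X$ sends the complement of $U'$ onto that of $U$ and preserves fiberwise connectedness.

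Next I would treat $T = Y_{(V)}$, a lattice. The proof of Prop.\ \ref{prop: push of lattice is representable} identifies $j_{\ast} Y_{(V)}$ with the maximal $X$-\'etale open subscheme of $\overline{T}$, the normalization of $X$ in $T$. Since $X$ is Dedekind, $\overline{T} \to X$ is finite, and $\overline{T} \otimes_{A} \hat{A}$ decomposes as a product of the completions of the semi-local Dedekind ring $\overline{T}$ at the maximal ideals above that of $A$. Each such factor is the integral closure of $\hat{A}$ in the corresponding completion of the fraction field of $\overline{T}$, so the tensor product agrees with the normalization of $X'$ in $T \times_{U} U'$. Since passage to the maximal open subscheme \'etale over the base commutes with any flat base change, this concludes the lattice case.

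The main obstacle is the commutation of normalization with completion in the lattice case; this is where the Dedekind hypothesis on $X$ is essential, since it ensures that integral closures in finite separable extensions of $K$ are finite $A$-modules whose behavior under completion is controlled by the classical decomposition $L \otimes_{K} \hat{K} \cong \prod_{w \mid v} \hat{L}_{w}$. Once this is in place, combining the two cases yields the proposition.
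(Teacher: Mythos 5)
There is a genuine gap in the treatment of the $G_{(V)}$ case. You assert that the compatibility of N\'eron lft-models with completion is ``classical (cf. \cite[Chap.\ 10]{BLR90})'' for the group $G_{(V)}$, but the relevant result \cite[10.1/3]{BLR90} is stated for smooth $K$-group schemes \emph{of finite type}, and $G_{(V)}$ is an extension of the lattice $Y_{(/V)}$ by the semi-abelian $G$, hence only locally of finite type. The paper invokes \cite[10.1/3]{BLR90} only for the semi-abelian part $G$, and then supplies a separate argument for $G_{(V)}$: it compares the exact sequences
\[
  0 \to j_{\ast} G \to j_{\ast} G_{(V)} \to j_{\ast} Y_{(/V)} \to R^{1} j_{\ast} G
\]
over $X$ and $X'$, reduces the problem to showing that the kernel $C$ of $j_{\ast} Y_{(/V)} \to R^{1} j_{\ast} G$ is compatible with the base change, and establishes this by first reducing to the strictly henselian local case (via Prop.~\ref{prop: Neron model commutes with regular base change}) and then using the isomorphism $H^{1}(K, G) \isomto H^{1}(K', G')$ for smooth $G$ over henselian local bases, which is a nontrivial input from \cite[Prop.~3.5.3~(2)]{GGMB14}. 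Your proposal skips both the reduction to the strictly henselian case and the identification of this kernel, which is exactly where the difficulty lies: it is not just that $j_{\ast} G_{(V)}$ has a N\'eron mapping property, but that the relative component datum cut out by $R^{1}j_{\ast}G$ does not change upon completion.

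The lattice case is essentially handled correctly, though with two minor imprecisions worth flagging: the normalization $\overline{T}$ of $X$ in a lattice $T$ is an infinite disjoint union of schemes finite over $X$, hence locally finite but not finite over $X$; and the compatibility of the maximal \'etale open subscheme with base change requires \emph{faithful} flatness (to descend \'etaleness), not merely flatness, though $\Hat{A}/A$ is of course faithfully flat so no harm is done here.
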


\begin{proof}
	We may assume that $X$ is strictly henselian (local!) and $U = \Spec K$ its generic point.
	Write $U' = \Spec K'$.
	We denote the base change $(\var) \times_{X} X'$ of $X$-schemes by $(\var)'$.
	Let $V$ be a good covering of $U$ with respect to $M$ and $X$.
	The proof of Prop.\ \ref{prop: push of lattice is representable} shows that
	$j_{\ast} Y$ is the maximal open subscheme of the normalization of $X$ in $Y$
	\'etale over $X$.
	This description shows that $(j_{\ast} Y)' \isomto j'_{\ast} Y'$.
	Similarly, we have
	$(j_{\ast} Y_{(V)})' \isomto j'_{\ast} Y'_{(V')}$ and
	$(j_{\ast} Y_{(/ V)})' \isomto j'_{\ast} Y'_{(/ V')}$.
	Also, the formation of N\'eron (lft) model of semi-abelian varieties
	commutes with completion by \cite[10.1/3]{BLR90}.
	Thus $(j_{\ast} G)' \isomto j'_{\ast} G'$.
	We have exact sequences
		\begin{gather*}
					0
				\to
					j_{\ast} G
				\to
					j_{\ast} G_{(V)}
				\to
					j_{\ast} Y_{(/ V)}
				\to
					R^{1} j_{\ast} G,
			\\
					0,
				\to
					j_{\ast}' G'
				\to
					j_{\ast}' G'_{(V')}
				\to
					j_{\ast}' Y'_{(/ V')}
				\to
					R^{1} j_{\ast}' G'
		\end{gather*}
	in $\Ab(X_{\sm})$, $\Ab(X'_{\sm})$, respectively,
	and a commutative diagram with exact rows
		\[
			\begin{CD}
					0
				@>>>
					(j_{\ast} G)'
				@>>>
					(j_{\ast} G_{(V)})'
				@>>>
					(j_{\ast} Y_{(/ V)})'
			\\
			@.
			@VV \wr V
			@VVV
			@VV \wr V
			\\
					0
				@>>>
					j_{\ast}' G'
				@>>>
					j_{\ast}' G'_{(V')}
				@>>>
					j_{\ast}' Y'_{(/ V')}
			\end{CD}
		\]
	in $\SmGp / X'$.
	We want to show that the second vertical morphism is an isomorphism.
	Let $C \in \EtGp / X$ be the kernel of $j_{\ast} Y_{(/ V)} \to R^{1} j_{\ast} G$
	and $D \in \EtGp / X'$ the kernel of $j_{\ast}' Y'_{(/ V')} \to R^{1} j'_{\ast} G'$.
	The above diagram induces an injective morphism $C' \into D$ in $\EtGp / X'$,
	which is an isomorphism after $j'^{\ast}$.
	It is enough to show that $C' \isomto D$,
	for which it is enough to show that $\Gamma(X', C') \isomto \Gamma(X', D)$.
	We have $\Gamma(X', C') = \Gamma(X, C)$ since $C \in \EtGp / X$.
	We have a commutative diagram with exact rows
		\[
			\begin{CD}
					0
				@>>>
					\Gamma(X, C)
				@>>>
					\Gamma(U, Y_{(/ V)})
				@>>>
					H^{1}(U, G)
				\\
				@.
				@VVV
				@VV \wr V
				@VVV
				\\
					0
				@>>>
					\Gamma(X', D)
				@>>>
					\Gamma(U', Y'_{(/ V')})
				@>>>
					H^{1}(U', G').
			\end{CD}
		\]
	Since $G$ is smooth and $X$ henselian local,
	the right vertical homomorphism $H^{1}(U, G) \to H^{1}(U', G')$
	(or $H^{1}(K, G) \to H^{1}(K', G')$) is an isomorphism
	by \cite[Prop.\ 3.5.3 (2)]{GGMB14}.
	Thus $\Gamma(X, C) \isomto \Gamma(X', D)$.
	Hence $(j_{\ast} G_{(V)})' \isomto j_{\ast}' G'_{(V')}$.
	Therefore $\mathcal{N}(M, V)' \isomto \mathcal{N}(M', V')$.
\end{proof}

There are slightly more flexible representatives of $\mathcal{N}(M)$ in $\Ch^{b}(\SmGp / U)$
than $\mathcal{N}(M, V)$.

\begin{Prop} \label{prop: Neron model by flasque embedding}
	Let $M = [Y \to G] \in \mathcal{M}_{U}$.
	Let $0 \to Y \to Y' \to Y'' \to 0$ be an exact sequence of lattices over $U$
	such that $R^{1} j_{\ast} Y' = 0$.
	Denote the cokernel of the diagonal embedding $Y \into Y' \oplus G$ by $G'$.
	Then there exists a canonical isomorphism
	$\mathcal{N}(M) \cong [j_{\ast} Y' \to j_{\ast} G']$ in $D^{b}(\SmGp / X)$.
\end{Prop}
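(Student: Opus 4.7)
The plan is to proceed in three steps: verify that $M \to [Y' \to G']$ is a quasi-isomorphism in $\Ch^b(\SmGp/U)$; show that $[j_\ast Y' \to j_\ast G']$ is a complex in $\Ch^b(\SmGp/X)$ representing $\tau_{\le 0} R j_\ast M$ in $D^b(X_\sm)$; and finally upgrade this identification to a canonical isomorphism in $D^b(\SmGp/X)$ via a zigzag of quasi-isomorphisms inside $\Ch^b(\SmGp/X)$.

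For the first two steps, the morphism $M \to [Y' \to G']$ given by $Y \into Y'$ in degree $-1$ and the composite $G \to Y' \oplus G \onto G'$ in degree $0$ is a quasi-isomorphism by the same argument as for $M \to M_{(V)}$: it fits into a morphism from $0 \to Y \to Y' \to Y'' \to 0$ to $0 \to G \to G' \to Y'' \to 0$ that is the identity on the right column. In particular $G' \in \SmGp/U$ by Prop.\ \ref{prop: extension of smooth by smooth is smooth}. Over $X$, Prop.\ \ref{prop: push of lattice is representable} gives $j_\ast Y' \in \EtGp/X$, and the method of Prop.\ \ref{prop: push of lattice by semiabelian is representable} applied to $0 \to G \to G' \to Y'' \to 0$ yields $j_\ast G' \in \SmGp/X$. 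Using $R^1 j_\ast Y' = 0$, the $4$-term exact sequence exactly as in the proof of Prop.\ \ref{prop: Neron with V represents truncated push} identifies $[j_\ast Y' \to j_\ast G']$ with $\tau_{\le 0} R j_\ast M$ in $D^b(X_\sm)$.

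The main obstacle is to make this identification canonical in $D^b(\SmGp/X)$ rather than only in $D^b(X_\sm)$, which requires an explicit zigzag inside $\Ch^b(\SmGp/X)$. To this end, choose a good covering $V$ of $U$ trivializing both $Y$ and $Y'$, set $Y'_{(V)} := \Res_{V/U}(Y' \times_U V)$, and let $G^{(V)}$ be the cokernel of the diagonal $Y \into Y'_{(V)} \oplus G$ in $\SmGp/U$. Since $V$ is good for $Y'$ as well, Prop.\ \ref{prop: induced lattice has trivial first push} gives $R^1 j_\ast Y'_{(V)} = 0$, so the argument of the previous paragraph applies equally to $[Y'_{(V)} \to G^{(V)}]$. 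The functorial maps
\[
	[Y_{(V)} \to G_{(V)}] \longrightarrow [Y'_{(V)} \to G^{(V)}] \longleftarrow [Y' \to G']
\]
in $\Ch^b(\SmGp/U)$, induced respectively by the Weil-restriction of $Y \to Y'$ and by the adjunction unit $Y' \to Y'_{(V)}$, become quasi-isomorphisms in $\Ch^b(\SmGp/X)$ after applying $j_\ast$ term-wise, since all three complexes represent $\tau_{\le 0} R j_\ast M$ and the maps respect these identifications. Inverting the right arrow in $D^b(\SmGp/X)$ produces the desired isomorphism $\mathcal{N}(M) \cong [j_\ast Y' \to j_\ast G']$, and its independence of the choice of $V$ follows from the refinement-plus-homotopy argument already used in the proof of Prop.\ \ref{prop: functoriality of Neron models}.
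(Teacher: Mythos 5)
Your proposal is correct but takes a genuinely different route from the paper. The paper avoids the intermediate complex $[Y'_{(V)} \to G^{(V)}]$ entirely: it chooses a retraction $Y' \times_U V \onto Y \times_U V$ of the inclusion (which exists because the cokernel $Y'' \times_U V$ is free), translates it by Weil-restriction adjunction into a morphism $Y' \to Y_{(V)}$ extending $Y \into Y_{(V)}$, and thereby obtains a \emph{direct} morphism $[Y' \to G'] \to [Y_{(V)} \to G_{(V)}]$ in $\Ch^b(\SmGp/U)$. After $j_\ast$ this is a quasi-isomorphism by the same ``both sides represent $\tau_{\le 0} R j_\ast M$'' comparison you use, and independence of the chosen retraction is shown by the same homotopy trick as in Prop.~\ref{prop: functoriality of Neron models}. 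Your zigzag through $[Y'_{(V)} \to G^{(V)}]$ trades one kind of auxiliary choice for another: it spares you from choosing a splitting (both arrows in your roof are canonical once $V$ is fixed), at the cost of introducing an extra complex whose terms must be checked to lie in $\SmGp/U$ and whose degree $-1$ term must be checked to satisfy $R^1 j_\ast Y'_{(V)} = 0$ --- which you do correctly via goodness of $V$ for $Y'$. The paper's approach is shorter; yours is arguably conceptually cleaner in that the zigzag is pointwise functorial. Both handle independence of $V$ by the same refinement-plus-homotopy argument, and both are rigorous.
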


\begin{proof}
	Let $V$ be a finite \'etale covering of $U$
	such that $Y'$ (and hence $Y$ and $Y''$) is trivial over $V$.
	Choose a retraction (left-inverse) $Y' \times_{U} V \onto Y \times_{U} V$
	to the inclusion $Y \times_{U} V \into Y' \times_{U} V$.
	Such a retraction exists since the cokernel $Y'' \times_{U} V$ is a lattice.
	This retraction corresponds to a morphism $Y' \to Y_{(V)}$
	such that the composite $Y \to Y' \to Y_{(V)}$ is the natural inclusion.
	Hence we have a morphism $[Y' \to G'] \to [Y_{(V)} \to G_{(V)}]$ in $\Ch^{b}(\SmGp / U)$.
	We have $[j_{\ast} Y' \to j_{\ast} G'] \cong \tau_{\le 0} R j_{\ast} M$ in $D^{b}(X_{\sm})$
	since $R^{1} j_{\ast} Y' = 0$.
	Hence $[j_{\ast} Y' \to j_{\ast} G'] \to [j_{\ast} Y_{(V)} \to j_{\ast} G_{(V)}]$ in $\Ch^{b}(\SmGp / X)$
	is a quasi-isomorphism.
	Thus $[j_{\ast} Y' \to j_{\ast} G'] \isomto \mathcal{N}(M)$ in $D^{b}(\SmGp / X)$.
	A different choice of a retraction $Y' \times_{U} V \onto Y \times_{U} V$
	gives a morphism $[Y' \to G'] \to [Y_{(V)} \to G_{(V)}]$
	homotopic to the previous one
	by the same argument as the proof of Prop.\ \ref{prop: functoriality of Neron models}.
	Hence the isomorphism $[j_{\ast} Y' \to j_{\ast} G'] \isomto \mathcal{N}(M)$ in $D^{b}(\SmGp / X)$ is canonical.
\end{proof}

Here is a simple example where the N\'eron model of a $1$-motive arises geometrically
from a relative curve over $X$ with an \'etale local section over $U$.

\begin{Prop} \label{prop: Neron model from relative curve}
	Assume that $X$ is excellent and the residue fields of $X \setminus U$ are perfect.
	Let $S \to X$ be a proper flat morphism with $1$-dimensional geometrically connected fibers from a regular scheme $S$
	such that $S_{U} = S \times_{X} U \to U$ is smooth.
	Let $T \to X$ be a finite flat morphism from a regular scheme $T$
	such that $T_{U} = T \times_{X} U \to U$ is \'etale.
	Let $s \colon T \to S$ be an $X$-morphism.
	
	Denote by $Y$ the kernel of the norm map $\Res_{T_{U} / U} \Z \onto \Z$ and
	by $A$ the relative Jacobian $\Pic_{S_{U} / U}^{0}$.
	Let $\Res_{T_{U} / U} \Z \to \Pic_{S_{U} / U}$ be the morphism in $\SmGp / U$
	induced by the restriction $T_{U} \to S_{U}$ of $s$
	and $Y \to A$ its restriction.
	Set $M = [Y \to A] \in \mathcal{M}_{U}$.
	Denote by $(\Pic_{S / X})_{\sep}$ the maximal separated quotient of $\Pic_{S / X}$
	(\cite[(8.0.1)]{Ray70a}).
	
	Then the morphism $\Res_{T_{U} / U} \Z \to \Pic_{S_{U} / U}$ in $\SmGp / U$
	uniquely extends to a morphism
	$\Res_{T / X} \Z \to (\Pic_{S / X})_{\sep}$ in $\SmGp / X$,
	and we have a canonical isomorphism
		\[
				\mathcal{N}(M)
			\cong
				[\Res_{T / X} \Z \to (\Pic_{S / X})_{\sep}]
		\]
	in $D^{b}(\SmGp / X)$.
\end{Prop}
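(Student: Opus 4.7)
My plan is to apply Prop.\ \ref{prop: Neron model by flasque embedding} to the defining short exact sequence of lattices $0 \to Y \to \Res_{T_U/U}\Z \to \Z \to 0$ (so $Y' = \Res_{T_U/U}\Z$ and $Y'' = \Z$). Two identifications of pushforwards underlie the construction of the extended morphism. First, $j_{\ast}\Res_{T_U/U}\Z \cong \Res_{T/X}\Z$: decomposing the regular scheme $T$ into its integral regular components (each meeting $T_U$ densely by flatness of $T/X$), applying Prop.\ \ref{prop: push of lattice is representable} componentwise, and using that smooth base change preserves regularity (so $\pi_0(T \times_X X') = \pi_0((T \times_X X')_U)$ for any smooth $X'/X$), one identifies $j_{\ast}\Res_{T_U/U}\Z$ with the separated \'etale $X$-scheme $\Res_{T/X}\Z$. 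Second, $j_{\ast}\Pic_{S_U/U} \cong (\Pic_{S/X})_{\sep}$: this is Raynaud's theorem identifying the maximal separated quotient of the relative Picard scheme with the N\'eron lft model of its generic fiber, using the regularity of $S$, the proper flatness of $S/X$ with geometrically connected fibers, and the perfect-residue-field hypothesis on $X \setminus U$. Combined with the functoriality of $j_{\ast}$, these identifications yield the extended morphism $\Res_{T/X}\Z \to (\Pic_{S/X})_{\sep}$; uniqueness follows from schematic density of $U$ in $X$ and separation of the target.

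Next I verify the hypothesis $R^1 j_{\ast}\Res_{T_U/U}\Z = 0$. By Prop.\ \ref{prop: Neron model commutes with regular base change}, it suffices, for the strict henselization $X'$ of any smooth $X$-scheme at any point, to show $H^1(X' \times_X U, \Res_{T_U/U}\Z) = 0$. The Leray spectral sequence for the finite \'etale morphism $T_U \to U$ (whose pushforward is exact on abelian sheaves) identifies this with $H^1((T \times_X X')_U, \Z)$. Since $T \times_X X'$ is regular (smooth base change of a regular scheme), so is its open subscheme $(T \times_X X')_U$; a regular noetherian scheme has profinite \'etale fundamental group, so $H^1(\var, \Z) = \Hom_{\mathrm{cts}}(\pi_1, \Z) = 0$.

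With the hypotheses of Prop.\ \ref{prop: Neron model by flasque embedding} verified, we have $\mathcal{N}(M) \cong [j_{\ast}\Res_{T_U/U}\Z \to j_{\ast} G']$ in $D^{b}(\SmGp/X)$, where $G' = \Coker(Y \to \Res_{T_U/U}\Z \oplus A)$ with respect to the anti-diagonal embedding. It remains to identify $G' \cong \Pic_{S_U/U}$. Since $Y \to A$ is by construction the restriction of $\Res_{T_U/U}\Z \to \Pic_{S_U/U}$, the morphisms $\Res_{T_U/U}\Z \to \Pic_{S_U/U}$ and $A \hookrightarrow \Pic_{S_U/U}$ agree on $Y$, producing a canonical morphism $G' \to \Pic_{S_U/U}$. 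Fitting $G'$ and $\Pic_{S_U/U}$ into the exact sequences $0 \to A \to G' \to \Z \to 0$ and $0 \to A \to \Pic_{S_U/U} \to \Z \to 0$ (the latter being the degree sequence of a relative curve with geometrically connected fibers), the induced map on $\Z$-quotients is the identity: on geometric fibers and over an \'etale cover trivializing $T_U$, each section of $T_U \to U$ is a relative effective Cartier divisor of degree one on $S_U/U$, so the total degree of $\sum n_j [s_j]$ coincides with the norm $\sum n_j$. The five lemma then gives $G' \cong \Pic_{S_U/U}$. Applying $j_{\ast}$ termwise and invoking the identifications of the first paragraph yields the asserted representative $[\Res_{T/X}\Z \to (\Pic_{S/X})_{\sep}]$ of $\mathcal{N}(M)$. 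The principal obstacle is the invocation of Raynaud's theorem on N\'eron models of Picard schemes, where the hypotheses of regularity of $S$ and of perfect residue fields crucially enter.
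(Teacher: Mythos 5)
Your proposal is correct and follows essentially the same route as the paper: both apply Prop.\ \ref{prop: Neron model by flasque embedding} to the exact sequence $0 \to Y \to \Res_{T_U/U}\Z \to \Z \to 0$, identify the auxiliary cokernel with $\Pic_{S_U/U}$ via the degree sequence, verify $R^1 j_\ast \Res_{T_U/U}\Z = 0$ by reducing to strict henselizations and using that a regular scheme has vanishing $H^1(\var,\Z)$, and then identify the two pushforwards. Two small remarks. First, for the $R^1$-vanishing you invoke Prop.\ \ref{prop: Neron model commutes with regular base change}, but that proposition is about formation of N\'eron models commuting with base change, not about computing $R^1 j_\ast$; the intended justification (and the one the paper uses in the proof of Prop.\ \ref{prop: induced lattice has trivial first push}) is simply that $R^1 j_\ast$ is the sheafification of $X' \mapsto H^1(X' \times_X U, \var)$, so one may pass to strict henselizations of smooth $X$-schemes at points. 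Second, for $(\Pic_{S/X})_{\sep} \cong j_\ast \Pic_{S_U/U}$ you invoke Raynaud's theorem on maximal separated quotients of Picard functors as N\'eron lft models, whereas the paper argues more directly: by the perfect residue field hypothesis and the reference to Grothendieck's Brauer group paper, the map $\Pic_{S/X} \to j_\ast \Pic_{S_U/U}$ is surjective with skyscraper kernel, hence induces the isomorphism after passing to the maximal separated quotient. Both routes rely on cohomological flatness in dimension zero (guaranteed by perfect residue fields or by the index-one condition the paper mentions afterward), so they are compatible; the paper's version is slightly more self-contained and also makes transparent exactly where the residue-field hypothesis is used, which is the content of the remark following the proposition.
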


\begin{proof}
	We have a commutative diagram with exact rows
		\[
			\begin{CD}
					0
				@>>>
					Y
				@>>>
					\Res_{T_{U} / U} \Z
				@>>>
					\Z
				@>>>
					0
				\\
				@. @VVV @VVV @| @.
				\\
					0
				@>>>
					A
				@>>>
					\Pic_{S_{U} / U}
				@> \deg >>
					\Z
				@>>>
					0
			\end{CD}
		\]
	in $\SmGp / U$.
	Hence the cokernel of the diagonal embedding
	$Y \into \Res_{T_{U} / U} \Z \oplus A$ is $\Pic_{S_{U} / U}$.
	We have $R^{1} j_{\ast} \Res_{T_{U} / U} \Z = 0$
	by the same argument as the proof of
	Prop.\ \ref{prop: induced lattice has trivial first push}.
	Hence we have a canonical isomorphism
		\[
				\mathcal{N}(M)
			\cong
				\bigl[
						j_{\ast} \Res_{T_{U} / U} \Z
					\to
						j_{\ast} \Pic_{S_{U} / U}
				\bigr]
		\]
	in $D^{b}(\SmGp / X)$
	by Prop.\ \ref{prop: Neron model by flasque embedding}.
	We have $j_{\ast} \Res_{T_{U} / U} \Z \cong \Res_{T / X} \Z$
	since $\Res_{T / X} \Z$ is an \'etale $X$-scheme
	and the pushforward of $\Z$ by the morphism $T_{U, \et} \into T_{\et}$ is $\Z$.
	By the assumption on the residue fields of $X \setminus U$ and \cite[Eq.\ (4.10 bis)]{Gro68},
	we know that the natural morphism
	$\Pic_{S / X} \to j_{\ast} \Pic_{S_{U} / U}$ is surjective in $\Ab(X_{\et})$
	whose kernel is a skyscraper \'etale sheaf.
	Hence $(\Pic_{S / X})_{\sep} \isomto j_{\ast} \Pic_{S_{U} / U}$ in $\SmGp / X$.
	This proves the proposition.
\end{proof}

The assumption ``the residue fields of $X \setminus U$ are perfect'' is only used
to ensure that $\Pic_{S / X} \to j_{\ast} \Pic_{S_{U} / U}$ is surjective in $\Ab(X_{\et})$.
This latter condition is satisfied also if
$S \times_{X} \Order_{X, x}^{\mathit{sh}} \to \Spec \Order_{X, x}^{\mathit{sh}}$
has a section (or slightly weaker, has index $1$) for any $x \in X \setminus U$,
where $\Order_{X, x}^{\mathit{sh}}$ is the strict henselian local ring at $x$,
as stated before \cite[Eq.\ (4.13)]{Gro68}.


\section{Component complexes}
\label{sec: Component complexes}

Recall from \cite[\S 2.4]{Suz18} that a premorphism of sites $f \colon S' \to S$
between sites defined by pretopologies is a functor $f^{-1}$
from the underlying category of $S$ to the underlying category of $S'$
sending covering families to covering families
such that $f^{-1}(T_{2} \times_{T_{1}} T_{3}) = f^{-1} T_{2} \times_{f^{-1} T_{1}} f^{-1} T_{3}$
whenever $T_{2} \to T_{1}$ appears in a covering family.
Such a functor $f^{-1}$ is called a morphism of topologies from $S$ to $S'$
in \cite[Def.\ 2.4.2]{Art62}.
By \cite[Lem.\ 3.7.2]{Suz13},
the pullback $f^{\ast} \colon \Ab(S) \to \Ab(S')$ admits a left derived functor
$L f^{\ast} \colon D(S) \to D(S')$,
which is left adjoint to $R f_{\ast} \colon D(S') \to D(S)$.
Statement \eqref{item: sheafified derived push pull adjunction} in the following proposition
is already used in the proof of \cite[Lem.\ 3.2.6]{Suz18}.

\begin{Prop} \label{prop: premorphisms preserving finite products}
	Let $f \colon S' \to S$ be a premorphism of sites defined by pretopologies.
	Assume that the underlying category of $S$ has finite products and
	the underlying functor $f^{-1}$ of $f$ commutes with these products.
	\begin{enumerate}
		\item \label{item: sheafified derived push pull adjunction}
			There exists a canonical isomorphism
				\[
						R f_{\ast} R \sheafhom_{S'}(L f^{\ast} F, F')
					\cong
						R \sheafhom_{S}(F, R f_{\ast} F')
				\]
			in $D(S)$ functorial in $F \in D(S)$ and $F' \in D(S')$.
		\item \label{item: derived pull of representables}
			Denote the sheafification functor for $S$ or $S'$ by $(\var)^{\sim}$.
			Let $F$ be a bounded above complex of representable presheaves of abelian groups on $S$.
			Then we have $L f^{\ast} \Tilde{F} = f^{\ast} \Tilde{F} = (f^{-1} F)^{\sim}$ in $D(S')$,
			where $f^{\ast}$ and $f^{-1}$ in the middle and right-hand sides are applied term-wise.
	\end{enumerate}
\end{Prop}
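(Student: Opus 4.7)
The plan for \eqref{item: sheafified derived push pull adjunction} is to evaluate sections on each $T \in S$ and reduce the desired isomorphism to the derived adjunction $L f^{\ast} \dashv R f_{\ast}$ on slice sites. The finite-products hypothesis on $f^{-1}$ lets one restrict $f$ to a premorphism of sites
\[
	f_{T} \colon S'_{/f^{-1} T} \to S_{/T}
\]
for each $T \in S$. Using the identification $R \sheafhom_{S}(A, B)(T) \cong R \Hom_{S_{/T}}(A|_{T}, B|_{T})$ and its $S'$-analog, together with the compatibilities $(R f_{\ast} F')|_{T} \cong R f_{T \ast}(F'|_{f^{-1} T})$ and $(L f^{\ast} F)|_{f^{-1} T} \cong L f_{T}^{\ast}(F|_{T})$ that follow from the constructions, the desired sheafified adjunction reduces to the ordinary derived adjunction cited from \cite{Suz13} applied on each slice. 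Naturality of all these identifications in $T$ upgrades the section-wise statement to the claimed isomorphism of objects in $D(S)$.

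For \eqref{item: derived pull of representables}, I will treat the two equalities separately, reducing term-wise by the bounded-above hypothesis on $F$. The right-hand equality $f^{\ast} \Tilde{F} = (f^{-1} F)^{\sim}$ reduces to the single claim $f^{\ast} \Tilde{h_{T}} \cong \Tilde{h_{f^{-1} T}}$ for a representable presheaf of abelian groups $h_{T}$: at the presheaf level this is $f^{\ast}_{\mathrm{pre}} h_{T} = h_{f^{-1} T}$ by Yoneda applied to the adjunction $f^{\ast}_{\mathrm{pre}} \dashv f_{\ast, \mathrm{pre}}$, and sheafification commutes with $f^{\ast}$ since both are left adjoints. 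For the left-hand equality $L f^{\ast} \Tilde{F} = f^{\ast} \Tilde{F}$, the point is that sheafifications of representables of abelian groups are $L f^{\ast}$-acyclic: invoking the construction of $L f^{\ast}$ from \cite[Lem.\ 3.7.2]{Suz13}, which proceeds via resolutions by precisely such ``free'' sheaves, a bounded-above complex of them already computes the derived pullback without further resolution.

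The main obstacle is the acyclicity claim underlying the first equality of \eqref{item: derived pull of representables}. Confirming it requires tracing through the construction in the cited reference to verify that bounded-above complexes of sheafified representable presheaves form a class on which $L f^{\ast}$ collapses to $f^{\ast}$; the finite-products hypothesis on $f^{-1}$ is exactly what ensures this class is preserved under $f^{\ast}$ and that the necessary exactness properties hold at the presheaf level before sheafification. Everything else in the argument, including the slice-site formalism used for part \eqref{item: sheafified derived push pull adjunction}, is standard once this acyclicity is in hand.
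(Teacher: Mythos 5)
The main gap is in your handling of \eqref{item: derived pull of representables}. You frame the equality $L f^{\ast}\tilde{F} = f^{\ast}\tilde{F}$ as an acyclicity statement, asserting that $\tilde{F}$ is among the objects used to resolve things when $L f^{\ast}$ is constructed. That is not what happens. Mac Lane's resolution (the method of \cite[Rmk.\ (5.1.2)]{Suz14}, which the paper invokes) resolves $\tilde{F}$ by sheafified \emph{free} abelian presheaves on the finite powers $F^{m}$; a sheafified representable presheaf of abelian groups is not of that free form, so $\tilde{F}$ does not appear among the resolution's terms and no acyclicity of $\tilde{F}$ falls out of the construction. What actually has to be checked is that applying $f^{\ast}$ term-wise to Mac Lane's resolution of $\tilde{F}$ yields Mac Lane's resolution of $(f^{-1}F)^{\sim}$, and unwinding this reduces to $f^{\ast\set}(\tilde{F}^{m}) = ((f^{-1}F)^{\sim})^{m}$ for all $m \ge 0$ — exactly where the hypothesis that $f^{-1}$, and hence $f^{\ast\set}$, commutes with finite products enters. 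You flag this, under the mislabel of an acyclicity claim, as "the main obstacle" and defer to the reference, but that obstacle is the entire content of the item and your framing points at a lemma one would not want to prove. A related subtlety affects your Yoneda argument for $f^{\ast}\tilde{F} = (f^{-1}F)^{\sim}$: for abelian presheaves $\Hom(h_{T}, Q) \ne Q(T)$ in general, so one must run Yoneda for presheaves of \emph{sets} and then transport the group structure — again via commutativity of $f^{\ast\set}$ with finite products, not the abelian-presheaf adjunction.

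For \eqref{item: sheafified derived push pull adjunction}, your slice-site reduction is a genuinely different route from the paper's. The paper observes that the claimed adjunction is \cite[Thm.\ 18.6.9 (iii)]{KS06} when $f$ is a morphism of sites, and that the only step in that proof requiring exactness of $f^{\ast\set}$ — namely \cite[Prop.\ 17.6.7 (i)]{KS06} — uses only commutativity with finite products. Your route requires compatibilities like $(L f^{\ast}F)|_{f^{-1}T} \cong L f_{T}^{\ast}(F|_{T})$; since $f^{\ast}$ is not exact, this is a substantive claim about interchanging a left-derived functor with slice restriction, not a formal consequence of the underived square, and you leave it unproved. The slice route can likely be made to work, but the paper's argument sidesteps these verifications by localizing the hypothesis inside an existing proof rather than localizing the conclusion.
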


\begin{proof}
	First note that the pullback $f^{\ast \set} \colon \Set(S) \to \Set(S')$ for sheaves of sets
	commutes with finite products.
	Indeed, for $F \in \Set(S)$, the sheaf $f^{\ast \set} F$ is the sheafification of the presheaf
	that sends $X' \in S'$ to the direct limit of $F(X)$,
	where $X$ runs through objects of $S$ together with morphisms $X' \to f^{-1} X$ in $S'$.
	Using this presentation and the assumption on $f^{-1}$,
	it is routine to check that
	the assignment $F \mapsto f^{\ast \set} F$ commutes with finite products.
	(If one wants a reference, see \cite[Thm.\ 1.5, Ex.\ 3.1]{BD77}.)
	
	Second, if $F$ is a complex of representable presheaves of abelian groups on $S$,
	then $f^{\ast} \Tilde{F} = f^{\ast \set} \Tilde{F} = (f^{-1} F)^{\sim}$ in $\Ch(S')$.
	
	\eqref{item: sheafified derived push pull adjunction}
	This is \cite[Thm.\ 18.6.9 (iii)]{KS06} when $f$ is a morphism of sites.
	The only part that needs exactness of $f^{\ast \set}$ is the proof of \cite[Prop.\ 17.6.7 (i)]{KS06}.
	It does not need full exactness but only commutativity with finite products.
	
	\eqref{item: derived pull of representables}
	We have a spectral sequence
		\[
					E_{1}^{i j}
				=
					L_{-j} f^{\ast} \Tilde{F}^{i}
			\Longrightarrow
				H^{i + j} L f^{\ast} \Tilde{F},
		\]
	where $\Tilde{F}^{i}$ is the $i$-th term of the complex $\Tilde{F}$.
	Hence we may assume that $F$ has a term only in degree zero.
	By the method of proof of \cite[Rmk.\ (5.1.2)]{Suz14} (i.e.\ using Mac Lane's resolution of $F$),
	the statement to prove reduces to the statement
	$f^{\ast \set}(\Tilde{F}^{m}) = ((f^{\ast} F)^{\sim})^{m}$
	for all $m \ge 0$,
	where the upper scripts $m$ denote products of $m$ copies.
	This statement is true by the assumption on $f^{-1}$
	and the two remarks above.
\end{proof}

\begin{Prop} \label{prop: derived pull of smooth groups}
	Let $f \colon Y \to X$ be a morphism of schemes
	and $f \colon Y_{\sm} \to X_{\sm}$ the induced premorphism of sites.
	Then for any bounded above complex $F$ of smooth group algebraic spaces over $X$,
	we have $L f^{\ast} F = F \times_{X} Y$ (term-wise fiber product).
\end{Prop}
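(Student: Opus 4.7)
The plan is to follow the Mac Lane resolution argument used in the proof of Proposition~\ref{prop: premorphisms preserving finite products}~\eqref{item: derived pull of representables}. First, via the hypercohomology spectral sequence $E_{1}^{ij} = L_{-j} f^{\ast} F^{i} \Rightarrow H^{i+j} L f^{\ast} F$, I would reduce to the case where $F = G$ is a single smooth group algebraic space over $X$ concentrated in degree zero; one must then show $L f^{\ast} G \cong G \times_{X} Y$ in $D(Y_{\sm})$.

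For any smooth $X$-scheme $X'$, the underlying functor $f^{-1}$ of the premorphism $f$ sends $X'$ to $X' \times_{X} Y$, and hence commutes with fibered products over $X$, i.e., with finite products in $X_{\sm}$. So Proposition~\ref{prop: premorphisms preserving finite products} applies to $f$. If $G$ happens to be a smooth $X$-scheme, then $G$ represents a presheaf on $X_{\sm}$ and part~\eqref{item: derived pull of representables} of that proposition gives directly $L f^{\ast} \Tilde{G} = (f^{-1} G)^{\sim} = (G \times_{X} Y)^{\sim}$, as required.

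For a general smooth group algebraic space $G$, I would apply Mac Lane's resolution as in that proof: this resolves $G$ in $\Ab(X_{\sm})$ through sheaves built from the iterated self-products $G^{m} = G \times_{X} \cdots \times_{X} G$, each itself a smooth group algebraic space. The problem then reduces to verifying the identity $f^{\ast \set}(G^{m}) \cong (G \times_{X} Y)^{m}$ of sheaves of sets on $Y_{\sm}$ for every $m \ge 0$. Since $f^{\ast \set}$ commutes with finite products of sheaves of sets (established in the proof of Proposition~\ref{prop: premorphisms preserving finite products} from the hypothesis on $f^{-1}$), this reduces further to the case $m = 1$, i.e., to the identity $f^{\ast \set} G \cong G \times_{X} Y$.

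The main obstacle is this last identity for a general algebraic space $G$, since $f^{\ast \set} G$ is \emph{a priori} a sheafified filtered colimit over representables, and no universal property of $G$ as a representable presheaf on $X_{\sm}$ is directly available. I would handle it by fixing an étale presentation $R \rightrightarrows G' \onto G$ with $G'$ and $R$ smooth $X$-schemes, applying the scheme case to $G'$ and $R$ (where representability makes the identity immediate), and using that $f^{\ast \set}$, being a left adjoint on sheaves of sets, preserves the coequalizer exhibiting $G$ as the quotient of $G'$ by the étale equivalence relation $R$.
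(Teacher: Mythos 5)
Your proposal is correct in outline but takes a genuinely different route from the paper. The paper sidesteps the non-representability of algebraic spaces in $X_{\sm}$ by a change of site: it replaces $X_{\sm}$ with the site $X_{\sm'}$ of smooth algebraic spaces over $X$ (same topology), notes that the identity functor $X_{\sm'} \to X_{\sm}$ induces an equivalence of topoi, and observes that $L f^\ast$ is intrinsic to the topos. In $X_{\sm'}$ every term of $F$ is representable, so Prop.~\ref{prop: premorphisms preserving finite products}~\eqref{item: derived pull of representables} applies verbatim with no further work. Your proposal instead re-runs the Mac Lane resolution argument directly for the algebraic space $G$ on $X_{\sm}$ and handles the base case $m = 1$ via an \'etale presentation $R \rightrightarrows G' \onto G$ by smooth $X$-schemes, using that $f^{\ast\set}$, being a left adjoint, commutes with coequalizers. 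This works, but trades the paper's one-line equivalence-of-topoi observation for several technical verifications that you leave implicit: that one may choose $G'$ separated so that $R = G' \times_G G'$ is a scheme (via the locally-quasi-finite-and-separated criterion for algebraic spaces over a scheme), that $G$ is in fact the coequalizer of $R \rightrightarrows G'$ in $\Set(X_{\sm})$ and not merely on the big \'etale site (which follows from exactness of restriction, using that \'etale over smooth is smooth), and similarly on $Y_{\sm}$ after base change. The paper's approach buys a cleaner, shorter proof by making representability literal; your approach is more elementary and self-contained but requires these extra foundational checks to be completed.
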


\begin{proof}
	Consider the category of smooth algebraic spaces over $X$ with 
	morphisms of algebraic spaces over $X$
	endowed with the \'etale topology.
	Let $X_{\sm'}$ be the resulting site.
	The identity functor defines a morphism of sites
	$X_{\sm'} \to X_{\sm}$ inducing an equivalence on the topoi.
	Let $f' \colon Y_{\sm'} \to X_{\sm'}$ be the premorphism of sites induced by $f$.
	We have $L f'^{\ast} = L f^{\ast}$ since these functors are intrinsic to the topoi.
	Since the terms of $F$ are now representable in $X_{\sm'}$,
	Prop.\ \ref{prop: premorphisms preserving finite products} shows that
	$L f^{\ast} F = f^{\ast} F = F \times_{X} Y$.
\end{proof}

In the rest of this paper, we consider the following situation:
\begin{Sit} \label{sit: to consider Neron models, U open and Z closed} \BetweenThmAndList
	\begin{itemize}
		\item
			$X$ is an irreducible Dedekind scheme with function field $K$.
		\item
			$U$ is a dense open subscheme of $X$
			with complement $Z$ with reduced induced structure.
		\item
			$j \colon U \into X$ and $i \colon Z \into X$ are the inclusion morphisms.
		\item
			$j \colon U_{\sm} \to X_{\sm}$ and $i \colon Z_{\sm} \to X_{\sm}$
			are the premorphisms of sites induced by $j$ and $i$, respectively.
	\end{itemize}
\end{Sit}
The scheme $Z$ is a finite set of closed points of $X$.
Note that we disallow $U = \Spec K$ from now on (if $X$ has infinitely many points).
Hence the connected N\'eron model $\mathcal{N}_{0}(M)$ of $M \in \mathcal{M}_{U}$
is in $\Ch^{b}(\SmGp / X)$.
Let $\EtGpf / Z \subset \EtGp / Z$ be the full subcategory of groups with finitely generated geometric fibers.
For an object $P$ of $D^{b}(\EtGpf / Z)$,
we denote its linear dual $R \sheafhom_{Z_{\et}}(P, \Z) \in D^{b}(\EtGpf / Z)$ by $P^{\LDual}$.
Note that if $P$ is not constant, then $i_{\ast} P \in \EtGp' / X$ is only an algebraic space and not a scheme
by \cite[Prop.\ (3.3.6.1)]{Ray70a} (see also \cite[Introduction, Example 2]{Knu71}).

\begin{Prop} \label{prop: Neron components}
	Let $M = [Y \to G] \in \mathcal{M}_{U}$
	and $V$ a good covering of $U$ with respect to $M$ and $X$.
	Let $\mathcal{Y}_{0} / X$ be the extension by zero of $Y$
	and $\mathcal{G}_{0} / X$ be the maximal open subgroup scheme of the N\'eron model $\mathcal{G}$ of $G$
	with connected fibers.
	Define
		\begin{gather*}
					\mathcal{P}'(M, V)
				=
					[j_{\ast} Y_{(V)} / \mathcal{Y}_{0} \to j_{\ast} G_{(V)} / \mathcal{G}_{0}]
				\in
					\Ch^{b}(X_{\sm}),
			\\
					\mathcal{P}(M, V)
				=
					i^{\ast} \mathcal{P}'(M, V)
				\in
					\Ch^{b}(Z_{\sm}).
		\end{gather*}
	Then we have $\mathcal{P}'(M, V) \in \Ch^{b}(\EtGp' / X)$
	and $\mathcal{P}(M, V) \in \Ch^{b}(\EtGp / Z)$.
	We have a distinguished triangle
		\[
				\mathcal{N}_{0}(M)
			\to
				\mathcal{N}(M, V)
			\to
				\mathcal{P}'(M, V)
		\]
	in $D^{b}(\SmGp' / X)$.
	The morphism $\mathcal{P}'(M, V) \to i_{\ast} \mathcal{P}(M, V)$ is a quasi-isomorphism.
	In particular, we have a distinguished triangle
		\[
				\mathcal{N}_{0}(M)
			\to
				\mathcal{N}(M, V)
			\to
				i_{\ast} \mathcal{P}(M, V)
		\]
	in $D^{b}(\SmGp' / X)$.
	For any morphism $W \to V$ of finite \'etale coverings of $U$,
	the morphisms $\mathcal{P}'(M, V) \to \mathcal{P}'(M, W)$
	and $\mathcal{P}'(M, V) \to \mathcal{P}'(M, W)$ are both quasi-isomorphisms.
\end{Prop}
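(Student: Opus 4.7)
The plan is to handle the four assertions in sequence, each reducing to a termwise computation or a cohomology-sheaf support argument.

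First, I would verify that each term of $\mathcal{P}'(M,V)$ lies in $\EtGp'/X$. In degree $-1$, both $\mathcal{Y}_{0}$ and $j_{\ast}Y_{(V)}$ are separated \'etale over $X$ by Prop.\ \ref{prop: push of lattice is representable} and Def.\ \ref{def: extension by zero of etale group}, and the inclusion $\mathcal{Y}_{0} \into j_{\ast}Y_{(V)}$ is open-and-closed because on $U$ it is the open-and-closed immersion $Y \into Y_{(V)}$ of disjoint unions of connected components; the quotient of an \'etale group scheme by such a subgroup is an \'etale group algebraic space. In degree $0$, I would use the filtration $\mathcal{G}_{0} \subset j_{\ast}G \subset j_{\ast}G_{(V)}$: the subquotient $j_{\ast}G/\mathcal{G}_{0}$ is the relative component group of the N\'eron lft-model of $G$, hence in $\EtGp'/X$; the upper subquotient $j_{\ast}G_{(V)}/j_{\ast}G$ embeds into $j_{\ast}Y_{(/V)}$ by left-exactness of $j_{\ast}$ applied to $0 \to G \to G_{(V)} \to Y_{(/V)} \to 0$, and $j_{\ast}Y_{(/V)}$ is \'etale by Prop.\ \ref{prop: push of lattice is representable}, so this subquotient is \'etale by Prop.\ \ref{prop: kernel from etale is etale}. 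An extension of two \'etale group algebraic spaces is \'etale, so $j_{\ast}G_{(V)}/\mathcal{G}_{0} \in \EtGp'/X$. Applying $i^{\ast}$ term by term and invoking Prop.\ \ref{prop: derived pull of smooth groups} (so that $i^{\ast}$ coincides with the geometric fiber over $Z$ on these representable terms) yields $\mathcal{P}(M,V) \in \Ch^{b}(\EtGp/Z)$, where the target is $\EtGp/Z$ rather than $\EtGp'/Z$ because separated \'etale group algebraic spaces over the Dedekind scheme $Z$ are schemes.

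Second, for the distinguished triangle, I would check that $\mathcal{N}_{0}(M) \to \mathcal{N}(M,V)$ is termwise injective in $\Ab(X_{\sm})$: in degree $-1$ this is the open-and-closed immersion $\mathcal{Y}_{0} \into j_{\ast}Y_{(V)}$, and in degree $0$ it factors as $\mathcal{G}_{0} \into j_{\ast}G \into j_{\ast}G_{(V)}$, where the second arrow is the image under $j_{\ast}$ of the injection $G \into G_{(V)}$ coming from the definition of $G_{(V)}$ as the cokernel of the anti-diagonal embedding $Y \into Y_{(V)} \oplus G$. The termwise cokernel is by construction $\mathcal{P}'(M,V)$, so Prop.\ \ref{prop: term wise exact sequence gives dist triangle} applied in the $\SmGp'/X$ variant supplies the distinguished triangle.

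Third, to identify $\mathcal{P}'(M,V)$ with $i_{\ast}\mathcal{P}(M,V)$ in the derived category, I would show that the cohomology sheaves of $\mathcal{P}'(M,V)$ are supported on $Z$, which makes the adjunction morphism $\mathcal{P}'(M,V) \to i_{\ast}i^{\ast}\mathcal{P}'(M,V) = i_{\ast}\mathcal{P}(M,V)$ a quasi-isomorphism. Restriction to $U$ yields $(j_{\ast}Y_{(V)}/\mathcal{Y}_{0})|_{U} = Y_{(V)}/Y = Y_{(/V)}$ and $(j_{\ast}G_{(V)}/\mathcal{G}_{0})|_{U} = G_{(V)}/G \cong Y_{(/V)}$ (the latter from the defining sequence of $G_{(V)}$), and the induced differential on $U$ is the identity on $Y_{(/V)}$, so $\mathcal{P}'(M,V)|_{U}$ is acyclic. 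For functoriality in $V$, since $\mathcal{N}_{0}(M)$ is independent of $V$ and $\mathcal{N}(M,V) \to \mathcal{N}(M,W)$ is a quasi-isomorphism by Prop.\ \ref{prop: Neron with V represents truncated push}, the long exact sequence associated to the distinguished triangle forces the induced transition $\mathcal{P}'(M,V) \to \mathcal{P}'(M,W)$ on cones to be a quasi-isomorphism, and applying $i^{\ast}$ gives the corresponding statement for $\mathcal{P}(M,V)$. The main obstacle I anticipate is cleanly verifying that $j_{\ast}G_{(V)}/\mathcal{G}_{0}$ is an \'etale group algebraic space rather than merely a quotient sheaf; this is where one must combine Raynaud's representability of the relative component group with the explicit identification of $j_{\ast}G_{(V)}/j_{\ast}G$ as an open \'etale subgroup of $j_{\ast}Y_{(/V)}$ via Prop.\ \ref{prop: kernel from etale is etale}.
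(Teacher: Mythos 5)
Your argument follows the same overall strategy as the paper's: filter $j_{\ast}G_{(V)}/\mathcal{G}_{0}$ by $\mathcal{G}_{0}\subset j_{\ast}G\subset j_{\ast}G_{(V)}$, identify the top graded piece inside $j_{\ast}Y_{(/V)}$ via left-exactness of $j_{\ast}$ and Prop.\ \ref{prop: kernel from etale is etale}, close with an extension argument; then deduce the quasi-isomorphism with $i_{\ast}\mathcal{P}(M,V)$ from the acyclicity of $\mathcal{P}'(M,V)|_{U}$ and the functoriality from the morphism of triangles together with Prop.\ \ref{prop: Neron with V represents truncated push}. The paper's proof is the same in outline, differing only in that it establishes $\mathcal{G}/\mathcal{G}_{0}\in\EtGp'/X$ from scratch as $\dirlim_{i}\pi_{0}(S_{i}/X)$ over quasi-compact opens $S_{i}\subset\mathcal{G}$, where you cite representability of the component group as known.

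One factual error, though not a fatal one: the inclusion $\mathcal{Y}_{0}\into j_{\ast}Y_{(V)}$ is an open immersion but is \emph{not} closed in general. On $U$ it is indeed an inclusion of connected components, but a nonzero component of $Y$ sits inside $\mathcal{Y}_{0}$ as a copy of $U$, while the corresponding component of $j_{\ast}Y_{(V)}$ is a nontrivial \'etale scheme over $X$, so the closure is strictly larger (already for $Y=Y_{(V)}=\Z$ over local $X$, the open subscheme $\mathcal{Y}_{0}=j_{!}\Z\subset j_{\ast}\Z=\Z_{X}$ is not closed). The argument survives because openness alone makes the quotient a quasi-separated \'etale group algebraic space; indeed this is exactly why the degree $-1$ term of $\mathcal{P}'(M,V)$ lands in $\EtGp'/X$ and is typically not a scheme. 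Relatedly, your appeal to Raynaud's theorem for separated group algebraic spaces to land in $\EtGp/Z$ does not directly apply since the terms of $\mathcal{P}'(M,V)$ are only quasi-separated; the reason $\mathcal{P}(M,V)$ is schematic is simply that $Z$ is a finite disjoint union of spectra of fields, over which every quasi-separated \'etale group algebraic space is a scheme.
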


\begin{proof}
	We have $j_{\ast} G = \mathcal{G}$.
	We show that $\mathcal{G} / \mathcal{G}_{0} \in \EtGp' / X$.
	Write $\mathcal{G}$ as a union of quasi-compact open subschemes $S_{i} \subset G$
	(which might not be group subschemes).
	For each $i$, the connected components of fibers of $S_{i} \to X$
	form a quasi-separated \'etale algebraic space $\pi_{0}(S_{i} / X)$ over $X$
	(\cite[(6.8.1) (i)]{LMB00}, \cite[Thm.\ 2.5.2 (i)]{Rom11}).
	Since $\dirlim_{i} \pi_{0}(S_{i} / X) = \mathcal{G} / \mathcal{G}_{0}$,
	we know that the sheaf of groups $\mathcal{G} / \mathcal{G}_{0}$ is
	also a quasi-separated \'etale algebraic space,
	so it is in $\EtGp' / X$.
	
	We have an exact sequence
		\begin{equation} \label{eq: zeroth term of Neron components}
				0
			\to
				j_{\ast} G / \mathcal{G}_{0}
			\to
				j_{\ast} G_{(V)} / \mathcal{G}_{0}
			\to
				j_{\ast} Y_{/ (V)}
			\to
				R^{1} j_{\ast} G
		\end{equation}
	in $\Ab(X_{\sm})$.
	The kernel of $j_{\ast} Y_{/ (V)} \to R^{1} j_{\ast} G$ is in $\EtGp / X$
	by Prop.\ \ref{prop: push of lattice is representable}
	and \ref{prop: kernel from etale is etale}.
	Hence $j_{\ast} G_{(V)} / \mathcal{G}_{0} \in \EtGp' / X$.
	The same argument shows that
	$j_{\ast} Y_{(V)} / \mathcal{Y}_{0} \in \EtGp' / X$.
	Hence $\mathcal{P}'(M, V) \in \Ch^{b}(\EtGp' / X)$
	and consequently $\mathcal{P}(M, V) \in \Ch^{b}(\EtGp / Z)$.
	
	The sequence
		\[
				0
			\to
				\mathcal{N}_{0}(M)
			\to
				\mathcal{N}(M, V)
			\to
				\mathcal{P}'(M, V)
			\to
				0
		\]
	is a term-wise exact sequence of complexes in $\Ab(X_{\sm})$.
	Hence by Prop.\ \ref{prop: term wise exact sequence gives dist triangle},
	it defines a distinguished triangle in $D^{b}(\SmGp' / X)$.
	
	We have a distinguished triangle
		\[
				j_{!} j^{\ast} \mathcal{P}'(M, V)
			\to
				\mathcal{P}'(M, V)
			\to
				i_{\ast} i^{\ast} \mathcal{P}'(M, V)
		\]
	in $D^{b}(X_{\et})$ and hence in $D^{b}(X_{\sm})$.
	We have
		\[
				j^{\ast} \mathcal{P}'(M, V)
			=
				[Y_{(V)} / Y \to G_{(V)} / G]
			=
				0
		\]
	in $D^{b}(U_{\sm})$.
	Hence $\mathcal{P}'(M, V) \to i_{\ast} i^{\ast} \mathcal{P}'(M, V)$ is a quasi-isomorphism.
	
	We have a morphism of distinguished triangles
		\[
			\begin{CD}
					\mathcal{N}_{0}(M)
				@>>>
					\mathcal{N}(M, V)
				@>>>
					\mathcal{P}'(M, V)
				\\
				@| @VVV @VVV
				\\
					\mathcal{N}_{0}(M)
				@>>>
					\mathcal{N}(M, W)
				@>>>
					\mathcal{P}'(M, W).
			\end{CD}
		\]
	(Remember that there are hidden shifted terms $\mathcal{N}_{0}(M)[1]$ in the triangles
	and a hidden commutative square next to the right square.)
	The middle vertical morphism is a quasi-isomorphism
	by Prop.\ \ref{prop: Neron with V represents truncated push}.
	Hence so is the right vertical one.
\end{proof}

\begin{Def}
	For $M = [Y \to V] \in \mathcal{M}_{U}$
	and $V$ a good covering of $U$ with respect to $M$ and $X$,
	we call $\mathcal{P}(M, V) \in \Ch^{b}(\EtGp / Z)$
	the \emph{N\'eron component complex of $M$ with respect to $V$}.
	It is contravariantly functorial in $V$.
\end{Def}

\begin{Prop} \label{prop: fibers of Nerom components with trivialization}
	For $M$ and $V$ as above,
	we have $\mathcal{P}(M, V) \in \Ch^{b}(\EtGpf / Z)$.
	Its term in degree $-1$ is a lattice.
\end{Prop}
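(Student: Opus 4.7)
The plan is to analyze the two terms of
\[
	\mathcal{P}(M, V) = i^{\ast}\bigl[ j_{\ast} Y_{(V)} / \mathcal{Y}_{0} \to j_{\ast} G_{(V)} / \mathcal{G}_{0} \bigr]
\]
separately. First I would handle the degree $-1$ term. Since the $X$-scheme $\mathcal{Y}_{0} = (Y \setminus U) \sqcup X$ has fiber over any point of $Z$ equal to the zero section, one has $i^{\ast} \mathcal{Y}_{0} = 0$, and so the degree $-1$ term of $\mathcal{P}(M, V)$ is simply $i^{\ast} j_{\ast} Y_{(V)}$. To show this is a lattice over $Z$, I would compute geometric stalks: for $z \in Z$ with strict henselization $\Order_{X, z}^{\mathit{sh}}$ and $K^{\mathit{sh}} = \Frac \Order_{X, z}^{\mathit{sh}}$,
\[
	(j_{\ast} Y_{(V)})_{\bar z} = Y_{(V)}(\Spec K^{\mathit{sh}}) = Y(\Spec K^{\mathit{sh}} \times_{U} V)
\]
by definition of the Weil restriction. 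Since $K^{\mathit{sh}}$ is separably closed and $V \to U$ is finite \'etale, $\Spec K^{\mathit{sh}} \times_{U} V$ is a finite disjoint union of copies of $\Spec K^{\mathit{sh}}$, on each of which the lattice $Y$ is constant of some finite rank. Hence this stalk is a finitely generated free abelian group, so $i^{\ast} j_{\ast} Y_{(V)}$ is \'etale locally constant of finite rank over $Z$, that is, a lattice.

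Next I would turn to the degree $0$ term by restricting the exact sequence \eqref{eq: zeroth term of Neron components} in Prop.\ \ref{prop: Neron components} along $i$. Exactness of $i^{\ast}$ for \'etale sheaves yields an exact sequence
\[
	0 \to (\mathcal{G} / \mathcal{G}_{0})|_{Z} \to (j_{\ast} G_{(V)} / \mathcal{G}_{0})|_{Z} \to i^{\ast} \Ker\bigl( j_{\ast} Y_{(/ V)} \to R^{1} j_{\ast} G \bigr) \to 0.
\]
The rightmost term is an open subscheme of $j_{\ast} Y_{(/V)}$ by Prop.\ \ref{prop: kernel from etale is etale}, and the stalk argument above (applied now to the lattice $Y_{(/V)}$) shows $j_{\ast} Y_{(/V)}$ is a lattice over $X$; in particular the rightmost term has finitely generated geometric fibers over $Z$. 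The leftmost term is the group of components of the N\'eron lft model of the semi-abelian variety $G$ at the points of $Z$, which is classically finitely generated. Finite generation of the middle term then follows from the exact sequence.

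The main point requiring external input is the finite generation of the component group $\mathcal{G} / \mathcal{G}_{0}$ of the N\'eron lft model at points with possibly imperfect residue field. For abelian varieties the component group is even finite by classical theory; for tori this is contained in the lft N\'eron model construction of \cite[Chap.\ 10]{BLR90}, and the semi-abelian case follows by d\'evissage along $0 \to T \to G \to A \to 0$ together with the snake lemma applied to the quotients by identity components. Everything else reduces to the stalk computation carried out in the first paragraph, which is routine.
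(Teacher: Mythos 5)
Your proof follows essentially the same strategy as the paper's: use the vanishing $i^{\ast}\mathcal{Y}_{0}=0$ to identify the degree~$-1$ term with $i^{\ast}j_{\ast}Y_{(V)}$, restrict the exact sequence \eqref{eq: zeroth term of Neron components} to $Z$, and reduce to the finite generation of $\pi_0(\mathcal{G}_Z)$. The genuine divergence is at that last point: the paper simply cites \cite[Prop.\ 3.5]{HN11}, whereas you try to rederive it by d\'evissage. That is a legitimate route, but as stated it has a gap. The sequence $0 \to j_{\ast}T \to j_{\ast}G \to j_{\ast}A$ is only left exact (the last map need not be surjective because $R^{1}j_{\ast}T$ may be nonzero), so you cannot apply the snake lemma to the three Néron models and their identity components as you suggest. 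The correct d\'evissage is: let $C$ be the image of $\mathcal{G}_{Z}\to\mathcal{A}_{Z}$, which is a smooth closed subgroup of the finite-type group $\mathcal{A}_{Z}$, hence has \emph{finite} $\pi_{0}$; then from the short exact sequence $0\to\mathcal{T}_{Z}\to\mathcal{G}_{Z}\to C\to 0$ of smooth lft groups over a field, one checks by hand (using that a smooth surjection is open, hence $\mathcal{G}_{Z0}$ surjects onto $C_{0}$) that $\pi_{0}(\mathcal{G}_{Z})$ is an extension of $\pi_{0}(C)$ by a quotient of $\pi_{0}(\mathcal{T}_{Z})$, whence finitely generated. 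This is precisely what makes a reference convenient, and your phrase ``classically finitely generated'' undersells the subtlety.

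Two smaller slips that do not affect the conclusion: $K^{\mathit{sh}}$, the fraction field of a strictly henselian DVR, is \emph{not} separably closed, so $\Spec K^{\mathit{sh}}\times_{U}V$ is a disjoint union of spectra of finite separable field extensions of $K^{\mathit{sh}}$, not of copies of $\Spec K^{\mathit{sh}}$; the Galois invariants of the lattice $Y\times_{U}V$ over each such extension are still a finite free $\Z$-module, so the stalk computation goes through. Likewise, $j_{\ast}Y_{(/V)}$ need not be a lattice over all of $X$ (its rank may drop over $Z$ and it is only a separated \'etale scheme); what you actually need and do have is that its geometric fibers over $Z$ are finitely generated (indeed free), which suffices.
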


\begin{proof}
	The N\'eron model $j_{\ast} G$ has finitely generated groups of geometric connected components
	(\cite[Prop.\ 3.5]{HN11}).
	Hence the exact sequence \eqref{eq: zeroth term of Neron components} shows that
	$\mathcal{P}'(M, V)$ has finitely generated geometric fibers.
	Its degree $-1$ term $j_{\ast} Y_{(V)} / \mathcal{Y}_{0}$
	becomes $i^{\ast} j_{\ast} Y_{(V)}$ after pulling back to $Z$,
	which is a lattice.
\end{proof}

\begin{Prop}
	For any object $M = [Y \to G] \in \mathcal{M}_{U}$,
	choose a good covering $V$ of $U$ with respect to $M$ and $X$
	and consider the object
		\[
				\mathcal{P}'(M, V)
			\in
				D^{b}(\EtGp' / X).
		\]
	For any morphism $M = [Y \to G] \to M' = [Y' \to G'] \in \mathcal{M}_{U}$,
	choose a good covering $V$ of $U$ with respect to both $M$ and $M'$ and $X$
	and consider the morphism
		\[
				\mathcal{P}'(M, V)
			\to
				\mathcal{P}'(M', V)
			\in
				D^{b}(\EtGp' / X).
		\]
	These assignments define a well-defined additive functor
	$\mathcal{M}_{U} \to D^{b}(\EtGp' / X)$.
	We denote this functor as $M \mapsto \mathcal{P}'(M)$.
\end{Prop}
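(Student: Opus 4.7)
The plan is to parallel the proof of Prop \ref{prop: functoriality of Neron models}, the key input being the last assertion of Prop \ref{prop: Neron components}: for any morphism $W \to V$ of good coverings of $U$, the induced morphism $\mathcal{P}'(M, V) \to \mathcal{P}'(M, W)$ is a quasi-isomorphism. Since any two good coverings admit a common good refinement (given any common refinement, replace it by a good covering factoring through it, which exists by Prop \ref{prop: induced lattice has trivial first push}), once I know $\mathcal{P}'(M, V)$ is canonically independent of $V$ in $D^{b}(\EtGp' / X)$, functoriality in $M$ and additivity both follow by the same formal arguments as in Prop \ref{prop: functoriality of Neron models}. I therefore focus on the independence statement.

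The one point needing genuine verification is that two $U$-morphisms $f, g \colon W \rightrightarrows V$ between good coverings induce homotopic morphisms $\mathcal{P}'(M, V) \rightrightarrows \mathcal{P}'(M, W)$ in $\Ch^{b}(\EtGp' / X)$. The proof of Prop \ref{prop: functoriality of Neron models} already produces a chain homotopy $h \colon G_{(V)} \to Y_{(W)}$ between $f, g \colon M_{(V)} \rightrightarrows M_{(W)}$, namely the composite $G_{(V)} \onto Y_{(/V)} \to Y_{(W)}$. I would verify that $j_{\ast} h$ descends through the quotients by $\mathcal{G}_{0}$ and $\mathcal{Y}_{0}$, i.e.\ that $\mathcal{G}_{0} \into j_{\ast} G_{(V)} \to j_{\ast} Y_{(W)}$ lands in $\mathcal{Y}_{0}$. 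This follows from the defining exact sequence $0 \to G \to G_{(V)} \to Y_{(/V)} \to 0$, which forces the composite $\mathcal{G}_{0} \into j_{\ast} G \to j_{\ast} G_{(V)} \to j_{\ast} Y_{(/V)}$ to vanish; a fortiori the composite into $j_{\ast} Y_{(W)}$ is zero. The chain homotopy identity in degree $-1$ descends by the analogous observation that $Y \to Y_{(V)} \to Y_{(/V)}$ vanishes, so the composite $\mathcal{Y}_{0} \into j_{\ast} Y_{(V)} \to j_{\ast} Y_{(W)}$ induced by $hd$ is zero.

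Hence $j_{\ast} h$ induces the required chain homotopy between the induced maps $\mathcal{P}'(M, V) \rightrightarrows \mathcal{P}'(M, W)$, yielding well-definedness of $\mathcal{P}'(M)$ in $D^{b}(\EtGp' / X)$. The main (and essentially only) obstacle is this descent-of-homotopy check, which is clean once the right vanishing statements are identified; everything else is a transcription of the corresponding step in Prop \ref{prop: functoriality of Neron models}.
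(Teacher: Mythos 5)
Your proof is correct and is the elaboration the paper intends: the paper's own proof consists of the single sentence ``The same proof as Prop.~\ref{prop: functoriality of Neron models} works.'' The genuine check you supply---that the chain homotopy $j_{\ast} h$ descends through the quotients by $\mathcal{G}_{0}$ and $\mathcal{Y}_{0}$, because $h$ factors through $Y_{(/V)}$ while $\mathcal{G}_{0} \subset j_{\ast} G$ lies in the kernel of $j_{\ast} G_{(V)} \to j_{\ast} Y_{(/V)}$---is precisely the subtlety left implicit by that terse reference.
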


\begin{proof}
	The same proof as Prop.\ \ref{prop: functoriality of Neron models} works.
\end{proof}

\begin{Def} \label{def: Neron components}
	For $M \in \mathcal{M}_{U}$,
	we define
		\[
				\mathcal{P}(M)
			=
				i^{\ast} \mathcal{P}'(M)
			\in
				D^{b}(\EtGp / Z)
		\]
	and call it the \emph{N\'eron component complex} of $M$.
	The assignment $M \mapsto \mathcal{P}(M)$ defines an additive functor
	$\mathcal{M}_{U} \to D^{b}(\EtGp / Z)$.
	We have a canonical distinguished triangle
		\[
				\mathcal{N}_{0}(M)
			\to
				\mathcal{N}(M)
			\to
				i_{\ast} \mathcal{P}(M)
		\]
	in $D^{b}(\SmGp' / X)$ coming from Prop.\ \ref{prop: Neron components}.
\end{Def}

Note that $L i^{\ast} \mathcal{P}'(M) = i^{\ast} \mathcal{P}'(M)$ in $D^{b}(Z_{\sm})$
by Prop.\ \ref{prop: derived pull of smooth groups}.
Hence the pullback functor $i^{\ast}$ in the above definition can be understood
in either the derived or non-derived sense.

\begin{Prop} \label{prop: fibers of Nerom components}
	We have $\mathcal{P}(M) \in D^{b}(\EtGpf / Z)$, concentrated in degrees $-1$ and $0$.
	Its $H^{-1}$ is a lattice.
\end{Prop}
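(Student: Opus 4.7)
The plan is to reduce immediately to the explicit representative $\mathcal{P}(M, V)$ studied in Proposition \ref{prop: fibers of Nerom components with trivialization}. Choose a good covering $V$ of $U$ with respect to $M$ and $X$; then by Definition \ref{def: Neron components} and the functoriality established in Proposition \ref{prop: Neron components}, $\mathcal{P}(M)$ is represented in $D^{b}(\EtGp / Z)$ by the two-term complex $\mathcal{P}(M, V) \in \Ch^{b}(\EtGpf / Z)$ sitting in degrees $-1$ and $0$, whose degree $-1$ term $i^{\ast}(j_{\ast} Y_{(V)} / \mathcal{Y}_{0}) = i^{\ast} j_{\ast} Y_{(V)}$ is a lattice.

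The first assertion then follows at once: the cohomology objects of a two-term complex in degrees $-1$ and $0$ automatically vanish outside of those degrees, and over each closed point of $Z$ the category $\EtGpf / Z$ identifies with the (abelian) category of finitely generated continuous Galois modules, so it is stable under taking kernels and cokernels. Hence both $H^{-1}$ and $H^{0}$ of $\mathcal{P}(M, V)$ lie in $\EtGpf / Z$, and $\mathcal{P}(M) \in D^{b}(\EtGpf / Z)$ is concentrated in degrees $-1$ and $0$.

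For the second assertion, $H^{-1}(\mathcal{P}(M))$ is the kernel of the differential $i^{\ast} j_{\ast} Y_{(V)} \to i^{\ast}(j_{\ast} G_{(V)} / \mathcal{G}_{0})$, i.e.\ an \'etale subgroup scheme of the lattice $i^{\ast} j_{\ast} Y_{(V)}$. Over any point $x \in Z$, this kernel corresponds to a Galois-stable subgroup of a finitely generated free abelian group, which is again free of finite rank and hence defines a lattice in the sense of an \'etale group scheme that is \'etale locally isomorphic to $\Z^{n}$. Thus $H^{-1}(\mathcal{P}(M))$ is a lattice.

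There is essentially no real obstacle here; the substantive work has been done in Proposition \ref{prop: fibers of Nerom components with trivialization}, and the present statement is the formal derived-category upgrade, the only mildly subtle point being the stability of the notion of lattice under passage to \'etale subgroup schemes, which reduces fiberwise to the elementary fact that subgroups of finitely generated free abelian groups are themselves free of finite rank.
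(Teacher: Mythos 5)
Your proposal is correct and takes essentially the same route as the paper, whose proof is simply the citation "This follows from Prop.\ \ref{prop: fibers of Nerom components with trivialization}." You have unpacked that citation: $\mathcal{P}(M)$ is represented by the two-term complex $\mathcal{P}(M,V)\in\Ch^{b}(\EtGpf/Z)$ concentrated in degrees $-1$, $0$; taking cohomology stays inside the abelian category $\EtGpf/Z$; and $H^{-1}$, being the kernel of a morphism out of the lattice $i^{\ast}j_{\ast}Y_{(V)}$, is itself a lattice since Galois-stable subgroups of finitely generated free $\Z$-modules are free of finite rank.
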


\begin{proof}
	This follows from Prop.\ \ref{prop: fibers of Nerom components with trivialization}.
\end{proof}

\begin{Prop} \label{prop: Neron components commutes with base change}
	The formation of Neron component complexes
	commutes with regular base change and completion.
	More precisely, in Situation \ref{sit: base change},
	assume that $U \to X$ is open and $X' \to X$ is regular or the completion of a local $X$.
	Let $Z' = Z \times_{X} X'$.
	Then we have
		\[
				\mathcal{P}(M, V) \times_{Z} Z'
			\isomto
				\mathcal{P}(M \times_{U} U', V \times_{U} U')
		\]
	in $\Ch^{b}(\SmGp / Z')$ and
		\[
				\mathcal{P}(M) \times_{Z} Z'
			\isomto
				\mathcal{P}(M \times_{U} U')
		\]
	in $D^{b}(\SmGp / Z')$
	for any $M \in \mathcal{M}_{U}$ and any good covering $V$ of $U$ with respect to $M$ and $X$.
\end{Prop}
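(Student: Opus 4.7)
The plan is to deduce the compatibility for $\mathcal{P}'(M,V)$ from the term-wise exact sequence
\[
    0 \to \mathcal{N}_0(M) \to \mathcal{N}(M,V) \to \mathcal{P}'(M,V) \to 0
\]
established in the proof of Prop.\ \ref{prop: Neron components}, and then apply $i^{\ast}$ to pass to $\mathcal{P}(M,V)$. The key inputs are already in hand: Prop.\ \ref{prop: Neron model commutes with regular base change} and Prop.\ \ref{prop: Neron model commutes with completion} give the compatibility of $\mathcal{N}(\cdot,V)$ with the base change $X'/X$, and the paragraph following Situation \ref{sit: base change} observes that $\mathcal{N}_0(\cdot)$ is likewise compatible (since the formation of the maximal open subgroup with connected fibers commutes with flat base change).

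First I would show that each term of $\mathcal{P}'(M,V)$ is preserved under the base change $(\var)\times_X X'$. The terms are the quotient sheaves $j_{\ast} Y_{(V)}/\mathcal{Y}_0$ and $j_{\ast} G_{(V)}/\mathcal{G}_0$. Since $X'\to X$ is flat in both cases considered (a regular morphism is flat; completion of a noetherian local ring is flat), the pullback functor on $\Ab(X_{\sm})$ is exact, so applying it to the term-wise short exact sequence above yields the corresponding short exact sequence over $X'$. Combined with the term-wise compatibility of $\mathcal{N}_0$ and $\mathcal{N}(\cdot,V)$, this gives an isomorphism
\[
    \mathcal{P}'(M,V)\times_X X' \isomto \mathcal{P}'(M\times_U U',\, V\times_U U')
\]
in $\Ch^b(\EtGp'/X')$.

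To pass to $\mathcal{P}(M,V)=i^{\ast}\mathcal{P}'(M,V)$, note that the terms of $\mathcal{P}'$ lie in $\EtGp'/X$, so $i^{\ast}$ is simply the fiber product along $Z\to X$ (and agrees with $Li^{\ast}$ by Prop.\ \ref{prop: derived pull of smooth groups}). The base-change square $Z'\to Z\to X$ and $Z'\to X'\to X$ commutes with fiber products, giving
\[
    \mathcal{P}(M,V)\times_Z Z' \isomto i'^{\ast}\bigl(\mathcal{P}'(M,V)\times_X X'\bigr) \isomto \mathcal{P}(M\times_U U',\, V\times_U U'),
\]
which is the first assertion. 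For the second assertion, choose a good covering $V$ of $U$ with respect to $M$ and $X$; then $V\times_U U'$ is a good covering of $U'$ with respect to $M\times_U U'$ and $X'$ (since the normalization of $X'$ in $V\times_U U'$ is compatible with base change under our flatness hypothesis on $X'/X$, and a lattice pulls back to a lattice), so the first assertion applies and descends to the required isomorphism in $D^b(\SmGp/Z')$ via the independence of $\mathcal{P}(M)$ from the choice of $V$.

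The only mild obstacle is the verification in the completion case that a good covering remains good after base change, i.e.\ that the lattice extension of $Y\times_U V$ over the normalization of $X$ in $V$ stays a lattice after tensoring with $\Hat A$; this is straightforward because the normalization commutes with flat base change in this setting and a lattice is preserved by any étale base change of the base. Modulo this check, the argument is a mechanical assembly of the term-wise base change compatibilities of $\mathcal{N}$ and $\mathcal{N}_0$ with flat exactness of the pullback.
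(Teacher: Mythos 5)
Your argument follows the paper's intended route exactly: the paper's proof is a one-line citation of Prop.\ \ref{prop: Neron model commutes with regular base change} and Prop.\ \ref{prop: Neron model commutes with completion}, and you are spelling out the deduction via the term-wise exact sequence $0\to\mathcal{N}_{0}(M)\to\mathcal{N}(M,V)\to\mathcal{P}'(M,V)\to 0$, the base-change compatibility of the first two terms, and a final $i^{\ast}$.

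One caveat on the justification: the step ``$X'\to X$ is flat, hence the pullback functor on $\Ab(X_{\sm})$ is exact'' is not right as stated. Flatness of a base change does not give exactness of the sheaf pullback on the smooth site (if it did, the paper would not need the whole $L i^{\ast}$ apparatus of Prop.\ \ref{prop: premorphisms preserving finite products} and Prop.\ \ref{prop: derived pull of smooth groups}); in the completion case $X'\to X$ need not even be regular, so no Popescu-style filtered-limit description is available. What actually carries the argument is simpler and requires no flatness at all: each term-wise row is a short exact sequence of smooth group algebraic spaces, and the formation of the fppf quotient $B/A$ of a group space by a subgroup space commutes with arbitrary base change of the base. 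Combined with the identifications $\mathcal{N}_{0}(M)\times_X X'\cong\mathcal{N}_{0}(M')$ and $\mathcal{N}(M,V)\times_X X'\cong\mathcal{N}(M',V')$ from Prop.\ \ref{prop: Neron model commutes with regular base change}/\ref{prop: Neron model commutes with completion}, this gives $\mathcal{P}'(M,V)\times_X X'\cong\mathcal{P}'(M',V')$ term-wise, and $i^{\ast}$ (a plain fiber product on the relevant group spaces) then yields the claim. Your remark about $V\times_U U'$ remaining good is fair, but it is already implicit in the statement and proofs of Prop.\ \ref{prop: Neron model commutes with regular base change} and \ref{prop: Neron model commutes with completion}, so it is not an extra obligation specific to this proposition.
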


\begin{proof}
	This follows from Prop.\ \ref{prop: Neron model commutes with regular base change}
	and \ref{prop: Neron model commutes with completion}.
\end{proof}


\section{Duals and duality pairings of N\'eron models}
\label{sec: Duals and duality pairings of Neron models}

In the rest of this paper, we work in the following situation with a set of notation:

\begin{Sit} \label{sit: to consider Neron models, notation for M and its dual}
	\begin{itemize} \BetweenThmAndList
		\item
			$j \colon U_{\sm} \to X_{\sm}$, $i \colon Z_{\sm} \to X_{\sm}$ and $K$ are as in
			Situation \ref{sit: to consider Neron models, U open and Z closed}.
		\item
			$M = [Y \to G] \in \mathcal{M}_{U}$ is a $1$-motive,
			where $Y$ is a lattice and $G$ is an extension of an abelian scheme $A$ by a torus $T$.
		\item
			Let $\mathcal{T}$, $\mathcal{G}$, $\mathcal{A}$ be the N\'eron models of
			$T$, $G$, $A$, respectively, along $j$.
			Denote their open subgroup schemes with connected fibers by
			$\mathcal{T}_{0}$, $\mathcal{G}_{0}$, $\mathcal{A}_{0}$, respectively.
			Denote the extension by zero of $Y$ by $\mathcal{Y}_{0}$.
		\item
			Fibers over $Z$ are generally denoted by putting the subscript $(\var)_{Z}$.
			The notation $(\var)_{Z 0}$ or $(\var)_{0 Z}$ mean the identity component of $(\var)_{Z}$.
		\item
			$M^{\vee} = [Y' \to G']$ is the dual of $M$ (see below).
			The objects above corresponding to $M^{\vee}$ are denoted by putting primes $(\var)'$,
			such as $A'$, $\mathcal{T}_{0}'$, $\mathcal{G}_{0 Z}'$, etc.
	\end{itemize}
\end{Sit}

The dual $1$-motive $M^{\vee}$ \cite[(10.2.12), (10.2.13)]{Del74} of $M$
is equipped with a canonical $\Gm$-bi-extension $M^{\vee} \tensor^{L} M \to \Gm[1]$
as a morphism in $D(U_{\fppf})$ or $D(U_{\sm})$.
The induced morphism
	\begin{equation} \label{eq: duality for one motives over U}
			M^{\vee}
		\to
			\tau_{\le 0}
			R \sheafhom_{U_{\fppf}}(M, \Gm[1])
	\end{equation}
via the derived tensor-Hom adjunction (\cite[Thm.\ 18.6.4 (vii)]{KS06})
is an isomorphism in $D(U_{\fppf})$ (\cite[Thm.\ 1.11 (2)]{Jos13}).

\begin{Prop} \label{prop: fppf and smooth sites}
	For any scheme $S$,
	let $\alpha \colon S_{\fppf} \to S_{\sm}$ be the premorphism of sites defined by the identity functor.
	Then for any bounded complex of smooth group algebraic spaces $F$ over $S$,
	we have $R \alpha_{\ast} F = F$ and $L \alpha^{\ast} F = F$.
	Let $F_{1}, F_{2}, F_{3}$ be bounded complexes of smooth group algebraic spaces over $S$.
	If $F_{1} \to F_{2} \to F_{3} \to F_{1}[1]$ is a triangle of bounded complexes of smooth group algebraic spaces over $S$,
	then it is distinguished in $D(S_{\fppf})$ if and only if it is so in $D(S_{\sm})$.
	Morphisms $F_{1} \tensor^{L} F_{2} \to F_{3}$ in $D(S_{\fppf})$
	bijectively correspond to morphisms $F_{1} \tensor^{L} F_{2} \to F_{3}$ in $D(S_{\sm})$.
\end{Prop}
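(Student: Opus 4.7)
The plan is to first establish the identities $R\alpha_{\ast} F = F$ and $L\alpha^{\ast} F = F$ for a bounded complex $F$ of smooth commutative group algebraic spaces over $S$, and then derive the assertions about distinguished triangles and the $\Hom$-bijection formally from the adjunction $L\alpha^{\ast} \dashv R\alpha_{\ast}$.

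For $L\alpha^{\ast} F = F$, I would follow the strategy of Prop.\ \ref{prop: derived pull of smooth groups}: enlarge $S_{\sm}$ and $S_{\fppf}$ to sites $S_{\sm'}$ and $S_{\fppf'}$ whose underlying categories allow algebraic spaces (so that every term of $F$ is representable), without changing the topoi. On $S_{\sm'}$ the underlying functor of $\alpha$ commutes with fiber products, so Prop.\ \ref{prop: premorphisms preserving finite products}\eqref{item: derived pull of representables} applies and yields $L\alpha^{\ast} F = \alpha^{\ast} F = F$, the last equality because the terms of $F$ are already fppf sheaves so no sheafification is needed.

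For $R\alpha_{\ast} F = F$, a standard spectral sequence (or induction on the length of $F$) reduces the claim to the case of a single smooth commutative group algebraic space $G$. The identity $R^{0}\alpha_{\ast} G = G$ is immediate since $\alpha_{\ast}$ is restriction. For $i \ge 1$, the sheaf $R^{i}\alpha_{\ast} G$ is the \'etale sheafification on $S_{\sm}$ of $X' \mapsto H^{i}_{\fppf}(X', G)$. By Grothendieck's classical theorem identifying fppf with \'etale cohomology for smooth commutative group schemes, extended to algebraic spaces by \'etale descent from a scheme chart, this presheaf equals $X' \mapsto H^{i}_{\et}(X', G)$, whose \'etale sheafification vanishes in positive degree by the general fact that presheaf cohomology sheafifies to zero.

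With both identities in hand the remaining statements are formal. A candidate triangle $F_{1} \to F_{2} \to F_{3} \to F_{1}[1]$ of bounded complexes of smooth group algebraic spaces is sent literally to itself by each of the triangulated functors $L\alpha^{\ast}$ and $R\alpha_{\ast}$, so distinguishedness transfers in both directions between $D(S_{\fppf})$ and $D(S_{\sm})$. For the $\Hom$-bijection, the adjunction together with the commutativity of $L\alpha^{\ast}$ with $\tensor^{L}$ and $R\alpha_{\ast} F_{3} = F_{3}$ gives
\[
\Hom_{D(S_{\fppf})}(F_{1} \tensor^{L} F_{2}, F_{3}) = \Hom_{D(S_{\sm})}(F_{1} \tensor^{L} F_{2}, R\alpha_{\ast} F_{3}) = \Hom_{D(S_{\sm})}(F_{1} \tensor^{L} F_{2}, F_{3}).
\]
The main obstacle is the identity $R\alpha_{\ast} F = F$: everything else is formal, but this identification rests on Grothendieck's nontrivial comparison between fppf and \'etale cohomology of smooth commutative groups, plus a short descent argument to handle the algebraic space case.
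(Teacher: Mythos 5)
Your proof is correct and essentially the same as the paper's: both rely on Grothendieck's comparison of fppf and \'etale cohomology for smooth groups to get $R\alpha_{\ast} F = F$, both use the enlarged-site argument via Prop.\ \ref{prop: premorphisms preserving finite products}~\eqref{item: derived pull of representables} (through Prop.\ \ref{prop: derived pull of smooth groups}) for $L\alpha^{\ast} F = F$, and both finish with the same adjunction computation for the $\Hom$-bijection. The only difference is that you spell out the spectral-sequence reduction to a single group and the sheafification argument behind $R\alpha_{\ast} F = F$, which the paper leaves implicit by citing \cite[III, Rmk.\ 3.11 (b)]{Mil80}.
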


\begin{proof}
	We have $R \alpha_{\ast} F = F$ since fppf cohomology with coefficients in smooth group algebraic spaces
	agrees with \'etale cohomology (\cite[III, Rmk.\ 3.11 (b)]{Mil80}).
	We have $L \alpha^{\ast} F = F$ by
	Prop.\ \ref{prop: premorphisms preserving finite products}
	\eqref{item: derived pull of representables}
	and the proof of Prop.\ \ref{prop: derived pull of smooth groups}.
	These facts imply the statement about the distinguished triangle.
	We have
		\begin{align*}
					\Hom_{D(S_{\fppf})}(F_{1} \tensor^{L} F_{2}, F_{3})
			&	=
					\Hom_{D(S_{\fppf})}(L \alpha^{\ast} F_{1} \tensor^{L} L \alpha^{\ast} F_{2}, F_{3})
			\\
			&	=
					\Hom_{D(S_{\sm})}(F_{1} \tensor^{L} F_{2}, R \alpha_{\ast} F_{3})
			\\
			&	=
					\Hom_{D(S_{\sm})}(F_{1} \tensor^{L} F_{2}, F_{3}).
		\end{align*}
	This shows the last statement.
\end{proof}

\begin{Prop} \label{prop: dual one motive in smooth topology}
	The isomorphism \eqref{eq: duality for one motives over U} induces an isomorphism
		\[
				M^{\vee}
			\isomto
				\tau_{\le 0}
				R \sheafhom_{U_{\sm}}(M, \Gm[1])
		\]
	in $D(U_{\sm})$.
\end{Prop}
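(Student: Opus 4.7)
The plan is to apply $R\alpha_{\ast}$ to the fppf isomorphism \eqref{eq: duality for one motives over U}, where $\alpha \colon U_{\fppf} \to U_{\sm}$ is the premorphism of sites from Prop.\ \ref{prop: fppf and smooth sites}, and then take $\tau_{\le 0}$. All the inputs are already assembled: Prop.\ \ref{prop: fppf and smooth sites} records that both $R\alpha_{\ast}$ and $L\alpha^{\ast}$ act as the identity on bounded complexes of smooth group algebraic spaces, while Prop.\ \ref{prop: premorphisms preserving finite products} \eqref{item: sheafified derived push pull adjunction} supplies the sheafified adjunction
\[
    R\alpha_{\ast} R\sheafhom_{U_{\fppf}}(L\alpha^{\ast} F, F') \cong R\sheafhom_{U_{\sm}}(F, R\alpha_{\ast} F').
\]

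Specializing with $F = M$ and $F' = \Gm[1]$, both being (complexes of) smooth group schemes, this yields $R\alpha_{\ast} R\sheafhom_{U_{\fppf}}(M, \Gm[1]) \cong R\sheafhom_{U_{\sm}}(M, \Gm[1])$. Applying $R\alpha_{\ast}$ to \eqref{eq: duality for one motives over U} and using that $R\alpha_{\ast} M^{\vee} = M^{\vee}$, I obtain an isomorphism
\[
    M^{\vee} \isomto R\alpha_{\ast} \tau_{\le 0} R\sheafhom_{U_{\fppf}}(M, \Gm[1])
\]
in $D(U_{\sm})$.

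To conclude, I would apply $\tau_{\le 0}$ and exploit the fact that $R\alpha_{\ast}$, being the right derived functor of a left exact functor, preserves $D^{\ge 0}$. From the distinguished triangle $\tau_{\le 0} K \to K \to \tau_{\ge 1} K$ with $K = R\sheafhom_{U_{\fppf}}(M, \Gm[1])$, the term $R\alpha_{\ast}\tau_{\ge 1}K$ sits in $D^{\ge 1}$, so $\tau_{\le 0} R\alpha_{\ast} \tau_{\le 0} K \isomto \tau_{\le 0} R\alpha_{\ast} K$. Combining this with the previous display and with $\tau_{\le 0} M^{\vee} = M^{\vee}$ (as $M^{\vee} \in D^{\le 0}$) produces the desired isomorphism.

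The main point requiring care, rather than the construction of \emph{some} isomorphism, is identifying the isomorphism produced this way with the one induced by the smooth pairing. By the last statement of Prop.\ \ref{prop: fppf and smooth sites}, the smooth pairing $M^{\vee} \tensor^{L} M \to \Gm[1]$ is precisely the one corresponding to the given fppf pairing under $L\alpha^{\ast} \dashv R\alpha_{\ast}$. Naturality of the derived tensor-Hom adjunction together with naturality of the sheafified adjunction in Prop.\ \ref{prop: premorphisms preserving finite products} \eqref{item: sheafified derived push pull adjunction} should then reduce this compatibility to a formal diagram chase, which I would carry out to close the argument.
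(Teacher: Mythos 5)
Your proposal is correct and follows essentially the same route as the paper's proof: apply $\tau_{\le 0} R\alpha_{\ast}$ to the fppf isomorphism, use Prop.\ \ref{prop: fppf and smooth sites} to identify $R\alpha_{\ast}M^{\vee}$ with $M^{\vee}$ and $L\alpha^{\ast}M$ with $M$, and use the sheafified adjunction of Prop.\ \ref{prop: premorphisms preserving finite products}\,\eqref{item: sheafified derived push pull adjunction} together with $\tau_{\le 0} R\alpha_{\ast}\tau_{\ge 1} = 0$. Your closing remark on matching the resulting isomorphism with the smooth tensor pairing via the last part of Prop.\ \ref{prop: fppf and smooth sites} is a welcome extra precision that the paper leaves implicit.
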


\begin{proof}
	Let $\alpha \colon U_{\fppf} \to U_{\sm}$ be the premorphism of sites defined by the identity functor.
	We apply $\tau_{\le 0} R \alpha_{\ast}$ to the mentioned isomorphism.
	By Prop.\ \ref{prop: fppf and smooth sites},
	we have $\tau_{\le 0} R \alpha_{\ast} M^{\vee} = \tau_{\le 0} M^{\vee} = M^{\vee}$ and
	$L \alpha^{\ast} M = M$.
	Also $\tau_{\le 0} R \alpha_{\ast} \tau_{\le 0} = \tau_{\le 0} R \alpha_{\ast}$
	since $\tau_{\le 0} R \alpha_{\ast} \tau_{\ge 1} = 0$.
	Hence
		\[
				\tau_{\le 0}
				R \alpha_{\ast}
				\tau_{\le 0}
				R \sheafhom_{U_{\fppf}}(M, \Gm[1])
			=
				\tau_{\le 0}
				R \sheafhom_{U_{\sm}}(M, \Gm[1])
		\]
	using Prop.\ \ref{prop: premorphisms preserving finite products}.
	This proves the proposition.
\end{proof}

\begin{Def} \label{def: dual of Neron model}
	We define
		\begin{gather*}
					\mathcal{N}(M)^{\vee}
				=
					\tau_{\le 0} R \sheafhom_{X_{\sm}}(\mathcal{N}(M), \Gm[1])
				\in
					D(X_{\sm}),
			\\
					\mathcal{N}_{0}(M)^{\vee}
				=
					\tau_{\le 0} R \sheafhom_{X_{\sm}}(\mathcal{N}_{0}(M), \Gm[1])
				\in
					D(X_{\sm}).
		\end{gather*}
\end{Def}

Let $\mathcal{G}_{m} / X$ be the N\'eron model of $\Gm / U$.
Recall from \cite[III, proof of Lem.\ C.10]{Mil06} that $R^{1} j_{\ast} \Gm = 0$.
Hence we have $\tau_{\le 0}(R j_{\ast} \Gm[1]) = \mathcal{G}_{m}[1]$.
We have a canonical exact sequence
	\[
		0 \to \Gm \to \mathcal{G}_{m} \to i_{\ast} \Z \to 0
	\]
in $\SmGp / X$.

\begin{Prop} \label{prop: trivial duality with Neron target}
	Consider the morphisms
		\[
				R j_{\ast} M^{\vee} \tensor^{L} R j_{\ast} M
			\to
				R j_{\ast} (M^{\vee} \tensor^{L} M)
			\to
				R j_{\ast} \Gm[1]
		\]
	and the induced morphism
		\[
				\mathcal{N}(M^{\vee}) \tensor^{L} \mathcal{N}(M)
			\to
				\mathcal{G}_{m}[1]
		\]
	in $D(X_{\sm})$.
	The two induced morphisms
		\begin{align*}
					\mathcal{N}(M^{\vee})
			&	\to
					\tau_{\le 0}
					R \sheafhom_{X_{\sm}}(\mathcal{N}(M), \mathcal{G}_{m}[1])
			\\
			&	\to
					\tau_{\le 0}
					R \sheafhom_{X_{\sm}}(\mathcal{N}_{0}(M), \mathcal{G}_{m}[1])
		\end{align*}
	are both isomorphisms in $D(X_{\sm})$.
\end{Prop}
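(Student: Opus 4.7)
The plan is to prove both assertions at once by noting that $F = \mathcal{N}(M)$ and $F = \mathcal{N}_0(M)$ share two features: each satisfies $\tau_{\geq 1} F = 0$, and each satisfies $j^\ast F \cong M$ in $D^b(U_{\sm})$. For $\mathcal{N}(M) = \tau_{\leq 0} R j_\ast M$ this is immediate from $j^\ast R j_\ast = \id$ (using that $j^\ast$ is exact, Prop \ref{prop: morphism between smooth sites of U and X}). For $\mathcal{N}_0(M) = [\mathcal{Y}_0 \to \mathcal{G}_0]$ it follows since $\mathcal{Y}_0$ extends $Y$ by zero and $\mathcal{G}_0$ is the relative identity component of the N\'eron model of $G$, both of which restrict to $Y$ and $G$ on $U$.

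First I replace $\mathcal{G}_m[1]$ by $R j_\ast \Gm[1]$ in the second argument. Since $R^1 j_\ast \Gm = 0$, we have $\mathcal{G}_m[1] = \tau_{\leq 0}(R j_\ast \Gm[1])$, so the cone $\tau_{\geq 1}(R j_\ast \Gm[1])$ is concentrated in cohomological degrees $\geq 2$. For any $F$ with $\tau_{\geq 1} F = 0$, the sheaf-$\Ext$ spectral sequence then forces $R \sheafhom_{X_{\sm}}(F, \tau_{\geq 1}(R j_\ast \Gm[1]))$ to be concentrated in degrees $\geq 2$, so $\tau_{\leq 0}$ kills it, yielding
\[
	\tau_{\leq 0} R \sheafhom_{X_{\sm}}(F, \mathcal{G}_m[1])
	\isomto
	\tau_{\leq 0} R \sheafhom_{X_{\sm}}(F, R j_\ast \Gm[1]).
\]
Next, $j \colon U_{\sm} \to X_{\sm}$ is a morphism of sites with $L j^\ast = j^\ast$ exact, so Prop \ref{prop: premorphisms preserving finite products}\eqref{item: sheafified derived push pull adjunction} gives
\[
	R \sheafhom_{X_{\sm}}(F, R j_\ast \Gm[1])
	\cong
	R j_\ast R \sheafhom_{U_{\sm}}(j^\ast F, \Gm[1])
	\cong
	R j_\ast R \sheafhom_{U_{\sm}}(M, \Gm[1]).
\]
Applying $\tau_{\leq 0}$, using that $R j_\ast$ preserves concentration in positive degrees (so $\tau_{\leq 0} R j_\ast \tau_{\geq 1} = 0$) and then the duality $\tau_{\leq 0} R \sheafhom_{U_{\sm}}(M, \Gm[1]) \cong M^\vee$ (Prop \ref{prop: dual one motive in smooth topology}), the right side becomes $\tau_{\leq 0} R j_\ast M^\vee = \mathcal{N}(M^\vee)$. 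Hence both targets in the proposition are canonically identified with $\mathcal{N}(M^\vee)$.

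The main obstacle is showing that these canonical identifications agree with the two morphisms actually induced by the bi-extension pairing $M^\vee \tensor^L M \to \Gm[1]$. This reduces to chasing the $\tensor^L$-$R \sheafhom$ adjunction and the unit of $j^\ast \dashv R j_\ast$ through Prop \ref{prop: premorphisms preserving finite products}\eqref{item: sheafified derived push pull adjunction} and Prop \ref{prop: dual one motive in smooth topology}, then verifying that the resulting composite agrees with the morphism obtained by factoring $R j_\ast M^\vee \to R \sheafhom(R j_\ast M, R j_\ast \Gm[1])$ through the appropriate truncations and through $\mathcal{N}_0(M) \to \mathcal{N}(M)$. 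The verification is routine but is the only substantive bookkeeping.
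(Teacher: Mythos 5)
Your proposal is essentially the paper's proof run in reverse: the paper starts from the isomorphism $M^\vee \isomto \tau_{\le 0} R\sheafhom_{U_{\sm}}(M, \Gm[1])$ of Prop.\ \ref{prop: dual one motive in smooth topology}, applies $\tau_{\le 0} R j_\ast$, invokes Prop.\ \ref{prop: premorphisms preserving finite products}\eqref{item: sheafified derived push pull adjunction} and $j^\ast \mathcal{N}(M) = j^\ast \mathcal{N}_0(M) = M$, and finishes with the $\mathcal{G}_m[1]$ versus $R j_\ast \Gm[1]$ truncation triangle --- exactly your three steps in the opposite order. Two small points: the cone $\tau_{\ge 1}(R j_\ast \Gm[1])$ is concentrated in degrees $\ge 1$, not $\ge 2$ (its $H^1 = R^2 j_\ast \Gm$ need not vanish), though degrees $\ge 1$ already suffices since the source is concentrated in degrees $\le 0$; and the "main obstacle" you flag at the end is dissolved by the paper's choice of direction, since building the isomorphism forward from the pairing makes it tautologically the map induced by the pairing, with no separate compatibility chase needed.
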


\begin{proof}
	Applying $\tau_{\le 0} R j_{\ast}$ to the isomorphism in
	Prop.\ \ref{prop: dual one motive in smooth topology}
	and using $\tau_{\le 0} R j_{\ast} \tau_{\le 0} = \tau_{\le 0} R j_{\ast}$, we have
		\[
				\mathcal{N}(M^{\vee})
			\isomto
				\tau_{\le 0}
				R j_{\ast}
				R \sheafhom_{U_{\sm}}(M, \Gm[1]).
		\]
	We have $j^{\ast} \mathcal{N}(M) = j^{\ast} \mathcal{N}_{0}(M) = M$.
	Hence by Prop.\ \ref{prop: premorphisms preserving finite products}, we have
		\begin{align*}
					\mathcal{N}(M^{\vee})
			&	\isomto
					\tau_{\le 0}
					R \sheafhom_{X_{\sm}}(\mathcal{N}(M), R j_{\ast} \Gm[1])
			\\
			&	\isomto
					\tau_{\le 0}
					R \sheafhom_{X_{\sm}}(\mathcal{N}_{0}(M), R j_{\ast} \Gm[1]).
		\end{align*}
	Let $F = R \sheafhom_{X_{\sm}}(\mathcal{N}(M), \var)$ or
	$R \sheafhom_{X_{\sm}}(\mathcal{N}_{0}(M), \var)$.
	By the distinguished triangle
		\[
				F \tau_{\le 0}(R j_{\ast} \Gm[1])
			\to
				F(R j_{\ast} \Gm[1])
			\to
				F \tau_{\ge 1}(R j_{\ast} \Gm[1])
		\]
	and the fact $\tau_{\le 0}(R j_{\ast} \Gm[1]) = \mathcal{G}_{m}[1]$,
	we have
		\[
				\tau_{\le 0}
				F(\mathcal{G}_{m}[1])
			\isomto
				\tau_{\le 0} F(R j_{\ast} \Gm[1]).
		\]
	Hence the result follows.
\end{proof}

If $M$ is an abelian scheme,
then the isomorphisms in Prop.\ \ref{prop: trivial duality with Neron target}
agree with the isomorphism in \cite[III, Lem.\ C.10]{Mil06}
by construction.

\begin{Prop} \label{prop: commutativity of pairing on Neron}
	The morphism
		\[
				\mathcal{N}(M^{\vee}) \tensor^{L} \mathcal{N}(M)
			\to
				\mathcal{G}_{m}[1]
		\]
	in Prop.\ \ref{prop: trivial duality with Neron target} and the corresponding morphism
		\[
				\mathcal{N}(M^{\vee \vee}) \tensor^{L} \mathcal{N}(M^{\vee})
			\to
				\mathcal{G}_{m}[1]
		\]
	for $M^{\vee}$ are compatible under the biduality isomorphism $M \isomto M^{\vee \vee}$
	and switching the tensor factors.
\end{Prop}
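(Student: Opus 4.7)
The plan is to descend the required compatibility from the corresponding symmetry of Deligne's bi-extension pairing on $U$. Over $U$, the biduality isomorphism $\iota \colon M \isomto M^{\vee\vee}$ of \cite[(10.2.13)]{Del74} is constructed exactly so that the canonical pairing $M^{\vee\vee} \tensor^{L} M^{\vee} \to \Gm[1]$ for $M^{\vee}$ corresponds, via $\iota$ on the first factor and the symmetry constraint of $\tensor^{L}$, to the canonical pairing $M^{\vee} \tensor^{L} M \to \Gm[1]$ for $M$. I take this symmetry as input; it is essentially built into the definition of the Cartier dual bi-extension realizing $M^{\vee\vee}$.

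To promote the symmetry from $U$ to $X$, recall that the N\'eron pairing of Prop.\ \ref{prop: trivial duality with Neron target} is assembled as the composite
\[
        \mathcal{N}(M^{\vee}) \tensor^{L} \mathcal{N}(M)
    \to R j_{\ast} M^{\vee} \tensor^{L} R j_{\ast} M
    \to R j_{\ast}(M^{\vee} \tensor^{L} M)
    \to R j_{\ast} \Gm[1]
    \to \mathcal{G}_{m}[1],
\]
where the first and last arrows arise from the canonical identifications $\mathcal{N}(\var) = \tau_{\le 0} R j_{\ast}(\var)$ and $\mathcal{G}_{m}[1] = \tau_{\le 0} R j_{\ast} \Gm[1]$ together with truncation, the second is the lax monoidal structure of $R j_{\ast}$, and the third is $R j_{\ast}$ of the pairing on $U$. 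Each of these arrows is natural in its inputs, and the lax monoidal map of $R j_{\ast}$ commutes with the standard symmetry constraint of $\tensor^{L}$ on both $D(U_{\sm})$ and $D(X_{\sm})$. Combining naturality in each slot with the symmetry on $U$ yields the desired commutativity on $X$.

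The main obstacle is precisely the input over $U$: verifying that $\iota$ truly exchanges the two Deligne pairings. This is standard but requires unwinding the construction of $M^{\vee\vee}$ as the Cartier dual of the Poincar\'e bi-extension attached to $M^{\vee}$; once this is in place, the remainder is a formal diagram chase in $D(X_{\sm})$ using naturality of $R j_{\ast}$, its symmetric lax monoidal structure, and the truncation $\tau_{\le 0}$.
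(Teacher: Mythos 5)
Your proof is correct and takes essentially the same approach as the paper: both reduce to the symmetry of Deligne's bi-extension pairing over $U$ (the paper cites \cite[(10.2.12)]{Del74} for the symmetric description) and then invoke the functorial construction of the morphism in Prop.\ \ref{prop: trivial duality with Neron target} to carry it to $X$. The paper leaves the promotion step implicit, whereas you spell out the naturality and lax-monoidality of $R j_{\ast}$ and the truncation; this added detail is accurate but not a different method.
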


\begin{proof}
	This follows from the fact that
	the morphism
		$
				M^{\vee} \tensor^{L} M
			\to
				\Gm[1]
		$
	and the corresponding morphism
		$
				M^{\vee \vee} \tensor^{L} M^{\vee}
			\to
				\Gm[1]
		$
	for $M^{\vee}$ are compatible under $M \isomto M^{\vee \vee}$
	and switching the tensor factors
	by the symmetric description of the pairings \cite[(10.2.12)]{Del74}.
\end{proof}

\begin{Prop} \label{prop: linear dual of connected Neron and trivial duality with Gm target}
	We have
		\[
				\tau_{\le 0}
				R \sheafhom_{X_{\sm}}(\mathcal{N}_{0}(M), i_{\ast} \Z[1])
			=
				0.
		\]
	Hence the isomorphism in Prop.\ \ref{prop: trivial duality with Neron target} induces a canonical isomorphism
		\[
				\mathcal{N}(M^{\vee})
			\cong
				\tau_{\le 0}
				R \sheafhom_{X_{\sm}}(\mathcal{N}_{0}(M), \Gm[1])
			=
				\mathcal{N}_{0}(M)^{\vee}.
		\]
\end{Prop}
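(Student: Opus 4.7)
The plan is to split $\mathcal{N}_{0}(M) = [\mathcal{Y}_{0} \to \mathcal{G}_{0}]$ into its lattice and semi-abelian pieces. The tautological term-wise exact sequence $0 \to \mathcal{G}_{0} \to \mathcal{N}_{0}(M) \to \mathcal{Y}_{0}[1] \to 0$ yields, via Prop.\ \ref{prop: term wise exact sequence gives dist triangle}, a distinguished triangle $\mathcal{Y}_{0} \to \mathcal{G}_{0} \to \mathcal{N}_{0}(M)$ in $D^{b}(\SmGp / X)$. Applying the contravariant functor $R \sheafhom_{X_{\sm}}(\var, i_{\ast}\Z[1])$ and chasing the long exact sequence of cohomology sheaves, I reduce to the two vanishings $R \sheafhom_{X_{\sm}}(\mathcal{Y}_{0}, i_{\ast}\Z[1]) = 0$ and $\tau_{\le 0} R \sheafhom_{X_{\sm}}(\mathcal{G}_{0}, i_{\ast}\Z[1]) = 0$. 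The lattice part is formal: since $U \subset X$ is open, $\mathcal{Y}_{0} = j_{!}Y$, and the derived sheaf-Hom form of the adjunction $j_{!} \dashv j^{\ast}$ gives $R \sheafhom_{X_{\sm}}(j_{!}Y, i_{\ast}\Z[1]) \cong R j_{\ast} R \sheafhom_{U_{\sm}}(Y, j^{\ast} i_{\ast}\Z[1]) = 0$ because $j^{\ast} i_{\ast}\Z = 0$.

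For the $\mathcal{G}_{0}$ contribution, the plan is to transport the computation to $Z_{\sm}$ through the premorphism $i \colon Z_{\sm} \to X_{\sm}$. Its underlying functor $X' \mapsto X' \times_{X} Z$ commutes with finite products, so Prop.\ \ref{prop: premorphisms preserving finite products} \eqref{item: sheafified derived push pull adjunction} furnishes
\begin{equation*}
		R \sheafhom_{X_{\sm}}(\mathcal{G}_{0}, R i_{\ast} \Z[1])
	\cong
		R i_{\ast} R \sheafhom_{Z_{\sm}}(L i^{\ast} \mathcal{G}_{0}, \Z[1]).
\end{equation*}
By Prop.\ \ref{prop: derived pull of smooth groups}, $L i^{\ast} \mathcal{G}_{0} = \mathcal{G}_{0 Z}$; and the pushforward $i_{\ast}$ on the smooth sites is exact (being restriction along the base change $X' \mapsto X' \times_{X} Z$, with smooth covers lifting by deformation of smooth morphisms along the closed immersion $Z \into X$), so $R i_{\ast}\Z = i_{\ast}\Z$ and $R i_{\ast}$ is $t$-exact. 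This reduces the problem to proving $\sheafext^{m}_{Z_{\sm}}(\mathcal{G}_{0 Z}, \Z) = 0$ for $m = 0, 1$. The case $m = 0$ is immediate because $\mathcal{G}_{0 Z}$ has geometrically connected smooth fibers over $Z$ while $\Z$ is locally constant. For $m = 1$, an extension of $\mathcal{G}_{0 Z}$ by $\Z$ on $Z'_{\sm}$ (for smooth $Z' \to Z$) gives in particular an \'etale $\Z$-torsor over the smooth connected normal scheme $\mathcal{G}_{0 Z} \times_{Z} Z'$, which is classified by $H^{1}_{\et}(\var, \Z) = \Hom_{\text{cts}}(\pi_{1}^{\et}(\var), \Z) = 0$ since $\pi_{1}^{\et}$ is profinite and $\Z$ discrete.

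The main obstacle I anticipate is the sheaf-level treatment of the $\sheafext^{1}$ vanishing, particularly when the residue fields of $Z$ may be imperfect: one must confirm that extensions in $\Ab(Z'_{\sm})$ really are captured by \'etale $\Z$-torsors and that the profinite-versus-discrete cancellation descends through sheafification. A safe fallback would be to compute stalks of $\sheafext^{1}$ at strict henselizations of smooth $Z$-schemes, reducing to $\Ext^{1}$ in the \'etale category over a strictly henselian local ring, where the classical argument applies without complication.
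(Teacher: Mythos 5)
Your proposal is correct in outline but differs from the paper in two respects, and one step needs closing. Structurally, the paper does not split $\mathcal{N}_{0}(M)$ into lattice and connected semi-abelian parts at all: it applies Prop.\ \ref{prop: premorphisms preserving finite products} and Prop.\ \ref{prop: derived pull of smooth groups} directly to $\mathcal{N}_{0}(M)$, observing that $L i^{\ast} \mathcal{N}_{0}(M) = \mathcal{N}_{0}(M) \times_{X} Z = \mathcal{G}_{Z 0}$ since $\mathcal{Y}_{0} \times_{X} Z = 0$; your detour through the triangle $\mathcal{Y}_{0} \to \mathcal{G}_{0} \to \mathcal{N}_{0}(M)$ and the $j_{!} \dashv j^{\ast}$ adjunction for the lattice term is a correct but slightly longer route expressing the same disjointness $U \cap Z = \emptyset$ on the other side. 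Both then reduce to showing $\sheafhom_{Z_{\sm}}(\mathcal{G}_{Z 0}, \Z) = 0$ and $\sheafext^{1}_{Z_{\sm}}(\mathcal{G}_{Z 0}, \Z) = 0$.

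The genuine gap is in the $\sheafext^{1}$ step. Triviality of the underlying $\Z$-torsor on $\mathcal{G}_{Z 0} \times_{Z} Z'$ (which you correctly get from $H^{1}_{\et}$ of a normal connected scheme being $\Hom_{\mathrm{cts}}(\pi_{1}^{\et}, \Z) = 0$) only yields a section of sheaves of \emph{sets}, not a group homomorphism, so you cannot yet conclude the extension splits. You must close this by the standard coboundary/normalization argument: given such a section $s$, the $2$-cocycle $c(x,y) = s(x) + s(y) - s(x+y)$ is a $\Z$-valued section over the connected scheme $\mathcal{G}_{Z 0} \times_{Z} \mathcal{G}_{Z 0} \times_{Z} Z'$, hence a constant, equal to $s(0)$; replacing $s$ by $s - s(0)$ then gives an actual group-theoretic splitting. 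Alternatively, the paper bypasses the torsor issue entirely by citing Milne's argument via the short exact sequence $0 \to \Z \to \Q \to \Q/\Z \to 0$, which gives $\sheafext^{1}_{Z_{\sm}}(\mathcal{G}_{Z 0}, \Z) \cong \sheafhom_{Z_{\sm}}(\pi_{0}(\mathcal{G}_{Z 0}), \Q/\Z) = 0$ using connectedness. Your concern about imperfect residue fields is, I think, a red herring here: $\mathcal{G}_{Z 0}$ being smooth connected with a rational point over each residue field is enough for both the $\sheafhom$ vanishing and the connectedness of the product, regardless of perfection; the real issue was the torsor-versus-extension step. Your stalk-theoretic fallback is sound and would also work, but still needs the coboundary argument at the level of strictly henselian local rings.
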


\begin{proof}
	We have
		\[
				R \sheafhom_{X_{\sm}}(\mathcal{N}_{0}(M), i_{\ast} \Z[1])
			=
				i_{\ast}
				R \sheafhom_{Z_{\sm}}(i^{\ast} \mathcal{N}_{0}(M), \Z[1])
		\]
	by Prop.\ \ref{prop: premorphisms preserving finite products}
	and Prop.\ \ref{prop: derived pull of smooth groups}.
	We have
		$
				i^{\ast} \mathcal{N}_{0}(M)
			=
				\mathcal{G}_{Z 0}
		$
	since $i^{\ast} \mathcal{Y}_{0} = 0$.
	Hence
		\[
				\tau_{\le 0}
				R \sheafhom_{X_{\sm}}(\mathcal{N}_{0}(M), i_{\ast} \Z[1])
			=
				i_{\ast}
				\tau_{\le 0}
				R \sheafhom_{Z_{\sm}}(\mathcal{G}_{Z 0}, \Z[1]).
		\]
	The fibers of the group $\mathcal{G}_{Z 0}$ at the points of $Z$
	are connected smooth algebraic groups.
	Hence the same argument as \cite[III, paragraph after Lem.\ C 10]{Mil06} shows that
		\begin{gather*}
					\sheafhom_{Z_{\sm}}(\mathcal{G}_{Z 0}, \Z)
				=
					0,
			\\
					\sheafext_{Z_{\sm}}^{1}(\mathcal{G}_{Z 0}, \Z)
				=
					\sheafhom_{Z_{\sm}}(\pi_{0}(\mathcal{G}_{Z 0}), \Q / \Z)
				=
					0.
		\end{gather*}
	The result then follows.
\end{proof}

By the definition of $\mathcal{N}_{0}(M)^{\vee}$ and the derived tensor-Hom adjunction,
the isomorphism
	$
			\mathcal{N}(M^{\vee})
		\isomto
			\mathcal{N}_{0}(M)^{\vee}
	$
in Prop.\ \ref{prop: linear dual of connected Neron and trivial duality with Gm target}
gives a $\Gm$-bi-extension
	\[
			\mathcal{N}_{0}(M) \tensor^{L} \mathcal{N}(M^{\vee})
		\to
			\Gm[1]
	\]
as a morphism in $D(X_{\sm})$.
Switching $M$ and $M^{\vee}$ and applying the derived tensor-Hom adjunction,
we have a morphism
	\[
			\mathcal{N}_{0}(M^{\vee})
		\to
			R \sheafhom_{X_{\sm}}(\mathcal{N}(M), \Gm[1])
	\]
This morphism factors through the truncation $\tau_{\le 0}$ of the right-hand side
since $\mathcal{N}_{0}(M^{\vee})$ is concentrated in degrees $\le 0$.

\begin{Def} \label{def: duality morphisms for Neron}
	We denote the above obtained morphisms in $D(X_{\sm})$ by
		\begin{gather*}
					\zeta_{M}
				\colon
					\mathcal{N}(M^{\vee})
				\isomto
					\mathcal{N}_{0}(M)^{\vee},
			\\
					\zeta_{0 M}
				\colon
					\mathcal{N}_{0}(M^{\vee})
				\to
					\mathcal{N}(M)^{\vee}.
		\end{gather*}
\end{Def}

Hence Thm.\ \ref{main: duality} \eqref{main: item: trivial duality} has been proven
in Prop.\ \ref{prop: linear dual of connected Neron and trivial duality with Gm target}.
If $M$ is an abelian scheme,
then the morphism $\zeta_{0 M}$ agrees with the morphism in \cite[III, Lem.\ C.11]{Mil06}
by construction.

\begin{Prop} \label{prop: linear dual of Neron}
	We have
		\begin{align*}
					\tau_{\le 0}
					R \sheafhom_{X_{\sm}}(\mathcal{N}(M), i_{\ast} \Z[1])
			&	\isomfrom
					\tau_{\le 0}
					R \sheafhom_{X_{\sm}}(i_{\ast} \mathcal{P}(M), i_{\ast} \Z[1])
			\\
			&	=
					i_{\ast}
					\tau_{\le 0}
					R \sheafhom_{Z_{\sm}}(\mathcal{P}(M), \Z[1])
			\\
			&	\isomto
					i_{\ast} \mathcal{P}(M)^{\LDual}[1].
		\end{align*}
\end{Prop}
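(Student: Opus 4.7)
The plan is to verify each of the three displayed relations in turn, the key input being the canonical distinguished triangle $\mathcal{N}_{0}(M) \to \mathcal{N}(M) \to i_{\ast}\mathcal{P}(M)$ of Definition \ref{def: Neron components}.

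First I would obtain the left (backward) isomorphism by applying the contravariant functor $R\sheafhom_{X_{\sm}}(-, i_{\ast}\Z[1])$ to this triangle. The resulting distinguished triangle has rightmost term $R\sheafhom_{X_{\sm}}(\mathcal{N}_{0}(M), i_{\ast}\Z[1])$, whose $\tau_{\le 0}$ vanishes by Proposition \ref{prop: linear dual of connected Neron and trivial duality with Gm target}. Feeding this into the long exact sequence of cohomology sheaves forces the map
$$\tau_{\le 0} R\sheafhom_{X_{\sm}}(i_{\ast}\mathcal{P}(M), i_{\ast}\Z[1]) \to \tau_{\le 0} R\sheafhom_{X_{\sm}}(\mathcal{N}(M), i_{\ast}\Z[1])$$
to be an isomorphism, as required.

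For the middle equality I would establish the general formula $R\sheafhom_{X_{\sm}}(i_{\ast}F, i_{\ast}G) = i_{\ast} R\sheafhom_{Z_{\sm}}(F, G)$ for $F, G$ sheaves on $Z_{\sm}$. Working via the equivalence of the smooth and \'etale topoi, $i$ becomes a closed immersion, so $i^{\ast}$ is exact and $i^{\ast} i_{\ast} = \id$; hence $i_{\ast}$ is exact, fully faithful, and preserves injectives. A direct sections-wise computation using the $(i^{\ast}, i_{\ast})$ adjunction gives the non-derived identity $\sheafhom(i_{\ast}F, i_{\ast}G) = i_{\ast} \sheafhom(F, G)$, and applying this termwise to an injective resolution of $G$ produces the derived version. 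The truncation $\tau_{\le 0}$ then commutes with $i_{\ast}$ because $i_{\ast}$ is exact.

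For the final isomorphism, I would note that $R\sheafhom_{Z_{\sm}}(\mathcal{P}(M), \Z) = R\sheafhom_{Z_{\et}}(\mathcal{P}(M), \Z) = \mathcal{P}(M)^{\LDual}$, since both $\mathcal{P}(M)$ and $\Z$ are \'etale sheaves and the two topoi are equivalent. To show that $\tau_{\le 0}$ is redundant, use the canonical triangle $H^{-1}(\mathcal{P}(M))[1] \to \mathcal{P}(M) \to H^{0}(\mathcal{P}(M))$ supplied by Proposition \ref{prop: fibers of Nerom components}, where $H^{-1}$ is a lattice and $H^{0}$ is a finitely generated \'etale group. Then $R\sheafhom(H^{0}, \Z)$ sits in degrees $0$ and $1$ (the free dual in degree $0$, the Pontryagin dual of the torsion part as $\sheafext^{1}$), and $R\sheafhom(H^{-1}[1], \Z) = (H^{-1})^{\vee}[-1]$ sits in degree $1$. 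Hence $\mathcal{P}(M)^{\LDual}$ is concentrated in degrees $0, 1$, so $\mathcal{P}(M)^{\LDual}[1]$ is concentrated in degrees $-1, 0$ and $\tau_{\le 0}$ acts as the identity on it.

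The main obstacle, beyond routine bookkeeping, is the middle equality: one must carefully justify the formula $R\sheafhom(i_{\ast}F, i_{\ast}G) = i_{\ast} R\sheafhom(F, G)$ and the exactness of $i^{\ast}$ on the smooth site, which is where the equivalence of the smooth and \'etale topoi (already implicit in Propositions \ref{prop: premorphisms preserving finite products} and \ref{prop: derived pull of smooth groups}) does the real work.
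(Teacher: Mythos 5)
Your proposal is correct and follows essentially the same three-step decomposition as the paper, which cites Prop.\ \ref{prop: linear dual of connected Neron and trivial duality with Gm target} for the first isomorphism, Prop.\ \ref{prop: premorphisms preserving finite products} for the middle equality, and Prop.\ \ref{prop: fibers of Nerom components} together with the fact that abelian groups have projective dimension one for the third. For the middle equality you reconstruct the sheaf-Hom adjunction from scratch (exactness of $i^{\ast}$ via the \'etale-smooth topos equivalence, preservation of injectives) rather than invoking Prop.\ \ref{prop: premorphisms preserving finite products} together with $L i^{\ast} i_{\ast} = \mathrm{id}$ and $R i_{\ast} = i_{\ast}$; this is a more self-contained route but lands in the same place, and your explicit verification is if anything closer to a complete argument than the paper's one-line citation.
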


\begin{proof}
	The first isomorphism follows from
	Prop.\ \ref{prop: linear dual of connected Neron and trivial duality with Gm target}.
	The second equality is Prop.\ \ref{prop: premorphisms preserving finite products}.
	The third isomorphism follows from the fact that
	the category of abelian groups has projective dimension one,
	$\Ext^{1}(\Z, \var) = 0$
	and Prop.\ \ref{prop: fibers of Nerom components}.
\end{proof}

\begin{Prop} \label{prop: diagram of duality morphisms for Neron}
	Let us abbreviate the functor $R \sheafhom_{X_{\sm}}$ as $[\var, \var]_{X}$.
	Consider the two distinguished triangles
		\begin{gather*}
					\mathcal{N}_{0}(M^{\vee})
				\to
					\mathcal{N}(M^{\vee})
				\to
					i_{\ast} \mathcal{P}(M^{\vee}),
			\\
					\bigl[ \mathcal{N}(M), \Gm[1] \bigr]_{X}
				\to
					\bigl[ \mathcal{N}(M), \mathcal{G}_{m}[1] \bigr]_{X}
				\to
					\bigl[ \mathcal{N}(M), i_{\ast} \Z[1] \bigr]_{X}.
		\end{gather*}
	The morphism
		\[
				\mathcal{N}(M^{\vee})
			\to
				\bigl[ \mathcal{N}(M), \mathcal{G}_{m}[1] \bigr]_{X}
		\]
	in the middle coming from Prop.\ \ref{prop: trivial duality with Neron target}
	can uniquely be extended to a morphism of distinguished triangles
		\[
			\begin{CD}
					\mathcal{N}_{0}(M^{\vee})
				@>>>
					\mathcal{N}(M^{\vee})
				@>>>
					i_{\ast} \mathcal{P}(M^{\vee}),
				\\
				@VVV
				@VVV
				@VVV
				\\
					\bigl[ \mathcal{N}(M), \Gm[1] \bigr]_{X}
				@>>>
					\bigl[ \mathcal{N}(M), \mathcal{G}_{m}[1] \bigr]_{X}
				@>>>
					\bigl[ \mathcal{N}(M), i_{\ast} \Z[1] \bigr]_{X}.
			\end{CD}
		\]
	The left vertical morphism agrees with the morphism $\zeta_{0 M}$.
	The middle morphism becomes an isomorphism after truncation $\tau_{\le 0}$.
\end{Prop}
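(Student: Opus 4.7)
The last clause, that the middle morphism becomes an isomorphism after $\tau_{\le 0}$, is precisely Prop~\ref{prop: trivial duality with Neron target} and requires no further argument. For the existence and uniqueness of the left vertical, the strategy is to apply the triangulated category axioms: the middle morphism extends (uniquely) to a morphism of triangles if and only if the composite $\mathcal{N}_0(M^\vee) \to [\mathcal{N}(M), i_*\Z[1]]_X$ is zero, and the extension is unique modulo an element of $\Hom_{D(X_\sm)}(\mathcal{N}_0(M^\vee), [\mathcal{N}(M), i_*\Z]_X)$. Both vanishing statements will be treated by the same mechanism.

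Since $\mathcal{N}_0(M^\vee) \in D^{\le 0}(X_\sm)$, the t-structure orthogonality $\Hom(D^{\le 0}, D^{\ge 1}) = 0$ reduces each $\Hom$ group to its image in $\tau_{\le 0}$ of the target. By Prop~\ref{prop: linear dual of Neron}, these truncations are $i_*\mathcal{P}(M)^{\LDual}[1]$ and $i_*\sheafhom(H^{0}\mathcal{P}(M), \Z)$ respectively (the latter is a lattice on $Z$ concentrated in degree $0$). The $(i^*, i_*)$-adjunction combined with $i^*\mathcal{N}_0(M^\vee) = \mathcal{G}'_{Z0}$ (using Prop~\ref{prop: derived pull of smooth groups}) reduces both Homs to $\Hom_{D(Z_\sm)}(\mathcal{G}'_{Z0}, \var)$ with target either an \'etale complex of finitely generated type in degrees $[-1, 0]$ or a lattice in degree $0$. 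These vanish by the same $\sheafhom$ and $\sheafext^{1}$ computations as in the proof of Prop~\ref{prop: linear dual of connected Neron and trivial duality with Gm target}: $\sheafhom$ from a connected smooth group into an \'etale sheaf is zero, and $\sheafext^{1}$ into a lattice vanishes because $\pi_{0}(\mathcal{G}'_{Z0}) = 0$.

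To identify the resulting left vertical with $\zeta_{0M}$, by the above uniqueness it suffices to verify that $\zeta_{0M}$ itself makes the left square commute. Under the tensor-Hom adjunction, this commutativity is equivalent to an equality of two pairings $\mathcal{N}_0(M^\vee) \tensor^{L} \mathcal{N}(M) \to \mathcal{G}_m[1]$: the restriction of the Prop~\ref{prop: trivial duality with Neron target} pairing along $\mathcal{N}_0(M^\vee) \to \mathcal{N}(M^\vee)$, and the composition of $\zeta_{0M}$'s defining pairing to $\Gm[1]$ with $\Gm[1] \to \mathcal{G}_m[1]$. By the construction of $\zeta_{0M}$ in Def~\ref{def: duality morphisms for Neron}, combined with the biduality symmetry of Prop~\ref{prop: commutativity of pairing on Neron}, $\zeta_{0M}$'s pairing is precisely such a $\Gm[1]$-lift of the restricted canonical pairing, yielding the desired commutativity. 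The main obstacle is this last identification: one must unwind $\zeta_{0M}$'s construction through biduality $M \isomto M^{\vee\vee}$ and tensor-factor switching, invoking Prop~\ref{prop: commutativity of pairing on Neron} at the right moment; by contrast the Hom-vanishing steps are routine given the truncation and adjunction machinery already developed in Sections~\ref{sec: Definition of Neron models} and~\ref{sec: Component complexes}.
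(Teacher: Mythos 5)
Your proof is correct and follows essentially the same route as the paper: reduce the existence/uniqueness of the left vertical to the vanishing of two $\Hom$ groups, use $\tau_{\le 0}$-truncation and the $(i^*, i_*)$-adjunction together with Prop.~\ref{prop: linear dual of Neron} to reduce to $\Hom$ and $\Ext^1$ from $\mathcal{G}'_{Z0}$ into \'etale groups of finite type, which vanish by connectedness; then pin down the left vertical as $\zeta_{0M}$ by exhibiting the commutative pairing square (the paper states the square without explicitly naming Prop.~\ref{prop: commutativity of pairing on Neron}, whereas you invoke it explicitly, which is where it is indeed needed). The only cosmetic difference is that the paper handles the two vanishings uniformly by shifting the source $\mathcal{N}_0(M^\vee)$ and $\mathcal{N}_0(M^\vee)[1]$ against a single truncation $i_*\mathcal{P}(M)^{\LDual}[1]$, while you compute the two truncated targets separately.
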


\begin{proof}
	If we show that any morphism
	from $\mathcal{N}_{0}(M^{\vee})$ or $\mathcal{N}_{0}(M^{\vee})[1]$
	to $\bigl[ \mathcal{N}(M), i_{\ast} \Z[1] \bigr]_{X}$ is necessarily zero,
	then the general lemma on triangulated categories \cite[Lem.\ (4.2.5)]{Suz14}
	gives the desired unique extension.
	Let $f$ be such a morphism.
	Then $f$ factors through
	the truncation $\tau_{\le 0}$ of $\bigl[ \mathcal{N}(M), i_{\ast} \Z[1] \bigr]_{X}$,
	which is isomorphic to $i_{\ast} \mathcal{P}(M)^{\LDual}[1]$
	by Prop.\ \ref{prop: linear dual of Neron}.
	By adjunction, $f$ corresponds to a morphism
	from $\mathcal{G}_{Z 0}'$ or $\mathcal{G}_{Z 0}'[1]$ to $\mathcal{P}(M)^{\LDual}[1]$.
	We have a spectral sequence
		\[
				E_{2}^{i j}
			=
				\sheafext_{Z_{\sm}}^{i}(H^{-j-1} \mathcal{P}(M), \Z)
			\Longrightarrow
				H^{i + j} \bigl(
					\mathcal{P}(M)^{\LDual}[1]
				\bigr).
		\]
	From this and by Prop.\ \ref{prop: fibers of Nerom components},
	we obtain an isomorphism and an exact sequence
		\begin{gather*}
					H^{-1} \bigl(
						\mathcal{P}(M)^{\LDual}[1]
					\bigr)
				=
					\sheafhom_{Z_{\sm}}(H^{0} \mathcal{P}(M), \Z),
			\\
					0
				\to
					\sheafext_{Z_{\sm}}^{1}(H^{0} \mathcal{P}(M), \Z)
				\to
					H^{0} \bigl(
						\mathcal{P}(M)^{\LDual}[1]
					\bigr)
				\to
					\sheafhom_{Z_{\sm}}(H^{-1} \mathcal{P}(M), \Z)
				\to
					0.
		\end{gather*}
	Hence $\mathcal{P}(M)^{\LDual}[1] \in D^{b}(\EtGpf / Z)$ is concentrated in degrees $-1$ and $0$
	whose $H^{-1}$ is a lattice.
	Therefore, about cohomology objects $H^{n}$ of $\mathcal{P}(M)^{\LDual}[1]$,
	we have
		\[
				\Hom_{Z_{\sm}}(\mathcal{G}_{Z 0}', H^{-1} \text{ or } H^{0})
			=
				\Ext_{Z_{\sm}}^{1}(\mathcal{G}_{Z 0}, H^{-1})
			=
				0,
		\]
	and $f$ has to be zero.
	
	On the other hand, we have a commutative diagram
		\[
			\begin{CD}
					\mathcal{N}_{0}(M^{\vee}) \tensor^{L} \mathcal{N}(M)
				@>>>
					\mathcal{N}(M^{\vee}) \tensor^{L} \mathcal{N}(M)
				\\
				@VVV
				@VVV
				\\
					\Gm[1]
				@>>>
					\mathcal{G}_{m}[1],
			\end{CD}
		\]
	where the right vertical morphism is the one in Prop.\ \ref{prop: trivial duality with Neron target}
	and the left vertical morphism is the one appearing in the definition of $\zeta_{0 M}$
	(in the paragraph before Def.\ \ref{def: duality morphisms for Neron}).
	Translating this using the derived tensor-Hom adjunction,
	we see that the left square in the statement of the proposition is also commutative
	if the morphism $\zeta_{0 M}$ is used in the left vertical morphism.
	Hence the left vertical morphism has to be $\zeta_{0 M}$ by uniqueness.
	The middle morphism becomes an isomorphism after truncation $\tau_{\le 0}$
	by Prop.\ \ref{prop: trivial duality with Neron target}.
\end{proof}

The objects in the upper row of the diagram in this proposition are concentrated in degrees $\le 0$.
Hence the third vertical morphisms factor as
	\[
			i_{\ast} \mathcal{P}(M^{\vee})
		\to
			i_{\ast} \mathcal{P}(M)^{\LDual}[1],
	\]
where we used the isomorphism in Prop.\ \ref{prop: linear dual of Neron}.
It is a morphism in $D(X_{\sm})$.
Pulling back by $i$, we have a morphism
	$
				\mathcal{P}(M^{\vee})
			\to
				\mathcal{P}(M)^{\LDual}[1]
	$
in $D^{b}(\EtGpf / Z)$.
By the derived tensor-Hom adjunction, this corresponds to a morphism
	\[
			\mathcal{P}(M^{\vee})
		\tensor^{L}
			\mathcal{P}(M)
		\to
			\Z[1]
	\]
in $D(Z_{\sm})$ (or $D(Z_{\et})$).

\begin{Def}
	We denote the above obtained morphism in $D^{b}(\EtGpf / Z)$ by
		\[
				\eta_{M}
			\colon
					\mathcal{P}(M^{\vee})
				\to
					\mathcal{P}(M)^{\LDual}[1].
		\]
\end{Def}

If $M$ is an abelian scheme,
then the morphism $\eta_{M}$ agrees with Grothendieck's pairing
by \cite[III, Lem.\ C.11]{Mil06}.

\begin{Prop} \label{prop: RHom from Neron components to Neron Gm}
	We have
		\[
				\tau_{\le 1}
				R \sheafhom_{X_{\sm}}(
					i_{\ast} \mathcal{P}(M),
					\mathcal{G}_{m}[1]
				)
			=
				0.
		\]
\end{Prop}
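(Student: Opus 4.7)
My plan is to exploit the characterization $\mathcal{G}_m[1] = \tau_{\le 0}(R j_\ast \Gm[1])$ noted before Prop \ref{prop: trivial duality with Neron target}, together with the adjunction of Prop \ref{prop: premorphisms preserving finite products} \eqref{item: sheafified derived push pull adjunction} applied to the premorphism $j \colon U_{\sm} \to X_{\sm}$, and the observation that $i_\ast \mathcal{P}(M)$ restricts to zero on $U$.

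First I would establish the vanishing
\[
    R \sheafhom_{X_{\sm}}(i_\ast \mathcal{P}(M), R j_\ast \Gm[1]) = 0.
\]
By the adjunction formula of Prop \ref{prop: premorphisms preserving finite products} \eqref{item: sheafified derived push pull adjunction}, the left-hand side equals $R j_\ast R \sheafhom_{U_{\sm}}(L j^\ast i_\ast \mathcal{P}(M), \Gm[1])$. Taking the representative $\mathcal{P}'(M) \in D^{b}(\EtGp' / X)$ of $i_\ast \mathcal{P}(M)$ from Prop \ref{prop: Neron components} and invoking Prop \ref{prop: derived pull of smooth groups}, one obtains $L j^\ast i_\ast \mathcal{P}(M) = \mathcal{P}'(M) \times_X U$. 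The explicit description of $\mathcal{P}'(M, V)$ shows this base change is the complex $[Y_{(/ V)} \xrightarrow{\id} Y_{(/ V)}]$ over $U$, which is acyclic, giving the claimed vanishing.

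Second, I would apply $R \sheafhom_{X_{\sm}}(i_\ast \mathcal{P}(M), -)$ to the canonical distinguished triangle
\[
    \mathcal{G}_m[1] \to R j_\ast \Gm[1] \to \tau_{\ge 1}(R j_\ast \Gm[1]) \to \mathcal{G}_m[2].
\]
Combined with the vanishing from the previous step, this yields an isomorphism
\[
    R \sheafhom_{X_{\sm}}(i_\ast \mathcal{P}(M), \mathcal{G}_m[1]) \cong R \sheafhom_{X_{\sm}}(i_\ast \mathcal{P}(M), \tau_{\ge 1}(R j_\ast \Gm[1]))[-1].
\]
Since $i_\ast \mathcal{P}(M)$ has cohomology concentrated in degrees $-1$ and $0$ by Prop \ref{prop: fibers of Nerom components}, and $\tau_{\ge 1}(R j_\ast \Gm[1]) \in D^{\ge 1}$, applying $R \sheafhom(-, \tau_{\ge 1}(R j_\ast \Gm[1]))$ to the truncation triangle $i_\ast H^{-1} \mathcal{P}(M)[1] \to i_\ast \mathcal{P}(M) \to i_\ast H^0 \mathcal{P}(M)$ shows the right-hand side before the shift lies in $D^{\ge 1}$, hence in $D^{\ge 2}$ after shifting by $[-1]$. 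Therefore $\tau_{\le 1} R \sheafhom_{X_{\sm}}(i_\ast \mathcal{P}(M), \mathcal{G}_m[1]) = 0$.

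The main subtlety is the identification of $L j^\ast i_\ast \mathcal{P}(M)$ in the first step: one must pass to the representative $\mathcal{P}'(M) \in D^{b}(\EtGp' / X)$ in order to apply Prop \ref{prop: derived pull of smooth groups}, since $i_\ast \mathcal{P}(M)$ viewed abstractly as a complex of sheaves on $X_{\sm}$ does not come with an immediate presentation by smooth group algebraic spaces over $X$. Once this identification is made, the adjunction formula and the straightforward degree estimates deliver the result.
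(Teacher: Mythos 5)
Your proof is correct and follows essentially the same two-step strategy as the paper: first show $R \sheafhom_{X_{\sm}}(i_\ast \mathcal{P}(M), R j_\ast \Gm[1]) = 0$ via the push--pull adjunction, then compare $\mathcal{G}_m[1]$ with $R j_\ast \Gm[1]$ and run a degree count. One small remark on the ``main subtlety'' you identify at the end: you do not actually need to pass to the representative $\mathcal{P}'(M)$ or invoke Prop.~\ref{prop: derived pull of smooth groups} to control $L j^\ast$. By Prop.~\ref{prop: morphism between smooth sites of U and X}, $j \colon U_{\sm} \to X_{\sm}$ is a genuine \emph{morphism of sites} (not just a premorphism), so $j^\ast$ is exact and $L j^\ast = j^\ast$ automatically; and $j^\ast i_\ast F = 0$ for any $F$ since $U$ and $Z$ are disjoint. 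This is precisely the route the paper takes (citing \cite[Thm.~18.6.9~(iii)]{KS06} and noting $j$ is a morphism of sites). Your explicit computation with $\mathcal{P}'(M,V)$ is a valid, if roundabout, alternative.
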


\begin{proof}
	We have
		\[
				R \sheafhom_{X_{\sm}}(
					i_{\ast} \mathcal{P}(M),
					R j_{\ast} \Gm[1]
				)
			=
				R j_{\ast}
				R \sheafhom_{U_{\sm}}(
					j^{\ast} i_{\ast} \mathcal{P}(M),
					\Gm[1]
				)
			=
				0
		\]
	by Prop.\ \ref{prop: premorphisms preserving finite products}
	\eqref{item: sheafified derived push pull adjunction}
	(or \cite[Thm. 18.6.9 (iii)]{KS06}, noting that $j$ is a morphism of sites).
	Together with $R^{1} j_{\ast} \Gm = 0$, we have
		\[
				R \sheafhom_{X_{\sm}}(
					i_{\ast} \mathcal{P}(M),
					\mathcal{G}_{m}[1]
				)
			=
				R \sheafhom_{X_{\sm}}(
					i_{\ast} \mathcal{P}(M),
					\tau_{\ge 2}
					R j_{\ast} \Gm
				).
		\]
	This is concentrated in degrees $\ge 2$.
\end{proof}

\begin{Prop} \label{prop: compatibility of two definitions of duality pairings}
	Consider the diagram
		\[
			\begin{CD}
					\bigl[
						\mathcal{N}(M),
						\mathcal{G}_{m}[1]
					\bigr]_{X}
				@>>>
					\bigl[
						\mathcal{N}_{0}(M),
						\mathcal{G}_{m}[1]
					\bigr]_{X}
				@<<<
					\bigl[
						\mathcal{N}_{0}(M),
						\Gm[1]
					\bigr]_{X}
				\\
				@VVV
				@.
				@VVV
				\\
					\bigl[
						\mathcal{N}(M),
						i_{\ast} \Z[1]
					\bigr]_{X}
				@<<<
					\bigl[
						i_{\ast} \mathcal{P}(M),
						i_{\ast} \Z[1]
					\bigr]_{X}
				@>>>
					\bigl[
						i_{\ast} \mathcal{P}(M),
						\Gm[1]
					\bigr]_{X}[1]
			\end{CD}
		\]
	of natural morphisms in $D(X_{\sm})$.
	Note that the mapping cones of the four horizontal morphisms are all concentrated in degrees $\ge 1$
	by Prop.\ \ref{prop: linear dual of connected Neron and trivial duality with Gm target}
	and \ref{prop: RHom from Neron components to Neron Gm}
	and hence the horizontal morphisms become isomorphisms after truncation $\tau_{\le 0}$.
	Then the resulting two morphisms
		\[
				\mathcal{N}_{0}(M)^{\vee}
			\rightrightarrows
				i_{\ast} \mathcal{P}(M)^{\LDual}[1]
		\]
	are equal, or equivalently, the above diagram becomes a commutative diagram after $\tau_{\le 0}$.
\end{Prop}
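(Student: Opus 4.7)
The plan is to recognize both compositions as arising from applying the bifunctor $R\sheafhom_{X_{\sm}}(-,-)$ to two distinguished triangles simultaneously: in the target, the short exact sequence $\Gm \to \mathcal{G}_m \to i_{\ast}\Z$, call this (I); in the source, the triangle $\mathcal{N}_0(M) \to \mathcal{N}(M) \to i_{\ast}\mathcal{P}(M)$ from Def.\ \ref{def: Neron components}, call this (II). First I would form the $3 \times 3$ grid of objects $[A,B]_X$ indexed by the terms of (II) and (shifts of) the terms of (I); by contravariant-covariant bifunctoriality, every row of this grid is a distinguished triangle obtained from (I) and every column is a distinguished triangle obtained from (II). All nine small squares built out of ordinary (non-connecting) morphisms commute on the nose.

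Next, I would identify the two paths of the proposition with two routes around this grid: the top path goes from $[\mathcal{N}(M),\mathcal{G}_m[1]]_X$ to $[i_{\ast}\mathcal{P}(M),\Gm[1]]_X[1]$ by first traversing the two ordinary squares across the top and then applying the connecting morphism of (II), while the bottom path first applies the connecting morphism of (I) and then traverses two ordinary squares across the bottom. Thus, modulo the tautologically commutative squares, the proposition reduces to checking the (anti-)commutativity of the single square made of the two connecting morphisms associated to (I) and (II), namely
\[
\begin{CD}
	[\mathcal{N}_0(M),\mathcal{G}_m[1]]_X	@>>>	[\mathcal{N}_0(M),i_{\ast}\Z[1]]_X	\\
	@VVV									@VVV									\\
	[i_{\ast}\mathcal{P}(M),\mathcal{G}_m[1]]_X[1]	@>>>	[i_{\ast}\mathcal{P}(M),i_{\ast}\Z[1]]_X[1].
\end{CD}
\]

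The main obstacle is the sign: for a general triangulated bifunctor this outer square is known only to \emph{anti}-commute, so an honest diagram chase leaves a potential factor of $-1$ between the two paths. To eliminate it after $\tau_{\le 0}$, I would follow the strategy already used in the proof of Prop.\ \ref{prop: diagram of duality morphisms for Neron}: exhibit both compositions as extensions of the same middle map (the morphism $\mathcal{N}(M^{\vee}) \to \tau_{\le 0}[\mathcal{N}(M),\mathcal{G}_m[1]]_X$ of Prop.\ \ref{prop: trivial duality with Neron target}) to a morphism of distinguished triangles, and invoke the uniqueness of such an extension. Concretely, any discrepancy between the two compositions would be a morphism from $\mathcal{N}_0(M^{\vee})$ (or a shift thereof) into $\tau_{\le 0}[i_{\ast}\mathcal{P}(M),\Gm[1]]_X[1] \cong i_{\ast}\mathcal{P}(M)^{\LDual}[1]$ that vanishes after restriction to $U$; the same vanishing computation as in Prop.\ \ref{prop: diagram of duality morphisms for Neron} (using $i^{\ast}\mathcal{N}_0(M^{\vee}) = \mathcal{G}_{Z0}'$ and $\Hom(\mathcal{G}_{Z0}',\text{lattice}) = \Ext^{1}(\mathcal{G}_{Z0}',\text{lattice}) = 0$) forces this discrepancy to be zero.

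Finally, with the anti-commutativity sign thus killed, I would conclude that the two compositions agree as morphisms $\mathcal{N}_0(M)^{\vee} \to i_{\ast}\mathcal{P}(M)^{\LDual}[1]$ in $D(X_{\sm})$, so the diagram of the proposition commutes after $\tau_{\le 0}$ as claimed. The cleanest way to package the whole argument is probably as a single application of the octahedral axiom to the composite of the two connecting triangles, but the uniqueness route above should work without having to make that explicit.
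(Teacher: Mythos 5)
Your first two steps --- recognizing the diagram as living inside the $3\times 3$ grid for the bifunctor $[-,-]_X$ applied to the triangles $\mathcal{N}_0(M)\to\mathcal{N}(M)\to i_\ast\mathcal{P}(M)$ and $\Gm\to\mathcal{G}_m\to i_\ast\Z$, and reducing the question to the corner square of connecting morphisms --- is a reasonable framing, but it doesn't reduce the problem as cleanly as you suggest. The two composites in the proposition are formed only after inverting the four horizontal arrows \emph{after $\tau_{\le 0}$}; those arrows are not isomorphisms in the untruncated grid, so the $3\times 3$ lemma (which concerns the untruncated objects) cannot directly compare the two paths. Tracking whether the discrepancy is exactly the $\pm 1$ of the single anti-commuting square also requires chasing the paths around the periodic boundary of the grid, which you don't carry out. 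This is not fatal, but it means the "modulo the tautologically commutative squares, the proposition reduces to ... one sign" claim is not established.

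The more serious problem is the step that is supposed to kill the sign. You claim the discrepancy between the two composites is a morphism from $\mathcal{N}_0(M^\vee)$ into $i_\ast\mathcal{P}(M)^{\LDual}[1]$. That is not the correct source: the composites are morphisms $\tau_{\le 0}[\mathcal{N}_0(M),\mathcal{G}_m[1]]_X\to\tau_{\le 0}[i_\ast\mathcal{P}(M),i_\ast\Z[1]]_X$, and by Prop.\ \ref{prop: trivial duality with Neron target} the source is $\mathcal{N}_0(M)^\vee\cong\mathcal{N}(M^\vee)$, which is strictly larger than $\mathcal{N}_0(M^\vee)$. With the correct source, the vanishing you invoke ($\Hom(\mathcal{G}'_{Z0},\text{lattice})=\Ext^1(\mathcal{G}'_{Z0},\text{lattice})=0$) does not apply: $i^\ast\mathcal{N}(M^\vee)$ contains the full component-group information, and $\Hom_X(\mathcal{N}(M^\vee),i_\ast\mathcal{P}(M)^{\LDual}[1])$ is precisely the group in which the pairing $\eta_M$ lives; it is typically non-zero. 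Indeed if a $\Hom$-vanishing of this sort were available it would force both composites (not just their difference) to vanish, which is absurd. The remark that the discrepancy "vanishes after restriction to $U$" is also vacuous, because \emph{every} morphism into $i_\ast(\text{anything})$ restricts to zero over $U$; it gives no constraint. Finally, the uniqueness statement in Prop.\ \ref{prop: diagram of duality morphisms for Neron} is about extending the \emph{middle} vertical of a morphism of distinguished triangles to the \emph{third} vertical; the two composites here are not naturally two candidate third-vertical arrows of any such extension, so that uniqueness does not directly apply.

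The paper's proof sidesteps all of this by testing against an arbitrary $T\in D(X_{\sm})$ concentrated in degrees $\le 0$ and using the tensor-Hom adjunction to reduce to a comparison of two set-theoretic maps $\Hom(T\otimes^L\mathcal{N}_0(M),\mathcal{G}_m[1])\rightrightarrows\Hom(T\otimes^L i_\ast\mathcal{P}(M),i_\ast\Z[1])$. One then picks an element $f$, constructs its two images by filling in morphisms of distinguished triangles, and uses the two vanishings $\Hom(A',F)=0$ and $\Hom(C',E[1])=0$ (i.e.\ the degree estimates of Prop.\ \ref{prop: linear dual of connected Neron and trivial duality with Gm target} and \ref{prop: RHom from Neron components to Neron Gm}) at the precise points where a choice of lift or a fill-in is made. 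The sign never appears because one never relies on the $3\times 3$ lemma; the vanishings make the relevant lifts unique and force the two images to coincide. I would recommend you abandon the "reduce to a sign, then kill it" plan and instead do the element-wise diagram chase directly.
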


\begin{proof}
	Denote the distinguished triangles
		\begin{gather*}
				\mathcal{N}_{0}(M)
			\to
				\mathcal{N}(M)
			\to
				i_{\ast} \mathcal{P}(M),
			\\
				\Gm[1]
			\to
				\mathcal{G}_{m}[1]
			\to
				i_{\ast} \Z[1]
		\end{gather*}
	by $A \to B \to C$, $D \to E \to F$, respectively.
	Then the diagram can be written as
		\[
			\begin{CD}
					[B, E]_{X}
				@>>>
					[A, E]_{X}
				@<<<
					[A, D]_{X}
				\\
				@VVV
				@.
				@VVV
				\\
					[B, F]_{X}
				@<<<
					[C, F]_{X}
				@>>>
					[C, D]_{X}[1].
			\end{CD}
		\]
	As noted, $[A, F]_{X}$ and $[C, E]_{X}[1]$ are concentrated in degrees $\ge 1$.
	We want to show that the two morphisms
	$\tau_{\le 0} [A, E]_{X} \rightrightarrows \tau_{\le 0} [C, F]_{X}$ are equal.
	Let $T \in D(X_{\sm})$ be any object concentrated in degrees $\le 0$.
	Applying $\Hom_{X_{\sm}}(T, \var)$, we are comparing two homomorphisms
		\[
				\Hom_{X_{\sm}}(T \tensor^{L} A, E)
			\rightrightarrows
				\Hom_{X_{\sm}}(T \tensor^{L} C, F).
		\]
	Denote $T \tensor^{L} A$, $T \tensor^{L} B$, $T \tensor^{L} C$
	by $A'$, $B'$, $C'$, respectively.
	We want to show that the diagram
		\[
			\begin{CD}
					\Hom_{X_{\sm}}(B', E)
				@>> \sim >
					\Hom_{X_{\sm}}(A', E)
				@<< \sim <
					\Hom_{X_{\sm}}(A', D)
				\\
				@VVV
				@.
				@VVV
				\\
					\Hom_{X_{\sm}}(B', F)
				@< \sim <<
					\Hom_{X_{\sm}}(C', F)
				@> \sim >>
					\Hom_{X_{\sm}}(C', D[1])
			\end{CD}
		\]
	is commutative.
	We know that $R \Hom_{X_{\sm}}(A', F)$ and $R \Hom_{X_{\sm}}(C', E[1])$ are
	concentrated in degrees $\ge 1$.
	Let $f \in  \Hom_{X_{\sm}}(A', E)$.
	Sending $f$ to $\Hom_{X_{\sm}}(C', F)$ via the left side of the diagram,
	we have a commutative diagram
		\begin{equation} \label{eq: technical compatibility diagram}
			\begin{CD}
				A' @>>> B' @>>> C' @>>> A'[1] \\
				@VV f V @VVV @VVV @VV f V \\
				E @>>> F @>>> D[1] @>>> E[1]
			\end{CD}
		\end{equation}
	with diagonal arrows from $B'$ to $E$ and $C'$ to $F$
	splitting the left and middle squares into commutative triangles.
	(Note that the right square is automatically commutative
	since $\Hom_{X_{\sm}}(C', E[1]) = 0$.)
	Hence we have a commutative diagram
		\[
			\begin{CD}
				B' @>>> C' @>>> A'[1] @>>> B'[1] \\
				@VVV @VVV @. @VVV \\
				E @>>> F @>>> D[1] @>>> E[1].
			\end{CD}
		\]
	By an axiom of triangulated categories, there exists a morphism $A'[1] \to D[1]$
	that completes this diagram into a morphism of distinguished triangles.
	This morphism diagonally splits the right square of the diagram \eqref{eq: technical compatibility diagram}
	into commutative triangles.
	From this, we see that the two images of $f$ in $\Hom_{X_{\sm}}(C', F)$ are equal.
	This proves the proposition.
\end{proof}

\begin{Prop} \label{prop: morphism between connected etale triangle to dual such}
	The diagram
		\[
			\begin{CD}
					\mathcal{N}_{0}(M^{\vee})
				@>>>
					\mathcal{N}(M^{\vee})
				@>>>
					i_{\ast} \mathcal{P}(M^{\vee}),
				\\
				@VV \zeta_{0 M} V
				@VV \zeta_{M} V
				@VV d \compose i_{\ast} \eta_{M} V
				\\
					\bigl[ \mathcal{N}(M), \Gm[1] \bigr]_{X}
				@>>>
					\bigl[ \mathcal{N}_{0}(M), \Gm[1] \bigr]_{X}
				@>>>
					\bigl[ i_{\ast} \mathcal{P}(M), \Gm[1] \bigr]_{X}[1].
			\end{CD}
		\]
	is a morphism of distinguished triangles,
	where the $d$ in the right vertical morphism is the connecting morphism
	$i_{\ast} \Z[1] \to \Gm[2]$ of the triangle
	$\Gm \to \mathcal{G}_{m} \to i_{\ast} \Z$.
\end{Prop}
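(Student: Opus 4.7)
The plan is to verify the three commuting-square conditions that comprise a morphism of distinguished triangles, converting each via the derived tensor-Hom adjunction into an identity between pairings deriving from the global pairing $\pi \colon \mathcal{N}(M^{\vee}) \dtensor \mathcal{N}(M) \to \mathcal{G}_{m}[1]$ of Prop.~\ref{prop: trivial duality with Neron target}. First, the bottom row is distinguished by applying the contravariant functor $R \sheafhom_{X_{\sm}}(\var, \Gm[1])$ to the canonical triangle $\mathcal{N}_{0}(M) \to \mathcal{N}(M) \to i_{\ast}\mathcal{P}(M)$ of Def.~\ref{def: Neron components} and rotating.

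For the left square, both compositions $\mathcal{N}_{0}(M^{\vee}) \rightrightarrows R \sheafhom_{X_{\sm}}(\mathcal{N}_{0}(M), \Gm[1])$ correspond under adjunction to pairings $\mathcal{N}_{0}(M^{\vee}) \dtensor \mathcal{N}_{0}(M) \to \Gm[1]$. By the very constructions of $\zeta_{M}$ and $\zeta_{0M}$ recalled before Def.~\ref{def: duality morphisms for Neron}, the restriction of $\pi$ along $\mathcal{N}_{0} \to \mathcal{N}$ in either single factor factors through $\Gm[1] \into \mathcal{G}_{m}[1]$; the doubly-restricted pairing is thus a single well-defined $\Gm[1]$-valued map, independent of the order of restriction, establishing commutativity.

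The main obstacle is verifying the middle and connecting squares with the specified $\beta = d \compose i_{\ast}\eta_{M}$. For this, I would rely on Prop.~\ref{prop: diagram of duality morphisms for Neron} together with the $\tau_{\le 0}$-identifications of Prop.~\ref{prop: linear dual of connected Neron and trivial duality with Gm target} and Prop.~\ref{prop: linear dual of Neron}: these yield a canonical pairing $i_{\ast}\mathcal{P}(M^{\vee}) \dtensor \mathcal{N}(M) \to i_{\ast}\Z[1]$, obtained from $\pi$ by quotienting the source through $i_{\ast}\mathcal{P}$ on the first factor and the target through $\mathcal{G}_{m}[1] \onto i_{\ast}\Z[1]$, which is by the very definition of $\eta_{M}$ in the paragraph after Prop.~\ref{prop: diagram of duality morphisms for Neron} adjoint to the pullback of $i_{\ast}\eta_{M}$ along $\mathcal{N}(M) \to i_{\ast}\mathcal{P}(M)$. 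Composing with the connecting morphism $d$ of the triangle $\Gm \to \mathcal{G}_{m} \to i_{\ast}\Z$ produces exactly the $\Gm[2]$-valued pairings on $i_{\ast}\mathcal{P}(M^{\vee}) \dtensor \mathcal{N}(M)$ and $i_{\ast}\mathcal{P}(M^{\vee}) \dtensor i_{\ast}\mathcal{P}(M)$ encoding the middle and connecting squares for $\beta = d \compose i_{\ast}\eta_{M}$; their agreement with the corresponding pairings coming from $\mathcal{N}_{0}$-restriction of $\pi$ composed with the connecting of the $\Gm$-$\mathcal{G}_{m}$-$i_{\ast}\Z$ triangle follows by tracing through the constructions, yielding commutativity of all three squares.
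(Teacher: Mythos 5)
Your overall strategy — reduce everything to pairings via the derived tensor--Hom adjunction and compare with the diagram of Prop.~\ref{prop: diagram of duality morphisms for Neron} — is the right idea, but your argument glosses over the two lemmas that carry the real weight in the paper's proof, and the gaps are genuine.

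For the left square, your claim that ``the doubly-restricted pairing is thus a single well-defined $\Gm[1]$-valued map, independent of the order of restriction'' proves only that the restriction of the pairing $\pi \colon \mathcal{N}(M^\vee) \tensor^L \mathcal{N}(M) \to \mathcal{G}_m[1]$ to $\mathcal{N}_0(M^\vee) \tensor^L \mathcal{N}_0(M)$ factors uniquely through $\Gm[1]$. But the composite through $\zeta_{0M}$ does not a priori give the restriction of $\pi$: by the paragraph before Def.~\ref{def: duality morphisms for Neron}, $\zeta_{0M}$ is manufactured by \emph{switching $M$ and $M^\vee$}, i.e.\ from the pairing $\mathcal{N}(M^{\vee\vee}) \tensor^L \mathcal{N}(M^\vee) \to \mathcal{G}_m[1]$ together with the biduality $M \cong M^{\vee\vee}$. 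Identifying this ``switched'' restriction with the restriction of $\pi$ is exactly the content of Prop.~\ref{prop: commutativity of pairing on Neron}, which you never invoke; without it the two candidates for the doubly-restricted pairing need not coincide.

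For the middle and connecting squares, you correctly point to Prop.~\ref{prop: diagram of duality morphisms for Neron}, but then say that agreement ``follows by tracing through the constructions.'' The tracing is not routine. The lower row of the present proposition varies the source ($\mathcal{N}(M) \to \mathcal{N}_0(M) \to i_*\mathcal{P}(M)$ with fixed target $\Gm[1]$), while the lower row of Prop.~\ref{prop: diagram of duality morphisms for Neron} varies the target ($\Gm[1] \to \mathcal{G}_m[1] \to i_*\Z[1]$ with fixed source $\mathcal{N}(M)$). Identifying these two triangles after $\tau_{\le 0}$, so that the two resulting maps $\mathcal{N}_0(M)^\vee \rightrightarrows i_*\mathcal{P}(M)^{\LDual}[1]$ agree, is Prop.~\ref{prop: compatibility of two definitions of duality pairings}, whose proof is a nontrivial diagram chase in the triangulated category (using the degree estimates on $[A',F]$ and $[C',E[1]]$ and an axiom of triangulated categories to construct the splitting arrows). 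Skipping this step means you have not actually established the commutativity of the middle or connecting squares. The paper's actual argument is: truncate the lower row by $\tau_{\le 0}$ (legitimate since the upper row lives in degrees $\le 0$), identify it with the lower row of Prop.~\ref{prop: diagram of duality morphisms for Neron} via Prop.~\ref{prop: commutativity of pairing on Neron} and Prop.~\ref{prop: compatibility of two definitions of duality pairings}, then transfer the morphism of distinguished triangles established there.
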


\begin{proof}
	It is enough to show the commutativity of the squares
	after applying $\tau_{\le 0}$ to the lower row
	since the upper row consists of objects concentrated in degrees $\le 0$.
	(Note that there are actually three squares whose commutativity has to be checked,
	one of which is hidden in the diagram.)
	Prop.\ \ref{prop: commutativity of pairing on Neron} and
	\ref{prop: compatibility of two definitions of duality pairings} show that
	the lower row after $\tau_{\le 0}$ can be identified with
	the lower row after $\tau_{\le 0}$ of the diagram in
	Prop.\ \ref{prop: diagram of duality morphisms for Neron}.
	This implies the result.
\end{proof}

\begin{Prop} \label{prop: symmetry of pairing on Neron components}
	Under the identification $M \isomto M^{\vee \vee}$, the dual
		\[
				\eta_{M}^{\LDual}
			\colon
				\mathcal{P}(M)
			\to
				\mathcal{P}(M^{\vee})^{\LDual}[1]
		\]
	of $\eta_{M}$ agrees with
		\[
				\eta_{M^{\vee}}
			\colon
				\mathcal{P}(M)
			\to
				\mathcal{P}(M^{\vee})^{\LDual}[1].
		\]
	In particular, $\eta_{M}$ is an isomorphism if and only if $\eta_{M^{\vee}}$ is so.
\end{Prop}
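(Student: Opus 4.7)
The plan is to reinterpret $\eta_{M}$ as a pairing and then transfer the symmetry of the pairing on the N\'eron models (Prop.\ \ref{prop: commutativity of pairing on Neron}) down to the component complexes. By construction (Def.\ \ref{def: Neron components} and the paragraph before Def.\ \ref{def: duality morphisms for Neron}), $\eta_{M}$ is the adjoint of a morphism
\[
        \pi_{M}
    \colon
        \mathcal{P}(M^{\vee}) \tensor^{L} \mathcal{P}(M)
    \to
        \Z[1]
\]
in $D(Z_{\sm})$. The statement to prove is equivalent to the symmetry $\pi_{M} = \pi_{M^{\vee}} \compose \mathrm{swap}$ under $M \isomto M^{\vee\vee}$, because adjunction together with biduality of lattices (which applies thanks to Prop.\ \ref{prop: fibers of Nerom components}) turns this symmetry into $\eta_{M}^{\LDual} = \eta_{M^{\vee}}$.

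First I would unwind how $\pi_{M}$ is obtained from the Neron pairing. Starting with the pairing $\mathcal{N}(M^{\vee}) \tensor^{L} \mathcal{N}(M) \to \mathcal{G}_{m}[1]$ of Prop.\ \ref{prop: trivial duality with Neron target}, composing with the projection $\mathcal{G}_{m}[1] \to i_{\ast} \Z[1]$ from the triangle $\Gm \to \mathcal{G}_{m} \to i_{\ast} \Z$ gives
\[
        \mathcal{N}(M^{\vee}) \tensor^{L} \mathcal{N}(M)
    \to
        i_{\ast} \Z[1].
\]
Under the tensor-Hom adjunction, this morphism corresponds to the middle vertical of Prop.\ \ref{prop: morphism between connected etale triangle to dual such} composed with $\mathcal{N}(M^{\vee}) \to \tau_{\le 0}[\mathcal{N}(M), i_{\ast}\Z[1]]_{X}$. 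By the same argument as in Prop.\ \ref{prop: diagram of duality morphisms for Neron} (showing morphisms out of $\mathcal{N}_{0}$ into such targets vanish) the pairing factors uniquely through $i_{\ast} \mathcal{P}(M^{\vee}) \tensor^{L} \mathcal{N}(M)$, and then symmetrically through $i_{\ast} \mathcal{P}(M^{\vee}) \tensor^{L} i_{\ast} \mathcal{P}(M)$. Pulling back by $i$ yields exactly $\pi_{M}$.

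Next I would invoke Prop.\ \ref{prop: commutativity of pairing on Neron}: the pairing $\mathcal{N}(M^{\vee}) \tensor^{L} \mathcal{N}(M) \to \mathcal{G}_{m}[1]$ is identified with the analogous pairing for $M^{\vee}$ after swapping the tensor factors under the biduality $M \isomto M^{\vee\vee}$. The projection $\mathcal{G}_{m}[1] \to i_{\ast}\Z[1]$ is independent of $M$, and the double factorization described above is canonical in the two tensor arguments (it is produced purely from the vanishing statements of Prop.\ \ref{prop: linear dual of connected Neron and trivial duality with Gm target} applied to each of $\mathcal{N}_{0}(M)$ and $\mathcal{N}_{0}(M^{\vee})$, both of which are symmetric roles). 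Hence the symmetry of the $\mathcal{G}_{m}[1]$-valued Neron pairing descends to the desired symmetry $\pi_{M} = \pi_{M^{\vee}} \compose \mathrm{swap}$.

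The delicate point, and the place where I expect to spend most of the work, is justifying that the two-step factorization (first through $i_{\ast}\mathcal{P}(M^{\vee}) \tensor^{L} \mathcal{N}(M)$, then through $i_{\ast}\mathcal{P}(M^{\vee}) \tensor^{L} i_{\ast}\mathcal{P}(M)$) is genuinely symmetric in the two tensor factors, rather than depending on the order in which one eliminates the $\mathcal{N}_{0}$ parts. This symmetry should follow from the observation that any morphism $\mathcal{N}_{0}(M^{\vee}) \tensor^{L} \mathcal{N}_{0}(M) \to i_{\ast}\Z[1]$ vanishes (by the same $\sheafhom$- and $\sheafext$-vanishing used in Prop.\ \ref{prop: linear dual of connected Neron and trivial duality with Gm target}), so that the factorization is unique and the two intermediate models give the same map. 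Once this uniqueness is in place, applying Prop.\ \ref{prop: commutativity of pairing on Neron} is immediate and yields the proposition.
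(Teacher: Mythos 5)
Your overall strategy is right and is essentially the same route the paper takes: express $\eta_M$ as the component-level incarnation of the $\mathcal{G}_m[1]$-valued N\'eron pairing, invoke the symmetry of that pairing (Prop.\ \ref{prop: commutativity of pairing on Neron}), and then argue by a uniqueness statement that this symmetry descends to $\mathcal{P}$. The paper's proof condenses all of this by dualizing the morphism of triangles in Prop.\ \ref{prop: morphism between connected etale triangle to dual such} (apply tensor-Hom adjunction, swap factors, re-adjoin) and then invoking the uniqueness half of Prop.\ \ref{prop: diagram of duality morphisms for Neron} directly.

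The one real gap is in your justification of the ``delicate point.'' Vanishing of morphisms $\mathcal{N}_0(M^{\vee}) \otimes^{L} \mathcal{N}_0(M) \to i_{\ast}\Z[1]$ is true but too weak: to factor a pairing $\mathcal{N}(M^{\vee}) \otimes^{L} \mathcal{N}(M) \to i_{\ast}\Z[1]$ through $i_{\ast}\mathcal{P}(M^{\vee}) \otimes^{L} i_{\ast}\mathcal{P}(M)$, you must kill it on $\mathcal{N}_0(M^{\vee}) \otimes^{L} \mathcal{N}(M)$ and on $\mathcal{N}(M^{\vee}) \otimes^{L} \mathcal{N}_0(M)$, not merely on $\mathcal{N}_0 \otimes^{L} \mathcal{N}_0$; and for the factorization to be \emph{unique} (so that the two orders of elimination agree) you additionally need the corresponding vanishing with a shift, i.e.\ with $\mathcal{N}_0(\cdot)[1]$ in the relevant slot. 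These are exactly the statements proved in Prop.\ \ref{prop: diagram of duality morphisms for Neron}: there it is shown (via Prop.\ \ref{prop: linear dual of Neron} and the structure of $\mathcal{P}(M)^{\LDual}[1]$) that every morphism from $\mathcal{N}_0(M^{\vee})$ or $\mathcal{N}_0(M^{\vee})[1]$ to $\bigl[\mathcal{N}(M), i_{\ast}\Z[1]\bigr]_X$ is zero, which by adjunction is precisely the pair of vanishings you need. Citing that proposition instead of the weaker $\mathcal{N}_0 \otimes^{L} \mathcal{N}_0$ statement closes the gap and makes your argument complete.
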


\begin{proof}
	Applying the derived tensor-Hom adjunction to the diagram in
	Prop.\ \ref{prop: morphism between connected etale triangle to dual such}
	and interchanging the tensor factors,
	we have a morphism of distinguished triangles
		\[
			\begin{CD}
					\mathcal{N}_{0}(M)
				@>>>
					\mathcal{N}(M)
				@>>>
					i_{\ast} \mathcal{P}(M),
				\\
				@VV \zeta_{0 M^{\vee}} V
				@VV \zeta_{M^{\vee}} V
				@VV d \compose i_{\ast} \eta_{M}^{\LDual} V
				\\
					\bigl[ \mathcal{N}(M^{\vee}), \Gm[1] \bigr]_{X}
				@>>>
					\bigl[ \mathcal{N}_{0}(M^{\vee}), \Gm[1] \bigr]_{X}
				@>>>
					\bigl[ i_{\ast} \mathcal{P}(M^{\vee}), \Gm[1] \bigr]_{X}[1].
			\end{CD}
		\]
	Using the uniqueness part of Prop.\ \ref{prop: diagram of duality morphisms for Neron},
	we know that $i_{\ast} \eta_{M}^{\LDual} = i_{\ast} \eta_{M^{\vee}}$.
\end{proof}

The following together with Prop.\ \ref{prop: symmetry of pairing on Neron components}
proves Thm.\ \ref{main: duality} \eqref{main: item: equivalence}.

\begin{Prop}
	The morphisms $\zeta_{0 M}$ and $\zeta_{0 M^{\vee}}$ are both isomorphisms
	if and only if $\eta_{M}$ is an isomorphism.
	If these equivalent conditions are satisfied,
	then the diagram in Prop.\ \ref{prop: morphism between connected etale triangle to dual such}
	induces an isomorphism of distinguished triangles
		\[
			\begin{CD}
					\mathcal{N}_{0}(M^{\vee})
				@>>>
					\mathcal{N}(M^{\vee})
				@>>>
					i_{\ast} \mathcal{P}(M^{\vee}),
				\\
				@V \wr V \zeta_{0 M} V
				@V \wr V \zeta_{M} V
				@V \wr V i_{\ast} \eta_{M} V
				\\
					\mathcal{N}(M)^{\vee}
				@>>>
					\mathcal{N}_{0}(M)^{\vee}
				@>>>
					i_{\ast} \mathcal{P}(M)^{\LDual}[1].
			\end{CD}
		\]
\end{Prop}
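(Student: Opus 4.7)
The plan is to derive both claims by applying $\tau_{\le 0}$ to the bottom row of the diagram in Prop.\ \ref{prop: morphism between connected etale triangle to dual such} and then invoking the two-out-of-three principle for morphisms of distinguished triangles. Since the objects in the upper row of that diagram are concentrated in degrees $\le 0$, each vertical arrow factors uniquely through $\tau_{\le 0}$ of its target in the lower row.

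Next I would identify these truncations term by term. The left and middle lower terms become $\mathcal{N}(M)^{\vee}$ and $\mathcal{N}_{0}(M)^{\vee}$ by Def.\ \ref{def: dual of Neron model}. For the right one, I would apply $R \sheafhom_{X_{\sm}}(i_{\ast} \mathcal{P}(M), \var)$ to the distinguished triangle $\Gm \to \mathcal{G}_{m} \to i_{\ast} \Z$; by Prop.\ \ref{prop: RHom from Neron components to Neron Gm}, the middle term of the resulting triangle has vanishing $\tau_{\le 1}$, so its connecting morphism induces an isomorphism
\[
	\tau_{\le 0} \bigl[ i_{\ast} \mathcal{P}(M), i_{\ast} \Z[1] \bigr]_{X}
	\isomto
	\tau_{\le 0} \bigl[ i_{\ast} \mathcal{P}(M), \Gm[1] \bigr]_{X} [1],
\]
whose source is $i_{\ast} \mathcal{P}(M)^{\LDual}[1]$ by Prop.\ \ref{prop: linear dual of Neron}. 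Under this identification, the right vertical arrow $d \compose i_{\ast} \eta_{M}$ of Prop.\ \ref{prop: morphism between connected etale triangle to dual such} becomes exactly $i_{\ast} \eta_{M}$, since the identification just made is itself the one induced by the connecting morphism $d$; the middle and left verticals remain $\zeta_{M}$ and $\zeta_{0 M}$. This produces precisely the morphism of distinguished triangles displayed in the statement.

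To conclude, I would use that $\zeta_{M}$ is always an isomorphism by Thm.\ \ref{main: duality}\eqref{main: item: trivial duality} (equivalently, Prop.\ \ref{prop: linear dual of connected Neron and trivial duality with Gm target}). The two-out-of-three principle then gives that $\zeta_{0 M}$ is an isomorphism if and only if $i_{\ast} \eta_{M}$ is, and the latter is equivalent to $\eta_{M}$ being an isomorphism by applying $i^{\ast}$. Running the same reasoning with $M$ replaced by $M^{\vee}$ (via the biduality $M \cong M^{\vee \vee}$) shows that $\zeta_{0 M^{\vee}}$ is an isomorphism iff $\eta_{M^{\vee}}$ is, and Prop.\ \ref{prop: symmetry of pairing on Neron components} supplies the remaining equivalence between $\eta_{M}$ and $\eta_{M^{\vee}}$ being isomorphisms. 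Combining these three equivalences yields the desired biconditional. The only genuinely nontrivial check in the whole argument is the compatibility between the two appearances of the connecting morphism $d$ in the identification of the right vertical arrow; this is the main bookkeeping point, but once it is settled, the rest is formal.
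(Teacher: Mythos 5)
Your forward implication is fine and matches the paper: assuming $\eta_{M}$ is an isomorphism, combine $\zeta_{M}$ being an isomorphism (Prop.\ \ref{prop: linear dual of connected Neron and trivial duality with Gm target}) with the morphism of distinguished triangles in Prop.\ \ref{prop: morphism between connected etale triangle to dual such} and apply the five lemma to get $\zeta_{0 M}$, then apply Prop.\ \ref{prop: symmetry of pairing on Neron components} and repeat for $M^{\vee}$ to get $\zeta_{0 M^{\vee}}$.

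The converse direction has a genuine gap. You invoke ``two-out-of-three'' on the truncated diagram, but the lower row of \emph{that} diagram, namely
$\mathcal{N}(M)^{\vee} \to \mathcal{N}_{0}(M)^{\vee} \to i_{\ast} \mathcal{P}(M)^{\LDual}[1]$,
is the $\tau_{\le 0}$ of the distinguished triangle
$[\mathcal{N}(M), \Gm[1]]_{X} \to [\mathcal{N}_{0}(M), \Gm[1]]_{X} \to [i_{\ast}\mathcal{P}(M), \Gm[1]]_{X}[1]$,
and truncation does not in general produce a distinguished triangle --- that the truncated row is distinguished is precisely part of what the proposition asserts under the equivalent conditions, so using it as input is circular. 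If you instead run the argument on long exact sequences of cohomology (which is the honest version of ``two-out-of-three'' here), the five lemma for the term $H^{0}(i_{\ast}\mathcal{P}(M^{\vee})) \to H^{0}(i_{\ast}\mathcal{P}(M)^{\LDual}[1])$ needs the map $H^{1}\mathcal{N}_{0}(M^{\vee}) \to H^{1}[\mathcal{N}(M),\Gm[1]]_{X}$ to be an epimorphism, and this fails because $H^{1}\mathcal{N}_{0}(M^{\vee}) = 0$ while $H^{1}[\mathcal{N}(M),\Gm[1]]_{X}$ need not vanish (the truncation $\tau_{\le 0}$ is discarding exactly this). So from $\zeta_{0 M}$ being an isomorphism you only obtain that $\eta_{M}$ is an isomorphism on $H^{-1}$ and injective with finite \'etale cokernel on $H^{0}$; this is what the paper records at this stage.

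This is not a cosmetic point: the paper explicitly warns (see the discussion after Prop.\ \ref{prop: duality holds up to finite etale skyscrapers}) that $\zeta_{0 M}$ being an isomorphism alone might not imply $\eta_{M}$ is an isomorphism, which is exactly why the hypothesis demands \emph{both} $\zeta_{0 M}$ and $\zeta_{0 M^{\vee}}$. Your argument would make one of them redundant, which should have been a red flag. What is missing is the paper's closing step: split $H^{0}\mathcal{P}(M^{\vee})$ into its torsion and torsion-free parts, use Prop.\ \ref{prop: symmetry of pairing on Neron components} to see that the torsion-free comparison is already an isomorphism, deduce that $H^{0}(\mathcal{P}(M^{\vee}))_{\tor} \to H^{0}(\mathcal{P}(M))_{\tor}^{\PDual}$ is injective, symmetrize to get injectivity in the other direction, and then conclude these are isomorphisms because a pair of mutually injective homomorphisms between finite groups are isomorphisms. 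You need to add this argument to close the converse.
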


\begin{proof}
	Suppose that $\eta_{M}$ is an isomorphism.
	By Prop.\ \ref{prop: linear dual of connected Neron and trivial duality with Gm target}
	and Def.\ \ref{def: duality morphisms for Neron},
	the morphism $\zeta_{M} \colon \mathcal{N}(M^{\vee}) \to \mathcal{N}_{0}(M)^{\vee}$
	is an isomorphism.
	Then the five lemma applied to the diagram in
	Prop.\ \ref{prop: morphism between connected etale triangle to dual such}
	shows that $\zeta_{0 M} \colon \mathcal{N}_{0}(M^{\vee}) \to \mathcal{N}(M)^{\vee}$ is an isomorphism.
	On the other hand, Prop.\ \ref{prop: symmetry of pairing on Neron components} shows that
	$\eta_{M^{\vee}}$ is also an isomorphism.
	Hence the above argument applied to $M^{\vee}$ implies that
	$\zeta_{0 M^{\vee}}$ is an isomorphism.
	
	Conversely, suppose that $\zeta_{0 M}$ and $\zeta_{0 M^{\vee}}$ are isomorphisms.
	Then the same argument as above shows that
	$\eta_{M}$ and $\eta_{M^{\vee}}$ are isomorphisms in $H^{-1}$ and injective in $H^{0}$.
	Set $P = \mathcal{P}(M)$ and $P' = \mathcal{P}(M^{\vee})$.
	Denote the torsion part by $(\var)_{\tor}$
	and torsion-free quotient by $(\var)_{/ \tor}$.
	Then we have a commutative diagram with exact rows
		\[
			\begin{CD}
					0
				@>>>
					H^{0}(P')_{\tor}
				@>>>
					H^{0}(P')
				@>>>
					H^{0}(P')_{/ \tor}
				@>>>
					0
				\\
				@.
				@VVV
				@VVV
				@VVV
				@.
				\\
					0
				@>>>
					H^{0}(P)_{\tor}^{\PDual}
				@>>>
					H^{1}(P^{\LDual})
				@>>>
					H^{-1}(P)^{\LDual}
				@>>>
					0,
			\end{CD}
		\]
	where $H^{0}(P)_{\tor}^{\PDual}$ is a shorthand for $(H^{0}(P)_{\tor})^{\PDual}$.
	The middle vertical morphism is injective.
	The right vertical one is an isomorphism
	by Prop.\ \ref{prop: symmetry of pairing on Neron components}.
	These imply that the left vertical morphism is injective.
	Switching $M$ and $M^{\vee}$, we know that
	$H^{0}(P)_{\tor} \to H^{0}(P')_{\tor}^{\PDual}$ is also injective.
	As $H^{0}(P)_{\tor}$ and $H^{0}(P')_{\tor}$ are finite \'etale,
	we conclude that these injective morphisms are all isomorphisms.
	Therefore $\eta_{M}$ and $\eta_{M^{\vee}}$ are isomorphisms.
	
	The last statement about the diagram follows from the isomorphism
		\[
				d
			\colon
				\bigl[ i_{\ast} \mathcal{P}(M), i_{\ast} \Z[1] \bigr]_{X}
			\isomto
				\tau_{\le 0}
				\bigl[ i_{\ast} \mathcal{P}(M), \Gm[2] \bigr]_{X},
		\]
	which is a consequence of Prop.\ \ref{prop: RHom from Neron components to Neron Gm}.
\end{proof}

\begin{Prop} \label{prop: reduction to str hensel case}
	The morphism $\eta_{M}$ is an isomorphism
	if and only if the corresponding morphism
		\[
				\eta_{M \times_{U} K_{x}^{sh}}
			\colon
				\mathcal{P}(M^{\vee} \times_{U} K_{x}^{sh})
			\to
				\mathcal{P}(M \times_{U} K_{x}^{sh})^{\LDual}[1],
		\]
	for the strict henselian local field $K_{x}^{sh}$ at any point $x \in Z$
	is an isomorphism.
	We may replace $K_{x}^{sh}$ by its completion.
\end{Prop}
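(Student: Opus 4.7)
The plan is to reduce the statement to stalks, by passing to the strict henselization at each point $x \in Z$. Since $Z$ is a finite disjoint union of closed points, the target category $D^{b}(\EtGpf / Z)$ splits as a product over $x \in Z$, and on each factor a morphism in $D^{b}(\EtGpf / \Spec \kappa(x))$ is an isomorphism if and only if its geometric stalk at $\bar{x}$ is. This geometric stalk is computed by pulling back to the strict henselization $X_{x}^{\mathit{sh}} = \Spec \Order_{X, x}^{\mathit{sh}}$, whose generic point is $\Spec K_{x}^{\mathit{sh}}$.

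The central step is to identify this stalk of $\eta_{M}$ with $\eta_{M \times_{U} K_{x}^{\mathit{sh}}}$. Since $X_{x}^{\mathit{sh}} \to X$ is a regular morphism (a filtered limit of \'etale neighborhoods), Proposition \ref{prop: Neron components commutes with base change} provides canonical isomorphisms $\mathcal{P}(M) \times_{Z} \Spec \bar{\kappa}(x) \isomto \mathcal{P}(M \times_{U} K_{x}^{\mathit{sh}})$ and analogously for $M^{\vee}$. I would then trace through the construction of $\eta_{M}$ in Section \ref{sec: Duals and duality pairings of Neron models} --- the pairing $M^{\vee} \tensor^{L} M \to \Gm[1]$, derived pushforward by $j$, truncation, the distinguished triangles of Proposition \ref{prop: morphism between connected etale triangle to dual such}, and the derived tensor-Hom adjunction --- and verify that each step is natural under this regular base change. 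The required compatibility is formal, using Proposition \ref{prop: Neron model commutes with regular base change} for the N\'eron and connected N\'eron models, together with Propositions \ref{prop: premorphisms preserving finite products} and \ref{prop: derived pull of smooth groups}, which ensure that $R \sheafhom$ behaves correctly under pullback when applied to the representable objects at hand.

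The main obstacle is precisely the bookkeeping of this naturality across the multi-step construction of $\eta_{M}$; once it is in hand, the first equivalence follows immediately. For the final assertion about replacing $K_{x}^{\mathit{sh}}$ by its completion, the same argument applies verbatim, with the completion clause of Proposition \ref{prop: Neron components commutes with base change} (underpinned by Proposition \ref{prop: Neron model commutes with completion}) standing in for the regular base change input. In both cases the essential point is that every constituent of the construction of $\eta_{M}$ commutes with the relevant base change, so that stalks of $\eta_{M}$ are literally the corresponding morphisms for the base-changed $1$-motive.
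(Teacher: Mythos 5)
Your proposal is correct and follows the same route as the paper: the paper's proof is the one-line citation of Proposition \ref{prop: Neron components commutes with base change}, which is exactly the ingredient you invoke for both the strict-henselization and completion clauses. You spell out more explicitly the reduction to geometric stalks over the finitely many points of $Z$ and the naturality check for each step in the construction of $\eta_{M}$, which the paper leaves implicit.
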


\begin{proof}
	This follows from Prop.\ \ref{prop: Neron components commutes with base change}.
\end{proof}

The following proves Thm.\ \ref{main: duality} \eqref{main: item: semistable case}.

\begin{Prop} \label{prop: Neron duality is true in semistable case}
	The morphism $\eta_{M}$ is an isomorphism
	if $M$ is semistable over $X$,
	i.e.\ if its torus and lattice parts are unramified over $X$
	and its abelian scheme part is semistable over $X$.
\end{Prop}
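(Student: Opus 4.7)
The plan is to reduce to Werner's theorem \cite{Wer97} for semistable abelian varieties via a short devissage. By Prop.\ \ref{prop: reduction to str hensel case}, I may assume $X$ is strictly henselian local with closed point $x$, so that ``unramified'' becomes ``constant'' on Galois modules. Since $Y$ is unramified, the trivial covering $V = U$ is good, hence by Prop.\ \ref{prop: Neron model by flasque embedding} one has $\mathcal{N}(M) \cong [\mathcal{Y} \to \mathcal{G}]$, where $\mathcal{Y} = j_{\ast} Y$ is the \'etale extension of $Y$ and $\mathcal{G} = j_{\ast} G$ is the N\'eron lft model of $G$. Consequently
\[
	\mathcal{P}(M) \cong \bigl[\, i^{\ast} \mathcal{Y} \to \pi_{0}(\mathcal{G}_{x}) \,\bigr]
\]
in degrees $-1, 0$, and $M^{\vee}$ admits an analogous description (it is again semistable, since the character lattice of an unramified torus is unramified and $A^{\vee}$ inherits semistability from $A$).

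Next I would perform a devissage via the two term-wise exact sequences of $1$-motives
\[
	0 \to [0 \to G] \to [Y \to G] \to [Y \to 0] \to 0,
	\quad
	0 \to [0 \to T] \to [0 \to G] \to [0 \to A] \to 0,
\]
each entry of which is semistable. The trivial covering $V = U$ remains good for every entry, and the formations of $\mathcal{N}(\var, U)$, $\mathcal{N}_{0}$ and $\mathcal{P}(\var, U)$ preserve term-wise exactness here, invoking the standard fact that the N\'eron lft model of a semistable semi-abelian variety is an extension of that of its abelian part by that of its toric part. Prop.\ \ref{prop: term wise exact sequence gives dist triangle} then produces distinguished triangles of $\mathcal{P}$'s in $D^{b}(\EtGpf / Z)$, and the parallel devissages of $M^{\vee}$, together with the functoriality of Deligne's biextension in short exact sequences of $1$-motives, assemble these into morphisms of distinguished triangles. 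The five lemma reduces the problem to checking $\eta_{M}$ for the three building blocks $[Y \to 0]$, $[0 \to T]$, and $[0 \to A]$.

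For $[0 \to A]$, the pairing $\eta_{M}$ is precisely Grothendieck's pairing on N\'eron component groups, which is an isomorphism by \cite{Wer97}. For $[Y \to 0]$, with dual $[0 \to T_{Y}]$ where $T_{Y} = \sheafhom(Y, \Gm)$, a direct computation gives $\mathcal{P}([Y \to 0]) \cong Y$ in degree $-1$ and $\mathcal{P}([0 \to T_{Y}]) \cong X_{\ast}(T_{Y}) = \Hom(Y, \Z)$ in degree $0$, with $\eta$ being the tautological evaluation pairing, manifestly an isomorphism. The case $[0 \to T]$ then follows from the lattice case by the symmetry of Prop.\ \ref{prop: symmetry of pairing on Neron components}. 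The principal obstacle I anticipate is the verification that Deligne's biextension pairing is genuinely compatible with the above short exact sequences in the strong sense of morphisms of distinguished triangles---a bookkeeping matter about Poincar\'e biextensions and the contravariant functoriality of $M \mapsto M^{\vee}$, but it is the crucial step that enables the reduction to the three elementary cases.
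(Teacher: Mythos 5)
Your proposal is correct and shares the paper's essential strategy: reduce to the strictly henselian local case via Prop.~\ref{prop: reduction to str hensel case}, exploit that the trivial covering is good so that $\mathcal{N}(M)=[j_{\ast}Y\to j_{\ast}G]$ with no Weil restriction needed, d\'evisse along the weight filtration, and invoke Werner for the abelian piece together with the elementary computations for the unramified lattice and torus pieces (using Prop.~\ref{prop: symmetry of pairing on Neron components} to pass between the lattice and torus cases). The differences are organizational: you d\'evisse $\eta_{M}$ directly with $[0\to G]$ as the intermediate $1$-motive, whereas the paper d\'evisses $\zeta_{0 M}$ with $[Y\to A]$ as the intermediate (the dual choice), and then recovers that $\eta_{M}$ is an isomorphism from Thm.~\ref{main: duality}\,\eqref{main: item: equivalence} applied to both $M$ and the again-semistable $M^{\vee}$. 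Your closing flag is well judged: the compatibility of the pairings with the d\'evissage triangles is the one point both arguments lean on, and running the d\'evissage on $\zeta_{0 M}$ rather than on $\eta_{M}$ keeps this concern at the level of the tautological functoriality of $R\sheafhom_{X_{\sm}}(\var,\Gm[1])$ applied to the filtrations of the complexes $[\mathcal{Y}\to\mathcal{G}]$ and $[\mathcal{Y}_{0}'\to\mathcal{G}_{0}']$, together with the naturality of the biextension under maps of $1$-motives already recorded in Prop.~\ref{prop: commutativity of pairing on Neron}; it avoids any direct manipulation of the connecting morphisms through which $\eta_{M}$ is defined as a mapping-cone morphism in Prop.~\ref{prop: diagram of duality morphisms for Neron}.
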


\begin{proof}
	We may assume that $X$ is strictly henselian local
	by Prop.\ \ref{prop: reduction to str hensel case}.
	If $M = \Gm$ or $\Z[1]$, then $\mathcal{P}(M) = \Z$ or $\Z[1]$, respectively,
	and $\eta_{M}$ is an isomorphism.
	Hence $\eta_{M}$ is an isomorphism
	if $M$ is an unramified torus or an unramified lattice shifted by one.
	If $M$ is a semistable abelian variety,
	then $\eta_{M}$ is Grothendieck's pairing,
	which is an isomorphism by \cite{Wer97}.
	
	Now we treat a general semistable $M$.
	Set $\mathcal{Y} = j_{\ast} Y$.
	By assumption, $X$ itself is a good covering of $X$.
	Hence $\mathcal{N}(M) = [\mathcal{Y} \to \mathcal{G}]$ and
	$\mathcal{N}_{0}(M) = [\mathcal{Y}_{0} \to \mathcal{G}_{0}]$.
	We have $R^{1} j_{\ast} T = 0$ since $T$ is a trivial torus
	and by the proof of \cite[III, Lem.\ C.10]{Mil06}.
	Hence we have an exact sequence
	$0 \to \mathcal{T} \to \mathcal{G} \to \mathcal{A} \to 0$.
	We have $\mathcal{G}_{0} \cap \mathcal{T} = \mathcal{T}_{0}$
	since $\mathcal{G}_{0}$ is of finite type
	and $\pi_{0}(\mathcal{T}_{Z})$ is torsion-free.
	Therefore we have an exact sequence
	$0 \to \mathcal{T}_{0} \to \mathcal{G}_{0} \to \mathcal{A}_{0} \to 0$.
	We have morphisms of distinguished triangles
		\[
			\begin{CD}
					\mathcal{T}_{0}'
				@>>>
					\mathcal{G}_{0}'
				@>>>
					\mathcal{A}_{0}'
				\\
				@VVV @VVV @VVV
				\\
					\bigl[ \mathcal{Y}[1], \Gm[1] \bigl]_{X}
				@>>>
					\bigl[ [\mathcal{Y} \to \mathcal{A}], \Gm[1] \bigl]_{X}
				@>>>
					\bigl[ \mathcal{A}, \Gm[1] \bigl]_{X}
			\end{CD}
		\]
	and
		\[
			\begin{CD}
					\mathcal{G}_{0}'
				@>>>
					[\mathcal{Y}_{0}' \to \mathcal{G}_{0}']
				@>>>
					\mathcal{Y}_{0}'[1]
				\\
				@VVV @VVV @VVV
				\\
					\bigl[ [\mathcal{Y} \to \mathcal{A}]), \Gm[1] \bigl]_{X}
				@>>>
					\bigl[ [\mathcal{Y} \to \mathcal{G}], \Gm[1] \bigl]_{X}
				@>>>
					\bigl[ \mathcal{T}, \Gm[1] \bigl]_{X}.
			\end{CD}
		\]
	In either diagram, the left and right vertical morphisms become isomorphisms
	after truncation $\tau_{\le 0}$ by the previously treated cases.
	Therefore the four lemmas imply that
	the middle vertical morphisms become isomorphisms
	after $\tau_{\le 0}$.
\end{proof}

The following proves a weaker version of
Thm.\ \ref{main: duality} \eqref{main: item: after inverting res char}.

\begin{Prop} \label{prop: duality holds rationally}
	The morphism $\eta_{M}$ becomes an isomorphism
	after tensoring with $\Q$.
\end{Prop}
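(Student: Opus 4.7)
The plan is: by Prop.~\ref{prop: reduction to str hensel case}, first reduce to the strictly henselian local case, with separably closed residue field $k$ and $Z = \Spec k$. Then d\'evisser $M$ along its weight filtration into the pure-weight pieces $T$ (torus), $A$ (abelian scheme), and $Y[1]$ (shifted lattice), and verify $\eta_{M'} \tensor \Q$ is an isomorphism for each.

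Concretely, writing $M = [Y \to G]$ with $G$ an extension of $A$ by $T$, I would use the two term-wise short exact sequences in $\mathcal{M}_{U}$,
\begin{gather*}
0 \to T \to G \to A \to 0, \\
0 \to G \to M \to Y[1] \to 0,
\end{gather*}
where $G = [0 \to G]$ and $Y[1] = [Y \to 0]$. Dually, $M^{\vee}$ has a weight filtration with graded pieces $T^{\vee}$, $A^{\vee}$, $Y^{\vee}[1]$, where $Y^{\vee} = X^{\ast}(T)$ and $T^{\vee}$ is the torus with character group $Y$. For a common good covering $V$ of $U$, these sequences remain term-wise exact after applying $\mathcal{N}(\var, V)$ (by Prop.~\ref{prop: induced lattice has trivial first push}), so by Prop.~\ref{prop: term wise exact sequence gives dist triangle} they produce distinguished triangles in $D^{b}(\SmGp/X)$; the same holds for $\mathcal{N}_{0}$, and by taking cones also for $\mathcal{P}$. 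Invoking naturality of $\eta$ together with the symmetry $\eta_{M^{\vee}} = \eta_{M}^{\LDual}$ (Prop.~\ref{prop: symmetry of pairing on Neron components}), the five-lemma will reduce the claim to the three pure-weight cases, and by the symmetry it suffices to handle only $M' = A$ and $M' = T$.

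For $M' = A$ the conclusion is immediate: $\mathcal{P}(A) = \pi_{0}(\mathcal{A}_{z})$ is a finite abelian group (\cite[9.6/1]{BLR90}), and likewise $\mathcal{P}(A^{\vee})$; both vanish after $\tensor \Q$. For $M' = T$ a torus, $\mathcal{P}(T) = \pi_{0}(\mathcal{T}_{z})$ and $\mathcal{P}(T^{\vee}) = \pi_{0}(\mathcal{T}^{\vee}_{z})$ are finitely generated of ranks equal to those of $X_{\ast}(T)^{I}$ and $X^{\ast}(T)^{I}$ respectively, where $I$ is the absolute Galois group of $K$ acting through a finite quotient. I would identify $\eta_{T}$ with the classical Grothendieck pairing of component groups of tori, which by Xarles \cite{Xar93} is an isomorphism modulo torsion; alternatively, the natural pairing $X_{\ast}(T) \tensor X^{\ast}(T) \to \Z$ descends to a pairing of $I$-coinvariants that becomes perfect after $\tensor \Q$.

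The hard part will be this identification of $\eta_{T}$ with the classical pairing on tori: one must unwind the construction of $\eta_{T}$ (which starts from the duality pairing $M^{\vee} \tensor^{L} M \to \Gm[1]$ for $M = T$, passes through $\tau_{\le 0} R j_{\ast}$, and uses the connected-\'etale triangle of Prop.~\ref{prop: morphism between connected etale triangle to dual such}) and check that its restriction to N\'eron component groups recovers the pairing of \cite{Xar93}. Once this identification is in place, the d\'evissage together with the collapse of the $A$-part after $\tensor \Q$ yields the result.
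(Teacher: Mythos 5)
Your proposal has two gaps.

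\textbf{The d\'evissage step is not justified and, as stated, false.}
You claim that the term-wise short exact sequences $0 \to T \to G \to A \to 0$ and $0 \to G \to M \to Y[1] \to 0$ remain term-wise exact after applying $\mathcal{N}(\var, V)$, citing Prop.~\ref{prop: induced lattice has trivial first push}.
That proposition only asserts $R^{1} j_{\ast} Y_{(V)} = 0$ for a lattice $Y$ and a good covering $V$; it says nothing about $R^{1} j_{\ast} T$ or $R^{1} j_{\ast} G$.
Applying $j_{\ast}$ to $0 \to T \to G \to A \to 0$ gives only a left-exact sequence
$0 \to j_{\ast} T \to j_{\ast} G \to j_{\ast} A$ with cokernel landing in $R^{1} j_{\ast} T$, and $R^{1} j_{\ast} T \ne 0$ when $T$ is ramified
(this vanishes for a trivial torus, which is why it works in the semistable proof of Prop.~\ref{prop: Neron duality is true in semistable case}, but not in general).
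Similarly $j_{\ast} G_{(V)} \to j_{\ast} Y_{(/V)}$ need not be surjective.
So you do not obtain distinguished triangles for $\mathcal{N}$, $\mathcal{N}_{0}$, $\mathcal{P}$ from Prop.~\ref{prop: term wise exact sequence gives dist triangle},
and the five-lemma reduction does not apply.
What is true is that the defects of exactness are torsion (e.g.\ $H^{1}(K, T)$ is killed by the degree of a trivializing extension, as the paper verifies), so a $\Q$-linear version could be salvaged, but you would have to prove that rather than cite the lattice-only vanishing.

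\textbf{The identification of $\eta_{T}$ with the classical pairing is left open.}
You correctly flag this as the hard part but give no argument.
The paper sidesteps the formal d\'evissage by computing $\mathcal{P}(M) \tensor \Q$ directly: from $R^{1} j_{\ast} \Q = 0$ and a Hochschild--Serre argument one gets $R^{1} j_{\ast} Y \tensor \Q = 0$, so $\mathcal{P}(M) \tensor \Q = [\Gamma(U, Y) \to \pi_{0}(\mathcal{G}_{Z})] \tensor \Q$;
a snake-lemma analysis of the map $\mathcal{G}_{Z} \to \mathcal{A}_{Z}$ then shows $\pi_{0}(\mathcal{T}_{Z}) \tensor \Q = \pi_{0}(\mathcal{G}_{Z}) \tensor \Q$,
reducing $\eta_{M} \tensor \Q$ to the two explicit pairings
$\Gamma(U, Y') \leftrightarrow \pi_{0}(\mathcal{T}_{Z})$ and $\pi_{0}(\mathcal{T}'_{Z}) \leftrightarrow \Gamma(U, Y)$,
whose rational perfectness is then read off from the description of $\eta$ (or from $l$-adic duality for the Tate module of a torus).
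Your route, if completed, would need both a torsion-killed version of the d\'evissage and a separate unwinding of $\eta_{T}$; the paper's concrete computation of $\mathcal{P}(M) \tensor \Q$ handles both at once.
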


\begin{proof}
	We may assume that $X$ is strictly henselian local and $U = \Spec K$
	by Prop.\ \ref{prop: reduction to str hensel case}.
	First we describe $\mathcal{P}(M) \tensor \Q$.
	We have $R^{1} j_{\ast} \Q = 0$
	since $H^{1}(X' \times_{X} U, \Q) = 0$
	for any quasi-compact smooth $X$-scheme $X'$
	by \cite[II, Lem.\ 2.10]{Mil06}.
	From this, by taking a finite Galois extension of $K$ that trivializes $Y$
	and arguing with a Hochschild-Serre spectral sequence,
	we know that $R^{1} j_{\ast} Y \tensor \Q = 0$.
	Hence $\mathcal{N}(M) \tensor \Q = [j_{\ast} Y \to j_{\ast} G] \tensor \Q$.
	Therefore $\mathcal{P}(M) \tensor \Q = [\Gamma(U, Y) \to \pi_{0}(\mathcal{G}_{Z})] \tensor \Q$,
	where we are viewing $\mathcal{P}(M)$ as a complex of abstract abelian groups
	since $X$ is strictly henselian and hence $Z$ is a geometric point.
	Let $F$ be the cokernel of $\mathcal{G}_{Z} \to \mathcal{A}_{Z}$ in $\Ab(Z_{\sm})$.
	Then $\Gamma(Z, F)$ is a subgroup of $H^{1}(U, T)$.
	If $K'$ is a finite Galois extension of $K$ that trivializes $T$,
	then the exact sequence
		\[
				0
			\to
				H^{1}(\Gal(K' / K), T(K'))
			\to
				H^{1}(K, T)
			\to
				\Gamma(\Gal(K' / K), H^{1}(K', T)),
		\]
	the vanishing $H^{1}(K', \Gm) = 0$ and \cite[VIII, \S 2, Cor.\ 1 to Prop.\ 4]{Ser79}
	show that $H^{1}(K, T)$ is killed by $[K' : K]$.
	Hence so is $\Gamma(Z, F)$.
	The snake lemma for the diagram
		\[
			\begin{CD}
					0
				@>>>
					\mathcal{G}_{Z 0} / \mathcal{T}_{Z 0}
				@>>>
					\mathcal{G}_{Z} / \mathcal{T}_{Z 0}
				@>>>
					\pi_{0}(\mathcal{G}_{Z})
				@>>>
					0
				\\
				@.
				@VVV
				@VVV
				@VVV
				@.
				\\
					0
				@>>>
					\mathcal{A}_{Z 0}
				@>>>
					\mathcal{A}_{Z}
				@>>>
					\pi_{0}(\mathcal{A}_{Z})
				@>>>
					0
			\end{CD}
		\]
	gives an exact sequence
		\[
				\mathcal{T}_{Z} \cap \mathcal{G}_{Z 0}
			\to
				\pi_{0}(\mathcal{T}_{Z})
			\to
				\Ker \bigl(
					\pi_{0}(\mathcal{G}_{Z}) \to \pi_{0}(\mathcal{A}_{Z})
				\bigr)
			\to
				\mathcal{A}_{Z 0} / \mathcal{G}_{Z 0}
			\to
				F
		\]
	in $\Ab(Z_{\sm})$.
	The first term is of finite type over $Z$.
	Hence the first morphism has finite image.
	The third morphism is a morphism from a finitely generated abelian group to a smooth algebraic group.
	The group of $Z$-valued points of the cokernel of this morphism is killed by $[K' : K]$.
	Such a morphism has finite image since $Z$ is a geometric point.
	The group $\pi_{0}(\mathcal{A}_{Z})$ is finite.
	Thus we have $\pi_{0}(\mathcal{T}_{Z}) \tensor \Q = \pi_{0}(\mathcal{G}_{Z}) \tensor \Q$.
	Therefore
		\[
				\mathcal{P}(M) \tensor \Q
			=
				[\Gamma(U, Y) \to \pi_{0}(\mathcal{T}_{Z})] \tensor \Q.
		\]
	
	Note that $Y$ and $T'$ are Cartier dual to each other.
	So are $Y'$ and $T$.
	The morphism $\eta_{M} \tensor \Q$ decomposes into two parts
		\begin{gather*}
					\Gamma(U, Y') \tensor \Q
				\to
					\Hom(\pi_{0}(\mathcal{T}_{Z}) \tensor \Q, \Q),
			\\
					\pi_{0}(\mathcal{T}_{Z}') \tensor \Q
				\to
					\Hom(\Gamma(U, Y) \tensor \Q, \Q)
		\end{gather*}
	given by
		\[
				\Gamma(U, Y')
			=
				\Hom_{U}(T, \Gm)
			\to
				\Hom_{Z}(\mathcal{T}_{Z}, \mathcal{G}_{m})
			\to
				\Hom(\pi_{0}(\mathcal{T}_{Z}), \Z)
		\]
	and the corresponding morphism for $Y$ and $T'$.
	It is a classical fact that the two parts above are isomorphisms.%
	\footnote{
		One way to quickly see this is the following.
		Let $l$ be a prime invertible on $Z$.
		The Kummer sequence gives $H^{1}(U, V_{l} T) = \pi_{0}(\mathcal{T}) \tensor \Q_{l}$,
		where $V_{l}$ is the rational $l$-adic Tate module of $T$.
		The $l$-adic representation $V_{l} T$ over $U$ is the Tate twist of the dual of $Y' \tensor \Q_{l}$.
		Hence the duality $H^{1}(U, V_{l} T) \leftrightarrow \Gamma(U, Y' \tensor \Q_{l})$
		of $l$-adic cohomology of strict henselian discrete valuation fields (\cite[Exp.\ I, Thm.\ 5.1]{Ill77}) gives the result.
	}
	Hence $\eta_{M} \tensor \Q$ is an isomorphism.
\end{proof}

\begin{Prop} \label{prop: duality holds up to finite etale skyscrapers}
	The morphism $\zeta_{0 M}$ induces an isomorphism in cohomologies in degrees $\ne 0$
	and an injection in $H^{0}$.
	The morphism $\eta_{M}$ induces an isomorphism in cohomologies in degrees $\ne -1, 0$
	and an injection in $H^{-1}$.
	We have an exact sequence
		\[
				0
			\to
				\Coker(H^{-1} \eta_{M})
			\to
				\Coker(H^{0} \zeta_{0 M})
			\to
				\Ker(H^{0} \eta_{M})
			\to
				0.
		\]
	Each of these three terms as well as $\Coker(H^{0} \eta_{M})$ is of the form $i_{\ast} N$
	for some finite \'etale group scheme $N$ over $Z$.
\end{Prop}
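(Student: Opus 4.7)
The plan is to derive everything from a morphism of six-term exact cohomology sequences associated to the morphism of distinguished triangles in Prop.\ \ref{prop: morphism between connected etale triangle to dual such}, combined with the rational isomorphism of Prop.\ \ref{prop: duality holds rationally} and the fact that $H^{-1}$ of the N\'eron component complexes are lattices (Prop.\ \ref{prop: fibers of Nerom components}). Taking cohomology of both rows of the morphism of triangles and restricting to degrees $\le 0$, and using the identifications $\tau_{\le 0}[\mathcal{N}(M), \Gm[1]]_{X} = \mathcal{N}(M)^{\vee}$ and $\tau_{\le 0}([i_{\ast} \mathcal{P}(M), \Gm[1]]_{X}[1]) \cong i_{\ast} \mathcal{P}(M)^{\LDual}[1]$ (the latter from Prop.\ \ref{prop: linear dual of Neron} combined with Prop.\ \ref{prop: RHom from Neron components to Neron Gm}), one obtains the desired morphism of six-term exact sequences. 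The middle vertical $\zeta_M$ is an isomorphism in both $H^{-1}$ and $H^{0}$ by Prop.\ \ref{prop: linear dual of connected Neron and trivial duality with Gm target}.

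First I would show that $H^{-1} \eta_M$ is injective: its kernel is torsion by the rational duality, and $H^{-1} \mathcal{P}(M^{\vee})$ is a lattice, hence torsion-free, so the kernel is zero. A short four-lemma diagram chase at the leftmost position of the morphism of LES then gives that $H^{-1} \zeta_{0M}$ is an isomorphism. For the remaining claims, I extract from each row the short exact sequence
\[
	0 \to U \to H^{0}(\text{second term}) \to K \to 0,
\]
where $U$ is the image of the connecting map from $H^{-1}$ of the third term and $K$ is the kernel of the map to $H^{0}$ of the third term. The three induced vertical maps are the map $f \colon U_{\vee} \to U_{D}$ induced by $H^{-1} \eta_M$, the map $\zeta_{0M}^{0}$, and the restriction $\bar\zeta_M^{0}$ of the middle isomorphism to $K_{\vee} \to K_{D}$. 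Using the injectivity of $H^{-1} \eta_M$ one verifies $\Ker f = 0$ and $\Coker f = \Coker H^{-1} \eta_M$; using that $\zeta_M^{0}$ is an isomorphism one identifies $\Coker \bar\zeta_M^{0} \cong \Ker H^{0} \eta_M$ by the standard construction (lift an element of $K_{D}$ via $(\zeta_M^{0})^{-1}$, project to $H^{0} i_{\ast} \mathcal{P}(M^{\vee})$, and observe that the projection lies in $\Ker H^{0} \eta_M$). The snake lemma applied to this morphism of short exact sequences then yields $\Ker \zeta_{0M}^{0} = 0$ and the exact sequence
\[
	0 \to \Coker H^{-1} \eta_M \to \Coker H^{0} \zeta_{0M} \to \Ker H^{0} \eta_M \to 0.
\]

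Finally, the finite \'etale structure of all four relevant sheaves follows formally. Each is torsion by the rational duality, and each is a subquotient of a sheaf in $\EtGpf / Z$: $\Coker H^{-1} \eta_M$ is a torsion quotient of the lattice $\sheafhom_{Z_{\sm}}(H^{0} \mathcal{P}(M), \Z) = H^{-1} \mathcal{P}(M)^{\LDual}[1]$; $\Ker H^{0} \eta_M$ is contained in the torsion part of $H^{0} \mathcal{P}(M^{\vee})$; $\Coker H^{0} \zeta_{0M}$ is an extension of the two previous; and $\Coker H^{0} \eta_M$ is a torsion quotient of $H^{0} \mathcal{P}(M)^{\LDual}[1]$, which is an extension of a lattice by a finite \'etale group by Prop.\ \ref{prop: diagram of duality morphisms for Neron}. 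A torsion subquotient of an object of $\EtGpf / Z$ has finite geometric stalks, hence is finite \'etale. The fact that $\Coker H^{0} \zeta_{0M}$ is supported on $Z$ is automatic from $j^{\ast} \zeta_{0M}^{0} = \id_{H^{0} M^{\vee}}$. The main potential obstacle is the careful identification $\Coker \bar\zeta_M^{0} \cong \Ker H^{0} \eta_M$ in the snake-lemma step, but this reduces to elementary homological algebra once the commutativity of the morphism of LES is extracted from Prop.\ \ref{prop: morphism between connected etale triangle to dual such}.
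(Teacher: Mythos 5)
Your proof is correct and follows essentially the same approach as the paper: both use a morphism of distinguished triangles (you cite Prop.~\ref{prop: morphism between connected etale triangle to dual such}, the paper cites Prop.~\ref{prop: diagram of duality morphisms for Neron}, which carry the same information), combine the rational isomorphism of Prop.~\ref{prop: duality holds rationally} with the fact that $H^{-1}\mathcal{P}(M^{\vee})$ is a lattice to kill $\Ker(H^{-1}\eta_{M})$, and then extract the required vanishings and the short exact sequence by diagram chases on the long exact cohomology sequences, finishing with the observation that torsion subquotients of $\Z$-constructible \'etale sheaves are finite \'etale. The paper organizes the chase a bit more compactly (first writing down both induced exact sequences involving the kernels and cokernels, then using the torsion-freeness once), whereas you run a four lemma and a snake lemma explicitly; also note a minor slip in your description of the short exact sequence -- the middle term should be $H^{0}$ of the \emph{first} term $\mathcal{N}_0(M^{\vee})$ (whose vertical map is $\zeta_{0M}^{0}$), not the second -- but the subsequent argument is applied correctly.
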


\begin{proof}
	The domains and codomains of the morphisms $\zeta_{0 M}$ and $\eta_{M}$ are concentrated
	in degrees $-1$, $0$.
	The diagram in Prop.\ \ref{prop: diagram of duality morphisms for Neron} shows that
	$H^{-1} \zeta_{0 M}$ is injective and induces exact sequences
		\begin{gather*}
					0
				\to
					\Coker(H^{-1} \zeta_{0 M})
				\to
					\Ker(H^{-1} \eta_{M})
				\to
					\Ker(H^{0} \zeta_{0 M})
				\to
					0,
			\\
					0
				\to
					\Coker(H^{-1} \eta_{M})
				\to
					\Coker(H^{0} \zeta_{0 M})
				\to
					\Ker(H^{0} \eta_{M})
				\to
					0.
		\end{gather*}
	All these groups are torsion by Prop.\ \ref{prop: duality holds rationally}.
	The group $\Ker(H^{-1} \eta_{M})$ is torsion-free by
	Prop.\ \ref{prop: fibers of Nerom components},
	hence zero.
	Therefore $\Coker(H^{-1} \zeta_{0 M}) = \Ker(H^{0} \zeta_{0 M}) = 0$.
	The groups $\Coker(H^{n} \eta_{M})$ and $\Ker(H^{n} \eta_{M})$ for any $n$ are of the stated form.
	Hence so is their extension $\Coker(H^{0} \zeta_{0 M})$.
\end{proof}

In particular, if $\zeta_{0 M}$ is an isomorphism,
then $\eta_{M}$ is an isomorphism in cohomologies of degrees $\ne 0$
and an injection with finite \'etale cokernel in $H^{0}$.
This might not imply that $\eta_{M}$ is an isomorphism
if no additional assumption is made on $\zeta_{0 M^{\vee}}$.
This point seems to exist already for the case that $M = A$ is an abelian scheme;
see \cite[Prop.\ 5.1]{Bos97}.
An injection of finite \'etale groups \emph{of the same order} is an isomorphism,
but it is not clear whether $\pi_{0}(\mathcal{A}_{x})$ and $\pi_{0}(\mathcal{A}'_{x})$
for $x \in Z$ have the same order or not.
According to Lorenzini \cite[Rmk.\ 7.1]{Lor17},
it is not known that the groups of geometric points of
$\pi_{0}(\mathcal{A}_{x})$ and $\pi_{0}(\mathcal{A}'_{x})$ are
abstractly isomorphic.


\section{$l$-adic realization and perfectness for $l$-part}
\label{sec: l-adic realization and perfectness for l-part}

We continue working in Situation \ref{sit: to consider Neron models, notation for M and its dual}.
Let $l$ be a prime number invertible on $X$.
Below we use the formalism of derived categories of $l$-adic sheaves given by \cite[\S 6]{BS15}, \cite[Tag 09C0]{Sta18}
for technical simplicity.
One can also use \cite{Eke90}.
We denote the pro-\'etale site of $X$ by $X_{\pro\et}$ (\cite[Def.\ 4.1.1]{BS15}).
The derived completeness and the derived completion (\cite[Lem.\ 3.4.9, Prop.\ 3.5.1 (3)]{BS15})
	\[
			\widehat{F}
		=
			R \invlim_{n}
			(F \tensor^{L} \Z / l^{n} \Z)
	\]
of an object $F \in D(X_{\pro\et})$ is
always taken with respect to the ideal $l \Z \subset \Z$.
See \cite[Tag 091J]{Sta18}, \cite[\S 2.3]{Suz18}
for how we choose derived inverse limits functorially in derived categories.
In fact, we have
	\[
			\widehat{F}
		=
			R \sheafhom_{X_{\pro\et}}(\Q_{l} / \Z_{l}, F)[1]
	\]
by \cite[Tag 099B]{Sta18}.
The constructibility of a derived complete $F \in D(X_{\pro\et})$ is always taken
with respect to the ideal $l \Z \subset \Z$
(or $l \Z_{l} \subset \Z_{l}$; \cite[Lem.\ 3.5.6]{BS15})
unless otherwise noted.
The same notation applies to the pro-\'etale sites $U_{\pro\et}$, $Z_{\pro\et}$.
For a derived complete $F \in D(X_{\pro\et})$, we denote
	\[
			F^{\vee}
		=
			R \sheafhom_{X_{\pro\et}}(F, \Z_{l}(1)[2]),
	\]
where we set $\Z_{l} = \invlim_{n} \Z / l^{n} \Z \in \Ab(X_{\pro\et})$
and the Tate twist $\Z_{l}(1) = \invlim_{n} \Z / l^{n} \Z(1) \in \Ab(X_{\pro\et})$ as sheaves.
(This notation $F^{\vee}$ does not clash with dual $1$-motives $M^{\vee}$
since a non-zero $1$-motive is never derived complete.
It is also different from the linear dual of $\Z_{l}$-lattices
due to the twisted shift $(1)[2]$.)
The same notation $F^{\vee}$ applies to a derived complete $F \in D(U_{\pro\et})$.
For a derived complete $F \in D(Z_{\pro\et})$, we denote
	\[
			F^{\vee}
		=
			R \sheafhom_{Z_{\pro\et}}(F, \Z_{l}).
	\]
For the six operations formalism, see \cite[\S 6.7]{BS15}.
The $l$-adic Tate module $T_{l}(\var)$ of a sheaf is
the inverse limit of the $l^{n}$-torsion parts for $n \ge 0$.
Let $\nu \colon X_{\pro\et} \to X_{\et}$ be the morphism of sites
defined by the identity functor (\cite[\S 5]{BS15}).
We naturally regard objects of $D(X_{\et})$ as objects of $D(X_{\pro\et})$ via pullback $\nu^{\ast}$
(omitting $\nu^{\ast}$ from the notation).
A similar convention applies to $\nu \colon Z_{\pro\et} \to Z_{\et}$
and $\nu \colon U_{\pro\et} \to U_{\et}$.

\begin{Prop} \label{prop: ell adic realization is constructible}
	Let $\alpha \colon X_{\sm} \to X_{\et}$ be the morphism of sites defined by the identity.
	Then the objects $\alpha_{\ast} \mathcal{N}(M) \tensor^{L} \Z / l \Z$ and
	$\alpha_{\ast} \mathcal{N}_{0}(M) \tensor^{L} \Z / l \Z$ of $D^{b}(X_{\et})$
	are constructible complexes of sheaves of $\Z / l \Z$-modules.
\end{Prop}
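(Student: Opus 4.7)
First I would fix a good covering $V$ of $U$ for $M$, so that $\mathcal{N}(M)$ is represented in $\Ch^{b}(\SmGp / X)$ by the two-term complex $\mathcal{N}(M, V) = [j_{\ast} Y_{(V)} \to j_{\ast} G_{(V)}]$ in degrees $-1$ and $0$, and use the explicit representative $\mathcal{N}_{0}(M) = [\mathcal{Y}_{0} \to \mathcal{G}_{0}]$. Since each term is representable by a smooth (resp.\ \'etale) group scheme, applying $\alpha_{\ast}$ simply restricts the representing sheaf from the smooth site to the \'etale site, yielding the same two-term complexes viewed on $X_{\et}$. Using the flat resolution $\Z / l \Z = [\Z \xrightarrow{l} \Z]$, one computes $F \tensor^{L} \Z / l \Z$ as the cone of $[l] \colon F \to F$ for any complex $F$; by the distinguished triangles arising from the two-term complex structure and from this cone, it suffices to show that $H[l]$ and $H / l H$ are constructible \'etale sheaves of $\Z / l \Z$-modules on $X$ for each term $H$ of $\mathcal{N}(M, V)$ and of $\mathcal{N}_{0}(M)$.

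For the lattice-type terms $j_{\ast} Y_{(V)}$ and $\mathcal{Y}_{0} = j_{!} Y$, the definition of good covering (resp.\ the fact that $Y$ is a lattice) implies that these are separated \'etale $X$-schemes with finitely generated free abelian geometric stalks: the stalk of $j_{\ast} Y_{(V)}$ above $z \in Z$ is $Y_{(V)}$ evaluated on the strict henselization, where by goodness $Y_{(V)}$ extends to an honest lattice, while $\mathcal{Y}_{0} = j_{!} Y$ has stalk $Y$ on $U$ and zero on $Z$. Hence $[l]$ is injective on these terms, the cokernel is \'etale with stalks isomorphic to $(\Z / l \Z)^{r}$ on each stratum, and constructibility is immediate.

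For the smooth terms $\mathcal{G}_{0}$ and $j_{\ast} G_{(V)}$, invertibility of $l$ on $X$ ensures that for any smooth commutative group scheme $H$ over $X$ with connected fibers the map $[l] \colon H \to H$ is \'etale (its tangent at the identity is multiplication by $l$, an isomorphism, so $[l]$ is \'etale at the origin and everywhere by translation) and surjective on geometric fibers, hence an \'etale-sheaf epimorphism with finite \'etale kernel $H[l]$. Thus $\mathcal{G}_{0} \tensor^{L} \Z / l \Z \cong \mathcal{G}_{0}[l][1]$ is constructible. For $j_{\ast} G_{(V)}$ I would use the exact sequence $0 \to j_{\ast} G \to j_{\ast} G_{(V)} \to C \to 0$ from the proof of Prop.\ \ref{prop: push of lattice by semiabelian is representable}, with $C \subset j_{\ast} Y_{(/ V)}$ \'etale and lattice-like at stalks (handled as in the previous paragraph), together with the connected-component sequence $0 \to \mathcal{G}_{0} \to j_{\ast} G \to \pi_{0}(j_{\ast} G) \to 0$. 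The \emph{main obstacle} here is the constructibility of $\pi_{0}(j_{\ast} G) \tensor^{L} \Z / l \Z$, which reduces to finite generation of the geometric stalks of $\pi_{0}(j_{\ast} G)$; this is precisely \cite[Prop.\ 3.5]{HN11}, already invoked in the proof of Prop.\ \ref{prop: fibers of Nerom components with trivialization}. Granted this, both $\pi_{0}(j_{\ast} G)[l]$ and $\pi_{0}(j_{\ast} G) / l$ are finite \'etale, and combining all the pieces through the two short exact sequences yields constructibility of $j_{\ast} G_{(V)} \tensor^{L} \Z / l \Z$, completing the argument.
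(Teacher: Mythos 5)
Your proof is correct, and the treatment of the $\mathcal{N}_{0}(M)$ half is essentially the same as the paper's: one checks term-by-term that $\mathcal{Y}_{0} \tensor^{L} \Z/l\Z = \mathcal{Y}_{0}/l\mathcal{Y}_{0}$ and $\mathcal{G}_{0} \tensor^{L} \Z/l\Z = \mathcal{G}_{0}[l][1]$ are constructible. Where you diverge is the $\mathcal{N}(M)$ half. The paper does not go back to the explicit complex $\mathcal{N}(M, V)$ at all; instead it invokes the already-established distinguished triangle $\mathcal{N}_{0}(M) \to \mathcal{N}(M) \to i_{\ast}\mathcal{P}(M)$ (Def.\ \ref{def: Neron components}) together with $\mathcal{P}(M) \in D^{b}(\EtGpf/Z)$ (Prop.\ \ref{prop: fibers of Nerom components}), so that constructibility of $\mathcal{N}(M) \tensor^{L} \Z/l\Z$ follows in one line. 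You instead unwind $j_{\ast}G_{(V)}$ through the two exact sequences $0 \to j_{\ast}G \to j_{\ast}G_{(V)} \to C \to 0$ and $0 \to \mathcal{G}_{0} \to j_{\ast}G \to \pi_{0}(j_{\ast}G) \to 0$, and then re-import the finite generation of geometric stalks of $\pi_{0}(j_{\ast}G)$ from \cite[Prop.\ 3.5]{HN11}. That ingredient is exactly what is packaged into the statement $\mathcal{P}(M) \in D^{b}(\EtGpf/Z)$ via Prop.\ \ref{prop: fibers of Nerom components with trivialization}, so your argument is re-deriving a fact the paper had already recorded; the net mathematics is equivalent, but the paper's use of the component triangle is the more economical route and keeps the proof modular. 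A small point worth making explicit in your version: the sheaves $\mathcal{G}_{0}[l]$ and $\pi_{0}(j_{\ast}G)[l]$, $\pi_{0}(j_{\ast}G)/l$ are quasi-finite (not necessarily finite) separated \'etale over $X$, which still suffices for constructibility over the noetherian $X$; you should not claim finiteness.
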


\begin{proof}
	The object
		$
				\alpha_{\ast} \mathcal{Y}_{0} \tensor^{L} \Z / l \Z
			=
				\mathcal{Y}_{0} / l \mathcal{Y}_{0}
		$
	is constructible.
	The object
		$
				\alpha_{\ast} \mathcal{G}_{0} \tensor^{L} \Z / l \Z
		$
	is the $l$-torsion part of $\mathcal{G}_{0}$ shifted by one,
	which is constructible.
	Hence $\alpha_{\ast} \mathcal{N}_{0}(M) \tensor^{L} \Z / l \Z$ is constructible.
	Since $\mathcal{P}(M) \in D^{b}(\EtGpf / Z)$,
	it follows that $\alpha_{\ast} \mathcal{N}(M) \tensor^{L} \Z / l \Z$ is also constructible.
\end{proof}

In the rest of this section, we will omit $\alpha_{\ast}$ and
simply denote the image of a sheaf or a complex of sheaves $F$
over the smooth site (of $U$, $X$ or $Z$) by $F$.

\begin{Def}
	Viewing $\mathcal{N}(M)$ as an object of $D(X_{\pro\et})$,
	we call its derived completion $\mathcal{N}(M)^{\wedge} \in D(X_{\pro\et})$
	the \emph{$l$-adic realization} of $\mathcal{N}(M)$
	and denote it by $\widehat{\mathcal{N}}(M)$.
	The $l$-adic realizations $\widehat{\mathcal{N}}_{0}(M) = \mathcal{N}_{0}(M)^{\wedge} \in D(X_{\pro\et})$
	and $\widehat{\mathcal{P}}(M) = \mathcal{P}(M)^{\wedge} \in D(Z_{\pro\et})$
	are defined similarly.
\end{Def}

Yet another convention:
in the rest of this section,
we will use the pro-\'etale topology only
and denote the morphisms $U_{\pro\et} \to X_{\pro\et}$ and $Z_{\pro\et} \to Z_{\pro\et}$
induced by $j \colon U \into X$ and $i \colon Z \into X$ simply by $j$ and $i$.
This change of notation does not make a difference for relevant groups after derived completion.
More precisely:

\begin{Prop} \label{prop: sheaf pull and fiber after derived completion}
	Let $i \colon Z_{\pro\et} \to X_{\pro\et}$ as above.
	Let $H \in \SmGp' / X$.
	Then the natural reduction morphism $i^{\ast} H \to H \times_{X} Z$ induces an isomorphism
	$(i^{\ast} H)^{\wedge} \isomto (H \times_{X} Z)^{\wedge}$
	in $D(Z_{\pro\et})$.
\end{Prop}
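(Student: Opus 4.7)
The plan is to reduce the claim to a statement about tensor products with $\Z/l^n\Z$ for each $n$, and then to handle the identity component and the \'etale quotient of $H$ separately.

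First, since derived completion is $R\invlim_n(\var \tensor^L \Z/l^n\Z)$ and this operation preserves isomorphisms, I would reduce to showing that for every $n\ge 1$ the natural map
\[
i^*(H \tensor^L \Z/l^n\Z) \to (H \times_X Z) \tensor^L \Z/l^n\Z
\]
is an isomorphism in $D(Z_{\pro\et})$. Here I use that $i^* = i^{-1}$ is exact for any morphism of topoi, so $L i^* = i^*$ and it commutes with $\tensor^L$.

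Second, I would exploit the short exact sequence $0 \to H^0 \to H \to \pi_0(H) \to 0$ of pro-\'etale sheaves of abelian groups, where $H^0 \subset H$ is the maximal open subgroup with connected fibers and $\pi_0(H) = H/H^0$ is the \'etale quotient (an object of $\EtGp'/X$, as discussed in the proof of Prop.\ \ref{prop: Neron components}). Both $i^*$ and $\var \times_X Z$ preserve this triangle---the latter because the identity component commutes with flat base change---so it suffices to verify the assertion separately for $H^0$ and for $\pi_0(H)$.

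For $H^0$, since $l$ is invertible on $X$ and $H^0$ has connected fibers, the endomorphism $[l^n]\colon H^0 \to H^0$ is \'etale and surjective as a morphism of $X$-schemes (fiberwise surjectivity is classical, and smooth surjective morphisms give surjections of pro-\'etale sheaves), with quasi-finite \'etale kernel $H^0[l^n]$. Consequently $H^0 \tensor^L \Z/l^n\Z \cong H^0[l^n][1]$ in $D(X_{\pro\et})$, and the analogous description holds after $\var \times_X Z$, so the matter reduces to the identification $i^* H^0[l^n] \cong H^0[l^n]\times_X Z$ for the representable \'etale sheaf $H^0[l^n]$. For $\pi_0(H)$, I would use the same type of identification $i^*\pi_0(H) \isomto \pi_0(H)\times_X Z$ directly, after which applying $\tensor^L \Z/l^n\Z$ yields the claim.

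The hard part will be justifying the identification $i^* N \isomto N \times_X Z$ on $Z_{\pro\et}$ for a representable \'etale sheaf $N$ on $X_{\pro\et}$ whose geometric fibers may be infinitely generated (as happens for $\pi_0(H)$ when $H$ is a N\'eron lft model). I would verify this by passing to a w-contractible pro-\'etale cover $Y$ of $X$, on which $N|_Y$ is a constant sheaf indexed by the (possibly infinite) discrete set $N(Y)$; then both $i^* N$ and $N\times_X Z$ restrict on $Y\times_X Z$ to the constant sheaf associated to the same discrete set, which identifies them tautologically. Once this compatibility is in hand, the combination through the connected-\'etale triangle is formal.
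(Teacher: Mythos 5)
Your route differs from the paper's. The paper observes that $i^*H\to H\times_X Z$ is surjective by smoothness and then shows the kernel is uniquely $l$-divisible directly, reducing to $X$ strictly henselian local and using that $[l]$ is \'etale on $H$ together with henselian lifting of sections. You instead reduce mod $l^n$, split $H$ via $0\to H^0\to H\to\pi_0(H)\to 0$, and reduce everything to the identification $i^*N\isomto N\times_X Z$ for $N$ \'etale over $X$. That identification is correct and the plan would close, but the justification you give for it is wrong.

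Your claim that $N|_Y$ is the constant sheaf with value set $N(Y)$ for a w-contractible pro-\'etale cover $Y$ of $X$ is false for the \'etale $N$ appearing here. W-contractibility splits \emph{faithfully flat} weakly \'etale covers of $Y$, but $N\times_X Y\to Y$ is typically not surjective, and its connected pieces are open subschemes of $Y$, not copies of $Y$. Take $X$ a strict henselian DVR and $N = H^0[l^n]$ where $H$ has additive reduction (or, more simply, $N = j_!\Z$): then $N(Y)$ is trivial for any cover $Y$ of $X$, yet $N\times_X Y$ has nonzero sections over the part of $Y$ above $U$, so $N|_Y$ is not constant. The fix is shorter than what you attempted: an \'etale $X$-scheme $N$ is weakly \'etale and therefore an object of the site $X_{\pro\et}$, so its representable sheaf pulls back to the representable sheaf of $N\times_X Z$ by the general fact that pullback along a morphism of sites sends representables to representables along the underlying functor, which follows directly from $\Hom_{Z_{\pro\et}}(i^*N, G) = \Hom_{X_{\pro\et}}(N, i_*G) = (i_*G)(N) = G(N\times_X Z)$ for every $G$. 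For the \'etale algebraic space $\pi_0(H)$, apply this to a scheme atlas and use that $i^*$ preserves coequalizers. With this repair the rest of your argument goes through.
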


\begin{proof}
	The reduction morphism $i^{\ast} H \to H \times_{X} Z$ is surjective by smoothness.
	We need to show that the multiplication by $l$
	on the kernel of $i^{\ast} H \to H \times_{X} Z$ in $\Ab(Z_{\pro\et})$
	is an isomorphism.
	We may assume that $X$ is strictly henselian and $U = \Spec K$.
	Since $Z$ is then a geometric point and $H$ locally of finite type,
	it is enough to show that
	$l \colon \Ker(H(X) \onto H(Z))$ is bijective.
	By dividing $H$ by the schematic closure of the identity section of $H \times_{X} U$,
	we may assume that $H$ is a separated scheme (\cite[Prop.\ 3.3.5]{Ray70a}).
	The multiplication by $l$ is an \'etale morphism on $H$ by \cite[7.3/2 (b)]{BLR90}.
	In particular, $\Ker(l) \subset H$ is a separated \'etale group scheme over $X$.
	Hence the map $\Ker(l)(X) \to \Ker(l)(Z)$ is bijective since $X$ is henselian.
	Therefore $l \colon \Ker(H(X) \onto H(Z))$ is injective.
	Let $a \in \Ker(H(X) \onto H(Z))$.
	Then the inverse image $l^{-1}(a) \subset H$ is a separated \'etale $X$-scheme
	whose special fiber contains $0$ (the identity element).
	Hence $l^{-1}(a)(X)$ is non-empty since $X$ is henselian.
	Thus $l \colon \Ker(H(X) \onto H(Z))$ is also surjective.
	This implies the result.
\end{proof}

\begin{Prop}
	The objects $\widehat{\mathcal{N}}(M)$, $\widehat{\mathcal{N}}_{0}(M)$ of $D(X_{\pro\et})$
	and the object $\widehat{\mathcal{P}}(M)$ of $D(Z_{\pro\et})$ are constructible.
	We have $\widehat{\mathcal{P}}(M) = \mathcal{P}(M) \tensor \Z_{l}$.
	We have a canonical distinguished triangle
		\[
				\widehat{\mathcal{N}}_{0}(M)
			\to
				\widehat{\mathcal{N}}(M)
			\to
				i_{\ast} \widehat{\mathcal{P}}(M)
		\]
	in $D(X_{\pro\et})$.
\end{Prop}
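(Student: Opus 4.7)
The plan is to derive everything from the distinguished triangle $\mathcal{N}_0(M) \to \mathcal{N}(M) \to i_*\mathcal{P}(M)$ of Def.\ \ref{def: Neron components} by applying derived completion term-wise. Viewing the triangle in $D(X_{\pro\et})$ through the natural functor $D^b(\SmGp'/X) \to D(X_{\sm}) \to D(X_{\et}) \to D(X_{\pro\et})$, and using that derived completion $F \mapsto \widehat{F} = R\sheafhom_{X_{\pro\et}}(\Q_l/\Z_l, F)[1]$ is a triangulated functor, we obtain a distinguished triangle
\[
\widehat{\mathcal{N}}_0(M) \to \widehat{\mathcal{N}}(M) \to (i_*\mathcal{P}(M))^\wedge
\]
in $D(X_{\pro\et})$. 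The remaining work splits into three tasks: (a) identifying $(i_*\mathcal{P}(M))^\wedge$ with $i_*\widehat{\mathcal{P}}(M)$; (b) computing $\widehat{\mathcal{P}}(M) = \mathcal{P}(M) \tensor \Z_l$; and (c) proving constructibility of all three completed objects.

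For (a), $i_*$ is right adjoint to $i^*$ and hence commutes with $R\invlim$; since $i^*\Z/l^n\Z = \Z/l^n\Z$, the projection formula gives $i_*\mathcal{P}(M) \tensor^L \Z/l^n\Z = i_*(\mathcal{P}(M) \tensor^L \Z/l^n\Z)$, and applying $R\invlim_n$ yields the identification. Equivalently, one can invoke the $R\sheafhom$-description of completion together with $i^*(\Q_l/\Z_l) = \Q_l/\Z_l$. For (b), Prop.\ \ref{prop: fibers of Nerom components} asserts that $\mathcal{P}(M) \in D^b(\EtGpf/Z)$ is concentrated in degrees $-1, 0$ with finitely generated geometric fibers. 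For any finitely generated abelian group $A$ one checks directly that $\widehat{A} = A \tensor \Z_l$: decompose $A$ into free, $l$-primary, and prime-to-$l$ parts, observe that the first two are already $l$-adically complete and that the third becomes zero under both functors, and use that $\Z_l$ is $\Z$-flat. A standard stalk-wise and spectral sequence argument upgrades this to the complex $\mathcal{P}(M)$, giving $\widehat{\mathcal{P}}(M) = \mathcal{P}(M) \tensor \Z_l$, visibly a bounded complex of $\Z_l$-constructible sheaves.

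For (c), the essential input is Prop.\ \ref{prop: ell adic realization is constructible}, which says that $\mathcal{N}(M) \tensor^L \Z/l\Z$ and $\mathcal{N}_0(M) \tensor^L \Z/l\Z$ are constructible $\Z/l\Z$-complexes on $X_{\et}$, hence on $X_{\pro\et}$ via $\nu^*$. The natural map $F \to \widehat{F}$ induces an isomorphism after $\tensor^L \Z/l\Z$ (a standard feature of the derived $l$-adic completion, e.g.\ \cite[Tag 09C0]{Sta18}), so the mod-$l$ reductions of $\widehat{\mathcal{N}}(M)$ and $\widehat{\mathcal{N}}_0(M)$ are constructible. Applying the Bhatt--Scholze criterion that a derived complete object of $D(X_{\pro\et})$ is $\Z_l$-constructible precisely when its mod-$l$ reduction is a constructible $\Z/l\Z$-complex (\cite[\S 6.5]{BS15}) then yields the desired constructibility. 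The main technical point deserving care is step (a): namely checking that the completion functor, the $R\invlim$, and the projection formula really interact cleanly for the closed immersion $i$ in the pro-étale topology; once one has fixed the $R\sheafhom$-presentation of the completion, the commutation is immediate, but this is the step most vulnerable to bookkeeping mistakes.
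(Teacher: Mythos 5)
Your proposal follows essentially the same route as the paper's (very short) proof: apply derived $l$-adic completion to the triangle of Def.\ \ref{def: Neron components}, invoke Prop.\ \ref{prop: ell adic realization is constructible} together with \cite[Prop.\ 3.5.1 (2)]{BS15} for the constructibility of $\widehat{\mathcal{N}}(M)$ and $\widehat{\mathcal{N}}_0(M)$, and identify $\widehat{\mathcal{P}}(M)$ with $\mathcal{P}(M)\tensor\Z_l$ from $\mathcal{P}(M) \in D^b(\EtGpf/Z)$; the paper leaves the commutation $(i_*\mathcal{P}(M))^\wedge = i_*\widehat{\mathcal{P}}(M)$ implicit, whereas you usefully spell it out. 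One small slip to fix in your step (b): a finitely generated free abelian group is \emph{not} $l$-adically complete (the completion of $\Z^n$ is $\Z_l^n$, not $\Z^n$), so the clause saying the free and $l$-primary parts are ``already $l$-adically complete'' should be replaced by the direct computation $\widehat{\Z} = \Z_l = \Z\tensor\Z_l$; the conclusion $\widehat{A} = A\tensor\Z_l$, and hence the rest of your argument, remains correct.
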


\begin{proof}
	The statements for $\widehat{\mathcal{N}}(M)$ and $\widehat{\mathcal{N}}_{0}(M)$
	follow from Prop.\ \ref{prop: ell adic realization is constructible}
	and \cite[Prop.\ 3.5.1 (2)]{BS15}.
	Since $\mathcal{P}(M) \in D^{b}(\EtGpf / Z)$,
	it follows that $\widehat{\mathcal{P}}(M) = \mathcal{P}(M) \tensor \Z_{l}$ is constructible.
	Applying the derived completion to the triangle in Def.\ \ref{def: Neron components},
	we get the stated distinguished triangle.
\end{proof}

\begin{Prop} \label{prop: pull of connected Neron and shriek pull of Neron}
	We have canonical isomorphisms
		\[
				i^{\ast} \widehat{\mathcal{N}}_{0}(M)
			\cong
				T_{l} \mathcal{G}_{Z 0}[1],
			\quad
				i^{!} \widehat{\mathcal{N}}(M)
			\cong
				T_{l} i^{\ast} (R^{1} j_{\ast} M) [-1],
		\]
	which are shifts of $\Z_{l}$-lattices over $Z$.
\end{Prop}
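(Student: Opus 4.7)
The plan is to treat each isomorphism using the Kummer triangle for smooth group schemes and the localization triangle $i_{\ast} i^{!} F \to F \to R j_{\ast} j^{\ast} F$. Throughout, invertibility of $l$ on $X$ ensures that $l^{n}$ is an epimorphism of sheaves on any smooth commutative group scheme.

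For the first isomorphism, expand $\mathcal{N}_{0}(M) = [\mathcal{Y}_{0} \to \mathcal{G}_{0}]$ and consider the triangle $\mathcal{G}_{0} \to \mathcal{N}_{0}(M) \to \mathcal{Y}_{0}[1]$. Smoothness of $\mathcal{G}_{0}$ together with the Kummer triangle $\mathcal{G}_{0} \xrightarrow{l^{n}} \mathcal{G}_{0} \to \mathcal{G}_{0}[l^{n}][1]$ and surjectivity of the transition maps $\mathcal{G}_{0}[l^{n+1}] \xrightarrow{l} \mathcal{G}_{0}[l^{n}]$ yield $\widehat{\mathcal{G}_{0}} \cong T_{l} \mathcal{G}_{0}[1]$ in $D(X_{\pro\et})$. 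Since $\mathcal{Y}_{0}$ is the extension by zero of a lattice, $i^{\ast} \mathcal{Y}_{0} = 0$, hence $i^{\ast} \widehat{\mathcal{Y}_{0}} = 0$; Prop.~\ref{prop: sheaf pull and fiber after derived completion} identifies $T_{l}(i^{\ast} \mathcal{G}_{0})$ with $T_{l} \mathcal{G}_{Z 0}$. Assembling the pulled-back completed triangle gives the first isomorphism.

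For the second isomorphism, specialize the localization triangle at $F = \widehat{\mathcal{N}}(M)$. Since $j^{\ast}$ is exact and $R j_{\ast}$ commutes with $R \invlim$ (being a right adjoint), $R j_{\ast} j^{\ast} \widehat{\mathcal{N}}(M) \cong \widehat{R j_{\ast} M}$. The relation $\mathcal{N}(M) = \tau_{\le 0} R j_{\ast} M$ from Thm.~\ref{main: definition of Neron models} shows that the cone of $\mathcal{N}(M) \to R j_{\ast} M$ is $\tau_{\ge 1} R j_{\ast} M$, so after completion, $i_{\ast} i^{!} \widehat{\mathcal{N}}(M) \cong \widehat{\tau_{\ge 1} R j_{\ast} M}[-1]$. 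It then suffices to identify $\widehat{\tau_{\ge 1} R j_{\ast} M}$ with $T_{l} R^{1} j_{\ast} M$ concentrated in degree $0$.

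To verify this, Prop.~\ref{prop: Neron model commutes with regular base change} and~\ref{prop: Neron model commutes with completion} reduce the problem to the case when $X$ is strictly henselian local with $U = \Spec K$, in which $R^{k} j_{\ast} M$ is a skyscraper at $Z$ with stalk $H^{k}(K, M)$. Since $l$ is invertible on the residue field, $K$ has $l$-cohomological dimension at most $1$, so $H^{k}(K, N) = 0$ for $k \ge 2$ and any finite $l$-power torsion sheaf $N$, in particular for each cohomology sheaf of the Kummer torsion complex $M[l^{n}]$. Applying $R j_{\ast}$ to the triangle $M[l^{n}] \to M \xrightarrow{l^{n}} M$ and chasing the long exact sequence shows that $R^{k} j_{\ast} M$ is $l$-divisible for all $k \ge 1$ and has no $l$-power torsion for $k \ge 2$; hence $R^{k} j_{\ast} M$ is a $\Z[1/l]$-module for $k \ge 2$, giving $\widehat{R^{k} j_{\ast} M} = 0$. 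The $l$-primary part of $R^{1} j_{\ast} M$ is $l$-divisible torsion, so $\widehat{R^{1} j_{\ast} M} \cong T_{l} R^{1} j_{\ast} M [1]$; placed in degree $1$ via $\tau_{\ge 1}$, this contributes $T_{l} R^{1} j_{\ast} M$ in degree $0$, as required.

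The chief technical subtlety will be the interaction of derived $l$-completion with the functors $i^{\ast}$, $j^{\ast}$ and $R j_{\ast}$ in the pro-\'etale formalism; exactness of $j^{\ast}$ and adjointness of $R j_{\ast}$ handle the latter two, while Prop.~\ref{prop: sheaf pull and fiber after derived completion} together with constructibility of $\widehat{\mathcal{N}}(M)$ from Prop.~\ref{prop: ell adic realization is constructible} controls the $i^{\ast}$ case.
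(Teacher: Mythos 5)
Your proposal is correct and follows essentially the same strategy as the paper: compute $i^{\ast}\widehat{\mathcal{N}}_{0}(M)$ term-by-term using $i^{\ast}\mathcal{Y}_{0}=0$, $l$-divisibility of connected smooth groups, and Prop.~\ref{prop: sheaf pull and fiber after derived completion}; then use the localization triangle together with $\mathcal{N}(M) = \tau_{\le 0} R j_{\ast} M$ to reduce the $i^{!}$-computation to the derived completion of $\tau_{\ge 1} R j_{\ast} M$, which is killed in degrees $\ge 2$ and gives the Tate module in degree $1$ by the $l$-cohomological dimension $\le 1$ of the strictly henselian local field. The differences from the paper's own argument are cosmetic: the paper first pulls back by $i^{\ast}$ and works on $Z$ (citing \cite[Rmk.\ 6.5.10]{BS15} for commutativity of derived $l$-completion with $i^{\ast}$, $j^{\ast}$, $i^{!}$), whereas you complete over $X$ and then pull back; and for the cohomological-dimension claim the paper reduces via the triangle $G \to M \to Y[1]$ to the cases $M = G$ and $M = Y[1]$ and then works with the $l$-primary torsion groups $G[l^{\infty}]$ and $Y \otimes \Q_{l}/\Z_{l}$, whereas you chase the Kummer triangle $M[l^{n}] \to M \xrightarrow{l^{n}} M$ for $M$ itself. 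Both dévissages are sound. One small inaccuracy: you attribute commutativity of derived completion with $i^{\ast}$ to Prop.~\ref{prop: sheaf pull and fiber after derived completion} and constructibility, but that proposition compares $i^{\ast}$ (sheaf pullback) with fiber product and does not itself give commutativity of $(\var)^{\wedge}$ with $i^{\ast}$; the correct reference is \cite[Rmk.\ 6.5.10]{BS15}, as in the paper. This is a citation slip, not a logical gap.
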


\begin{proof}
	By \cite[Rmk.\ 6.5.10]{BS15}, the derived completion commutes with $i^{\ast}$ and $j^{\ast}$.
	By the distinguished triangle $i^{!} \to i^{\ast} \to i^{\ast} R j_{\ast} j^{\ast}$
	of functors $D(X_{\pro\et}) \to D(Z_{\pro\et})$ (\cite[Lem.\ 6.1.16]{BS15}),
	we know that the derived completion also commutes with $i^{!}$.
	We have $i^{\ast} \mathcal{Y}_{0} = 0$.
	Also
		$
				i^{\ast} \mathcal{G}_{0}^{\wedge}
			=
				\mathcal{G}_{Z 0}^{\wedge}
			=
				T_{l} \mathcal{G}_{Z 0}[1]
		$
	by Prop.\ \ref{prop: sheaf pull and fiber after derived completion}
	and the fact that the smooth group scheme $\mathcal{G}_{Z 0}$ with connected fibers is $l$-divisible.
	Hence $i^{\ast} \widehat{\mathcal{N}}_{0}(M) = T_{l} \mathcal{G}_{0 Z}[1]$.
	We have a distinguished triangle
		\[
				i^{!} \mathcal{N}(M)
			\to
				i^{\ast} \mathcal{N}(M)
			\to
				i^{\ast} R j_{\ast} M.
		\]
	We have
		$
				i^{\ast} \mathcal{N}(M)
			=
				\tau_{\le 0}
				i^{\ast} R j_{\ast} M
		$.
	We claim that the $l$-primary part of $i^{\ast} R^{n} j_{\ast} M$ is divisible for $n = 1$ and zero for $n \ge 2$.
	This implies
		\[
				i^{!} \widehat{\mathcal{N}}(M)
			=
				(\tau_{\ge 1} i^{\ast} R j_{\ast} M)^{\wedge}[-1]
			=
				(i^{\ast} R^{1} j_{\ast} M)^{\wedge}[-2]
			=
				T_{l} i^{\ast} (R^{1} j_{\ast} M) [-1]
		\]
	as desired.
	
	Now we prove the above claim.
	We may assume that $X$ is strictly henselian and $U = \Spec K$.
	We need to show that the $l$-primary part of $H^{n}(K, M)$ is divisible for $n = 1$ and zero for $n \ge 2$.
	It is enough to show this for the case $M = G$ and the case $M = Y[1]$.
	Let $C$ be the $l$-primary part of $G_{\tor}$ or $Y \tensor \Q_{l} / \Z_{l}$.
	We need to show that $H^{n}(K, C)$ is divisible for $n = 1$ and zero for $n \ge 2$.
	Since $C$ is $l$-divisible with finite $l$-torsion part,
	this follows from the fact that the $l$-cohomological dimension of $K$ is $1$
	(\cite[II, \S 4.3, Prop.\ 12]{Ser02}).
\end{proof}

\begin{Prop} \label{prop: cohomology of ell adic realization of Neron models}
	The objects $i^{!} \widehat{\mathcal{N}}_{0}(M)$ and $(i^{\ast} \widehat{\mathcal{N}}(M))^{\vee}$ are
	concentrated in degrees $0$ and $1$.
	The $H^{0}$ of these objects are $\Z_{l}$-lattices.
	The objects $i^{\ast} \widehat{\mathcal{N}}_{0}(M)$ and $(i^{!} \widehat{\mathcal{N}}(M))^{\vee}$ are
	concentrated in degree $-1$.
\end{Prop}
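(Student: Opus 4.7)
The plan is to combine Prop.\ \ref{prop: pull of connected Neron and shriek pull of Neron} with the distinguished triangle $\widehat{\mathcal{N}}_{0}(M) \to \widehat{\mathcal{N}}(M) \to i_{\ast}\widehat{\mathcal{P}}(M)$ and with the fact that $\widehat{\mathcal{P}}(M) = \mathcal{P}(M) \tensor \Z_{l}$ is concentrated in degrees $-1$ and $0$ with $H^{-1}$ a $\Z_{l}$-lattice (Prop.\ \ref{prop: fibers of Nerom components}). The third sentence of the statement then follows immediately: $i^{\ast}\widehat{\mathcal{N}}_{0}(M) \cong T_{l}\mathcal{G}_{Z 0}[1]$ already appears in Prop.\ \ref{prop: pull of connected Neron and shriek pull of Neron}, while applying $R\sheafhom_{Z_{\pro\et}}(-, \Z_{l})$ to the shifted lattice $i^{!}\widehat{\mathcal{N}}(M) \cong T_{l} i^{\ast}(R^{1} j_{\ast} M)[-1]$ yields $\Hom(T_{l} i^{\ast}(R^{1} j_{\ast} M), \Z_{l})[1]$, again a shift of a lattice concentrated in degree $-1$.

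For $i^{!}\widehat{\mathcal{N}}_{0}(M)$ I would apply $i^{!}$ to the defining triangle and use $i^{!} i_{\ast} = \id$ to obtain the distinguished triangle
\[
		i^{!}\widehat{\mathcal{N}}_{0}(M)
	\to
		i^{!}\widehat{\mathcal{N}}(M)
	\to
		\widehat{\mathcal{P}}(M).
\]
The middle term lives only in degree $1$, the right in degrees $-1$ and $0$, so the long exact sequence of cohomology forces $H^{n}(i^{!}\widehat{\mathcal{N}}_{0}(M)) = 0$ for $n \le -1$, identifies $H^{0}(i^{!}\widehat{\mathcal{N}}_{0}(M))$ with $H^{-1}(\widehat{\mathcal{P}}(M)) = (H^{-1}\mathcal{P}(M)) \tensor \Z_{l}$, a $\Z_{l}$-lattice, and presents $H^{1}(i^{!}\widehat{\mathcal{N}}_{0}(M))$ as an extension of the lattice $T_{l} i^{\ast}(R^{1} j_{\ast} M)$ by the finitely generated $\Z_{l}$-module $H^{0}(\mathcal{P}(M)) \tensor \Z_{l}$; vanishing in degrees $\ge 2$ is automatic.

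For $(i^{\ast}\widehat{\mathcal{N}}(M))^{\vee}$ I would first apply $i^{\ast}$ to the same triangle to get $T_{l}\mathcal{G}_{Z 0}[1] \to i^{\ast}\widehat{\mathcal{N}}(M) \to \widehat{\mathcal{P}}(M)$, which shows that $i^{\ast}\widehat{\mathcal{N}}(M)$ sits in degrees $-1$ and $0$, with $H^{-1}$ an extension of two lattices (hence a lattice) and $H^{0} = H^{0}(\mathcal{P}(M)) \tensor \Z_{l}$ a finitely generated $\Z_{l}$-module $Q$. I would then dualize by applying $R\sheafhom_{Z_{\pro\et}}(-, \Z_{l})$ to the truncation triangle $H^{-1}[1] \to i^{\ast}\widehat{\mathcal{N}}(M) \to H^{0}$. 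The dual of a lattice placed in degree $-1$ is a lattice placed in degree $1$; and $R\Hom(Q, \Z_{l})$ of a finitely generated $\Z_{l}$-module $Q$ lives in degrees $0$ and $1$, with $H^{0} = \Hom(Q, \Z_{l})$ a lattice and $H^{1} = \Ext^{1}(Q, \Z_{l})$ finite torsion, since $\Z_{l}$ has projective dimension one. The long exact sequence then places $(i^{\ast}\widehat{\mathcal{N}}(M))^{\vee}$ in degrees $0$ and $1$ with $H^{0} = \Hom(Q, \Z_{l})$ a $\Z_{l}$-lattice.

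The only real obstacle to formalize is justifying that $R\sheafhom_{Z_{\pro\et}}(-, \Z_{l})$ on constructible derived-complete complexes reduces to the expected $R\Hom_{\Z_{l}}$ at geometric points, so that the projective-dimension-one computation is available. This is a standard feature of the pro-\'etale formalism of \cite{BS15} and requires no new technique beyond what is already in use.
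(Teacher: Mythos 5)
Your argument is correct and coincides with the paper's proof: both apply $i^{!}$ and $i^{\ast}$ to the triangle $\widehat{\mathcal{N}}_{0}(M) \to \widehat{\mathcal{N}}(M) \to i_{\ast}\widehat{\mathcal{P}}(M)$, invoke Prop.\ \ref{prop: pull of connected Neron and shriek pull of Neron} and Prop.\ \ref{prop: fibers of Nerom components} to control the outer terms, and read off the cohomology from the long exact sequences and the projective-dimension-one computation for $R\sheafhom(-,\Z_{l})$. You merely spell out the diagram chase and the dualization step that the paper leaves implicit.
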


\begin{proof}
	We have a distinguished triangle
		\[
				i^{!} \widehat{\mathcal{N}}_{0}(M)
			\to
				i^{!} \widehat{\mathcal{N}}(M)
			\to
				\widehat{\mathcal{P}}(M).
		\]
	Hence the statement about $i^{!} \widehat{\mathcal{N}}_{0}(M)$
	follows from Prop.\ \ref{prop: fibers of Nerom components}
	and \ref{prop: pull of connected Neron and shriek pull of Neron}.
	We also have a distinguished triangle
		\[
				i^{\ast} \widehat{\mathcal{N}}_{0}(M)
			\to
				i^{\ast} \widehat{\mathcal{N}}(M)
			\to
				\widehat{\mathcal{P}}(M).
		\]
	Hence by the same propositions, we know that
	$i^{\ast} \widehat{\mathcal{N}}(M)$ is concentrated in degrees $-1$, $0$,
	whose $H^{-1}$ is a $\Z_{l}$-lattice.
	This implies the statement about $(i^{\ast} \widehat{\mathcal{N}}(M))^{\vee}$.
	The rest is already in Prop.\ \ref{prop: pull of connected Neron and shriek pull of Neron}.
\end{proof}

\begin{Prop}
	For $F, G \in D(X_{\pro\et})$, we have
		\[
				R \sheafhom_{X_{\pro\et}}(F, G)^{\wedge}
			=
				R \sheafhom_{X_{\pro\et}}(\widehat{F}, \widehat{G}).
		\]
\end{Prop}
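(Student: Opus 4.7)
The plan is to prove the equality in two steps, each inserting $\widehat{(\var)}$ in one argument at a time.

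First, I would show $R \sheafhom_{X_{\pro\et}}(F, G)^{\wedge} \cong R \sheafhom_{X_{\pro\et}}(F, \widehat{G})$. Since $R\sheafhom_{X_{\pro\et}}(F, \var)$ is a right adjoint (to $F \tensor^{L} \var$), it commutes with homotopy inverse limits, so applying it to $\widehat{G} = R\invlim_{n}(G \tensor^{L} \Z / l^{n} \Z)$ gives $R\sheafhom_{X_{\pro\et}}(F, \widehat{G}) = R\invlim_{n} R\sheafhom_{X_{\pro\et}}(F, G \tensor^{L} \Z / l^{n} \Z)$. The distinguished triangle $\Z \overset{l^{n}}{\to} \Z \to \Z / l^{n} \Z$ then identifies the inner term with $R\sheafhom_{X_{\pro\et}}(F, G) \tensor^{L} \Z / l^{n} \Z$, since both are canonically the cone of multiplication by $l^{n}$ on $R\sheafhom_{X_{\pro\et}}(F, G)$. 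Taking $R\invlim$ recovers the completion.

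Second, I would show $R\sheafhom_{X_{\pro\et}}(F, \widehat{G}) \cong R\sheafhom_{X_{\pro\et}}(\widehat{F}, \widehat{G})$. Let $C$ be the fiber of the completion map $F \to \widehat{F}$. By idempotence of derived completion, $\widehat{C} = 0$. By a standard fact from \cite[\S 3.5]{BS15}, this forces $C \tensor^{L} \Z / l \Z = 0$, and then the exact sequences $0 \to \Z / l \Z \to \Z / l^{n} \Z \to \Z / l^{n-1} \Z \to 0$ propagate to $C \tensor^{L} \Z / l^{n} \Z = 0$ for all $n \ge 1$ by induction. Combined with the identification $R\sheafhom_{X_{\pro\et}}(\Z / l^{n} \Z, G) = (G \tensor^{L} \Z / l^{n} \Z)[-1]$ and tensor-Hom adjunction, we obtain $R\sheafhom_{X_{\pro\et}}(C, G \tensor^{L} \Z / l^{n} \Z) = R\sheafhom_{X_{\pro\et}}(C \tensor^{L} \Z / l^{n} \Z, G)[1] = 0$, hence $R\sheafhom_{X_{\pro\et}}(C, G) \tensor^{L} \Z / l^{n} \Z = 0$ for all $n$ (using the first step's identification), and therefore $R\sheafhom_{X_{\pro\et}}(C, G)^{\wedge} = 0$. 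By the first step this equals $R\sheafhom_{X_{\pro\et}}(C, \widehat{G})$, so applying $R\sheafhom_{X_{\pro\et}}(\var, \widehat{G})$ to the triangle $C \to F \to \widehat{F}$ produces the desired isomorphism.

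The main subtle point is the implication $\widehat{C} = 0 \Rightarrow C \tensor^{L} \Z / l \Z = 0$ used in the second step. This is not a formal consequence of the definition but a structural feature of derived completion, reflecting the fact that the reflective subcategory of derived complete objects is precisely the Bousfield localization at tensoring by $\Z / l \Z$. Everything else in the argument is a routine manipulation of adjunctions, the perfectness of $\Z / l^{n} \Z$ as a complex of $\Z$-modules, and the $R\invlim$ description of $\widehat{(\var)}$.
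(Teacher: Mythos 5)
Your proof is correct and essentially parallels the paper's: both rest on commuting $R\sheafhom_{X_{\pro\et}}(F, \var)$ past mapping cones and $R\invlim$, combined with the fact that the fiber of $F \to \widehat{F}$ is killed by $\tensor^{L}\Z/l\Z$, which the paper expresses as unique $l$-divisibility of the mapping cone. Regarding the implication $\widehat{C} = 0 \Rightarrow C \tensor^{L}\Z/l\Z = 0$ that you correctly single out as the non-formal step, the cleanest source — matching the paper's — is the identity $\widehat{C} = R\sheafhom_{X_{\pro\et}}(\Q_{l}/\Z_{l}, C)[1]$ from Stacks Tag 099B (quoted just before the proposition), which identifies the fiber of $C \to \widehat{C}$ with $R\sheafhom_{X_{\pro\et}}(\Z[1/l], C)$ and hence shows $l$ acts invertibly on $C$ once $\widehat{C} = 0$; the appeal to \cite[\S 3.5]{BS15} is less apt, since the conservativity statement there (Proposition 3.5.1) runs in the opposite direction.
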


\begin{proof}
	The derived tensor product $(\var) \tensor^{L} \Z / l^{n} \Z$ is given by
	the mapping cone of multiplication by $l^{n}$,
	which commutes with $R \sheafhom_{X_{\pro\et}}(F, \var)$.
	The derived inverse limit $R \invlim$ also commutes with $R \sheafhom_{X_{\pro\et}}(F, \var)$.
	Hence we have
		\begin{align*}
					R \sheafhom_{X_{\pro\et}}(F, G)^{\wedge}
			&	=
					R \invlim_{n}
					R \sheafhom_{X_{\pro\et}}(F, G \tensor^{L} \Z / l^{n} \Z)
			\\
			&	=
					R \invlim_{n}
					R \sheafhom_{X_{\pro\et}}(\widehat{F}, G \tensor^{L} \Z / l^{n} \Z)
			\\
			&	=
					R \sheafhom_{X_{\pro\et}}(\widehat{F}, \widehat{G}),
		\end{align*}
	where the second equality comes from the fact that
	the mapping cone of $F \to \widehat{F}$ is uniquely $l$-divisible.
\end{proof}

Therefore the morphism $\zeta_{0 M}$ induces a morphism
	\[
			\widehat{\zeta}_{0 M}
		\colon
			\widehat{\mathcal{N}}_{0}(M^{\vee})
		\to
			R \sheafhom_{X_{\pro\et}}(\widehat{\mathcal{N}}(M), \Z_{l}(1)[2])
		=
			\widehat{\mathcal{N}}(M)^{\vee}
	\]
via derived completion.
Its pullback
	$
			(M^{\vee})^{\wedge}
		\to
			\widehat{M}^{\vee}
	$
to $U_{\pro\et}$ is an isomorphism
since $(M^{\vee})^{\wedge}[-1] = T_{l} M^{\vee}$ and $\widehat{M}[-1] = T_{l} M$ are dual to each other
after Tate twist.

\begin{Prop} \label{prop: duality for Neron holds ell adic integrally}
	The morphism $\widehat{\zeta}_{0 M}$ is an isomorphism.
	In $H^{-1} i^{\ast}$, it induces a perfect pairing
		\[
				T_{l} \mathcal{G}_{Z 0}'
			\longleftrightarrow
				T_{l} i^{\ast} R^{1} j_{\ast} M
		\]
	of $\Z_{l}$-lattices over $Z$.
\end{Prop}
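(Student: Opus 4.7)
I would verify that $\widehat{\zeta}_{0M}$ is an isomorphism by testing separately on the open and closed strata of $X$; via the localization triangle $i_\ast i^! \to \id \to R j_\ast j^\ast$ on $X_{\pro\et}$, and using that all objects involved are constructible $\Z_l$-complexes, this reduces to showing that $j^\ast \widehat{\zeta}_{0M}$ and $i^! \widehat{\zeta}_{0M}$ are both isomorphisms. The perfect pairing of Tate modules then follows automatically by extracting $H^{-1} i^\ast \widehat{\zeta}_{0M}$: Prop \ref{prop: pull of connected Neron and shriek pull of Neron} gives $H^{-1} i^\ast \widehat{\mathcal{N}}_0(M^\vee) = T_l \mathcal{G}_{Z 0}'$, while absolute purity $i^! \Z_l(1)[2] = \Z_l$ (valid since $l$ is invertible on $Z$ and $i$ is a regular closed immersion of codimension one) combined with the six-functor identity $i^\ast D_X = D_Z i^!$ and Prop \ref{prop: pull of connected Neron and shriek pull of Neron} identifies $H^{-1} i^\ast \widehat{\mathcal{N}}(M)^\vee$ with $(T_l \, i^\ast R^1 j_\ast M)^{\LDual}$.

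Over $U$, in fact $j^\ast \zeta_{0M}$ is an isomorphism even before $l$-adic completion. Since $j^\ast \mathcal{N}_0(M) = j^\ast \mathcal{N}(M) = M$ and $j^\ast$ commutes with $R\sheafhom$ and $\tau_{\le 0}$ (the pullback along an open immersion of sites being exact), the morphism $j^\ast \zeta_{0M}$ unwinds to the duality isomorphism of Prop \ref{prop: dual one motive in smooth topology}.

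For the $i^!$-check, I would use Prop \ref{prop: Neron model commutes with regular base change} and Prop \ref{prop: Neron model commutes with completion} to reduce to the case where $X = \Spec \Order$ for a complete strictly henselian discrete valuation ring with unique closed point $Z$. Absolute purity yields $i^! \widehat{\mathcal{N}}(M)^\vee = R \sheafhom_{Z_{\pro\et}}(i^\ast \widehat{\mathcal{N}}(M), \Z_l)$; together with Prop \ref{prop: pull of connected Neron and shriek pull of Neron} and Prop \ref{prop: cohomology of ell adic realization of Neron models}, this presents both $i^! \widehat{\mathcal{N}}_0(M^\vee)$ and $i^! \widehat{\mathcal{N}}(M)^\vee$ as constructible $\Z_l$-complexes on $Z$ concentrated in degrees $0$ and $1$ with $H^0$ a $\Z_l$-lattice. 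Tracing $i^! \widehat{\zeta}_{0M}$ back through the construction of $\zeta_{0M}$ in Def \ref{def: duality morphisms for Neron}, it should match the $l$-adic realization of the local Tate duality pairing on the Galois cohomology of the strictly henselian local field $K_x^{\mathrm{sh}}$ with coefficients in the Tate modules of $M$ and $M^\vee$, a classical perfect duality whose lattice/torus pieces reduce to inertia-equivariant Cartier duality and whose abelian-variety piece is the Weil pairing. The main obstacle will be making this identification precise, since both pairings descend from Deligne's canonical bi-extension $M^\vee \tensor^L M \to \Gm[1]$ but the bookkeeping through the tensor-Hom adjunctions of Def \ref{def: duality morphisms for Neron}, the derived completion, and the purity isomorphism is non-trivial.
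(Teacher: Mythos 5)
Your reduction steps are sound: checking on the open and closed strata (with $j^\ast$ on the open part being essentially trivial since both sides restrict to the duality of Prop.\ \ref{prop: dual one motive in smooth topology}, and the Verdier-duality/absolute-purity identity $i^\ast D_X = D_Z i^!$ as you state it) is also what the paper does, and Prop.\ \ref{prop: pull of connected Neron and shriek pull of Neron} does identify the $H^{-1}i^\ast$ of the two sides as the claimed Tate modules. The gap is in the remaining step. You propose to verify the $i^!$-isomorphism by identifying it with the $\Z_l$-integral local Tate duality pairing on Galois cohomology of $K_x^{\mathrm{sh}}$ with coefficients in $T_l M$ and $T_l M^\vee$, and you candidly flag that making this identification precise is ``the main obstacle.'' That obstacle is the whole content of the proposition, and your outline does not close it; the bi-extension bookkeeping through the tensor-Hom adjunction, the completion, and the purity isomorphism is genuinely delicate, and the ``classical'' integral perfectness for arbitrary $1$-motives over a strictly henselian DVR is itself not obviously in the literature at the level of precision you need.

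The paper avoids this entirely by a bootstrap. It forms the morphism of distinguished triangles
\[
\begin{CD}
i^{\ast}Rj_{\ast}\widehat{\mathcal{N}}_0(M^\vee)[-1] @>>> i^{!}\widehat{\mathcal{N}}_0(M^\vee) @>>> i^{\ast}\widehat{\mathcal{N}}_0(M^\vee)\\
@VV\wr V @VVV @VVV\\
(i^{\ast}Rj_{\ast}\widehat{\mathcal{N}}(M))^{\vee} @>>> (i^{\ast}\widehat{\mathcal{N}}(M))^{\vee} @>>> (i^{!}\widehat{\mathcal{N}}(M))^{\vee}
\end{CD}
\]
whose left column is an isomorphism (this is the $j^\ast$-input), reads off from Prop.\ \ref{prop: cohomology of ell adic realization of Neron models} that the middle column is a map of two-term complexes concentrated in degrees $0,1$ with $H^0$ a $\Z_l$-lattice, deduces from the long exact cohomology sequences that $H^1$ is an isomorphism and $H^0$ is surjective, and then uses the \emph{already-proven rational statement} Prop.\ \ref{prop: duality holds rationally} to conclude: a surjection of $\Z_l$-lattices that is a rational isomorphism is an isomorphism. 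No identification with an external duality pairing at the integral level is ever needed -- the rational input plus the structural constraints on the $l$-adic realizations do all the work. You should replace the ``match with classical Tate duality'' step with this argument; as written, your $i^!$-check remains an unproved assertion.
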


\begin{proof}
	The statement about $H^{-1} i^{\ast}$ is a consequence of
	Prop.\ \ref{prop: pull of connected Neron and shriek pull of Neron},
	once we show that $\widehat{\zeta}_{0 M}$ is an isomorphism.
	We have
		\[
				i^{\ast} \bigl(
					\widehat{\mathcal{N}}(M)^{\vee}
				\bigr)
			=
				\bigl(
					i^{!} \widehat{\mathcal{N}}(M)
				\bigr)^{\vee}
		\]
	by Verdier duality.
	Since
		\[
				j^{\ast} \widehat{\zeta}_{0 M}
			\colon
				j^{\ast} \bigl(
					\widehat{\mathcal{N}}_{0}(M^{\vee}
				\bigr)
			\to
				j^{\ast} \bigl(
					\widehat{\mathcal{N}}(M)^{\vee}
				\bigr)
		\]
	is an isomorphism,
	it is enough to show that the morphism
		\[
				i^{\ast} \widehat{\zeta}_{0 M}
			\colon
				i^{\ast} \bigl(
					\widehat{\mathcal{N}}_{0}(M^{\vee})
				\bigr)
			\to
				i^{\ast} \bigl(
					\widehat{\mathcal{N}}(M)^{\vee}
				\bigr)
			=
				\bigl(
					i^{!} \widehat{\mathcal{N}}(M)
				\bigr)^{\vee}
		\]
	is an isomorphism.
	The morphism $\widehat{\zeta}_{0 M}$ induces
	a canonical morphism of distinguished triangles
		\[
			\begin{CD}
					i^{\ast} R j_{\ast} \bigl(
						\widehat{\mathcal{N}}_{0}(M^{\vee})
					\bigr)[-1]
				@>>>
					i^{!} \bigl(
						\widehat{\mathcal{N}}_{0}(M^{\vee})
					\bigr)
				@>>>
					i^{\ast} \bigl(
						\widehat{\mathcal{N}}_{0}(M^{\vee})
					\bigr)
				\\
				@VV \wr V
				@VVV
				@VVV
				\\
					\bigl(
						i^{\ast} R j_{\ast} \widehat{\mathcal{N}}(M)
					\bigr)^{\vee}
				@>>>
					\bigl(
						i^{\ast} \widehat{\mathcal{N}}(M)
					\bigr)^{\vee}
				@>>>
					\bigl(
						i^{!} \widehat{\mathcal{N}}(M)
					\bigr)^{\vee}.
			\end{CD}
		\]
	Denote the upper triangle by $E \to F \to G$ and lower by $E' \to F' \to G'$.
	Then by Prop.\ \ref{prop: cohomology of ell adic realization of Neron models},
	we have a commutative diagram
		\[
			\begin{CD}
					H^{1} E
				@>> \sim >
					H^{1} F
				\\
				@VV \wr V
				@VVV
				\\
					H^{1} E'
				@>\sim >>
					H^{1} F'
			\end{CD}
		\]
	and a commutative diagram with exact rows
		\[
			\begin{CD}
					0
				@>>>
					H^{-1} G
				@>>>
					H^{0} E
				@>>>
					H^{0} F
				@>>>
					0
				\\
				@. @VVV @VV \wr V @VVV
				\\
					0
				@>>>
					H^{-1} G'
				@>>>
					H^{0} E'
				@>>>
					H^{0} F'
				@>>>
					0,
			\end{CD}
		\]
	and the cohomologies of $E, F, G, E', F', G'$ are zero in all other degrees.
	The first diagram shows that the morphism $H^{1} F \to H^{1} F'$ is an isomorphism.
	The second diagram shows that $H^{0} F \to H^{0} F'$ is surjective.
	It is an isomorphism up to torsion by
	Prop.\ \ref{prop: duality holds rationally}.
	It is also a morphism between $\Z_{l}$-lattices
	by Prop.\ \ref{prop: cohomology of ell adic realization of Neron models}.
	These imply that $H^{0} F \to H^{0} F'$ is an isomorphism.
	Therefore $H^{-1} G \to H^{-1} G'$ is also an isomorphism.
	Hence $F \to F'$ and $G \to G'$ are both isomorphisms.
	This proves that $\widehat{\zeta}_{0 M}$ is an isomorphism.
\end{proof}

\begin{Prop} \label{prop: duality for components holds ell adic integrally}
	The morphism $\mathcal{P}(M^{\vee}) \to \mathcal{P}(M)^{\LDual}[1]$
	becomes an isomorphism after tensoring with $\Z_{l}$.
\end{Prop}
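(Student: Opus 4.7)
The plan is to deduce the claim from Prop.\ \ref{prop: duality for Neron holds ell adic integrally} by applying derived $l$-adic completion to the morphism of distinguished triangles in Prop.\ \ref{prop: morphism between connected etale triangle to dual such} and invoking the five-lemma for triangulated categories. Passing to the pro-\'etale site and completing termwise, using the commutation of $R \sheafhom$ with derived completion together with the Kummer identification $\widehat{\Gm[1]} = \Z_{l}(1)[2]$, we obtain a morphism of distinguished triangles whose upper row is
\[
		\widehat{\mathcal{N}}_{0}(M^{\vee})
	\to
		\widehat{\mathcal{N}}(M^{\vee})
	\to
		i_{\ast} \widehat{\mathcal{P}}(M^{\vee})
\]
and whose lower row consists of the $\Z_{l}(1)[2]$-duals of $\widehat{\mathcal{N}}(M)$, $\widehat{\mathcal{N}}_{0}(M)$, and $i_{\ast} \widehat{\mathcal{P}}(M)$, the last shifted by $[1]$.

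The left vertical becomes $\widehat{\zeta}_{0 M}$, an isomorphism by Prop.\ \ref{prop: duality for Neron holds ell adic integrally}. For the middle vertical $\widehat{\zeta}_{M}$, I would apply the same proposition to $M^{\vee}$ to conclude $\widehat{\zeta}_{0 M^{\vee}}$ is an isomorphism, then invoke Verdier biduality on constructible complexes in $D(X_{\pro\et})$ with dualizing complex $\omega_{X} = \Z_{l}(1)[2]$ (valid since $X$ is regular one-dimensional and $l$ is invertible on $X$) together with the symmetry of the pairing in Prop.\ \ref{prop: commutativity of pairing on Neron}, to conclude $\widehat{\zeta}_{M}$ is also an isomorphism. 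Therefore the right vertical of the completed diagram is an isomorphism.

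What remains is to identify the right vertical with $i_{\ast} \widehat{\eta}_{M}$. Using the adjunction $(i_{\ast}, i^{!})$ and absolute cohomological purity $i^{!} \Z_{l}(1)[2] \cong \Z_{l}$, the lower-right term becomes $i_{\ast} (\widehat{\mathcal{P}}(M)^{\vee})[1]$, where $\vee$ denotes the $\Z_{l}$-dual on $Z_{\pro\et}$. The completed connecting map $\widehat{d}$ arising from the triangle $\Gm \to \mathcal{G}_{m} \to i_{\ast} \Z$ matches the purity counit: since $\mathcal{G}_{m} = R j_{\ast} \Gm$ (using $R^{1} j_{\ast} \Gm = 0$), we have $\widehat{\mathcal{G}_{m}} = R j_{\ast} \Z_{l}(1)[1]$, and the resulting distinguished triangle is exactly the localization triangle $\Z_{l}(1)[1] \to R j_{\ast} \Z_{l}(1)[1] \to i_{\ast} i^{!} \Z_{l}(1)[2]$ after applying $i^{!} \Z_{l}(1)[2] \cong \Z_{l}$. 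Tracing through the definition of $\eta_{M}$, the right vertical then becomes $i_{\ast} \widehat{\eta}_{M}$, which is therefore an isomorphism. Since $\widehat{\mathcal{P}}(M) = \mathcal{P}(M) \tensor \Z_{l}$, this yields that $\eta_{M} \tensor \Z_{l}$ is an isomorphism. The main obstacle is the biduality-identification of $\widehat{\zeta}_{M}$ and the matching of $\widehat{d}$ with the purity counit, both of which should be formal consequences of the six-functor formalism in the pro-\'etale setting once carefully unwound.
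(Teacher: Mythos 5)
Your proposal is correct and follows essentially the same route as the paper: derived $l$-adic completion of the diagram in Prop.\ \ref{prop: morphism between connected etale triangle to dual such}, $\widehat{\zeta}_{0 M}$ an isomorphism by Prop.\ \ref{prop: duality for Neron holds ell adic integrally}, and $\widehat{\zeta}_{M}$ identified via biduality as the $\Z_{l}(1)[2]$-Verdier dual of $\widehat{\zeta}_{0 M^{\vee}}$ (itself an isomorphism by the same proposition applied to $M^{\vee}$), whence the right vertical $\widehat{\eta}_{M}$ is forced to be an isomorphism. You also correctly avoid the trap the paper warns about in a parenthetical remark, namely that $\widehat{\zeta}_{M}$ being an isomorphism does not follow from $\zeta_{M}$ being one, because derived completion does not commute with the truncation in Def.\ \ref{def: dual of Neron model}.
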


\begin{proof}
	The derived completion of the diagram in
	Prop.\ \ref{prop: morphism between connected etale triangle to dual such}
	gives a morphism of distinguished triangles
		\[
			\begin{CD}
					\widehat{\mathcal{N}}_{0}(M^{\vee})
				@>>>
					\widehat{\mathcal{N}}(M^{\vee})
				@>>>
					i_{\ast} \widehat{\mathcal{P}}(M^{\vee})
				\\
				@VV \widehat{\zeta}_{0 M} V
				@VV \widehat{\zeta}_{M} V
				@VV \widehat{\eta}_{M} V
				\\
					(\widehat{\mathcal{N}}(M))^{\vee}
				@>>>
					(\widehat{\mathcal{N}}_{0}(M))^{\vee}
				@>>>
					i_{\ast} \widehat{\mathcal{P}}(M)^{\vee}[1].
			\end{CD}
		\]
	The morphism $\widehat{\zeta}_{0 M}$ is an isomorphism
	by Prop.\ \ref{prop: duality for Neron holds ell adic integrally}.
	The morphism $\widehat{\zeta}_{M}$ can be obtained by applying the dual $(\var)^{\vee}$ to $\widehat{\zeta}_{0 M^{\vee}}$
	by the definition of $\zeta_{0 M}$.
	Hence $\widehat{\zeta}_{M}$ is an isomorphism.
	(This is not a consequence of the fact that
	$\zeta_{M} \colon \mathcal{N}(M^{\vee}) \to \mathcal{N}_{0}(M)^{\vee}$
	is an isomorphism,
	since derived completion does not commutes with the truncation $\tau_{\le 0}$
	that appears in Def.\ \ref{def: dual of Neron model}.)
	Therefore $\widehat{\eta}_{M}$ is an isomorphism.
\end{proof}

The following finishes the proof of Thm.\ \ref{main: duality} \eqref{main: item: after inverting res char}.

\begin{Prop}
	Any of the kernel or cokernel of $H^{n} \zeta_{M}$ and $H^{n} \eta_{M}$ for any $n$
	is of the form $i_{\ast} N$ for some finite \'etale group scheme $N$ over $Z$
	whose fiber over any $x \in Z$ has order a power of the residual characteristic exponent at $x$.
\end{Prop}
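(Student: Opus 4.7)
The plan is as follows. First I note that \(\zeta_{M}\) is already an isomorphism by Prop \ref{prop: linear dual of connected Neron and trivial duality with Gm target}, so the assertion for \(\zeta_{M}\) is vacuous and only \(\eta_{M}\) needs attention. Prop \ref{prop: duality holds up to finite etale skyscrapers} has already established that the kernel and cokernel of \(H^{n} \eta_{M}\) vanish for \(n \notin \{-1, 0\}\) and that \(\Ker(H^{-1}\eta_{M}) = 0\); the three remaining pieces \(\Coker(H^{-1}\eta_{M})\), \(\Ker(H^{0}\eta_{M})\), \(\Coker(H^{0}\eta_{M})\) are of the form \(i_{\ast} N\) with \(N\) finite \'etale over \(Z\). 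So the only thing left is to bound the order of each geometric fiber of these \(N\).

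Since the claim is local on \(Z\), I would fix a point \(x \in Z\) of residual characteristic exponent \(p\) and reduce to \(X\) strictly henselian local at \(x\) with \(U = \Spec K\). The morphism \(\eta_{M}\) is compatible with this base change: \(\mathcal{P}\) behaves well under regular base change by Prop \ref{prop: Neron components commutes with base change}, and \(\eta_{M}\) is built functorially (Def.~\ref{def: duality morphisms for Neron} and Prop \ref{prop: diagram of duality morphisms for Neron}) from the natural pairing \(M^{\vee} \tensor^{L} M \to \Gm[1]\) via \(R j_{\ast}\), truncation, and derived tensor--Hom adjunction, all of which commute with the base change in play. Hence the kernels and cokernels of \(H^{n} \eta_{M}\) pull back to the corresponding objects over the strict henselization.

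I would then invoke Prop \ref{prop: duality for components holds ell adic integrally} for each prime \(\ell \neq p\): each such \(\ell\) is invertible on the henselized \(X\), and the proposition yields that \(\widehat{\eta}_{M}\) is an isomorphism in \(D(Z_{\pro\et})\). Since \(\mathcal{P}(M^{\vee})\) and \(\mathcal{P}(M)^{\LDual}[1]\) lie in \(D^{b}(\EtGpf / Z)\) with finitely generated geometric cohomology (Prop \ref{prop: fibers of Nerom components}), derived \(\ell\)-completion acts on cohomology as the exact functor \((-) \tensor_{\Z} \Z_{\ell}\); hence \(H^{n}(\eta_{M}) \tensor_{\Z} \Z_{\ell}\) is an isomorphism for every \(n\). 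By flatness of \(\Z_{\ell}\) over \(\Z\), the kernels and cokernels of \(H^{n} \eta_{M}\) vanish after \(\otimes \Z_{\ell}\); being finite, they have no \(\ell\)-torsion. As \(\ell\) ranges over all primes \(\neq p\) the orders of these finite groups must be a power of \(p\) (equivalently, trivial when \(p = 0\)), which is exactly the statement. The only mild subtlety is the base-change compatibility of \(\eta_{M}\), which is purely formal; the genuine substance is entirely contained in Prop \ref{prop: duality for components holds ell adic integrally}, and beyond this there is no real obstacle.
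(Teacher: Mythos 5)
Your proposal is correct and follows the paper's own one-line proof, which cites exactly Propositions \ref{prop: duality for components holds ell adic integrally} and \ref{prop: duality holds up to finite etale skyscrapers}. Your explicit reduction to the strict henselization at each $x\in Z$ before invoking Proposition \ref{prop: duality for components holds ell adic integrally} spells out a step the paper leaves implicit; it is indeed needed for the pointwise bound on orders when $Z$ has closed points of distinct residual characteristics, since that proposition applies only to primes $\ell$ invertible on all of $X$.
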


\begin{proof}
	This follows from Prop.\ \ref{prop: duality for components holds ell adic integrally}
	and \ref{prop: duality holds up to finite etale skyscrapers}.
\end{proof}


\section{Duality for cohomology of N\'eron models and perfectness for $p$-part}
\label{sec: Duality for cohomology of Neron models and perfectness for p-part}

We continue working in Situation \ref{sit: to consider Neron models, notation for M and its dual}.
Assume that $X = \Spec \Order_{K}$ is the spectrum of a complete discrete valuation ring $\Order_{K}$
with \emph{perfect} residue field $k$ of characteristic $p > 0$ and $U = \Spec K$ its generic point.
Below we use the same notation as \cite[\S 2.1]{Suz14}, \cite[\S 2.3]{Suz18}
about the ind-rational pro-\'etale site of $k$.
We also write $Z = x$, $\Order_{K} = \widehat{\Order}_{x}$, $K = \widehat{K}_{x}$
in order to match the notation in \cite[\S 2.5]{Suz18}.
Let $k^{\ind\rat}$ be the category of ind-rational $k$-algebras
and $\Spec k^{\ind\rat}_{\pro\et}$ the ind-rational pro-\'etale site of $k$
defined in \cite[\S 2.1]{Suz14}, \cite[\S 2.3]{Suz18}.
Let
	\begin{gather*}
				R \alg{\Gamma}(\Order_{K}, \var),
				R \alg{\Gamma}_{x}(\Order_{K}, \var)
			\colon
				D(\Order_{K, \fppf})
			\to
				D(k^{\ind\rat}_{\pro\et})
		\\
				R \alg{\Gamma}(K, \var),
			\colon
				D(K_{\fppf})
			\to
				D(k^{\ind\rat}_{\pro\et})
	\end{gather*}
be the functors defined in \cite[\S 2.5]{Suz18}.
The composites of them with the $n$-th cohomology object functor $H^{n}$ for any $n$ is denoted by
$\alg{H}^{n}(\Order_{K}, \var)$, $\alg{H}^{n}_{x}(\Order_{K}, \var)$
and $\alg{H}^{n}(K, \var)$, respectively,
and we set $\alg{\Gamma}(\Order_{K}, \var) = \alg{H}^{0}(\Order_{K}, \var)$,
$\alg{\Gamma}(K, \var) = \alg{H}^{0}(K, \var)$.
We denote the Serre dual functor by
$(\var)^{\SDual} = R \sheafhom_{k^{\ind\rat}_{\pro\et}}(\var, \Z)$ (\cite[\S 2.4]{Suz14}).
An object $C \in D(k^{\ind\rat}_{\pro\et})$ is said to be Serre reflexive
if the canonical morphism $C \to C^{\SDual \SDual}$ is an isomorphism.

We have the canonical trace morphism
	\[
			R \alg{\Gamma}(K, \Gm)
		\to
			R \alg{\Gamma}_{x}(\Order_{K}, \Gm)[1]
		=
			\Z
	\]
by \cite[Eq.\ (2.5.7)]{Suz18}.
The morphism
	\[
			M^{\vee}
		\to
			R \sheafhom_{K_{\fppf}}(M, \Gm[1])
	\]
induces morphisms
	\begin{align*}
				R \alg{\Gamma}(K, M^{\vee})
		&	\to
				R \sheafhom_{k^{\ind\rat}_{\pro\et}} \bigl(
					R \alg{\Gamma}(K, M),
					R \alg{\Gamma}(K, \Gm[1])
				\bigr)
		\\
		&	\to
				R \sheafhom_{k^{\ind\rat}_{\pro\et}} \bigl(
					R \alg{\Gamma}(K, M),
					\Z[1]
				\bigr)
			=
				R \alg{\Gamma}(K, M)^{\SDual}[1]
	\end{align*}
in $D(k^{\ind\rat}_{\pro\et})$ as in \cite[\S 2.5]{Suz18}.
Its Serre dual (when $M$ and $M^{\vee}$ are switched)
	\[
			R \alg{\Gamma}(K, M^{\vee})^{\SDual \SDual}
		\to
			R \alg{\Gamma}(K, M)^{\SDual}[1]
	\]
is an isomorphism by \cite[Thm.\ (9.1)]{Suz14} and the comparison results in \cite[Appendix A]{Suz18}.
This result is the main input for the results of this section.

By Prop.\ \ref{prop: fppf and smooth sites},
the distinguished triangle in Def.\ \ref{def: Neron components} and
the commutative diagram in Prop.\ \ref{prop: diagram of duality morphisms for Neron}
can be translated in the fppf site.

\begin{Prop} \label{prop: component cohom and localization triangles}
	The distinguished triangle in Def.\ \ref{def: Neron components}
	and the localization triangle in \cite[\S 2.5]{Suz18}
	(i.e.\ the definition of $R \alg{\Gamma}_{x}$ as a mapping cone)
	 induce a commutative diagram of distinguished triangles
		\[
			\begin{CD}
					R \alg{\Gamma}_{x}(\Order_{K}, \mathcal{N}_{0}(M))
				@>>>
					R \alg{\Gamma}_{x}(\Order_{K}, \mathcal{N}(M))
				@>>>
					\mathcal{P}(M)
				\\
				@VVV
				@VVV
				@|
				\\
					R \alg{\Gamma}(\Order_{K}, \mathcal{N}_{0}(M))
				@>>>
					R \alg{\Gamma}(\Order_{K}, \mathcal{N}(M))
				@>>>
					\mathcal{P}(M)
				\\
				@VVV
				@VVV
				@VVV
				\\
					R \alg{\Gamma}(K, M)
				@=
					R \alg{\Gamma}(K, M)
				@>>>
					0.
			\end{CD}
		\]
\end{Prop}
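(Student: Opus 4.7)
The plan is to derive the diagram formally from the canonical distinguished triangle
\[
	\mathcal{N}_{0}(M) \to \mathcal{N}(M) \to i_{\ast} \mathcal{P}(M)
\]
of Def.\ \ref{def: Neron components} (promoted to the fppf site by Prop.\ \ref{prop: fppf and smooth sites}) and the localization triangle
\[
	R \alg{\Gamma}_{x}(\Order_{K}, F) \to R \alg{\Gamma}(\Order_{K}, F) \to R \alg{\Gamma}(K, j^{\ast} F)
\]
which is functorial in $F \in D(\Order_{K, \fppf})$. Applying the functors $R \alg{\Gamma}(\Order_{K}, \var)$ and $R \alg{\Gamma}_{x}(\Order_{K}, \var)$ to the triangle of Def.\ \ref{def: Neron components} will produce the middle and top rows, and applying $R \alg{\Gamma}(K, \var) \compose j^{\ast}$ will produce the bottom row. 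The three vertical columns will be the localization triangles themselves.

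The first step is to compute the generic fiber of the triangle: since $j \colon U \into X$ and both $\mathcal{N}_{0}(M)$ and $\mathcal{N}(M)$ restrict to $M$ on $U$ (with $j^{\ast} i_{\ast} \mathcal{P}(M) = 0$), the bottom row reads $R \alg{\Gamma}(K, M) = R \alg{\Gamma}(K, M) \to 0$, as claimed. The next step is to identify
\[
	R \alg{\Gamma}(\Order_{K}, i_{\ast} \mathcal{P}(M))
	\;\cong\;
	\mathcal{P}(M)
	\;\cong\;
	R \alg{\Gamma}_{x}(\Order_{K}, i_{\ast} \mathcal{P}(M))
\]
in $D(k^{\ind\rat}_{\pro\et})$. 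The first isomorphism holds because $i_{\ast} \mathcal{P}(M)$ is supported on the closed point $x = \Spec k$ so that its global sections on $\Order_{K}$ in the relevant topology agree with its stalk at $x$; here one invokes the comparison results of \cite[Appendix A]{Suz18} to land in the ind-rational pro-\'etale site. The second isomorphism is then immediate from the localization triangle, since the third term $R \alg{\Gamma}(K, j^{\ast} i_{\ast} \mathcal{P}(M))$ vanishes.

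With these identifications in hand, the middle and top rows are just the images of the triangle $\mathcal{N}_{0}(M) \to \mathcal{N}(M) \to i_{\ast} \mathcal{P}(M)$ under the respective triangulated functors, and all three horizontal triangles are distinguished. The commutativity of the nine resulting squares is automatic from the naturality of the localization triangle in $F$ and from the functoriality of $R \alg{\Gamma}$ applied to the morphisms $\mathcal{N}_{0}(M) \to \mathcal{N}(M) \to i_{\ast} \mathcal{P}(M)$.

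The only substantive input is the identification $R \alg{\Gamma}(\Order_{K}, i_{\ast} \mathcal{P}(M)) \cong \mathcal{P}(M)$ in $D(k^{\ind\rat}_{\pro\et})$; this is essentially bookkeeping, but it is where the comparison between the fppf/\'etale topology on $\Order_{K}$ and the ind-rational pro-\'etale site on $k$ is used. Everything else in the proposition is a direct consequence of the triangulated-category formalism.
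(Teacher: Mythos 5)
Your proof is correct and follows the same route that the paper intends: the paper's own proof is just ``Obvious,'' which means exactly the formal argument you spell out---apply the functors $R\alg{\Gamma}_{x}(\Order_{K}, \var)$, $R\alg{\Gamma}(\Order_{K}, \var)$, and $R\alg{\Gamma}(K, j^{\ast}\var)$ to the triangle $\mathcal{N}_{0}(M) \to \mathcal{N}(M) \to i_{\ast}\mathcal{P}(M)$ (promoted to the fppf site by Prop.\ \ref{prop: fppf and smooth sites}), read off the columns from the localization triangle, and identify $j^{\ast}\mathcal{N}_{0}(M) = j^{\ast}\mathcal{N}(M) = M$, $j^{\ast}i_{\ast}\mathcal{P}(M) = 0$, and $R\alg{\Gamma}(\Order_{K}, i_{\ast}\mathcal{P}(M)) \cong R\alg{\Gamma}_{x}(\Order_{K}, i_{\ast}\mathcal{P}(M)) \cong \mathcal{P}(M)$. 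Your attribution of the last identification to the comparison results in \cite[Appendix A]{Suz18} is slightly imprecise---it is rather a direct consequence of the construction of $R\alg{\Gamma}(\Order_{K}, \var)$ and $R\alg{\Gamma}_{x}(\Order_{K}, \var)$ in \cite[\S 2.5]{Suz18} applied to a complex of \'etale group algebraic spaces supported on the closed fiber (cf.\ \cite[Prop.\ (3.4.1)]{Suz14})---but you correctly flag it as the one non-formal input, and everything else is genuinely just naturality of the localization triangle.
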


\begin{proof}
	Obvious.
\end{proof}

\begin{Prop} \BetweenThmAndList \label{prop: description of cohom of Neron models}
	\begin{enumerate}
		\item \label{item: cohom of N zero without support}
			About $R \alg{\Gamma}(\Order_{K}, \mathcal{N}_{0}(M))^{\SDual \SDual}$:
			The $H^{-1}$ is the Tate module $T \alg{\Gamma}(K, G)_{\sAb}$
			of the maximal semi-abelian subgroup $\alg{\Gamma}(K, G)_{\sAb}$
			of the proalgebraic group $\alg{\Gamma}(K, G)$.
			The $H^{0}$ is the quotient $\alg{\Gamma}(K, G)_{0 / \sAb}$
			of the identity component $\alg{\Gamma}(K, G)_{0}$ by $\alg{\Gamma}(K, G)_{\sAb}$.
		\item \label{item: cohom of N with support}
			About $R \alg{\Gamma}_{x}(\Order_{K}, \mathcal{N}(M))^{\SDual}$:
			The $H^{-1}$ is the Pontryagin dual $\pi_{0}(\alg{H}^{1}(K, M))^{\PDual}$
			of the component group $\pi_{0}(\alg{H}^{1}(K, M))$
			of the ind-algebraic group $\alg{H}^{1}(K, M)$.
			The $H^{0}$ is the dual
				\[
						\alg{H}^{1}(K, M)_{0}^{\SDual'}
					:=
						\sheafext_{k^{\ind\rat}_{\pro\et}}^{1} \bigl(
							\alg{H}^{1}(K, M)_{0},
							\Q / \Z
						\bigr)
				\]
			of the identity component $\alg{H}^{1}(K, M)_{0}$.
		\item \label{item: cohom of N zero with support}
			About $R \alg{\Gamma}_{x}(\Order_{K}, \mathcal{N}_{0}(M))$:
			The $H^{0}$ is $H^{-1} \mathcal{P}(M)$.
			The $H^{1}$ is $H^{0} \mathcal{P}(M)$.
			The $H^{2}$ is $\alg{H}^{1}(K, M)$.
		\item \label{item: cohom of N without support}
			About $R \alg{\Gamma}(\Order_{K}, \mathcal{N}(M))^{\SDual}$:
			The $H^{0}$ is $H^{0}(\mathcal{P}(M)^{\LDual})$.
			The $H^{1}$ is $H^{1}(\mathcal{P}(M)^{\LDual})$.
			The $H^{2}$ is the dual
				\[
						\alg{\Gamma}(K, G)_{0}^{\SDual'}
					:=
						\sheafext_{k^{\ind\rat}_{\pro\et}}^{1} \bigl(
							\alg{\Gamma}(K, G)_{0},
							\Q / \Z
						\bigr)
				\]
			of the identity component $\alg{\Gamma}(K, G)_{0}$.
	\end{enumerate}
	In both cases \eqref{item: cohom of N zero without support} and \eqref{item: cohom of N with support},
	the $H^{-1}$ is a pro-finite-\'etale group scheme over $k$
	and the $H^{0}$ is a connected unipotent proalgebraic group over $k$.
	In both cases \eqref{item: cohom of N zero with support} and \eqref{item: cohom of N without support},
	the $H^{0}$ is a lattice, $H^{1} \in \EtGpf / k$
	and $H^{2}$ an ind-algebraic group with unipotent identity component.
	All the four complexes above are Serre reflexive.
	We have $H^{n} = 0$ for all the complexes for all other degrees.
\end{Prop}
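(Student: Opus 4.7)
\emph{Plan.} The four complexes \eqref{item: cohom of N zero without support}--\eqref{item: cohom of N without support} are organized by the two localization columns and two ``canonical'' rows of Prop.\ \ref{prop: component cohom and localization triangles}. My strategy is to first compute \eqref{item: cohom of N zero with support} and \eqref{item: cohom of N without support} directly from the two-term structures $\mathcal{N}_{0}(M) = [\mathcal{Y}_{0} \to \mathcal{G}_{0}]$ and $\mathcal{N}(M) \cong [j_{\ast} Y' \to j_{\ast} G']$ (via a flasque embedding, Prop.\ \ref{prop: Neron model by flasque embedding}), and then to deduce \eqref{item: cohom of N zero without support} and \eqref{item: cohom of N with support} by applying $(\var)^{\SDual}$ (or its square) together with the duality theorem $R \alg{\Gamma}(K, M^{\vee})^{\SDual \SDual} \isomto R \alg{\Gamma}(K, M)^{\SDual}[1]$ of \cite[Thm.\ (9.1)]{Suz14}, using \cite[Appendix A]{Suz18} to translate between the fppf and ind-rational pro-\'etale formalisms.

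For \eqref{item: cohom of N zero with support}, I compute $R \alg{\Gamma}(\Order_{K}, \mathcal{Y}_{0}) = 0$ (any $\Order_{K}$-section of an extension-by-zero sheaf must specialize to the zero section at $x$) and $R \alg{\Gamma}(\Order_{K}, \mathcal{G}_{0}) = \alg{\Gamma}(K, G)_{0}$ concentrated in degree $0$ (by the defining property of the connected N\'eron model as the proalgebraic identity component). Plugging these into the localization column of Prop.\ \ref{prop: component cohom and localization triangles} identifies $R \alg{\Gamma}_{x}(\Order_{K}, \mathcal{N}_{0}(M))$ with the shifted mapping cone of $\alg{\Gamma}(K, G)_{0} \to R \alg{\Gamma}(K, M)$; comparing with the canonical-row triangle and invoking Prop.\ \ref{prop: Neron components}, \ref{prop: fibers of Nerom components} isolates the claimed $H^{0}, H^{1}, H^{2}$. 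The description of \eqref{item: cohom of N without support} is obtained by applying $(\var)^{\SDual}$ to the row-triangle $R \alg{\Gamma}(\Order_{K}, \mathcal{N}_{0}(M)) \to R \alg{\Gamma}(\Order_{K}, \mathcal{N}(M)) \to \mathcal{P}(M)$: Prop.\ \ref{prop: linear dual of Neron} provides $H^{i}(\mathcal{P}(M)^{\LDual})$ in degrees $0, 1$, while the Serre dual of the smooth connected proalgebraic group $\alg{\Gamma}(K, G)_{0}$ supplies $\alg{\Gamma}(K, G)_{0}^{\SDual'}$ in degree $2$.

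For \eqref{item: cohom of N zero without support} and \eqref{item: cohom of N with support}, I take $(\var)^{\SDual}$, resp.\ $(\var)^{\SDual\SDual}$, of the complexes computed above with $M$ replaced by $M^{\vee}$ and invoke \cite[Thm.\ (9.1)]{Suz14} to swap the $M^{\vee}$-column with the $M$-column of Prop.\ \ref{prop: component cohom and localization triangles}; this converts the description of $R \alg{\Gamma}(\Order_{K}, \mathcal{N}(M^{\vee}))^{\SDual}$ into that of $R \alg{\Gamma}(\Order_{K}, \mathcal{N}_{0}(M))^{\SDual \SDual}$ and similarly for the supported versions. The Tate module $T \alg{\Gamma}(K, G)_{\sAb}$ in $H^{-1}$ of \eqref{item: cohom of N zero without support} arises because $(\var)^{\SDual}$ annihilates the unipotent part of a connected proalgebraic group in degree $0$ while producing $\Tor_{1}(\var, \Q / \Z)$ --- i.e.\ the Tate module --- on the semi-abelian part in degree $-1$; the quotient $\alg{\Gamma}(K, G)_{0 / \sAb}$ then contributes to $H^{0}$ as the Serre-dual image of the unipotent piece on the $M^{\vee}$-side. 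Serre reflexivity of all four complexes is then inherited from reflexivity of their building blocks --- pro-finite-\'etale group schemes, $\Z$-lattices, connected unipotent proalgebraic groups and their $\sheafext^{1}_{k^{\ind\rat}_{\pro\et}}(\var, \Q / \Z)$-duals --- standard from \cite[\S 2.4]{Suz14}. The main obstacle is bookkeeping: keeping track of how identity components and component groups of the proalgebraic cohomology of $M$ and of $M^{\vee}$ match up across Serre duality, and ensuring that the semi-abelian versus unipotent dichotomy aligns with the degree shift $[1]$ so that the Tate module lands exactly in degree $-1$.
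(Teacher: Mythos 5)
Your treatment of items \eqref{item: cohom of N zero with support} and \eqref{item: cohom of N without support} is essentially correct and follows the paper's route: compute $R\alg{\Gamma}(\Order_K,\mathcal{Y}_0)=0$, $R\alg{\Gamma}(\Order_K,\mathcal{G}_0)=\alg{\Gamma}(K,G)_0$, and splice into the localization and component-complex triangles of Prop.\ \ref{prop: component cohom and localization triangles}. (The vanishing $R\alg{\Gamma}(\Order_K,\mathcal{Y}_0)=0$ needs \cite[Prop.\ (5.2.3.4)]{Suz14}, not just an $H^0$ argument, but that is a minor gap.)

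The genuine problem is in your approach to items \eqref{item: cohom of N zero without support} and \eqref{item: cohom of N with support}. You propose to derive them from \eqref{item: cohom of N zero with support}/\eqref{item: cohom of N without support} for $M^{\vee}$ by applying $(\var)^{\SDual}$ and invoking the $K$-level duality $R\alg{\Gamma}(K,M^{\vee})^{\SDual\SDual}\isomto R\alg{\Gamma}(K,M)^{\SDual}[1]$ of \cite[Thm.\ (9.1)]{Suz14} ``to swap the $M^{\vee}$-column with the $M$-column''. This does not work as stated. Theorem (9.1) of \cite{Suz14} is a duality for cohomology over the generic fiber $K$; converting $R\alg{\Gamma}(\Order_K,\mathcal{N}(M^{\vee}))^{\SDual}$ into $R\alg{\Gamma}(\Order_K,\mathcal{N}_0(M))^{\SDual\SDual}$ is precisely the content of the Néron-model duality proved in Prop.\ \ref{prop: duality for cohomology of Neron models}, which in the paper is established \emph{using} this very proposition, so your derivation is circular. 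Moreover, the circularity is unnecessary: items \eqref{item: cohom of N zero without support} and \eqref{item: cohom of N with support} are directly computable with no reference to $M^{\vee}$ at all. For \eqref{item: cohom of N zero without support}, one shows $R\alg{\Gamma}(\Order_K,\mathcal{N}_0(M))=\alg{\Gamma}(K,G)_0$ is a connected proalgebraic group concentrated in degree $0$, and then the description of $H^{-1}$ (Tate module of the semi-abelian subgroup) and $H^0$ (quotient by it) is the structure theorem for the double Serre dual of a connected proalgebraic group, \cite[Prop.\ (2.4.1)(d)]{Suz14}. For \eqref{item: cohom of N with support}, one shows $R\alg{\Gamma}_x(\Order_K,\mathcal{N}(M))=\alg{H}^1(K,M)[-2]$ is a shift of an ind-algebraic group with unipotent identity component, and then applies \cite[Prop.\ (2.4.1)(b)]{Suz14} for the single Serre dual. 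Your informal description of ``$(\var)^{\SDual}$ annihilates the unipotent part while producing $\Tor_1(\var,\Q/\Z)$ on the semi-abelian part'' is in fact a paraphrase of this structure theorem; you should cite and apply it directly to $\alg{\Gamma}(K,G)_0$ rather than wrapping the argument in a duality that has not yet been established.
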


\begin{proof}
	\eqref{item: cohom of N zero without support}
	Since $\mathcal{Y}_{0}$ is the extension by zero of the \'etale group $Y$,
	we have $R \alg{\Gamma}(\Order_{K}, \mathcal{Y}_{0}) = 0$
	by \cite[Prop.\ (5.2.3.4)]{Suz14}.
	We have
		\[
				R \alg{\Gamma}(\Order_{K}, \mathcal{G}_{0})
			=
				\alg{\Gamma}(\Order_{K}, \mathcal{G}_{0})
			=
				\alg{\Gamma}(\Order_{K}, \mathcal{G})_{0}
			=
				\alg{\Gamma}(K, G)_{0},
		\]
	where the first equality is \cite[Prop.\ (3.4.1)]{Suz14} (with the smoothness of $\mathcal{G}_{0}$),
	the second \cite[Prop.\ (3.4.2) (a)]{Suz14}
	and the third \cite[Prop.\ (3.1.3) (c)]{Suz14}.
	Hence $R \alg{\Gamma}(\Order_{K}, \mathcal{N}_{0}(M)) = \alg{\Gamma}(K, G)_{0}$,
	which is a connected proalgebraic group by \cite[Prop.\ (3.4.2) (a)]{Suz14}.
	Therefore the description of its double dual follows from
	\cite[Prop.\ (2.4.1) (d) and Footnote 7]{Suz14}.
	
	\eqref{item: cohom of N with support}
	We have
		\[
				R \alg{\Gamma}(\Order_{K}, \mathcal{N}(M))
			=
				\tau_{\le 0}
				R \alg{\Gamma}(\Order_{K}, R j_{\ast} M)
			=
				\tau_{\le 0}
				R \alg{\Gamma}(K, M)
		\]
	by \cite[Prop.\ (3.4.1)]{Suz14} (truncation commutes with exact functors).
	Hence one of the distinguished triangles in Prop.\ \ref{prop: component cohom and localization triangles}
	shows that
		\[
				R \alg{\Gamma}_{x}(\Order_{K}, \mathcal{N}(M))
			=
				\alg{H}^{1}(K, M)[-2]
		\]
	since $R \alg{\Gamma}(K, M)$ is concentrated in degrees $-1, 0, 1$
	(see \cite[first paragraph of \S 9]{Suz14}).
	By loc.\ cit., we know that $\alg{H}^{1}(K, M)$ is an ind-algebraic group with unipotent identity component.
	Hence \cite[Prop.\ (2.4.1) (b)]{Suz14} gives the required description of its Serre dual.
	
	\eqref{item: cohom of N zero with support}
	One of the distinguished triangles in Prop.\ \ref{prop: component cohom and localization triangles}
	and what we saw right above give a distinguished triangle
		\[
				R \alg{\Gamma}_{x}(\Order_{K}, \mathcal{N}_{0}(M))
			\to
				\alg{H}^{1}(K, M)[-2]
			\to
				\mathcal{P}(M).
		\]
	The result follows from this.
	
	\eqref{item: cohom of N without support}
	One of the distinguished triangles in Prop.\ \ref{prop: component cohom and localization triangles}
	and what we saw in the proof of \eqref{item: cohom of N zero without support} above
	give a distinguished triangle
		\[
				\mathcal{P}(M)^{\LDual}
			\to
				R \alg{\Gamma}(\Order_{K}, \mathcal{N}(M))^{\SDual}
			\to
				\alg{\Gamma}(K, G)_{0}^{\SDual}.
		\]
	We have
		\begin{align*}
					\alg{\Gamma}(K, G)_{0}^{\SDual}
			&	=
					R \sheafhom_{k^{\ind\rat}_{\pro\et}} \bigl(
						\alg{\Gamma}(K, G)_{0},
						\Q / \Z
					\bigr)[-1]
			\\
			&	=
					\sheafext_{k^{\ind\rat}_{\pro\et}}^{1} \bigl(
						\alg{\Gamma}(K, G)_{0},
						\Q / \Z
					\bigr)[-2]
			\\
			&	=
					\alg{\Gamma}(K, G)_{0}^{\SDual'}[-2]
		\end{align*}
	by \cite[Prop.\ (2.3.3) (d), (2.4.1) (a)]{Suz14}.
	Hence the statements in \eqref{item: cohom of N without support} follow.
	The group $\alg{\Gamma}(K, G)_{0}^{\SDual'}$ is an ind-algebraic group with unipotent identity component
	by \cite[(2.4.1) (d)]{Suz14}.
	
	We can check that the cohomology objects of all the four complexes are Serre reflexive
	using \cite[(2.4.1) (b)]{Suz14}.
	Hence the four complexes themselves are Serre reflexive.
\end{proof}

By Prop.\ \ref{prop: fppf and smooth sites},
the morphism
	\[
			\mathcal{N}(M^{\vee})
		\to
			R \sheafhom_{\Order_{K, \sm}}(\mathcal{N}_{0}(M), \Gm[1])
	\]
in $D(\Order_{K, \sm})$ given in Def.\ \ref{def: duality morphisms for Neron}
induces a morphism
	\[
			\mathcal{N}(M^{\vee})
		\to
			R \sheafhom_{\Order_{K, \fppf}}(\mathcal{N}_{0}(M), \Gm[1])
	\]
in $D(\Order_{K, \fppf})$.
Hence \cite[Prop.\ (3.3.8)]{Suz14} and the trace morphism
give a morphism of distinguished triangles
	\[
		\begin{CD}
				R \alg{\Gamma}(\Order_{K}, \mathcal{N}(M))
			@>>>
				R \alg{\Gamma}(K, M)
			@>>>
				R \alg{\Gamma}_{x}(\Order_{K}, \mathcal{N}(M))[1]
			\\
			@VVV
			@VVV
			@VVV
			\\
				R \alg{\Gamma}_{x}(\Order_{K}, \mathcal{N}_{0}(M^{\vee}))^{\SDual}
			@>>>
				R \alg{\Gamma}(K, M^{\vee})^{\SDual}[1]
			@>>>
				R \alg{\Gamma}(\Order_{K}, \mathcal{N}_{0}(M^{\vee}))^{\SDual}[1]
		\end{CD}
	\]
Applying $\SDual$, shifting by one
and using the Serre reflexibility of $R \alg{\Gamma}_{x}(\Order_{K}, \mathcal{N}_{0}(M^{\vee}))$
(Prop.\ \ref{prop: description of cohom of Neron models}),
we have a morphism of distinguished triangles
	\begin{equation} \label{eq: duality morphisms for cohomology of Neron}
		\begin{CD}
				R \alg{\Gamma}(\Order_{K}, \mathcal{N}_{0}(M^{\vee}))^{\SDual \SDual}
			@>>>
				R \alg{\Gamma}(K, M^{\vee})^{\SDual \SDual}
			@>>>
				R \alg{\Gamma}_{x}(\Order_{K}, \mathcal{N}_{0}(M^{\vee}))[1]
			\\
			@VVV
			@VVV
			@VVV
			\\
				R \alg{\Gamma}_{x}(\Order_{K}, \mathcal{N}(M))^{\SDual}
			@>>>
				R \alg{\Gamma}(K, M)^{\SDual}[1]
			@>>>
				R \alg{\Gamma}(\Order_{K}, \mathcal{N}(M))^{\SDual}[1].
		\end{CD}
	\end{equation}
To simplify the notation,
we denote the upper triangle by $C \to D \to E$ and lower by $C' \to D' \to E'$.
As noted earlier,
the middle vertical morphism is an isomorphism by \cite[Thm.\ (9.1)]{Suz14},
so $D \isomto D'$.
The above diagram induces a morphism from the long exact sequence of cohomologies of $C \to D \to E$
to the long exact sequence of cohomologies of $C' \to D' \to E'$.
We can spell it out using Prop.\ \ref{prop: description of cohom of Neron models} as follows:
	\begin{gather*}
			\begin{CD}
					0
				@>>>
					T \alg{\Gamma}(K, G')_{\sAb}
				@>>>
					H^{-1} D
				@>>>
					H^{-1} \mathcal{P}(M^{\vee})
				\\
				@.
				@VVV
				@VV \wr V
				@VVV
				\\
					0
				@>>>
					(\pi_{0} \alg{H}^{1}(K, M))^{\PDual}
				@>>>
					H^{-1} D'
				@>>>
					H^{0}(\mathcal{P}(M)^{\LDual})
			\end{CD}
		\\
			\begin{CD}
				@>>>
					\alg{\Gamma}(K, G')_{0 / \sAb}
				@>>>
					H^{0} D
				@>>>
					H^{0} \mathcal{P}(M)
				@>>>
					0
				\\
				@.
				@VVV
				@VV \wr V
				@VVV
				@.
				\\
				@>>>
					\alg{H}^{1}(K, M)_{0}^{\SDual'}
				@>>>
					H^{0} D'
				@>>>
					H^{1}(\mathcal{P}(M)^{\LDual})
				@>>>
					0,
			\end{CD}
		\\
			\begin{CD}
					0
				@>>>
					H^{1} D
				@>>>
					\alg{H}^{1}(K, M^{\vee})
				@>>>
					0
				\\
				@.
				@VV \wr V
				@VVV
				@.
				\\
					0
				@>>>
					H^{1} D'
				@>>>
					\alg{\Gamma}(K, G)_{0}^{\SDual'}
				@>>>
					0.
			\end{CD}
	\end{gather*}
The upper, middle and lower diagrams are for the $H^{-1}$, $H^{0}$ and $H^{1}$, respectively.

\begin{Prop} \label{prop: duality for cohomology of Neron models}
	The morphism of distinguished triangles \eqref{eq: duality morphisms for cohomology of Neron}
	is an isomorphism of distinguished triangles.
	The $H^{1}$ of the right vertical isomorphism gives an isomorphism
		\begin{equation} \label{eq: Shafarevich isomorphism for Neron}
				\alg{H}^{1}(K, M^{\vee})
			\isomto
				\alg{\Gamma}(K, G)_{0}^{\SDual'}.
		\end{equation}
\end{Prop}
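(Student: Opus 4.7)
The plan is to leverage the deep local duality \cite[Thm.\ (9.1)]{Suz14} (combined with the comparison of \cite[Appendix A]{Suz18}), which guarantees that the middle vertical morphism $D \to D'$ in \eqref{eq: duality morphisms for cohomology of Neron} is an isomorphism. Since the diagram is a morphism of distinguished triangles with the middle vertical an iso, the octahedral axiom reduces the goal to showing that just one of the two outer verticals is an isomorphism; the third then follows automatically.

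I would first dispose of the $H^1$ statement, which is essentially formal. By Prop.\ \ref{prop: description of cohom of Neron models}, both $C$ and $C'$ are concentrated in degrees $-1, 0$, so the long exact sequences of cohomology yield canonical isomorphisms $H^1 D \isomto H^1 E$ and $H^1 D' \isomto H^1 E'$. Combined with $H^1 D \isomto H^1 D'$, this produces the Shafarevich-type isomorphism \eqref{eq: Shafarevich isomorphism for Neron} identifying $\alg{H}^1(K, M^\vee)$ with $\alg{\Gamma}(K, G)_0^{\SDual'}$, and simultaneously shows the $H^1$ component of the right vertical is an iso.

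For the degree-$0$ and degree-$(-1)$ comparisons, the key is to exploit the structural decomposition supplied by Prop.\ \ref{prop: description of cohom of Neron models}: in both triangles the cohomology objects come with a canonical pro-finite-\'etale part (degree $-1$) and a connected unipotent part (degree $0$). Concretely, I would use (a) that in an extension of a lattice by a pro-finite-\'etale sheaf the pro-finite-\'etale subobject is canonically determined as the maximal such, so the iso $H^{-1}D \isomto H^{-1}D'$ automatically restricts to an iso $H^{-1}C \isomto H^{-1}C'$; and (b) that the surjection $H^0 D \onto H^0 E$ factors uniquely through $\pi_0(H^0 D)$ (since $H^0 E$ is finite \'etale while $H^0 C$ is connected), so that the iso $H^0 D \isomto H^0 D'$ descends to an iso $H^0 E \isomto H^0 E'$ via functoriality of $\pi_0$ and restricts to an iso on identity components, matching up the images of $H^0 C$ and $H^0 C'$.

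The main obstacle is the subtle interaction between the structural types at adjacent cohomological degrees, concretely the possibly nonzero connecting map $H^{-1}E \to H^0 C$ (from a lattice into a connected unipotent group), which prevents the 6-term exact sequence from cleanly splitting into two short exact sequences. Additional input is therefore needed to match $H^{-1}E$ with $H^{-1}E'$ and $H^0 C$ with $H^0 C'$ on the nose. I would handle this by applying Serre duality symmetrically: by Prop.\ \ref{prop: description of cohom of Neron models} every term in the diagram is Serre-reflexive, and $(\cdot)^{\SDual}$ interchanges pro-finite-\'etale and connected unipotent parts (after swapping $M$ and $M^{\vee}$). This symmetry lets one bootstrap the $H^{-1}$ case from the already-handled $H^0$ case, closing the argument.
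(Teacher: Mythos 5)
Your setup is correct — the local duality of \cite[Thm.\ (9.1)]{Suz14} makes the middle vertical $D \to D'$ an isomorphism, and once either outer vertical is shown to be an isomorphism the third follows. Your $H^1$ argument is also exactly right and matches the paper: both $C$ and $C'$ vanish in degree $1$, so $H^1 D \isomto H^1 E$ and $H^1 D' \isomto H^1 E'$, and composing with $H^1 D \isomto H^1 D'$ gives \eqref{eq: Shafarevich isomorphism for Neron}. Your observations (a) and (b) are likewise valid as far as they go: (a) gives $H^{-1}C \isomto H^{-1}C'$, since the pro-finite-\'etale part of $H^{-1}D \cong H^{-1}D'$ is characterized as the maximal pro-finite-\'etale subobject (the quotient being a sub-lattice of $H^{-1}E$, resp.\ $H^{-1}E'$), and (b) gives $H^0 E \isomto H^0 E'$, since both are $\pi_0(H^0 D) \cong \pi_0(H^0 D')$ (the kernel of $H^0 D \onto H^0 E$ is the connected image of $H^0 C$, an \'etale quotient forcing the kernel to be the identity component; note though that $H^0 E$ is finitely generated \'etale, not finite \'etale as you wrote).

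The genuine gap is exactly the one you flag: after (a), (b) and the $H^1$ step you control the verticals at positions $H^{-1}C$, $H^{-1}D$, $H^0 D$, $H^0 E$ of the six-term exact sequence, but not the consecutive four needed to apply the five lemma at $H^{-1}E$ or at $H^0 C$. Your proposed ``Serre duality bootstrap'' does not close it as stated. Taking $(\cdot)^{\SDual}$ of diagram \eqref{eq: duality morphisms for cohomology of Neron} (even after swapping $M$ and $M^\vee$) does not reproduce the same diagram: $E'^{\SDual}[1] = R\alg{\Gamma}(\Order_K, \mathcal{N}(M))^{\SDual\SDual}$ has $\mathcal{N}(M)$ where the ``$C$ for $M^\vee$'' would need $\mathcal{N}_0(M)$, and similarly for the other outer term, so there is an $\mathcal{N}$--versus--$\mathcal{N}_0$ mismatch that blocks the self-duality you invoke. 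The paper resolves this precisely by introducing an auxiliary diagram, \eqref{eq: morphism bw cohomology component triangles}, obtained by applying $R\alg{\Gamma}_x(\Order_K, \cdot)$ to the component-complex diagram of Prop.\ \ref{prop: morphism between connected etale triangle to dual such}. In that diagram the outer column lives in degrees $-1, 0$ while the two middle terms $R\alg{\Gamma}_x(\Order_K, \mathcal{N}(M^\vee))$ and $R\alg{\Gamma}(\Order_K, \mathcal{N}_0(M))^{\SDual}$ are each concentrated in a single degree $2$, and the $H^2$ of the left vertical is the Shafarevich isomorphism already established. A degree count then forces the middle vertical of \emph{that} diagram to be an isomorphism; its Serre dual is exactly $C \to C'$ (for $M^\vee$), which gives the missing isomorphism, and then $E \to E'$ follows since $D \to D'$ is one. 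Without identifying this second diagram — the morphism whose Serre dual is $C \to C'$ and which \emph{can} be analyzed by degree concentration — your argument has no way to obtain $H^0 C \isomto H^0 C'$ and $H^{-1}E \isomto H^{-1}E'$.
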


\begin{proof}
	The latter statement about $H^{1}$ is clear by the paragraph before the proposition.
	Applying $R \alg{\Gamma}_{x}(\Order_{K}, \var)$
	to the diagram in Prop.\ \ref{prop: morphism between connected etale triangle to dual such},
	and using \cite[Prop.\ 3.3.8]{Suz14} and the trace morphism,
	we have a morphism of distinguished triangles
		\begin{equation} \label{eq: morphism bw cohomology component triangles}
			\begin{CD}
					R \alg{\Gamma}_{x}(\Order_{K}, \mathcal{N}_{0}(M^{\vee}))
				@>>>
					R \alg{\Gamma}_{x}(\Order_{K}, \mathcal{N}(M^{\vee}))
				@>>>
					\mathcal{P}(M^{\vee})
				\\
				@VVV
				@VVV
				@VVV
				\\
					R \alg{\Gamma}(\Order_{K}, \mathcal{N}(M))^{\SDual}
				@>>>
					R \alg{\Gamma}(\Order_{K}, \mathcal{N}_{0}(M))^{\SDual}
				@>>>
					\mathcal{P}(M)^{\LDual}[1].
			\end{CD}
		\end{equation}
	The morphism in $H^{2}$ of the left vertical morphism is
	the isomorphism \eqref{eq: Shafarevich isomorphism for Neron}.
	The objects $\mathcal{P}(M)$ and $\mathcal{P}(M^{\vee})^{\LDual}[1]$ are concentrated in degrees $-1, 0$
	by Prop.\ \ref{prop: fibers of Nerom components}
	and the proof of Prop.\ \ref{prop: diagram of duality morphisms for Neron}.
	Hence the left horizontal two morphisms are both isomorphisms in $H^{2}$.
	The upper middle term $R \alg{\Gamma}_{x}(\Order_{K}, \mathcal{N}(M^{\vee}))$
	is concentrated in degree $2$ as we saw in the proof of
	Prop.\ \ref{prop: description of cohom of Neron models}
	\eqref{item: cohom of N with support}.
	As we saw in the proof of Prop.\ \ref{prop: description of cohom of Neron models}
	\eqref{item: cohom of N zero without support},
	the object $R \alg{\Gamma}(\Order_{K}, \mathcal{N}_{0}(M)) = \alg{\Gamma}(K, G)_{0}$
	is a connected proalgebraic group.
	Hence its Serre dual is concentrated in degree $2$
	by \cite[Prop.\ (2.4.1) (b)]{Suz14}.
	Therefore the lower middle term $R \alg{\Gamma}(\Order_{K}, \mathcal{N}_{0}(M))^{\SDual}$
	in the above diagram is also concentrated in degree $2$.
	Combining all these, we know that the middle vertical morphism in the above diagram is an isomorphism.
	Its Serre dual is $C \to C'$ with $M$ replaced by $M^{\vee}$.
	Therefore $C \to C'$ is an isomorphism.
	Hence $E \to E'$ is an isomorphism.
\end{proof}

\begin{Prop} \label{prop: duality for Neron components in local perfect residue field case}
	The morphism $\eta_{M}$ is an isomorphism.
\end{Prop}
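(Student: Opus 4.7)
My plan is to deduce the proposition from Prop \ref{prop: duality for cohomology of Neron models} via the two-out-of-three principle for morphisms of distinguished triangles applied to diagram \eqref{eq: morphism bw cohomology component triangles}. The middle vertical arrow of \eqref{eq: morphism bw cohomology component triangles}, namely $R \alg{\Gamma}_{x}(\Order_{K}, \mathcal{N}(M^{\vee})) \to R \alg{\Gamma}(\Order_{K}, \mathcal{N}_{0}(M))^{\SDual}$, was shown to be an isomorphism in the proof of Prop \ref{prop: duality for cohomology of Neron models}: both source and target are concentrated in degree $2$ by Prop \ref{prop: description of cohom of Neron models}, and on $H^{2}$ the morphism is the Shafarevich-type isomorphism \eqref{eq: Shafarevich isomorphism for Neron} coming from \cite[Thm.\ (9.1)]{Suz14}. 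The same proof further shows that the Serre dual of this middle isomorphism yields the isomorphism $C \to C'$, and hence the isomorphism $E \to E'$, in \eqref{eq: duality morphisms for cohomology of Neron}; a degree shift of $E \to E'$ is exactly the left vertical arrow of \eqref{eq: morphism bw cohomology component triangles}. Consequently the right vertical arrow of \eqref{eq: morphism bw cohomology component triangles} is also an isomorphism.

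The remaining task is to identify this right vertical arrow with $\eta_{M}$. The source $R \alg{\Gamma}_{x}(\Order_{K}, i_{\ast} \mathcal{P}(M^{\vee}))$ equals $\mathcal{P}(M^{\vee})$ since $i_{\ast} \mathcal{P}(M^{\vee})$ is supported on $x$. For the target, the triangle $\Gm \to \mathcal{G}_{m} \to i_{\ast} \Z$ combined with $R \alg{\Gamma}_{x}(\Order_{K}, \mathcal{G}_{m}) = 0$ (a consequence of $R^{1} j_{\ast} \Gm = 0$) and the trace \cite[Eq.\ (2.5.7)]{Suz18} yield a canonical isomorphism $R \alg{\Gamma}_{x}(\Order_{K}, R \sheafhom_{X_{\sm}}(i_{\ast} \mathcal{P}(M), \Gm[1])[1]) \cong \mathcal{P}(M)^{\LDual}[1]$. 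Under these identifications the right vertical arrow is the morphism induced by $d \circ i_{\ast} \eta_{M}$ from Prop \ref{prop: morphism between connected etale triangle to dual such}, which by the very construction of $\eta_{M}$ becomes $\eta_{M}$ itself.

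The main subtlety is this last identification: while formal, it requires careful use of \cite[Prop.\ (3.3.8)]{Suz14} to see that the lower row of \eqref{eq: morphism bw cohomology component triangles} really does arise by applying $R \alg{\Gamma}_{x}(\Order_{K}, \var)$ to the lower row in Prop \ref{prop: morphism between connected etale triangle to dual such}, and that the connecting morphism $d \colon i_{\ast} \Z[1] \to \Gm[2]$ becomes precisely the trace identification after $R \alg{\Gamma}_{x}$. Granted these compatibilities, the two-out-of-three principle finishes the proof.
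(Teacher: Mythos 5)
Your proposal is correct and follows essentially the same route as the paper: both proofs apply the two-out-of-three principle to the morphism of distinguished triangles \eqref{eq: morphism bw cohomology component triangles}, identify the middle vertical arrow as an isomorphism via Prop.\ \ref{prop: duality for cohomology of Neron models} (concentration in degree $2$ plus \cite[Thm.\ (9.1)]{Suz14}), obtain the left vertical arrow as a shift of $E \to E'$ from Prop.\ \ref{prop: duality for cohomology of Neron models}, and identify the right vertical arrow with $\eta_M$ via Prop.\ \ref{prop: morphism between connected etale triangle to dual such} and the construction of \eqref{eq: morphism bw cohomology component triangles}. The additional detail you supply on the trace identification of the target is compatible with what the paper compresses into a single sentence.
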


\begin{proof}
	The right vertical morphism in \eqref{eq: morphism bw cohomology component triangles}
	is $\eta_{M}$ by Prop.\ \ref{prop: morphism between connected etale triangle to dual such}
	and the construction of \eqref{eq: morphism bw cohomology component triangles}.
	The middle vertical morphism is an isomorphism
	as seen in the proof of Prop.\ \ref{prop: duality for cohomology of Neron models}.
	The left vertical morphism is $E \to E'$ up to shift,
	which is an isomorphism by Prop.\ \ref{prop: duality for cohomology of Neron models}.
	Therefore $\eta_{M}$ is an isomorphism.
\end{proof}

The following finishes the proof of
Thm.\ \ref{main: duality} \eqref{main: item: perfect residue field case}
and hence of Thm.\ \ref{main: duality} itself.

\begin{Prop}
	Let $X$ be an irreducible Dedekind scheme
	and $j \colon U \into X$ a dense open subscheme with complement $Z$.
	Assume the residue fields of $Z$ are perfect.
	Then for any $M \in \mathcal{M}_{U}$,
	the morphism $\eta_{M}$ is an isomorphism.
\end{Prop}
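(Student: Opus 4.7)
The plan is to reduce the global statement immediately to the local, complete, perfect-residue-field case already handled in Proposition~\ref{prop: duality for Neron components in local perfect residue field case}, via the base-change compatibilities established earlier.

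First I would invoke Proposition~\ref{prop: reduction to str hensel case}, which says that $\eta_M$ is an isomorphism if and only if $\eta_{M \times_U K_x^{sh}}$ is an isomorphism for every closed point $x \in Z$, and that one may further replace $K_x^{sh}$ by its completion $\widehat{K_x^{sh}}$. This reduces the claim to showing that for each $x \in Z$, the morphism $\eta_{M \times_U \widehat{K_x^{sh}}}$ is an isomorphism, where the base ring is the completion of the strict henselization of $\Order_{X,x}$ at $x$.

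Next I would observe that the residue field of $\widehat{K_x^{sh}}$ is the separable closure of the residue field $k(x)$ of $X$ at $x$. By hypothesis $k(x)$ is perfect, so its separable closure coincides with its algebraic closure and is in particular perfect. Hence $\widehat{K_x^{sh}}$ (together with its ring of integers) fits exactly into the setting of Section~\ref{sec: Duality for cohomology of Neron models and perfectness for p-part}: it is the fraction field of a complete discrete valuation ring with perfect residue field.

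Finally, Proposition~\ref{prop: duality for Neron components in local perfect residue field case} applied to the $1$-motive $M \times_U \widehat{K_x^{sh}}$ over the generic point of $\Spec \widehat{\Order_{X,x}^{sh}}$ yields that $\eta_{M \times_U \widehat{K_x^{sh}}}$ is an isomorphism. Combined with the reduction step above, this proves the proposition. There is no real obstacle here; the only thing to double-check is that the formation of $\mathcal{P}$ and of the pairing $\eta$ is compatible with the strict henselization and completion base changes used in Proposition~\ref{prop: reduction to str hensel case}, which is exactly the content of Proposition~\ref{prop: Neron components commutes with base change} (built on Propositions~\ref{prop: Neron model commutes with regular base change} and \ref{prop: Neron model commutes with completion}).
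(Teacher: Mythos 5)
There is a real gap: Section~\ref{sec: Duality for cohomology of Neron models and perfectness for p-part}, and hence Proposition~\ref{prop: duality for Neron components in local perfect residue field case}, is set up under the standing hypothesis that the residue field $k$ has characteristic $p > 0$. (The ind-rational pro-\'etale machinery of \cite{Suz14} that underlies the whole section is a positive-characteristic theory; the structure results quoted there, e.g.\ on connected unipotent proalgebraic groups and their Serre duals, are not available in characteristic zero.) Your reduction step via Proposition~\ref{prop: reduction to str hensel case} is correct, and your observation that the residue field of $\widehat{K_x^{sh}}$ is perfect is also correct, but after reducing to the complete strictly henselian case you may land in a discretely valued field whose residue field has characteristic $0$, and there Proposition~\ref{prop: duality for Neron components in local perfect residue field case} simply does not apply.

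To close the gap you need a separate argument at the points $x\in Z$ of residual characteristic $0$. This is exactly what Proposition~\ref{prop: duality for components holds ell adic integrally} provides: when the residual characteristic exponent at $x$ is $1$, every prime $l$ is invertible near $x$, so the $l$-adic result forces $\eta_M$ to be an isomorphism there (equivalently, the proposition immediately preceding it says the kernel and cokernel of $H^n\eta_M$ over $x$ have order a power of the residual characteristic exponent, which is $1$). The paper's proof explicitly splits into the two cases ``zero residual characteristic'' (handled $l$-adically) and ``positive residual characteristic'' (handled by the perfect-residue-field duality), and your write-up needs to do the same.
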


\begin{proof}
	This follows from Prop.\ \ref{prop: reduction to str hensel case},
	\ref{prop: duality for components holds ell adic integrally}
	(for zero residual characteristics)
	and \ref{prop: duality for Neron components in local perfect residue field case}
	(for positive residual characteristics).
\end{proof}

\begin{Rmk}
	The right-hand side of \eqref{eq: Shafarevich isomorphism for Neron} depends only on $G$
	and not on $Y$.
	Hence the left-hand side actually depends only on $[Y' \to A']$ and not on $T'$.
	This can also be checked directly by noting that
	$\alg{H}^{n}(K, T') = 0$ for $n \ge 1$ by \cite[Prop.\ (3.4.3) (e)]{Suz14},
	the distinguished triangle
	$T' \to M^{\vee} \to [Y' \to A']$ and hence an isomorphism
	$\alg{H}^{1}(K, M^{\vee}) \isomto \alg{H}^{1}(K, [Y' \to A'])$.
	In particular, \eqref{eq: Shafarevich isomorphism for Neron} can be written as
		\[
				\alg{H}^{1}(K, [Y' \to A'])
			\isomto
				\alg{\Gamma}(K, G)_{0}^{\SDual'}.
		\]
	A similar remark exists for Prop.\ \ref{prop: duality for Neron holds ell adic integrally}.
\end{Rmk}


\end{document}